\newcommand{\BigO}[1]{\ensuremath{\operatorname{O}\bigl(#1\bigr)}}
\newtheorem{theorem}{Theorem}
\newtheorem{lemma}{Lemma}
\newtheorem{definition}{Definition}
\newtheorem{proposition}{Proposition}
\newtheorem{corollary}{Corollary}
\newtheorem{conjecture}{Conjecture}
\date{}
\newcommand\footnoteref[1]{\protected@xdef\@thefnmark{\ref{#1}}\@footnotemark}
\newlength\figureheight \newlength\figurewidth
\newcommand{\mytilde}{\raise.17ex\hbox{$\scriptstyle\mathtt{\sim}$}}
\title{Community Detection on  Euclidean Random Graphs \footnote{An extended abstract of certain results appears in the proceedings of ACM-SIAM Symposium on Discrete Algorithms (SODA) 2018 \cite{soda}.}}
\author{%

	Emmanuel Abbe \footnote{Department of Applied and Computational Mathematics and Electrical Engineering, Princeton University. Email - eabbe@princeton.edu},\hspace{4mm}
	Fran\c cois Baccelli, \footnote{Department of Mathematics and ECE, The University of Texas at Austin. Email - baccelli@math.utexas.edu.}\hspace{4mm}
		Abishek Sankararaman\footnote{Department of ECE, The University of Texas at Austin. Email - abishek@utexas.edu.}

}
\providecommand{\keywords}[1]{\textbf{{Keywords---}} #1}
\begin{document}
\maketitle

\begin{abstract}

We study the problem of community detection on Euclidean random geometric graphs where each vertex has two latent variables: a binary community label and a $\mathbb{R}^d$ valued location label which forms the support of a Poisson point process of intensity $\lambda$. A random graph is then drawn with edge probabilities dependent on both the community and location labels. In contrast to the stochastic block model (SBM) that has no location labels, the resulting random graph contains many more short loops due to the geometric embedding. We consider the recovery of the community labels, partial and exact, using the random graph and the location labels. We establish phase transitions for both sparse and logarithmic degree regimes, and provide bounds on the location of the thresholds, conjectured to be tight in the case of exact recovery. We also show that the threshold of the distinguishability problem, i.e., the testing between our model and the null model without community labels exhibits no phase-transition and in particular, does not match the weak recovery threshold (in contrast to the SBM).

\end{abstract}

\keywords{Planted Partition, Stochastic Block Model, Random Connection Model, Percolation, Phase Transitions}

\section{Introduction}
\label{sec:intro}

 Community Detection, also known as the graph clustering problem, is the task of grouping together nodes of a graph into representative clusters. This problem has several incarnations that have proven to be useful in various applications (\cite{comm_det_nature}) such as social sciences (\cite{holland},\cite{social_science_app}), image segmentation \cite{image_segment}, recommendation systems (\cite{reco_1},\cite{reco_2}), web-page sorting \cite{web_app},  and biology (\cite{bio_application}, \cite{ppi_detect}) to name a few. In the present paper, we introduce a new class of spatial random graphs with  communities and consider the Community Detection problem on it. Our motivation for a new class of random graph model comes from applications where nodes have geometric attributes, such as in social networks or more generally in graphs of similarities, where the similarity function has metric properties.
We study two \emph{regimes} of the random graph - the sparse degree regime  and the logarithmic degree regime. The sparse degree regime is one wherein the average node degree does not scale with the total number of nodes of the graph, while the logarithmic degree regime is one where the average node degree is proportional to the logarithm of the total number of nodes.
\\

 \textbf{Model Overview} - The random graph will be denoted by $G_n$, which has a random  $N_n$ number of nodes which is Poisson distributed with mean $\lambda n$. In our formulation, $\lambda > 0$ is a {fixed} constant that denotes the \emph{intensity} parameter  and $n$ is a scaling parameter, and we will consider the asymptotic as $n \rightarrow \infty$. Nodes 
 are equipped with two i.i.d. labels, a uniform $\{-1,+1\}$ valued \emph{community label} and a uniform $B_n := \left[ -\frac{n^{1/d}}{2},\frac{n^{1/d}}{2} \right]^d$, $d \in \mathbb{N}$ valued \emph{location label}. Therefore, the average number of nodes having location labels in any subset of $B_n$ of unit volume is $\lambda$, which explains why we call $\lambda$ as the intensity parameter. To draw the edges, we consider two sequences of functions $(f_{in}^{(n)}(\cdot))_{n \in \mathbb{N}}$ and $(f_{out}^{(n)}(\cdot))_{n \in \mathbb{N}}$ such that $1 \geq f_{in}^{(n)}(r) \geq f_{out}^{(n)}(r) \geq 0$ for all $r \geq 0$ and $n \in \mathbb{N}$.  Conditional on the node labels, two nodes with location labels $x,y \in B_n$ and community labels $Z_x,Z_y \in \{-1,+1\}$ 
 are connected by an edge in $G_n$ independently of other edges  with probability $f_{in}^{(n)}(||x-y||)$ if $Z_x = Z_y$ and with probability $f_{out}^{(n)}(||x-y||)$ if $Z_x \neq Z_y$. In this interpretation, $||\cdot||$ denotes the Euclidean norm on the set $B_n$ if $G_n$ is sparse, or denotes the toroidal metric on $B_n$ in the non-sparse case. We call the random graph $G_n$ \emph{sparse} if the connection functions $f_{in}^{(n)}(\cdot)$ and $f_{out}^{(n)}(\cdot)$ do not depend on $n$. More precisely, the graph $G_n$ is sparse if for all $r \geq 0$, $f_{in}^{(n)}(r) := f_{in}(r)$ and $f_{out}^{(n)}(r) := f_{out}(r)$, for functions $f_{in}(\cdot)$ and $f_{out}(\cdot)$ satisfying $0 < \int_{x \in \mathbb{R}^d}(f_{in}(||x||) + f_{out}(||x||))dx < \infty$. In this sparse regime, the   average  degree of any node in $G_n$ is bounded above by  $(\lambda/2)\int_{x \in \mathbb{R}^d} (f_{in}(||x||) + f_{out}(||x||))dx < \infty$ \emph{uniformly} in $n$. In this regime, we draw an edge between two nodes $i$ and $j$ with probability $f_{in}(||X_i - X_j||)$ if the community labels $Z_i$ and $Z_j$ are the same or with probability $f_{out}(||X_i - X_j||)$ if the community labels $Z_i$ and $Z_j$ are different, where $||\cdot||$ denotes the Euclidean norm on the set $B_n$.
Furthermore, the `boundary effects' due to the edges of the set $B_n$ will not matter asymptomatically as $n \rightarrow \infty$ as the average degree is uniformly bounded in $n$ (We make this precise in Section \ref{sec:math_model}) \footnote{We introduce the toroidal metric as it makes the space symmetric, which greatly aids in the proof}. We call this the sparse regime as the average degree is a constant independent of $n$. If the connection functions $f_{in}^{(n)}(\cdot)$ and $f_{out}^{(n)}(\cdot)$ depended on $n$ and further-more satisfy $  \int_{x \in \mathbb{R}^d} f_{in}^{(n)}(||x||) dx = C_{in}\log(n)$ and $\int_{x \in \mathbb{R}^d} f_{out}^{(n)}(||x||) dx = C_{out}\log(n)$ for some $C_{in} > C_{out} \geq 0$, for all $n$, then we call the graph $G_n$ as the \emph{logarithmic degree} regime or simply as the logarithmic regime. This is so since the average degree of any node is proportional to the logarithm of the total number of nodes in $G_n$. In this case of logarithmic regime, we avoid having to deal with boundary effects by considering the toroidal metric on the set $B_n$. Precisely, conditional on the location and community labels on nodes, we place an edge between nodes $i$ and $j$ in $G_n$ with probability $f_{in}^{(n)}(||X_i - X_j||)$ if $Z_i = Z_j$ and with probability $f_{out}^{(n)}(||X_i - X_j||)$ if $Z_i \neq Z_j$, where $||\cdot||$ is the toroidal metric on $B_n$. We provide a more formal description of the random graph process in both regimes in Section \ref{sec:math_model}.
 \\

 \textbf{Problem Statement} - In the sparse regime, we study the problem of `weak-recovery', which asks how and when one can estimate the community labels of the nodes of $G_n$ better than at random, given observational data of locations labels of all nodes and the graph $G_n$. We say weak recovery is solvable in the sparse case (made precise in Section \ref{subsec:comm_det}) if there exists an algorithm which takes as input the graph $G_n$ along with the location labels and produces a partition of the nodes such that the fraction of misclassified nodes is strictly smaller than a half as $n$ goes to infinity, i.e., we asymptotically beat a random guess of the partition. Although this requirement on estimation is very weak, we see through our results that this is indeed the best one can hope for in the sparse graph setting considered here. In the logarithmic regime, we consider the problem of `Strong Recovery' or also known as \emph{exact-recovery}, which asks how and when can one recover the partition of nodes into communities \emph{exactly} based on the observation of the random graph $G_n$ and the location labels of all nodes. As this is a stronger requirement on the the estimator, the graph needs to be \emph{sufficiently dense} in order to perform Exact-Recovery. More precisely, we see that the average node degree must scale logarithmically to the number of nodes to capture the phase transition for exact-recovery. In both of these problems, we assume that the estimator has access to the model parameters $\lambda$ and $f_{in}^{(n)}(\cdot)$ and $f_{out}^{(n)}(\cdot)$. However, we present how one could possibly implement our algorithm in practice, when the connection functions are not known explicitly. From a mathematical perspective, the estimation of the connection functions from data in our spatial setup  is an interesting research question in itself which is beyond the scope of this paper. 
\\

 \textbf{Remark on the Two Different Distance Metrics} - For technical simplicity, we choose to use the Euclidean metric in the case of sparse graphs and the torridal metric in the case of non-sparse graphs. The Euclidean metric in the sparse graph case allows us to couple all the finite graphs as a subgraph of the limit inifinite graph (made precise in Section \ref{sec:math_model}), while the torridal metric in the non-sparse graph case allows us to use the translation invariance of the torus (Appendix \ref{appendix_palm}). 
\\

 \textbf{Main Results and Technical Contributions - }
 Our main results in this paper pertain to fundamental phase-transitions, which dictate how and when we can do Community Detection in the two regimes of interest. In the sparse regime, we show that (in Theorems \ref{thm:main_lb_cd} and \ref{thm:positive_direction_1}) for every $f_{in}(\cdot)$ and $f_{out}(\cdot)$ \footnote{We assume that $f_{in}(\cdot) \neq f_{out}(\cdot)$ upto Lebesgue measure $0$ and $\int_{x \in \mathbb{R}^d}(f_{in}(||x||) - f_{out}(||x||))\mathrm{d}x < \infty$.}, there exists a critical non-trivial $\lambda_c \in (0,\infty)$, such that if $\lambda < \lambda_c$, then no algorithm can estimate the community labels of $G_n$ better than at random, and if $\lambda > \lambda_c$, then there exists an algorithm that can estimate the community labels better than at random.  From a rather straightforward `monotonicity' argument (in Proposition \ref{prop:monotone}), one can establish the existence of $\lambda_c \in [0,\infty]$ such that the community detection problem shifts from being unsolvable to solvable. Our key technical result in this paper is to establish that this phase-transition is \emph{non-trivial}, i.e., $\lambda_c$ is neither $0$ nor $\infty$. Furthermore, in certain special cases of $f_{in}(\cdot)$ and $f_{out}(\cdot)$, we are able to characterize exactly the phase-transition point $\lambda_c$. 
 \\
 
 %However in general, it is hard to characterize explicitly the phase-transition point, as even identifying the phase-transition for percolation of the random connection model, which is conceptually much simpler than CD, has been open for decades (see \cite{Meester_Roy}).

  %We study this problem by developing coupling and random-cluster like arguments. 
  To establish weak-recovery is solvable for sufficiently high $\lambda$,  we give a new algorithm called {\ttfamily Good-Bad-Grid} abbreviated as {\ttfamily GBG}. The key idea is to observe that for any two nodes that are `near-by', we can classify them correctly with exponentially (in $\lambda$) small probability of error by simply considering their neighborhoods. Compared to the SBM, this is `easier', since the geometric embedding provides common neighbors even in the sparse regime, whereas one needs to go down to neighbors of neighbors of large depth to obtain intersections in the sparse SBM, as done in \cite{abbe_sandon_ch} with the sphere comparison algorithm \footnote{Counting common neighbors is also exploited in the algorithm of \cite{gbm}}. However in contrast to the SBM, this is not sufficient to produce a global clustering since there will be certain pairs incorrectly classified that need to be identified and corrected. We establish this by embedding `consistency checks' into our algorithm to correct some of the misclassified pairs by partitioning the space $B_n$ into `good' and `bad' regions. Hence the name of our algorithm is {\ttfamily Good-Bad-Grid}. To analyze this algorithm, we then couple the partitioning of space with another percolation process to prove that our algorithm will misclassify a fraction strictly smaller than half of the nodes if $\lambda$ is sufficiently high, with an explicit estimate of the constant.  Furthermore, in certain special instances of connection functions $f_{in}(\cdot)$ and $f_{out}(\cdot)$, our lower bound and upper bound match to give a sharp phase-transition and we can characterize the critical $\lambda$ for these cases. Moreover,  in Section \ref{sec:unknown_conn_func} we give a way to implement our algorithm without any knowledge of the model parameters $f_{in}^{(n)}(\cdot),f_{out}^{(n)}(\cdot)$ and $\lambda$ and is purely `data-dependent'. We prove $\lambda_c > 0$ in Theorem \ref{thm:main_lb_cd}, where the technical analysis relies on identifying an \emph{easier} problem than weak-recovery, which we call Information Flow from Infinity. This reduction is similar to that done in the case of classical SBMs \cite{MNS_Prob_Theory}.
  \\
  
  Our next result concerns the \emph{distinguishability} problem, which asks how well one can solve a hypothesis testing problem between our graph and an appropriate null model (a plain random connection model with connection function $(f_{in}(\cdot)+f_{out}(\cdot))/2$) without communities but having the same average degree and distribution for spatial locations. We show that for \emph{all} parameters, we can solve the {distinguishability} problem with success probability $1 - o_n(1)$, even if we cannot learn the partition better than a random guess. We do so by identifying suitable graph `triangle profiles' and showing that they are different in the planted partition model and the null model. Similar ideas have appeared in the context of distinguishing the SBM from an Erd\H{o}s-R\'enyi random graph \cite{MNS_Prob_Theory}. However, we show in this paper, that the associated computations in analyzing this problem are much simpler thanks to the translation invariance of the Euclidean space.
  In our model, we are able to infer the existence of the partition but do not learn anything  about it in certain regimes. This is because we can always `see' the   partition `locally' in space, but there is no way to consistently piece together the small partitions in different regions of space into one coherent partition of the graph if $\lambda$ is small. This phenomenon is new and starkly different from what is observed in the classical Erd\H{o}s-R\'enyi based symmetric Stochastic Block Models (SBM) with two communities where the moment one can identify the presence of a partition, one can also recover the partition better than a random guess (\cite{MNS_Prob_Theory}). Moreover, such phenomena where one can infer the existence of a partition but not identify it better than random are conjectured not to occur in the SBM even with many communities \cite{decelle},\cite{neeman_distinction}.
\\

In the logarithmic degree regime, we give explicit conditions on the model parameters $\lambda$, $f_{in}^{(n)}(\cdot)$ and $f_{out}^{(n)}(\cdot)$ under which Exact-Recovery is impossible to solve. We establish this by reducing this problem to a Hypothesis testing problem between Poisson random vectors and them employing the Large-Deviations results of  \cite{abbe_sandon_ch} to identify an explicit condition on the model parameters.  On the positive side, we show that a direct adaptation of our {\ttfamily GBG} algorithm to this logarithmic degree regime performs Exact-Recovery if the intensity $\lambda$ is sufficiently large, thereby establishing the phase-transition for Exact-Recovery. However, we conjecture that the algorithm is sub-optimal and that the lower bound identifies a sharp phase-transition. We describe a procedure that may achieve the lower bound, but leave this as an open problem.
\\

\textbf{Central Technical Challenges - } A good estimator must utilize both the spatial data about nodes, as well as the combinatorial information provided by the random graph to perform clustering. This is different from  classical graph clustering algorithms which only look at the edges and any weights on the edges. In our model, the spatial labels provide some form of `side-information' which any estimator must exploit. As an illustrative example to see this, consider the connection functions $f_{in}(\cdot)$ and $f_{out}(\cdot)$  in the sparse case to be of bounded support. In this case, the absence of an edge between two nearby nodes makes it likely that these two nodes belong to opposite communities but  the lack of an edge between far-away nodes farther than the support of either connection functions  does not give any community membership information. Thus the lack of an edge in this example has different interpretations depending on the location labels which needs to be exploited in a principled manner by the estimator. Indeed this is best seen in our lower bound for exact-recovery case, where if $f_{out}^{(n)}(r)$ is identically $0$, i.e. there are no cross community edges, then Exact-Recovery might still be possible even before the subgraph on the nodes of each individual communities become fully connected. This takes place since we can use the spatial location information in a non-trivial fashion to estimate the labels of isolated nodes. Nonetheless, the location labels alone without the graph provide no information on community membership as the community and location labels are independent of each other. 
\\

From a technical perspective, the core challenge in studying our spatial graph model in the sparse regime lies in the fact that it is not `locally tree-like'. The spatial graph is locally dense (i.e., there are  lots of triangles) which arises as a result of the constraints imposed by Euclidean geometry, while it is globally sparse (i.e., the average degree is bounded above by a constant). 
The sparse SBM on the other hand, is locally `tree-like' and has very few short cycles \cite{MNS_Prob_Theory}. This comes from the fact that the connection probability in a sparse SBM scales as $c/n$ for some $c > 0$. In contrast, the connection function in our model in the sparse regime does not scale with $n$. From an algorithmic point of view however, most commonly used techniques (message passing, broadcast process on trees, convex relaxations, spectral methods etc)  are not straight forward to apply to our graph (if one ignores the additional information provided by spatial labels) since their analysis fundamentally relies on the locally tree-like structure of the graph (see \cite{abbe_overview} and references therein). Nevertheless, we show that the presence of spatial labels, enables one to consider a very simple algorithm based on counting common neighbors, to provide an efficient clustering policy, that works even in the sparse graph regime. 
\\

 \textbf{Motivations for a Spatial Model - }
The most widely studied model for Community Detection is the Stochastic Block Model (SBM), which is a multi-type Erd\H{o}s-R\'enyi graph. In the simplest case, the two community symmetric SBM corresponds to a random graph with $n$ nodes, with each node equipped with an i.i.d. uniform community label drawn from $\{-1,+1\}$. Conditionally on the labels, pairs of nodes are connected by an edge independently of other pairs with two different probabilities depending on whether the end points are in the same or different communities. Structurally, the sparse SBM is known to be locally tree-like (\cite{MNS_Prob_Theory},\cite{abbe_overview}) while real social networks   are observed to be \emph{transitive} and sparse. Sparsity in social networks can be understood through `Dunbar's number' \cite{dunbar_base},  which concludes that an average human being can have only about $500$ `relationships' (online and offline) at any point of time. Moreover, this is a fundamental cognitive limitation of the person and not that of access or resources, thereby justifying models where the average node degree is independent of the population size. Social networks are transitive in the sense that any two agents that share a mutual common neighbor tend to have an edge among them as well, i.e., the graph has many  triangles. Similar phenomena also takes place in graphs of similarities, where vertices are connected based on metric similarity functions. These aspects point out the limitations of the sparse SBM and a large collection of models have been proposed to better fit applications under the realm of Latent Space Models (\cite{space_1},\cite{space_2})  and inhomogeneous random graphs (\cite{bollobas}). See also \cite{abbe_overview} for more references. These are sparse \emph{spatial} graphs in which the agents of the social network are assumed to be embedded in an abstract \emph{social space} that is modeled as an Euclidean space and conditional on the embedding,  edges of the graph are drawn independently at random as a non-increasing function of the distance between two nodes. Thanks to the properties of Euclidean geometry, these models are transitive and sparse, and have a better fit to data than any SBM (\cite{space_1}). Such modeling assumptions in the context of multiple communities was also recently verified in parallel independent work \cite{gbm}, where the nodes have both a community label and a location label. The locations labels are sampled uniformly on a sphere and the edges are generated by nodes `nearby' in this sphere connecting with probabilities that depend on the community labels. Several empirical validations of this model on real data is also conducted in \cite{gbm} which suggests that such spatial random graph model provides a good fit for several real world networks. However, we note that the sparse SBM enjoys certain advantages over the geometric random graph considered here, namely that of having low diameter. in agreement with the `small world' phenomena observed in many real world networks (see \cite{milgram}). Therefore a natural next step is to superimpose an SBM with the type of geometric graphs considered here to obtain both a lot of triangles and small diameter, i.e. a type of small world SBM. 
\\

Thus, one can view our model as the simplest planted-partition version of the Latent Space model, where the nodes are distributed uniformly in a large compact set $B_n$ and conditional on the locations, edges are drawn depending on Euclidean distance through connection functions $f_{in}.(\cdot)$ and $f_{out}(\cdot)$. Although, our assumptions are not particularly tailored towards any real data-sets, our setting is the most challenging regime for the estimation problem as the location labels alone without the graph  reveal no community membership information. However,in this paper we assume the location labels on nodes are known exactly to the estimator. In practice, it is likely that the locations labels are unknown (as in the original Latent Space models where the social space is unobservable) or are at-best estimated separately. Nonetheless, our formulation with known location labels forms a crucial first step towards more general models where the location labels are noisy or missing. The problem with known spatial location labels is itself quite challenging as outlined in the sequel  and hence we decided to focus on this setting alone in the present paper. Another drawback of our formulation is that we assume the estimator has knowledge of the model parameters $f_{in}(\cdot)$ and $f_{out}(\cdot)$. In our spatial setup, the estimation of  connection functions from data  is an interesting research question in itself which is however beyond the scope of this paper.
\\

 %The key idea in our algorithm  is to exploit  the fact that our graph is locally quite dense, which allows one to classify very accurately `nearby' pairs of nodes. We then use ideas from percolation theory to then piece together in a non-trivial fashion the different nearby estimators to produce a global clustering. We prove the lower bound using similar ideas as used in the classical SBM \cite{MNS_Prob_Theory} case, by identifying a reduction to an easier problem of Information Flow from Infinity. However, unlike the SBM case, this reduction requires new arguments that rely on the translation invariance of the Euclidean space. In the logarithmic degree regime, our main novelty is to notice that the lower bound for community detection can be reduced to reasoning about events in disjoint regions of space. We then exploit this reduction along with the fact that events of a Poisson process in disjoint regions of space are independent,  along with certain symmetries of the Euclidean space, to provide an explicit lower bound for exact-recovery. In certain cases, we give a closed form expression of the phase-transition threshold by using the large deviations framework introduced in \cite{abbe_sandon_ch}.

\subsection{Organization of the paper}

We give a formal description of the model and the problem statement in Section \ref{sec:math_model}. We then present our main theorem statements in Section \ref{sec:results}. The subsequent sections will develop the ideas and the proofs needed for our main results. We describe our {\ttfamily GBG} Algorithm in Section \ref{sec:algo} where we first give the idea and then the details of the algorithm. The analysis of our algorithm for the sparse case is performed in Section \ref{sec:analysis_algo}, where the key idea is to construct coupling arguments with site percolations. We establish the lower bound for Community Detection in the sparse case in  Section \ref{sec:lower_bound}, where we first introduce the Information Flow from Infinity problem and then prove that this is easier than Community Detection. Subsequently, we provide a proof of the impossibility result for Information Flow from Infinity. In Section \ref{sec:hypo}, we consider the distinguishability problem and provide a proof of it.  In Section \ref{sec:exact_recovery}, we discuss the non-sparse regime and study the Exact-Recovery problem. Thus, Sections \ref{sec:algo},\ref{sec:lower_bound} and \ref{sec:hypo} all pertain to the sparse graph problem and can be read in any relative order. Section \ref{sec:exact_recovery} exclusively only pertains to the non-sparse case and studies the Exact-Recovery problem. In Section \ref{sec:related_work}, we survey related work and place our model and results in context.

\section{Mathematical Framework and Problem Statement}
\label{sec:math_model}

We describe the mathematical framework based on stationary point processes and state the problem of Community Detection. More precisely, we assume the presence of a single infinite marked Poison Point Process (PPP)\footnote{This is defined in Appendix  \ref{appendix_PPP}}, and consider the random graphs $G_n$ as an appropriate truncation of this infinite object.  In the sparse case, we can construct a single infinite random graph $G$ and consider $G_n$ as an appropriate finite sub-graph of $G$. In the non-sparse case, there is no direct limiting infinite graph. Nevertheless, we can couple all the graphs $G_n$ on a single probability space by constructing them on a single marked PPP. We set a common shorthand notation we use throughout the paper.  For two arbitrary positive sequences $(a_n)_{n \in \mathbb{N}}$ and $(b_n)_{n \in \mathbb{N}}$, we let $b_n = o(a_n)$ to denote the fact that $\lim_{n \rightarrow \infty} b_n/a_n = 0$.

%This representation allows us to couple the graphs $G_n$ for all $n$ on a single probability space, which plays a crucial role in certain proofs and arguments.

\subsection{The Planted Partition Random Connection Model}
\label{subsec:graph_model}

We suppose there exists an abstract probability space $(\Omega,\mathcal{F},\mathbb{P})$ on which we have an appropriately marked PPP $\bar{\phi}$. We will construct the sequence of random graphs $(G_n)_{n \in \mathbb{N}}$ on this space simultaneously for all $n$ as a measurable function of this marked PPP $\bar{\phi}$. More formally, we assume $\phi$ to be the support of a homogeneous PPP of intensity $\lambda$ on $\mathbb{R}^d$ with the enumeration that $\phi := \{X_1,X_2,\cdots\}$. In this representation, $X_i \in \mathbb{R}^d$ for all $i \in \mathbb{N}$. Furthermore, we assume the enumeration to be such that for all $i > j \in \mathbb{N}$, we have $||X_i||_{\infty} \geq ||X_j||_{\infty}$, i.e., the points are ordered in accordance to increasing $l_{\infty}$ distance. We further mark each atom $i \in \mathbb{N}$ of $\phi$ with random variables  $Z_i \in \{-1,+1\}$ and $\{U_{ij}\}_{j \in \mathbb{N} \setminus \{i\} } \in [0,1]^{\mathbb{N} \setminus \{i\}} $ satisfying $U_{ij} = U_{ji}$ for all $i\neq j \in \mathbb{N}$. We denote by $\bar{\phi}$ to be this marked PPP. The sequence $\{Z_i\}_{i \in \mathbb{N}}$ is i.i.d. with each element being uniformly distributed in $\{-1,+1\}$. For every $i \in \mathbb{N}$, the sequence $\{U_{ij}\}_{j \in \mathbb{N} \setminus \{i\}}$  is i.i.d. with each element of $\{U_{ij}\}_{j \in \mathbb{N} \setminus \{i\}}$ being uniformly distributed on $[0,1]$.
The interpretation of this marked point process is that for any node $i \in \mathbb{N}$, its location label is $X_i$,  community label is $Z_i$ and $\{U_{ij}\}_{j \in \mathbb{N} \setminus \{i\}}$  are used to sample the graph neighbors of node $i$. We describe this construction in both the sparse and logarithmic degree regimes below.
\\
%
%The interpretation is that node $i \in \mathbb{N}$ of $G$ has for its location label, the random variable $X_i \in \mathbb{R}^d$.  The interpretation of $\phi$ and its marks is that every node $i \in \mathbb{N}$ of $G$ has a location label given by $X_i \in \mathbb{R}^d$ and  a community label given by $Z_i \in \{-1,+1\}$. This community label is moreover i.i.d. and independent of everything else. 

 Denote by the set $B_n := \left[ -\frac{n^{1/d}}{2} , \frac{n^{1/d}}{2}\right]^d$, the cube of area $n$ in $\mathbb{R}^d$. For all $n \in \mathbb{N}$, we let $N_n := \sup\{i \geq 1: X_i \in B_n\}$. Since the nodes are enumerated in increasing order of $l_{\infty}$ distance, it follows that for all $i \in [1,N_n]$, $X_i \in B_n$. Furthermore, from basic properties of PPP, $N_n$ is a Poisson random variable of mean $\lambda n$. We construct the graph $G_n$, by assuming its vertex set to be $\{1,\cdots,N_n\}$ and the location label of any node $i \in [1,N_n]$ to be $X_i \in B_n$ and its community label to be $Z_i \in \{-1,+1\}$. However, we use the marks 
 $\{U_{ij}\}_{j \in \mathbb{N} \setminus \{i\}}$  slightly differently depending on whether the graph is sparse or not. Recall that in the sparse regime, we have the connection functions $f_{in}^{(n)}(\cdot)$ and $f_{out}^{(n)}(\cdot)$ to be independent of $n$. In this case, we first construct an infinite graph $G$ with vertex set $\mathbb{N}$ and place an edge between any two nodes $i,j \in \mathbb{N}$  if and only if  $U_{ij} = U_{ji} \leq f_{in}(||X_i - X_j||) \mathbf{1}_{Z_i = Z_j} + f_{out}(||X_i - X_j||)\mathbf{1}_{Z_i \neq Z_j}$. The graph $G_n$ is then the induced subgraph of $G$ consisting of the nodes $1$ through $N_n$, i.e., the subgraph of $G$ restricted to the node set with location labels in $B_n$. In the logarithmic degree regime, we assume that the set $B_n$ is equipped with the toroidal metric rather than the Euclidean metric for simplicity.  Formally, for any $x := (x_1,\cdots,x_d), y:= (y_1,\cdots, y_d) \in B_n$, the toroidal distance on $B_n$ is given by $||x-y||_{\mathcal{T}_n} = || ( \min(|x_1-y_1|, n^{1/d} - |x_1-y_1 |),\cdots, \min(|x_d-y_d|, {n}^{1/d} - |x_d-y_d |)  )||$, where $||.||$ is the standard Euclidean norm on $\mathbb{R}^d$. For any $i,j \in [1,N_n]$, we draw an edge between nodes $i$ and $j$ in $G_n$ if and only if  $U_{ij} = U_{ji} \leq f_{in}^{(n)}(||X_i - X_j||_{\mathcal{T}_n}) \mathbf{1}_{Z_i = Z_j} + f_{out}^{(n)}(||X_i - X_j||_{\mathcal{T}_n})\mathbf{1}_{Z_i \neq Z_j}$. 
\\

The infinite random graph $G$ in the sparse regime can be viewed as a `planted-partition' version of the classical random-connection model (\cite{Meester_Roy}). Given $\lambda \in \mathbb{R}_{+}$, $g(\cdot) : \mathbb{R}_{+} \rightarrow [0,1]$ and $d \geq 1$, the classical random-connection model $H_{\lambda,g(\cdot),d}$, is a random graph whose vertex set forms a homogeneous PPP of intensity $\lambda$ on $\mathbb{R}^d$. Conditionally on the locations,  edges in $H_{\lambda,g(\cdot),d}$ are placed independently of each other where two points at locations $x$ and $y$ of $\mathbb{R}^d$ are connected by an edge in $H_{\lambda,g(\cdot),d}$ with probability $g(||x-y||)$. This construction can be made precise by letting the edge random variables for each node be marks of the PPP similarly to the construction of $G$.

\subsection{The Community Detection Problem}
\label{subsec:comm_det}

In this paper, we study two different notions of Community Detection - weak recovery and exact recovery, depending on whether the graph $G_n$ is sparse or non-sparse. In the sparse regime, our definition of Community Detection is analogous to the notion of `weak-recovery' considered in the classicial SBM literature (\cite{decelle}) and in the logarithmic degree regime, our definition of Community Detection is analogous to the notion of `Exact-Recovery' (\cite{abbe_exact}) in the SBM literature. To state the two notions of community recovery, we set more notation. Let $\phi_n$ be the restriction of the point process $\phi$ to the set $B_n$. Notice that the cardinality of $\phi_n$ is $N_n$ which is distributed as a Poisson random variable of mean $\lambda n$, and $X_i \in B_n$ for all $i \in [1,N_n]$. Moreover, conditionally on $N_n$, the location variables $(X_i)_{i \in [1,N_n]}$ are placed uniformly and independently in $B_n$. Before describing the problem, we need the definition of `overlap' between two sequences.

\begin{definition}
	Given a $t \in \mathbb{N}$, and two sequences $\mathbf{a},\mathbf{b} \in \{-1,1\}^t$, the \emph{overlap} between $\mathbf{a}$ and $\mathbf{b}$  is defined as $\frac{|\sum_{i = 1}^{t} a_ib_i | }{t}$, i.e., the absolute value of the normalized scalar product. 
	\label{def:overlap}
\end{definition}

We define two notions of performance of community detection, weak recovery and exact recovery defined below.

\begin{definition}
	\emph{Weak Recovery} is said to be \textbf{solvable in the sparse regime} for $\lambda,d,f_{in}(\cdot)$ and $f_{out}(\cdot)$ if for every $n \in \mathbb{R}_{+}$, there exists a sequence of $\{-1,+1\}$ valued random variables $\{{\tau}_{i}^{(n)}\}_{i=1}^{N_n}$ which is a {deterministic} function of the observed data $G_n$ {and} $\phi_n$ such that there exists a constant $\gamma > 0$ satisfying
	\begin{align}
	\lim_{n \rightarrow \infty} \mathbb{P} \left[ \mathcal{O}_n \geq \gamma \right] = 1,
	\label{eqn:comm_det_finite}
	\end{align}
	where $\mathcal{O}_n$ is the overlap between $\{\tau_{i}^{(n)}\}_{i=1}^{N_n}$ and $\{Z_i\}_{i = 1}^{N_n}$. 

	\label{defn:comm_det}
\end{definition}
In the above definition, we let the overlap $\mathcal{O}_n := 1$ if $N_n = 0$. In the logarithmic degree regime, we ask for exact-recovery which is formally stated as follows.
\begin{definition}
	\emph{Exact-Recovery} is said to be \textbf{solvable in the logarithmic degree regime} for $\lambda,d,(f_{in}^{(n)}(\cdot))_{n \in \mathbb{N}}$ and $(f_{out}^{(n)}(\cdot))_{n \in \mathbb{N}}$ if for every $n \in \mathbb{R}_{+}$, there exists a sequence of $\{-1,+1\}$ valued random variables $\{{\tau}_{i}^{(n)}\}_{i=1}^{N_n}$ which is a {deterministic} function of the observed data $G_n$ {and} $\phi_n$ such that 
	\begin{align}
	\lim_{n \rightarrow \infty} \mathbb{P} \left[ \mathcal{O}_n = 1\right] = 1,
	\label{eqn:comm_det_er}
	\end{align}
	where $\mathcal{O}_n$ is the overlap between $\{\tau_{i}^{(n)}\}_{i=1}^{N_n}$ and $\{Z_i\}_{i = 1}^{N_n}$. 
	
	\label{defn:comm_det_er}
\end{definition}

A key new feature of our Definitions  \ref{defn:comm_det} and \ref{defn:comm_det_er} comes from our assumption that the algorithm has knowledge of all location labels on the nodes and it only needs to estimate the missing community labels. In the sparse regime, we ask when can any estimator assign community labels to the nodes that beats a `random guess'. Observe that if an estimator guessed every node to be in Community $+1$, then the achieved overlap $\mathcal{O}_n$ converges almost-surely to $0$, thanks to the Strong Law of Large Numbers. Thus, achieving a positive $\gamma$ asks whether an estimator can asymptotically beat the trivial estimator. In the non-sparse regime, we ask when and how can we  recover the commuity label of \emph{all} nodes, upto a global flip. 
\\

%To be precise, in the rest of the paper, we refer to \emph{Community Detection} as given in Definition \ref{defn:comm_det} for the sparse case, and \emph{Exact-Recovery} to be given in Definition \ref{defn:comm_det_er} in the non-sparse case.

Observe that we assume the algorithm has access to the parameters $f_{in}^{(n)}(\cdot)$, $f_{out}^{(n)}(\cdot)$ and $\lambda$ although this assumption may not always hold in practice. As mentioned, the estimation of model parameters from data itself will form an interesting technical question which we leave for future work. We take an absolute value in the definition of  overlap since  the distribution of $G_n$ is symmetric in the community labels. In particular, if we flipped all community labels of $G_n$, we would observe a graph which is equal in distribution to $G_n$. Thus, any algorithm can produce the clustering only up-to a global sign flip, which we capture by considering the absolute value. We take finite restrictions $B_n$ since the overlap is not well defined if $N_n = \infty$. A natural question then is of `boundary-effects', i.e. the nodes near the boundary of $B_n$ will have different statistics for neighbors than those far away from the boundary. However since  $G_n$ is sparse, except for a $o_n(1)$ fraction of nodes, all nodes in $G_n$ will have the same degree as in the infinite graph $G$, i.e. the boundary effects are negligible. In the non-sparse case, since we consider the set $B_n$ as a torus, to precisely avoid the technicalities arising out of considering the edge effects.
\\
 
 The following elementary monotonicity property is evident from the definition of the problem and sets the stage for stating our main results.
 \begin{proposition}
 	For every $f_{in}(\cdot), f_{out}(\cdot)$ and $d$, there exists a $ \lambda_c \in [0,\infty]$ such that
 	\begin{itemize}
 		\item  $\lambda < \lambda_{c} \implies$  weak-recovery is not solvable.
 		
 		\item  $\lambda > \lambda_c \implies$ there exists an algorithm (which could possibly take exponential time) to solve weak-recovery.
 	\end{itemize}
 	\label{prop:monotone}
 \end{proposition}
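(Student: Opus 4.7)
The plan is to establish the proposition via a monotonicity property of solvability in $\lambda$: namely, if weak-recovery is solvable at some intensity $\lambda_0$, then it is also solvable at every $\lambda > \lambda_0$. Once this is in hand, defining $\lambda_c := \inf\{\lambda \geq 0 : \text{weak-recovery is solvable at intensity } \lambda\}$ (with the convention $\inf \emptyset := \infty$) gives a threshold in $[0,\infty]$ and both bullets of the proposition follow immediately.

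To prove monotonicity, I would exploit Poisson splitting. Extend the marked process $\bar{\phi}$ of Section \ref{subsec:graph_model} with an independent thinning mark $\xi_i \sim \mathrm{Bernoulli}(\lambda_0/\lambda)$ at each point, and write $\phi^{(0)} := \{X_i : \xi_i = 1\}$. Then $\phi^{(0)}$ is a homogeneous PPP of intensity $\lambda_0$ on $\mathbb{R}^d$, independent of $\phi \setminus \phi^{(0)}$. Crucially, because the edge probabilities $f_{in}(\|\cdot\|)$ and $f_{out}(\|\cdot\|)$ do not depend on $\lambda$, the subgraph of $G_n$ (built at intensity $\lambda$) induced by $\phi^{(0)} \cap B_n$ has exactly the distribution of the $\lambda_0$-intensity graph $G_n^{(0)}$ on $B_n$, with the correct independent community and edge marks inherited from $\bar{\phi}$.

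Given an estimator $\tau^{(0)}$ which achieves overlap $\geq \gamma > 0$ on $G_n^{(0)}$ with probability $1 - o(1)$, define an estimator $\tau$ for $G_n$ by running $\tau^{(0)}$ on the induced subgraph of $G_n$ on $\phi^{(0)} \cap B_n$, and assigning label $+1$ to every node outside $\phi^{(0)}$. Setting $M_n := |\phi^{(0)} \cap B_n|$ and using the bound $|A+B| \geq |A| - |B|$,
\[
\mathcal{O}_n \;\geq\; \frac{1}{N_n}\biggl|\sum_{i \in \phi^{(0)} \cap B_n} \tau^{(0)}_i Z_i\biggr| - \frac{1}{N_n}\biggl|\sum_{i \in \phi_n \setminus \phi^{(0)}} Z_i\biggr|.
\]
The first term is at least $\gamma M_n/N_n$ on the event that $\tau^{(0)}$ succeeds, and $M_n/N_n \to \lambda_0/\lambda$ almost surely by the SLLN applied to the Poisson random variables $M_n$ and $N_n$. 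The second term is $O_{\mathbb{P}}(N_n^{-1/2})$ since the $Z_i$ are i.i.d.\ Rademacher and independent of the locations and thinning marks. Hence $\mathcal{O}_n \geq \gamma\lambda_0/(2\lambda)$ with probability $1 - o(1)$, which is the required positive overlap at intensity $\lambda$.

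The only subtlety I anticipate lies in the ``deterministic function of $G_n$ and $\phi_n$'' clause of Definition \ref{defn:comm_det}: the thinning marks $\xi_i$ are external to $G_n$ and $\phi_n$, so a priori the procedure above is randomised. This can be handled either by derandomisation (a standard averaging argument shows that some realisation of the thinning yields a deterministic estimator with positive overlap) or by reading ``deterministic'' as ``measurable'' on the enriched probability space containing the $\xi_i$. Apart from this bookkeeping, the argument reduces to Poisson splitting combined with elementary concentration, and no step feels genuinely hard.
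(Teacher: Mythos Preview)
Your argument is correct and essentially identical to the paper's: both establish monotonicity of solvability in $\lambda$ via independent Poisson thinning, running the $\lambda_0$-estimator on the retained points and assigning $+1$ to the deleted ones, then using the SLLN to convert the overlap guarantee. Your write-up is in fact slightly more careful (you give an explicit triangle-inequality bound and flag the derandomisation issue with the external thinning marks, which the paper glosses over), but the core idea is the same.
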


The proof is deferred to Appendix \ref{appendix-monotone}. This proposition is not that strong since it does not rule out the fact that $\lambda_c$ is either $0$ or infinity. Moreover, this proposition does not tell us anything about polynomial time algorithms, just of the existence or non-existence of any (polynomial or exponential time) algorithms. The first non-trivial result would be to establish that $0 < \lambda_{c}   < \infty$, i.e. the phase transition is strictly non-trivial and then to show that for possibly a larger constant, the problem is solvable efficiently. We establish both of this in Section \ref{sec:results}.

\subsection*{Distinguishability of the Planted Partition in the Sparse Regime}

In this paper, we also consider a related problem to weak-recovery, namely the \emph{distinguishability} question in the sparse regime. Roughly speaking, this problem asks whether one can identify if the observed data $(G_n,\phi_n)$ has communities or not. More precisely, we have the following definition.
\begin{definition}
	The sparse planted partition model with parameters $\lambda$, $d$ and connection functions $f_{in}(\cdot)$ and $f_{out}(\cdot)$ is said to be \emph{distinguishable}, if for every $g(\cdot):\mathbb{R}_{+} \rightarrow [0,1]$, one can identify with probability at-least $\frac{1}{2} + \gamma - o_n(1)$ for some $\gamma > 0$, whether the observed data $(G_n,\phi_n)$ is a sample of the planted partition with the above parameters, or is a sample of the random connection model $H_{\lambda,g(\cdot),d}$, given an uniform prior over the two models.
	\label{defn:distinguishability}
\end{definition}

This problem asks if we can even identify the existence of communities, before trying to identify them. Such hypothesis testing questions are of critical importance in practice where one needs to be reasonably sure of the presence of communities in a given graph, before attempting to cluster the graph. Our main result in Theorem \ref{thm:identifiability} is that one can always solve the distinguishability problem, i.e. it exhibits no phase-transition. Thus our results on weak-recovery and distinguishability predicts regimes of the problem (i.e. $d=1$ and $\lambda < \lambda_c$ for $d \geq 2$), where we can be very sure of the presence of communities, but cannot recover it better than at random.

 \subsection*{Notation - Palm Probability}
 Before stating the results, we will need an important definition. We  define the Palm Probability measure $\mathbb{P}^0$ of a point process in this subsection for future reference. Roughly stated, the Palm measure is the distribution of a marked point process $\phi$ `seen from a typical atom'. We refer the reader to \cite{daley} for the general theory of Palm measures. However thanks to Slivnyak's theorem \cite{daley}, we have a simple interpretation of $\mathbb{P}^0$ which is what we will use. The measure $\mathbb{P}^0$ is obtained by first sampling $\phi$ and $G$ from $\mathbb{P}$ and placing an additional node indexed $0$ at the origin of $\mathbb{R}^d$ and equipping it with independent community label and edges.  The label of this node at origin will be denoted by  $Z_0 \in \{-1,+1\}$ which is uniform and independent of anything else. This extra node is also equipped as a mark the sequence $\{U_{0j}\}_{j \geq 1}$, which will be used to sample the edges as before. Informally, we place an edge between this extra point at $0$ and any other node $i \in \mathbb{N}$ in the graph $G_n$ with probability $f_{in}^{(n)}(||X_i||)$ if $Z_i = Z_0$, or with probability $f_{out}^{(n)}(||X_i||)$ if $Z_i \neq Z_0$, independently of everything else. More generally, for any $k \in \mathbb{N}$ and $x_1,\cdots x_k \in \mathbb{R}^d$, we denote by the measure $\mathbb{P}^{x_1,\cdots,x_k}$ to be the union of $\bar{\phi}$ from before with additional points placed at $x_1,\cdots x_k$, with them having independent marks as before. Informally, the community labels of the extra nodes located at $x_1,\cdots,x_k$ is uniform and independent, and will have edges among themselves and the points of $\phi$ with the same distribution as before. However, for the logarithmic degree regime discussed in Section \ref{sec:exact_recovery}, since the set $B_n$ is equipped with the toroidal metric, we need a definition of Palm measure on the torus rather than Euclidean plane. We give the necessary adaptation of Palm measure needed to handle the torus case in the Appendix \ref{appendix_palm}.

\section{Main Results}
\label{sec:results}

\subsection{Lower Bound for Weak Recovery}

 To state the main lower bound result in Theorem \ref{thm:main_lb_cd} below, we set some notation. For the random connection model graph $H_{\lambda,g(\cdot),d}$, denote by $\mathcal{C}_{H_{\lambda,g(\cdot),d}}(0)$  the set of nodes of $H_{\lambda,g(\cdot),d}$ that are in the same connected component as that of the node at the origin under the measure $\mathbb{P}^0$. Denote by $\theta(H_{\lambda,g(\cdot),d}) := \mathbb{P}^{0}[| \mathcal{C}_{H_{\lambda,g(\cdot),d}}(0)| = \infty] $  the {percolation probability} of the random graph $H_{\lambda,g(\cdot),d}$, i.e. the probability (under Palm) that the connected component of the origin has infinite cardinality.  
 
 \begin{theorem}
 	If $\theta(H_{\lambda,f_{in}(\cdot) - f_{out}(\cdot), d}) = 0$, then weak-recovery is not solvable.
 	\label{thm:main_lb_cd}
 \end{theorem}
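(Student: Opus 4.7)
The plan is to establish impossibility through two reductions. First I would introduce an intermediate task, called \emph{Information Flow from Infinity}: the existence, for every $R>0$, of an estimator $\tau_R$ measurable with respect to the graph and locations restricted to $B(0,R)^c$ satisfying $\liminf_{R\to\infty}\mathbb{E}^{0}[\tau_R \cdot Z_0]>0$ under the Palm measure. The strategy is to show (a) weak recovery implies Information Flow from Infinity, and (b) if $\theta(H_{\lambda,f_{in}-f_{out},d})=0$ then Information Flow from Infinity is impossible. The engine behind (b) is a random-cluster style coupling: for each pair $(i,j)$, the uniform mark $U_{ij}$ is used to declare three disjoint cases — a \emph{base} edge present regardless of labels when $U_{ij}\le f_{out}(\|X_i-X_j\|)$, an \emph{informative} edge present iff $Z_i=Z_j$ when $f_{out}(\|X_i-X_j\|)<U_{ij}\le f_{in}(\|X_i-X_j\|)$, and no edge when $U_{ij}>f_{in}(\|X_i-X_j\|)$. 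The informative bonds form a latent graph $\mathcal{H}$ whose marginal law equals $H_{\lambda,f_{in}-f_{out},d}$ and is independent of $(Z_i)$. The key structural observation is that flipping $Z_i\mapsto -Z_i$ on the vertex set of any union of $\mathcal{H}$-connected components leaves the observed graph $G_n$ unchanged, because base bonds do not depend on $Z$, no-edge pairs remain absent, and every informative bond has both endpoints in the same $\mathcal{H}$-cluster so the indicator $\mathbf{1}[Z_i=Z_j]$ is preserved.

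For the impossibility of Information Flow from Infinity under $\theta=0$, let $\mathcal{C}_{\mathcal{H}}(0)$ denote the $\mathcal{H}$-cluster of the Palm point at the origin; by hypothesis $|\mathcal{C}_{\mathcal{H}}(0)|<\infty$ almost surely, so $E_R:=\{\mathcal{C}_{\mathcal{H}}(0)\subset B(0,R)\}$ satisfies $\mathbb{P}^0(E_R)\to 1$. Consider the measurable involution $\Phi$ on $(\Omega,\mathcal{F},\mathbb{P}^0)$ that flips the labels on $\mathcal{C}_{\mathcal{H}}(0)$. Since $(Z_i)$ is i.i.d.\ uniform and $(X,U)$ is untouched, $\Phi$ is measure preserving; by the coupling observation it leaves $G$, and in particular $G|_{B(0,R)^c}$, invariant; and since $0\in\mathcal{C}_{\mathcal{H}}(0)$ it flips $Z_0$. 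Thus for any estimator $\tau_R$ measurable with respect to $G|_{B(0,R)^c}$ and locations,
\begin{equation*}
\mathbb{E}^0[\tau_R Z_0 \mathbf{1}_{E_R}]=\mathbb{E}^0[\tau_R\cdot(-Z_0)\mathbf{1}_{E_R}]=0,
\end{equation*}
which together with $|\tau_R Z_0|\le 1$ yields $|\mathbb{E}^0[\tau_R Z_0]|\le \mathbb{P}^0(E_R^c)\to 0$, contradicting Information Flow from Infinity.

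Finally, for the reduction from weak recovery, I would use translation invariance of the marked PPP and the Palm–Campbell calculus. Given a weak-recovery algorithm producing $\hat{Z}^{(n)}$ with overlap bounded below by $\gamma$, averaging over the choice of a uniformly random node and using the Palm interpretation of the PPP produces an estimator of the label of the point at the origin whose correlation with $Z_0$ is at least $\gamma$ after resolving the global sign ambiguity. To convert this full-graph estimator into one depending only on $G|_{B(0,R)^c}$, I would argue that deleting the nodes in $B(0,R)$ changes the algorithm's output only on a vanishing fraction of nodes as first $n\to\infty$ and then $R\to\infty$, because the deleted region contains only a bounded average number of nodes while the overall overlap is an asymptotic quantity. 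The main obstacle is precisely this localization step: the algorithm $\hat{Z}^{(n)}$ may depend on local structure near the origin in arbitrary ways, so the delicate part is to argue, via an ergodic / averaging argument over shifts rather than a pointwise modification of the estimator, that positive overlap forces nontrivial correlation even after the information in a bounded window is removed. Handling the interplay between the finite-volume $G_n$ on $B_n$ and the infinite Palm picture, and the measurability bookkeeping for the latent $\mathcal{H}$, are the other technical nuisances, but the flip-symmetry argument itself is combinatorially clean once the coupling is in place.
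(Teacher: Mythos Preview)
Your coupling via the latent graph $\mathcal{H}$ and the flip-involution argument are exactly the paper's Information Graph $I$ and the conditional-independence lemma (Lemma~\ref{lem:cond_indep_2}); this half of the plan is correct and matches the paper's route.

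The gap is in the reduction (a), and it is entangled with an imprecise formulation of Information Flow from Infinity. You declare $\tau_R$ measurable with respect to ``the graph and locations restricted to $B(0,R)^c$'' and later speak of converting the estimator to one ``depending only on $G|_{B(0,R)^c}$'' by deleting the nodes in $B(0,R)$. Taken literally this gives the estimator no community labels at all (so $\mathbb{E}^0[\tau_R Z_0]=0$ by flipping \emph{every} label, with no need for $\mathcal{H}$), and it also removes the node at the origin whose label you are trying to guess. The data that makes both your flip argument and the reduction work is the one the paper uses: the \emph{full} graph $G$, the full locations $\phi$, and the labels $\{Z_i:\|X_i\|\ge R\}$; on the event $E_R$ your involution fixes exactly this triple while negating $Z_0$, which is precisely why you had to flip only on $\mathcal{C}_{\mathcal{H}}(0)$ rather than globally. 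Once this is corrected no ``localization'' is needed, since the IFI estimator is allowed to see the full graph. What then remains is the sign ambiguity, which you acknowledge but do not resolve: positive overlap $|\sum_i \hat{Z}_i Z_i|/N_n\ge\gamma$ does not by itself give $\mathbb{E}^0[\hat{Z}_0 Z_0]>0$ for any single node, because the algorithm's global sign may depend on the data in arbitrary ways. The paper's device (Lemma~\ref{lem:lemma_easy}) is to pass to the sign-invariant \emph{pairwise} question: decide whether two uniformly random nodes $i,j$ of $G_n$ are in the same community. With probability $1-o(1)$ they are more than $R$ apart; revealing the labels outside $B(X_i,R)$ then discloses $Z_j$ and reduces the pairwise question to estimating $Z_i$ from the IFI data, after which Palm ergodicity (Proposition~\ref{lem:palm_ergodicity}) identifies this with the problem at the origin.
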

This theorem states that if the two functions $f_{in}(\cdot)$ and $f_{out}(\cdot)$ are not `sufficiently far-apart', then no algorithm to detect the partition of nodes can beat a random guess. As a corollary, this says that Community Detection is impossible for $d=1$.
\begin{corollary}
	For all $\lambda >0$, $f_{in}(\cdot),f_{out}(\cdot)$ such that $\int_{x \in \mathbb{R}} f_{in}(||x||)dx < \infty$, weak-recovery is not solvable if $d=1$.
	\label{cor:one_dim}
\end{corollary}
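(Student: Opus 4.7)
The plan is to deduce Corollary~\ref{cor:one_dim} as a direct application of Theorem~\ref{thm:main_lb_cd}: it suffices to show that for every $\lambda>0$, the random connection model $H_{\lambda,\,f_{in}-f_{out},\,1}$ does not percolate, i.e.\ $\theta(H_{\lambda,\,f_{in}-f_{out},\,1})=0$.

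The first step is a monotone coupling that reduces the problem to a simpler percolation question. Since $0 \le f_{in}(r)-f_{out}(r) \le f_{in}(r)$ for every $r\ge 0$, one can construct $H_{\lambda,\,f_{in}-f_{out},\,1}$ as a thinning of $H_{\lambda,\,f_{in},\,1}$ by independently retaining each edge of the latter with probability $(f_{in}(r)-f_{out}(r))/f_{in}(r)$ (with the convention $0/0 = 0$). Consequently $\theta(H_{\lambda,\,f_{in}-f_{out},\,1}) \le \theta(H_{\lambda,\,f_{in},\,1})$, and the task reduces to showing that the one-dimensional continuum RCM with integrable connection function $f_{in}$ is subcritical at every intensity.

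For the latter I would use the classical ``cut-point'' argument on the real line. Fix a level $R\in\mathbb{R}$. By Slivnyak's theorem and the marking property of the PPP, conditionally on $\phi$ the edges form an independent Bernoulli field, so by Campbell's formula the mean number of edges of $H_{\lambda,\,f_{in},\,1}$ with one endpoint in $(-\infty,R]$ and the other in $(R,\infty)$ is
\[
\lambda^2 \int_0^\infty \!\int_0^\infty f_{in}(u+v)\,du\,dv \;=\; \lambda^2 \int_0^\infty t\, f_{in}(t)\,dt.
\]
When this quantity is finite, the number of crossings is Poisson, so $\mathbb{P}[\text{no edge crosses }R]>0$. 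Stationarity of the marked PPP under horizontal shifts, combined with ergodicity, then forces the random set of levels $R\in\mathbb{Z}$ admitting no crossing edge to be infinite almost surely; any one such $R$ bounds the cluster of the origin on the right, and a symmetric argument bounds it on the left, giving $\theta=0$. Since $f_{out}\ge 0$, the same computation with $f_{in}$ replaced by $f_{in}-f_{out}$ gives a possibly tighter bound, which is used in Theorem~\ref{thm:main_lb_cd}.

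The main obstacle is the gap between the corollary's hypothesis $\int_0^\infty f_{in}(t)\,dt<\infty$ and the first-moment bound $\int_0^\infty t\,f_{in}(t)\,dt<\infty$ used above: without extra tail decay, the mean number of crossings of a single level may be infinite. To close this, my plan is to replace a single cut-point by a coarse-grained version. Partition $\mathbb{R}$ into blocks of length $L$ and consider the event that a window of several consecutive empty blocks separates the cluster of the origin from the rest; the probability that an edge bridges a gap of length $L$ involves only the tail $\int_L^\infty f_{in}(t)\,dt$, which vanishes as $L\to\infty$ under the assumed integrability. A coupling with a 1D site-percolation process on the block graph, together with a Borel--Cantelli argument along a suitably chosen sequence $L_k\to\infty$, should then produce an almost-sure finite window outside of which no edge survives, isolating the cluster of the origin. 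Carrying out this refinement using only the integrability of $f_{in}$ (and not a higher moment) is where I expect the technical core of the proof to lie.
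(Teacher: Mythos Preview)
Your overall route---reduce to Theorem~\ref{thm:main_lb_cd}, then show $\theta(H_{\lambda,f_{in}-f_{out},1})=0$---is exactly what the paper does; the paper's entire proof is a one-line appeal to the ``classical fact'' that a one-dimensional random connection model with integrable connection function never percolates. You go further and try to justify that fact. The cut-point argument you give is the standard one and is correct under the stronger hypothesis $\int_0^\infty t\,g(t)\,dt<\infty$, and you are right that this does not follow from integrability of $g$ alone. (The preliminary monotone reduction from $f_{in}-f_{out}$ to $f_{in}$ is unnecessary: $f_{in}-f_{out}$ is itself integrable, so you may run the argument on it directly.)

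The genuine gap is in your proposed coarse-grained repair: it cannot work at the stated level of generality. Take $g(r)=\min(1,c/r^{s})$ with $1<s\le 2$; then $\int_0^\infty g(r)\,dr<\infty$ but $\int_0^\infty r\,g(r)\,dr=\infty$, and one-dimensional long-range percolation with connection probability of order $r^{-s}$ \emph{does} percolate once the density is large---this is the Newman--Schulman/Aizenman--Newman picture on $\mathbb{Z}$, and it transfers to the Poisson random connection model by a standard discretization/domination argument. In that regime there are almost surely infinitely many edges across every level and across every block of any fixed width, so no Borel--Cantelli scheme based only on the tail $\int_L^\infty g$ can manufacture cut points. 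Hence either the corollary (and the ``classical fact'' it invokes) implicitly carries the extra moment condition---in which case your cut-point argument is already the complete proof---or some genuinely different mechanism is needed; your coarse-graining sketch does not supply one.
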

\begin{proof}
	This is based on the classical fact that for all $g(\cdot) : \mathbb{R}_{+} \rightarrow \mathbb{R}_{+}$ such that $\int_{x \in \mathbb{R}} g(||x||)dx < \infty$, $\theta(H_{\lambda,g(\cdot),1}) = 0$.
\end{proof}

 The following corollary  gives a quantitative estimate of the percolation probability for higher dimensions in terms of the problem parameters.
 
 \begin{corollary}
 	For all $d \geq 2$, if $\lambda \leq \lambda_{lower}:= (\int_{x \in \mathbb{R}^d} (f_{in}(||x||) - f_{out}(||x||))dx)^{-1} $, then  weak-recovery cannot be solved. Thus, $ \lambda_c > (\int_{x \in \mathbb{R}^d} (f_{in}(||x||) - f_{out}(||x||))dx)^{-1}$.
 	\label{cor:lambda_lower}
 \end{corollary}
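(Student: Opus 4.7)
The plan is to reduce to Theorem~\ref{thm:main_lb_cd} by showing that, under the stated upper bound on $\lambda$, the random connection model $H_{\lambda, g, d}$ with connection function $g(r) := f_{in}(r) - f_{out}(r)$ does not percolate, i.e.\ $\theta(H_{\lambda, g, d}) = 0$. Since $f_{in} \geq f_{out}$ pointwise and both are integrable, $g$ takes values in $[0,1]$ with $\int_{\mathbb{R}^d} g(||x||) dx \in (0,\infty)$ (the positivity uses that $f_{in}$ and $f_{out}$ are not Lebesgue a.e.\ equal), so $H_{\lambda, g, d}$ is a well-defined random connection model and $\lambda_{lower} \in (0,\infty)$.

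To bound the percolation probability, I would run a breadth-first exploration of the cluster $\mathcal{C}_{H_{\lambda, g, d}}(0)$ of the origin under $\mathbb{P}^0$. By Slivnyak's theorem, once the origin is added the remaining atoms still form an independent PPP of intensity $\lambda$ on $\mathbb{R}^d$ with independent edge marks $\{U_{ij}\}$. The key point is that after any finite collection of atoms has been explored, the strong Markov property of the PPP ensures that the unseen atoms form an independent PPP of intensity $\lambda$ on the remainder of space. Hence, conditionally on the exploration history, the number of previously unexplored neighbors of the currently exposed vertex $v$ is stochastically dominated by a Poisson random variable with mean
\[
\mu := \lambda \int_{\mathbb{R}^d} g(||x||)\, dx,
\]
namely, the mean degree of a vertex in $H_{\lambda, g, d}$. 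Iterating the comparison through each generation of the BFS exploration, the cardinality $|\mathcal{C}_{H_{\lambda, g, d}}(0)|$ is stochastically dominated by the total progeny of a Galton--Watson branching process with Poisson$(\mu)$ offspring distribution.

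Under the hypothesis $\lambda \leq \lambda_{lower}$ we have $\mu \leq 1$, so the dominating branching process is (sub)critical and becomes extinct almost surely. Consequently $\mathbb{P}^0[|\mathcal{C}_{H_{\lambda, g, d}}(0)| = \infty] = 0$, i.e.\ $\theta(H_{\lambda, g, d}) = 0$, and Theorem~\ref{thm:main_lb_cd} yields that weak-recovery is not solvable for any such $\lambda$. Combined with the definition of $\lambda_c$ from Proposition~\ref{prop:monotone}, this gives $\lambda_c \geq \lambda_{lower}$, as claimed. The only step that deserves genuine care is the branching process domination, since one must argue that each BFS round only exposes information about an independent portion of the PPP; this is a standard comparison for PPP-based random connection models (as in \cite{Meester_Roy}) and can be cleanly justified via Slivnyak's theorem together with the independence of the edge marks, so I do not expect it to be the main obstacle --- indeed, the whole argument is a straightforward corollary once Theorem~\ref{thm:main_lb_cd} is in hand.
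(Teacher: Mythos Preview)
Your proposal is correct and follows essentially the same route as the paper: both invoke Theorem~\ref{thm:main_lb_cd} and then use the standard branching-process comparison (which the paper simply cites from \cite{Meester_Roy}) to conclude that $\lambda \int_{\mathbb{R}^d} g(\|x\|)\,dx \leq 1$ implies $\theta(H_{\lambda,g,d})=0$. You have merely spelled out the BFS/Galton--Watson domination that the paper leaves as a citation; the arguments are otherwise identical.
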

 \begin{proof}
 	From classical results on percolation \cite{Meester_Roy}, by comparison with a branching process, we see that $\lambda \int_{x \in \mathbb{R}^{d}}g(||x||) \leq 1 \implies \theta(H_{\lambda,g(\cdot),d}) = 0$. 
 	\label{cor:lb_cor}
 \end{proof}
 
 Recall that if the graph $G$ is sparse (finite average degree), then $\int_{x \in \mathbb{R}^d} f_{in}(||x||)dx < \infty$ which implies from Corollary \ref{cor:lambda_lower} that $\lambda_{c}$ is strictly positive in the sparse regime. The following proposition shows that this lower bound is tight for certain specific families of connection functions.

\begin{proposition}
	For all $d \geq 2$ and $R_1 \geq R_2$, if $f_{in}(r) = \mathbf{1}_{r \leq R_1}$ and $f_{out}(r) = \mathbf{1}_{r \leq R_2}$ and $\lambda$ is  such that $\theta(H_{\lambda,f_{in}(\cdot) - f_{out}(\cdot), d}) > 0$, then weak-recovery can be solved in time proportional to $n$ with the proportionality constant depending on the parameters $\lambda,R_{in}$ and $R_{out}$.
	\label{prop:lower_is_tight}
\end{proposition}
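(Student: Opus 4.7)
The plan rests on the crucial observation that, because $f_{in}$ and $f_{out}$ are indicators of balls, the presence or absence of an edge in $G_n$ between any two nodes with $\|X_i-X_j\| \in (R_2,R_1]$ reveals their relative community labels \emph{deterministically}: under the coupling used to define $G_n$, such an edge exists if and only if $Z_i = Z_j$. Pairs at distance at most $R_2$ are always connected and pairs at distance exceeding $R_1$ are always disconnected, so those edges (or non-edges) carry no information about the communities. All usable information is concentrated in the ``annular'' pairs, and it is noise-free.

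I would therefore define an auxiliary graph $H_n$ on the vertex set $\{1,\ldots,N_n\}$ by placing an edge between $i$ and $j$ whenever $\|X_i-X_j\|\in(R_2,R_1]$; by construction $H_n$ has the law of $H_{\lambda,f_{in}(\cdot)-f_{out}(\cdot),d}$ restricted to $B_n$. The algorithm then proceeds as follows: (i) build $H_n$; (ii) find its largest connected component $C^\star$; (iii) root $C^\star$ at an arbitrary vertex with label $+1$ and propagate labels along $H_n$-edges, keeping the sign when the corresponding pair is $G_n$-adjacent and flipping it otherwise; (iv) assign $+1$ to every vertex outside $C^\star$. By the noise-free property above, the propagation is globally consistent, so every vertex of $C^\star$ receives its true community label up to a single global sign.

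The analysis of the overlap reduces to showing that $|C^\star|/N_n \to \theta$ in probability, where $\theta := \theta(H_{\lambda,f_{in}(\cdot)-f_{out}(\cdot),d}) > 0$ by hypothesis. Granted this, the contribution of $C^\star$ to $\mathcal{O}_n$ is at least $\theta - o_n(1)$, while the contribution of the remaining vertices is $O(N_n^{-1/2})$ with high probability, since their true labels are i.i.d.\ uniform in $\{-1,+1\}$ and independent of the estimator output on them. Hence $\mathcal{O}_n \ge \theta/2$ with probability $1-o_n(1)$, which is exactly what Definition \ref{defn:comm_det} requires. The running time is $O(N_n)=O(n)$ in expectation: constructing $H_n$ via a cubic grid of cells of side $R_1$ is linear because the expected cell occupancy $\lambda R_1^d$ is a constant, and the subsequent BFS and label propagation are linear in $|E(H_n)| + N_n$.

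The main obstacle is the finite-volume convergence $|C^\star|/N_n \to \theta$, i.e.\ ruling out both the possibility that the largest cluster inside $B_n$ is a short finite piece of the infinite cluster and the possibility that the cluster mass is split across two comparable fragments. Both are handled by standard continuum-percolation arguments: the uniqueness of the infinite cluster for the random connection model (a Burton--Keane argument, see \cite{Meester_Roy}) together with a sandwich of $B_n$ between slightly smaller and larger boxes to manage boundary effects. No idea beyond classical continuum percolation is needed; all the problem-specific work is already done by the noiseless-edge observation.
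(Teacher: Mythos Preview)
Your proposal is correct and follows essentially the same approach as the paper's own proof (given as Proposition~\ref{prop:lower_is_tight_proof} in Section~\ref{subsec:motivating_example}): both exploit that annular pairs at distance in $(R_2,R_1]$ carry noise-free relative-label information, both build the same auxiliary graph (your $H_n$, the paper's $T_n$) distributed as $H_{\lambda,f_{in}(\cdot)-f_{out}(\cdot),d}$ restricted to $B_n$, and both appeal to the existence and uniqueness of a giant component when $\theta>0$ to conclude positive overlap. Your version is slightly more explicit about the linear-time implementation via a spatial grid and about controlling the non-giant contribution via the CLT, but the core argument is identical.
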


Hence, in view of Theorem \ref{thm:main_lb_cd},  for $f_{in}(r) = \mathbf{1}_{r \leq R_1}$ and $f_{out}(r) = \mathbf{1}_{r \leq R_2}$ for some $R_1 \geq R_2$, then if $\lambda$ is such that $\theta(H_{\lambda,f_{in}(\cdot) - f_{out}(\cdot), d}) = 0$, no algorithm (exponential or polynomial) time can solve weak-recovery, while if $\lambda$ is such that $\theta(H_{\lambda,f_{in}(\cdot) - f_{out}(\cdot), d}) > 0$, then a linear time algorithm exists to solve weak-recovery. This gives a sharp phase-transition for this particular set of parameters where the problem shifts from being unsolvable even with unbounded computation to being efficiently solvable.

\subsection{Algorithm and an Upper Bound for Weak Recovery}

Our main result in the positive direction is our  {\ttfamily{GBG}} algorithm described  in Section \ref{sec:algo}.  The main theorem statement on the performance of {\ttfamily GBG} is the following.

\begin{theorem}
	If $f_{in}(\cdot)$ and $f_{out}(\cdot)$ are such that $\{r \in \mathbb{R}_{+} : f_{in}(r) \neq f_{out}(r)\}$ has positive Lebesgue measure and $d \geq 2$, then there exists a $\lambda_{upper} < \infty$ depending on $f_{in}(\cdot), f_{out}(\cdot)$ and  $d$, such that for all $\lambda \geq \lambda_{upper}$, the {\text{\ttfamily{GBG}}} algorithm solves the weak-recovery problem. Moreover, {\ttfamily{GBG}} when run on data $(G_n,\phi_n)$,  has time complexity order $n^2$ and storage complexity order $n$.  
	\label{thm:positive_direction_1} 
\end{theorem}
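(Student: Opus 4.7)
The plan is to analyze the {\ttfamily GBG} algorithm by first showing that nearby pairs of nodes can be classified with very low error probability from local neighborhood statistics, and then using a percolation-style renormalization argument to glue the local classifications into a global clustering whose overlap with the true partition is bounded below by a positive constant. Concretely, since $f_{in}$ and $f_{out}$ differ on a set of positive Lebesgue measure, I would first pick a radius $R$ and a region $A \subset \mathbb{R}^d$ on which $f_{in} - f_{out}$ is uniformly bounded below by some $\delta > 0$. For two nodes $u,v$ at small distance, the number of common neighbors located in translates of $A$ has conditional mean approximately $\lambda \cdot \mathrm{vol}(A) \cdot h(Z_u, Z_v)$, where $h$ depends only on whether $Z_u = Z_v$; by a Chernoff bound the probability that a simple common-neighbor counting test misclassifies a given close pair is at most $e^{-c\lambda}$ for some $c > 0$.

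Next, I would tile the torus/cube $B_n$ by a grid of cells of side length $\ell$, with $\ell$ chosen (depending on $f_{in}, f_{out}, d$) so that (i) within any $2\times 2$ block of cells every pair of nodes is close enough for the pairwise test to apply, and (ii) the expected number of nodes per cell is a large enough multiple of $\log(1/\varepsilon)$ that suitable concentration holds. Inside each cell $C$, the algorithm fixes one reference node and uses the pairwise classifier against that reference to label all other nodes in $C$; then it propagates labels between adjacent cells via pairwise tests across cell boundaries, taking a majority vote among cross-cell pairs. Declare a cell $C$ to be \emph{good} if the pairwise tests used within $C$ and across each of its boundaries are all correct, and \emph{bad} otherwise. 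By a union bound over the $\BigO{\lambda \ell^d}$ relevant pairs, the probability that a fixed cell is bad is at most $e^{-c'\lambda}$ for some $c' > 0$ depending on $\ell$, and badness of a cell depends only on the marked point process restricted to a bounded neighborhood of the cell. Hence the indicator field of bad cells is a $k$-dependent Bernoulli process with individual marginal probability $p(\lambda) \to 0$ as $\lambda \to \infty$.

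Using the Liggett–Schonmann–Stacey theorem, this $k$-dependent field is stochastically dominated by an independent Bernoulli site percolation with parameter $q(\lambda) \to 0$. For $\lambda$ sufficiently large, $q(\lambda)$ lies below the critical threshold for site percolation on $\mathbb{Z}^d$ ($d \geq 2$), so the set of bad cells almost surely contains no infinite cluster and, on the torus/cube, decomposes into finite clusters whose total density can be made arbitrarily small. On the complementary giant connected cluster of good cells, the label propagation is internally consistent, so after a global sign flip the inferred labels agree with the true labels on all nodes in this cluster; nodes in bad clusters contribute an asymptotic fraction at most $\eta(\lambda) < 1/2$ to the misclassification count. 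Combining these gives overlap $\mathcal{O}_n \geq 1 - 2\eta(\lambda) > 0$ with probability $1-o_n(1)$, yielding weak recovery for all $\lambda \geq \lambda_{\text{upper}}$. The complexity claim follows because each pairwise test and each cross-cell reconciliation touches $\BigO{1}$ nodes on average per point, and the graph has $\BigO{n}$ nodes with $\BigO{n}$ edges, giving an $\BigO{n^2}$ runtime (dominated by computing pairwise tests for all relevant node pairs) and $\BigO{n}$ storage.

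The main obstacle will be the percolation step: one must verify that the badness field really is $k$-dependent with small enough marginals, which requires carefully choosing the side length $\ell$ so that the Chernoff error from the common-neighbor test dominates the combinatorial number of pairs within each cell's neighborhood, and then checking that the stochastic domination argument transfers cleanly to the grid graph on the torus (in the logarithmic regime the torus is used; in the sparse regime the boundary effects must be handled separately, but only contribute a $o_n(1)$ fraction of nodes). A secondary subtlety is to show that label propagation inside the giant good cluster is unambiguous despite the presence of bad cells acting as "cut obstacles" — this is handled by observing that good cells already form a connected sub-lattice of positive density in the supercritical regime, so reconciliation within that sub-lattice suffices to beat the random-guess baseline.
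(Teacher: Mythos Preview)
Your overall strategy matches the paper's almost exactly: local pairwise classification via common-neighbor counting with a Chernoff bound (Propositions~\ref{prop:dist_edges}--\ref{prop:pairwise_estimate}), a grid tiling into good/bad cells, stochastic domination of the $k$-dependent good-cell field by i.i.d.\ Bernoulli via Liggett--Schonmann--Stacey, and a giant good component carrying more than half the nodes (Proposition~\ref{prop:infinite_cluster_average}).

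There is, however, one genuine gap in your algorithmic description that the paper takes pains to close. You define a cell to be \emph{good} when all relevant pairwise tests agree with the ground truth, and then assert that ``on the giant connected cluster of good cells, the label propagation is internally consistent.'' But the algorithm cannot see which cells are good in your sense---goodness is defined relative to the unknown $Z_i$'s---so if your propagation simply walks through \emph{all} cells (as your description suggests), a single bad cell on the propagation path can flip every label downstream, and the giant good cluster need not be labeled consistently. The paper resolves this by introducing a second, \emph{algorithmically testable} notion: a cell is \textbf{A-Good} if every cycle of pairwise-classify outputs among nodes in its $1$-neighborhood multiplies to $+1$ (Algorithm~\ref{alg:Is_Good}, Definition~\ref{defn:A-Good}). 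The algorithm restricts propagation to A-Good connected components and defaults to $+1$ on A-Bad cells. The analysis then hinges on the combinatorial fact that T-Good (your ``good'') implies A-Good (Proposition~\ref{lem:T-Good-A-Good}), so the giant T-Good component sits inside a single A-Good component, on which the unique consistent partition (Proposition~\ref{prop:consistent_partition}) coincides with the truth. Without this A-Good/T-Good distinction---which is literally the ``Good-Bad'' in {\ttfamily GBG}---your consistency claim does not follow. Once you add it, the rest of your outline goes through as in the paper.
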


%\begin{remark}
%	An upper bound to $\lambda_{upper}$ is given in Proposition \ref{prop:lambda_estimate}.
%	\end{remark}

This gives a complete non-trivial phase-transition for the sparse graph case where we have $0 < \lambda_c < \infty$ which implies the existence of  different phases. We also note that our algorithm is asymptotically optimal in a weak sense made precise in the sequel below. Denote by $\mathcal{O}_{\lambda}$ as the maximum overlap achieved by our algorithm, with the definition of overlap as given in Definition \ref{def:overlap}. More precisely, denote by $\mathcal{O}_{\lambda}$ as 
 \begin{equation}
 \mathcal{O}_{\lambda} := \sup \{\gamma \geq 0: \lim_{n \rightarrow \infty}\mathbb{P}[\mathcal{O}_n > \gamma] =1 \},
 \end{equation}
 where $\mathcal{O}_n$ is the overlap achieved by the {\ttfamily GBG} algorithm when run on the data $G_n$ and $\phi_n$. Thus, for each fixed $\lambda$, the overlap $\mathcal{O}_{\lambda} \in [0,1]$ is determinstic quantity.
\begin{proposition}
We have the limit that $\lim_{\lambda \rightarrow \infty} \mathcal{O}_{\lambda} = 1$.	
	\label{prop:weak_optimal}
\end{proposition}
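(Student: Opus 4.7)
The plan is to decompose the expected fraction of misclassified nodes produced by {\ttfamily GBG} into two contributions, both of which vanish as $\lambda\to\infty$, and then use ergodicity of the marked PPP $\bar\phi$ to transfer this expectation bound to the high-probability statement required by the definition of $\mathcal{O}_\lambda$.

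First, by ergodicity of the stationary marked PPP (equivalently, a Palm-inversion / mass-transport argument), the empirical misclassification fraction of {\ttfamily GBG} on $G_n$ converges almost surely, as $n\to\infty$, to a deterministic quantity $m(\lambda)$ equal to the Palm probability that the typical node is misclassified, after optimally choosing the global sign. Since the overlap in Definition~\ref{def:overlap} is defined with an absolute value, we get $\mathcal{O}_\lambda \ge 1 - 2m(\lambda)$, and it suffices to show that $m(\lambda)\to 0$ as $\lambda\to\infty$.

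Second, I would split the event that the origin is misclassified depending on whether the cell of the good-bad grid containing the origin is ``bad'' or ``good.'' From the analysis in Section~\ref{sec:analysis_algo}, a cell is declared bad either because the local Poisson count deviates significantly from its mean $\Theta(\lambda)$, or because a local classification test inside the cell fails; both events have probability $e^{-\Omega(\lambda)}$ by Chernoff-type bounds for Poisson random variables with mean proportional to $\lambda$. Consequently, the marginal bad-cell probability $p_{\mathrm{bad}}(\lambda)$ tends to $0$ as $\lambda\to\infty$. Once $p_{\mathrm{bad}}(\lambda)$ lies below the critical threshold of Bernoulli site percolation on $\mathbb{Z}^d$ (which it does for $\lambda$ sufficiently large), subcritical percolation estimates, together with the stochastic domination coupling with site percolation set up in the analysis of {\ttfamily GBG}, imply that the probability the origin's cell lies in any bad cluster also tends to $0$.

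Third, on the complementary good event, the label assigned to the origin by {\ttfamily GBG} is determined by a bounded number of pairwise sign tests between nodes in good cells of the origin's good cluster, and each such test errs with probability at most $e^{-\Omega(\lambda)}$. A union bound therefore gives that, conditional on lying in a good cluster, the origin is correctly labelled (relative to the global orientation chosen on that cluster) with probability $1 - e^{-\Omega(\lambda)}$. Combining the two contributions,
\[
m(\lambda) \;\le\; \mathbb{P}^0[\text{origin's cell lies in a bad cluster}] \;+\; e^{-\Omega(\lambda)} \;\longrightarrow\; 0
\]
as $\lambda\to\infty$, which gives $\mathcal{O}_\lambda \to 1$. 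The main technical obstacle is ensuring that the percolation coupling provides a cluster-size tail bound whose prefactors do not swamp the vanishing $p_{\mathrm{bad}}(\lambda)$, and that the sign-resolution step of {\ttfamily GBG} yields a consistent labelling on each good component so that the bound really holds with $m(\lambda)\le 1/2$; both follow directly from the subcritical exponential decay estimates and the consistency-check construction already in the analysis of the algorithm.
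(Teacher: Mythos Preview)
Your approach differs substantially from the paper's, which is essentially a one-line corollary of Proposition~\ref{prop:concluding_comm_det}: for any target overlap $\delta<1$, pick $\epsilon>0$ small and $\eta<\tfrac12$ close to $\tfrac12$ so that $(1-\epsilon)\bigl(\tfrac12+\eta\bigr)$ is close to $1$; then for $\lambda>\lambda_0(\epsilon,\eta)$ the giant T-Good component contains that fraction of nodes, and since those nodes are all classified according to ground truth (Proposition~\ref{prop:comb_global}), the overlap exceeds $\delta$. No Palm-measure or typical-node argument is needed because the fraction-in-giant-component statement is already established.

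Your argument has a genuine gap in the third step. You write that, conditional on the origin lying in a good cluster, its label is determined by ``a bounded number of pairwise sign tests \dots\ and each such test errs with probability at most $e^{-\Omega(\lambda)}$.'' Neither clause is correct. First, the number of pairwise tests along the path from the component's root to the origin is not bounded; it can be as large as the diameter of the good component. Second, and more importantly, in a \emph{T-Good} cell the pairwise tests match ground truth \emph{by definition} (Definition~\ref{defn:T-Good}), so there is no residual $e^{-\Omega(\lambda)}$ error to union-bound over. The actual obstruction you have not addressed is sign alignment across components: even if the origin sits in a T-Good cell, it may lie in a \emph{finite} T-Good component whose internal sign is flipped relative to the giant one. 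The correct decomposition is therefore
\[
m(\lambda)\;\le\;\mathbb{P}^0[\text{origin's cell is T-Bad}]\;+\;\mathbb{P}^0[\text{origin's cell is T-Good but not in the infinite T-Good component}],
\]
and you must argue that the second term vanishes. Your subcritical-bad-cell observation does give this (a finite T-Good component is enclosed by a bad circuit, which is exponentially unlikely once $p_{\mathrm{bad}}(\lambda)$ is small), but that is exactly the content of Proposition~\ref{prop:infinite_cluster_average}, which is what the paper invokes directly. So your route can be made to work, but only after replacing the erroneous third step with the giant-component density statement that the paper has already packaged.
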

In words, this proposition states that as the `signal' gets stronger, the fraction of nodes correctly classified by our algorithm tends to $1$. On a related note, we also mention in Section \ref{sec:unknown_conn_func}, a practical way of implementing the algorithm when the connection functions $f_{in}(\cdot)$ and $f_{out}(\cdot)$ are not known explicity.

\subsection{Distinguishability of the Planted Partition}

The key result we show here is that unlike in the traditional Erd\H{o}s-R\'enyi setting, the planted partition random connection model is always mutually singular with respect to \emph{any} random connection model without communities. Before precisely stating the result, we set some notation. Denote by $\mathbb{M}_{\mathcal{G}}(\mathbb{R}^d)$  the Polish space of all simple spatial graphs whose vertex set forms a locally finite set of $\mathbb{R}^d$. Thus, our random graph $G$ or the random connection model $H_{\lambda,g(\cdot),d}$ can also be viewed through the induced measure on the space $\mathbb{M}_{\mathcal{G}}(\mathbb{R}^d)$.

\begin{theorem}
	For every $\lambda >0$, $d \in \mathbb{N}$ and connection functions $f_{in}(\cdot)$ and $ f_{out}(\cdot)$  satisfying $1 \geq f_{in}(r) \geq f_{out}(r) \geq 0$ for all $r \geq 0$, and $\{r \geq 0 : f_{in}(r) \neq f_{out}(r) \}$ having positive Lebesgue measure and $g(\cdot):\mathbb{R}_{+} \rightarrow [0,1]$, the probability measures induced on the space of spatial graphs $\mathbb{M}_{\mathcal{G}}(\mathbb{R}^d)$ by  $G$ and $H_{\lambda,g(\cdot),d}$ are mutually singular. 
	\label{thm:identifiability}
\end{theorem}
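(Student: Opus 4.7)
My approach is to exhibit a translation-invariant, measurable functional of the marked spatial graph whose window-averages converge almost surely to different deterministic constants under the two measures; mutual singularity will then follow immediately from picking the level sets. Both $G$ and $H_{\lambda,g(\cdot),d}$ are measurable factors of a single stationary marked Poisson point process on $\mathbb{R}^d$ (the atoms, $\pm 1$ community marks, and the $U$-marks that generate edges). This underlying marked PPP is ergodic under the $\mathbb{R}^d$-translation action, so by the multiparameter Birkhoff ergodic theorem, any integrable translation-invariant additive functional $F(\cdot \cap W_n)/\mathrm{vol}(W_n)$ has a deterministic almost-sure limit equal to its mean spatial intensity. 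The whole point reduces to finding an $F$ whose intensity differs between the two models.

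The first candidate is the edge-length profile: fix $r>0$ and let $E_r(W)$ count pairs $i\neq j$ of atoms in $W$ with $\|X_i-X_j\|\le r$ joined by an edge. Slivnyak--Mecke gives
\[
\frac{E_r(W_n)}{\mathrm{vol}(W_n)} \;\xrightarrow{\;\mathrm{a.s.}\;}\; \frac{\lambda^2}{2}\int_{\|x\|\le r} p(\|x\|)\,dx,
\]
with $p=(f_{in}+f_{out})/2$ under $G$ and $p=g$ under $H$. If $g\neq (f_{in}+f_{out})/2$ on a set of positive Lebesgue measure (which also encompasses the degenerate case $\int g = \infty$, since $\int f_{in}<\infty$ forces a finite $G$-intensity), I can choose $r$ so that the two limits disagree, and mutual singularity is proved. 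The difficult case is thus $g(r)=(f_{in}(r)+f_{out}(r))/2$ for Lebesgue-a.e.\ $r$.

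For this remaining case I turn to a triangle functional. For a bounded Borel set $S\subset(0,\infty)$, let $T_S(W)$ count unordered triangles of the graph restricted to $W$ whose three edge lengths all lie in $S$. The third factorial moment measure of the PPP combined with the mark-averaged triangle probability yields
\[
\frac{T_S(W_n)}{\mathrm{vol}(W_n)} \;\xrightarrow{\;\mathrm{a.s.}\;}\; \frac{\lambda^3}{6}\int\!\!\int \mathbf{1}_{S^3}(r_{12},r_{13},r_{23})\,P_\bullet(\triangle\mid 0,y,z)\,dy\,dz,
\]
and a direct expansion using the uniform $\pm 1$ labels (writing $a_{ij}=f_{in}(r_{ij})$, $b_{ij}=f_{out}(r_{ij})$) gives the key algebraic identity
\[
P_G(\triangle)-P_H(\triangle) \;=\; \tfrac{1}{8}\bigl(a_{12}-b_{12}\bigr)\bigl(a_{13}-b_{13}\bigr)\bigl(a_{23}-b_{23}\bigr).
\]
Since $f_{in}\ge f_{out}$ and $\{r:f_{in}(r)>f_{out}(r)\}$ has positive Lebesgue measure, picking $S$ to be a bounded subset of this distinguishing set makes the integrand strictly positive on a set of positive $(y,z)$-measure, so the $G$-triangle intensity strictly exceeds the $H$-triangle intensity. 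The two measures are therefore separated by the ergodic limit of $T_S$.

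The main technical obstacle is ensuring almost-sure ergodic convergence of these functionals under both measures and finiteness of the relevant factorial moments. Ergodicity of the shift action on the marked PPP is classical and passes to $G$ and $H$ since both are constructed as measurable factors; finiteness of the second and third factorial moments restricted to bounded profile sets follows from $\int f_{in}<\infty$ together with boundedness of $r$ and $S$. No other delicate point remains, because mutual singularity requires only a single separating functional, and the case split above exhausts all $g$.
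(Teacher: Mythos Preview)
Your proposal is correct and follows essentially the same route as the paper: a case split on whether $g=(f_{in}+f_{out})/2$, handling the off-case by an ergodic edge-density argument and the matching case by a triangle functional, arriving at the same factored difference $\prod (f_{in}-f_{out})$ which is nonnegative and not identically zero. The only cosmetic difference is that you volume-normalize while the paper Palm-normalizes, and your constant $\tfrac18$ is in fact the correct one.
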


 This theorem \ref{thm:identifiability} implies that this distinguishability problem as stated in Definition \ref{defn:distinguishability} can be solved with probability of success $1-o_n(1)$ for \emph{all} parameter values. Thus, the distinguishability problem as stated in our spatial case exhibits no phase-transition. A consequence of our results is that in certain regimes ($\lambda < \lambda_{c}$ for $d\geq 2$ and $\lambda >0$ for $d=1$), we can be very sure by observing the data that a partition exists, but cannot identify it better than at random. Such phenomena was proven not to be observed in a symmetric SBM with two communities (\cite{MNS_Prob_Theory}) and conjectured not to occur in any arbitrary SBM (\cite{decelle}). Technically, this theorem gives in particular that $G$ and  $H_{\lambda, \frac{f_{in}(\cdot) + f_{out}(\cdot)}{2},d}$ are mutually singular. Note that if $g(\cdot) \neq \frac{f_{in}(\cdot) + f_{out}(\cdot)}{2}$, then the average degrees of $G$ and $H_{\lambda,g(\cdot),d}$ are different and hence the  empirical average of the degrees in $G_n$ and $H_{\lambda,g(\cdot),d}$ restricted to $B_n$,  will converge almost surely as $n \rightarrow \infty$ (thanks to the ergodic property of PPP) to the mean degree, thereby making the two induced measures mutually singular Thus, the only non-trivial random connection model that can possibly be not singular with respect to $G$ is $H_{\lambda,\frac{f_{in}(\cdot) + f_{out}(\cdot)}{2},d}$, i.e. the case of equal average degrees.  We show by a slightly different albeit similar ergodic argument in Section \ref{sec:hypo} that even in the case of equal average degrees, the two induced measures are mutually singular.

\subsection{Phase Transition for Exact-Recovery}

Our main achievement in the present paper is an explicit necessary condition on the model parameters for Exact Recovery given in the following Theorem. To highlight the ideas, we primarily focus on a simplest non-trivial example of connection functions in the logarithmic regime, where we are able to conjecture a closed form expression for the phase-transition threshold. However, we present the results on the general case in Proposition \ref{prop:er_nec_cond} in the Sequel in Section \ref{sec:exact_recovery}.

\begin{definition}
	For any $0 \leq b < a \leq 1$, $n \in \mathbb{N}$, $\lambda >0$ and $d \in \mathbb{N}$, we denote by $\mathcal{G}(\lambda n,a,b,d)$ as the distribution of graph $G_n$ we defined in Section \ref{sec:math_model} with $f_{in}^{(n)}(r) = a \mathbf{1}_{r \leq \log(n)^{1/d}}$ and $f_{out}^{(n)}(r) = b \mathbf{1}_{r \leq \log(n)^{1/d}}$ for all $ r \geq 0$, where the distance on the set $B_n$ is the torridal metric.
	\label{defn:gsbm}
\end{definition}

In the results that follow, denote by $\nu_d$ to be the volume of the unit Euclidean unit ball in $d$ dimensions. 

\begin{theorem}
	For any $\lambda >0$, $d \in \mathbb{N}$ and $0 \leq b < a \leq 1$ such that  $\lambda \nu_d ( 1- \sqrt{ab} - \sqrt{(1-a)(1-b)}) < 1$,  Exact-Recovery of $G_n \sim \mathcal{G}_n(\lambda n,a,b,d)$ is not solvable.
	\label{thm:er_main_formula}
\end{theorem}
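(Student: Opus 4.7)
The plan is to reduce Exact-Recovery to a genie-aided binary hypothesis test for the community label of a single probe node, and then amplify the per-probe error probability across many disjoint spatial regions via the independence of the Poisson point process; the sharp threshold will come from the Chernoff-Hellinger framework of \cite{abbe_sandon_ch}. Concretely, I place $M$ probes at fixed locations $c_1,\dots,c_M \in B_n$ chosen so that the toroidal balls $B(c_j, \log(n)^{1/d})$ are pairwise disjoint (one can take $M = \lfloor n/(2^d \nu_d \log n)\rfloor$), work under the Palm measure $\mathbb{P}^{c_1,\dots,c_M}$, and reveal via an oracle the community labels of every node except the probes. Since both connection functions are supported on $[0, \log(n)^{1/d}]$, the only information relevant for estimating $Z_{c_j}$ is the set of edges from $c_j$ into $B(c_j, \log(n)^{1/d})$; let $E_j$ be the event that the MAP estimator of $Z_{c_j}$ from this information is incorrect. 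Since MAP is optimal among genie-aided estimators and Exact-Recovery requires that every $Z_{c_j}$ be classified correctly, $\bigcup_j E_j$ implies Exact-Recovery fails.

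Next, I Poissonize the test: mark each point of $\phi$ in $B(c_j, \log(n)^{1/d})$ by the pair (community, indicator of an edge to $c_j$). Under $H_+ := \{Z_{c_j}=+1\}$, the independence of the community labels and the thinning property of the PPP decompose these marks into four independent PPPs of intensities $\lambda\,(a/2,\ (1-a)/2,\ b/2,\ (1-b)/2)$ corresponding respectively to $(+,\text{edge})$, $(+,\text{no-edge})$, $(-,\text{edge})$, $(-,\text{no-edge})$; under $H_-$ the roles of $a$ and $b$ swap. The sufficient statistic is therefore a $4$-dimensional Poisson count vector $\mathbf{W}$ with mean $\nu_d \log(n)\,\boldsymbol{\mu}^{(\pm)}$. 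The test is symmetric under $H_+ \leftrightarrow H_-$ (swap $a$ and $b$), so the Chernoff exponent is attained at $t=1/2$ and the Chernoff-Hellinger divergence of \cite{abbe_sandon_ch} equals
\begin{equation*}
D_+ \;=\; \tfrac{1}{2}\sum_{i=1}^4 \bigl(\sqrt{\mu_i^{(+)}} - \sqrt{\mu_i^{(-)}}\bigr)^2 \;=\; \lambda\nu_d\bigl(1-\sqrt{ab}-\sqrt{(1-a)(1-b)}\bigr).
\end{equation*}
Invoking the Poisson large-deviations lower bound of \cite{abbe_sandon_ch} then gives $\mathbb{P}[E_j] \ge n^{-D_+ + o(1)}$.

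For the amplification, I observe that $E_j$ is measurable with respect to the marked PPP restricted to $B(c_j, \log(n)^{1/d})$, and these balls are pairwise disjoint, so the events $E_1, \dots, E_M$ are mutually independent. Hence
\begin{equation*}
\mathbb{P}^{c_1,\dots,c_M}\Bigl[\bigcap_{j=1}^M E_j^c\Bigr] \;=\; \prod_{j=1}^M \bigl(1 - \mathbb{P}[E_j]\bigr) \;\le\; \exp\bigl(-M\cdot n^{-D_+ + o(1)}\bigr).
\end{equation*}
The hypothesis $\lambda\nu_d(1-\sqrt{ab}-\sqrt{(1-a)(1-b)}) < 1$ is precisely $D_+ < 1$, so $M \cdot n^{-D_+ + o(1)} = n^{1-D_+ + o(1)}/\log n \to \infty$ and the above probability vanishes. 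A standard Palm-to-stationary transfer (conditioning on the presence of actual atoms of $\phi$ in a unit ball around each $c_j$) yields $\mathbb{P}[\mathcal{O}_n = 1] = o(1)$ under the un-Palmed law, so Exact-Recovery is not solvable.

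The hard part will be the second paragraph: obtaining $\mathbb{P}[E_j] \geq n^{-D_+ + o(1)}$ with the \emph{sharp} exponent. A plain Bhattacharyya bound only gives $\mathbb{P}[E_j] \gtrsim BC^2/4 = n^{-2D_+ + o(1)}/4$, losing a factor of two in the exponent and forcing the strictly stronger assumption $\lambda\nu_d(1-\sqrt{ab}-\sqrt{(1-a)(1-b)}) < 1/2$. The fix is to import the matching converse of the Abbe-Sandon Poisson LDP, which is what recovers the Chernoff-Hellinger exponent. The remaining ingredients — Poisson concentration of the total count in each ball, disjointness bookkeeping on the torus, and the Palm-to-stationary passage — are routine.
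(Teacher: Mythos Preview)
Your approach is correct and takes a genuinely different route from the paper. The paper proves the theorem via a second moment method: it defines $Y_n$ as the number of \emph{Flip-Bad} nodes in $G_n$ (nodes whose genie-aided MAP estimate is wrong), computes $\mathbb{E}[Y_n] = \lambda n\,\mathbb{P}^0[\text{$0$ is Flip-Bad}]$ by Campbell's theorem, applies the Abbe--Sandon CH-divergence lemma to get $\mathbb{P}^0[\text{$0$ is Flip-Bad}] = n^{-\lambda\nu_d(1-\sqrt{ab}-\sqrt{(1-a)(1-b)})+o(1)}$, and then verifies $\mathbb{E}[Y_n^2]/\mathbb{E}[Y_n]^2 \to 1$ by splitting the second-moment integral according to whether $\|y\| \lessgtr 2\log(n)^{1/d}$ and using spatial independence on the far part. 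Chebyshev then gives $\mathbb{P}[Y_n=0]\to 0$. Your argument instead plants $M=\Theta(n/\log n)$ deterministic probes with pairwise disjoint interaction balls, obtains the same Abbe--Sandon exponent for each probe, and exploits exact independence across disjoint regions to write $\mathbb{P}[\cap_j E_j^c] = \prod_j(1-\mathbb{P}[E_j])$ directly; this trades the second-moment bookkeeping for a Palm-to-stationary transfer at the end.

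One caution: you describe that final transfer as ``routine,'' but it is the one place where real work hides. Under $\mathbb{P}^{c_1,\dots,c_M}$ you have proved that some \emph{artificial} probe is misclassified; the theorem concerns the graph on $\phi_n$ alone. Your hint --- condition on an actual atom of $\phi$ in a unit ball around each $c_j$ --- is the right move, but it requires checking that (i) a $\Theta(M)$ fraction of those balls are nonempty with high probability, and (ii) conditioning on the nearest atom in a volume-$O(1)$ ball does not perturb the Poisson hypothesis test, whose sufficient statistic lives on a volume-$\Theta(\log n)$ ball, beyond a $o(1)$ shift in the exponent. Both are true and not hard, but they are precisely the decorrelation content that the paper's second-moment route absorbs automatically; calling them routine undersells the step.
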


This connection functions are the simplest non-trivial instance of our model that makes the study interesting. We also believe that this necessary condition to be tight. However, at this point, we only present the following conjecture and do not pursue a proof of this.

\begin{conjecture}
		For any $\lambda >0$, $d \in \mathbb{N}$ and $0 \leq b < a \leq 1$ such that  $\lambda \nu_d ( 1- \sqrt{ab} - \sqrt{(1-a)(1-b)}) > 1$, Exact-Recovery of $G_n \sim \mathcal{G}_n(\lambda n,a,b,d)$ is solvable.
		\label{conjecture_er}
\end{conjecture}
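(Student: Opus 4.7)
I would follow the by-now-classical two-phase template for exact recovery in the logarithmic regime: produce an almost-exact initial estimate, then carry out a node-wise maximum-likelihood refinement on an independent portion of the data. The condition in the conjecture should emerge, up to constants, as the threshold at which the refinement test succeeds simultaneously at every vertex.

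\textbf{Step 1 --- Graph splitting and almost-exact initialization.} Thin the edge set of $G_n$ into two independent graphs $G_n^{(1)}$ and $G_n^{(2)}$ on the same vertex set $\phi_n$: for each potential edge, include it in $G_n^{(1)}$ with probability $1-\epsilon_n$ and independently in $G_n^{(2)}$ with probability $\epsilon_n$, where $\epsilon_n\to 0$ slowly enough. Both are planted-partition random connection models of the form of Definition~\ref{defn:gsbm} with connection parameters rescaled by $1-\epsilon_n$ and $\epsilon_n$ respectively. Run a logarithmic-regime variant of {\ttfamily GBG} on $G_n^{(1)}$ to obtain an estimate $\hat Z^{(0)}$ that is exact up to a global sign flip on all but $o(n/\log n)$ vertices, with high probability.

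\textbf{Step 2 --- Local MAP refinement.} For each vertex $v$, declare $\hat Z_v$ to be the MAP estimate of $Z_v$ given $\{\hat Z_u^{(0)}\}_{u\neq v}$ together with the edge/non-edge pattern of $v$ inside $B(v,\log(n)^{1/d})$ computed from $G_n^{(2)}$ (so as to be independent of $\hat Z^{(0)}$). The crucial calculation: under the Palm measure $\mathbb{P}^{0}$ at $v$, the four-dimensional vector whose coordinates count (community label $\times$ edge status) pairs in $B(v,\log(n)^{1/d})$ is a vector of four independent Poisson random variables whose means, in units of $\lambda\nu_d\log(n)/2$, are $(a,b,1-a,1-b)$ when $Z_v=+1$ and swap to $(b,a,1-b,1-a)$ when $Z_v=-1$. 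The Chernoff-Hellinger large-deviations framework of \cite{abbe_sandon_ch} evaluated at $t=1/2$ then yields
\begin{equation}
\mathbb{P}^{0}\bigl[\hat Z_v \neq Z_v\bigr] \;\leq\; n^{-\lambda\nu_d \bigl(1-\sqrt{ab}-\sqrt{(1-a)(1-b)}\bigr) + o(1)}.
\end{equation}
Under the hypothesis of the conjecture the exponent strictly exceeds $1$, so the per-vertex error is $o(1/n)$; a union bound over the $N_n$ vertices then yields exact recovery with high probability.

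\textbf{Main obstacle.} Two difficulties must be managed. The first, which is essentially bookkeeping, is that $\hat Z^{(0)}$ misclassifies $o(n/\log n)$ vertices and that these errors perturb the effective Poisson parameters entering Step 2; one shows the perturbation degrades the exponent by only $o(1)$ because a typical $\log(n)$-volume ball contains a negligible number of misclassified neighbors, while the independence between $G_n^{(2)}$ and $\hat Z^{(0)}$ eliminates the self-reference issue (some care is also needed because $G_n^{(2)}$ carries only $\epsilon_n$ of the edges, which is handled by choosing $\epsilon_n$ tending to zero slowly so that the Poisson-vector MAP exponent in Step 2 is still $(1-o(1))$ times the full CH divergence, or alternatively by using the full $G_n$ with a leave-$v$-out variant of Step 1). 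The genuinely hard point is Step 1: establishing almost-exact recovery at the \emph{same} threshold as the one posited for exact recovery. Theorem~\ref{thm:positive_direction_1} only gives correctness of {\ttfamily GBG} for $\lambda$ above a possibly loose upper bound, and there is no obvious reason the logarithmic-regime adaptation of {\ttfamily GBG} should succeed all the way down to the CH threshold. A natural route is to iteratively bootstrap {\ttfamily GBG}'s output by repeated MAP passes on a sequence of edge-splits, using the fact that each pass strictly contracts the misclassification rate as soon as the CH exponent exceeds $1$. Turning this iterative scheme into a quantitative argument and, in particular, ruling out the appearance of ``bad'' spatial regions in which the algorithm stays trapped below the threshold, is precisely what the authors leave as an open problem.
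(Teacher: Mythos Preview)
The statement you are addressing is a \emph{conjecture} that the paper explicitly leaves open; there is no proof in the paper to compare against. What the paper does offer (in the paragraph following the conjecture and again in Section~\ref{sec:er_ub_proof}) is exactly the two-round heuristic you describe: sample-split the edges, obtain an almost-exact labeling on one subgraph, then clean up via a local MAP/hypothesis-testing step whose per-vertex error is governed by the CH divergence $1-\sqrt{ab}-\sqrt{(1-a)(1-b)}$. Your Steps~1--2 and your union-bound conclusion match this sketch essentially line for line.

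You have also correctly located the genuine obstruction. The paper only proves that {\ttfamily GBG} achieves exact (hence almost-exact) recovery once $\lambda$ exceeds some unspecified constant $C(a,b,d)$ (Theorem~\ref{thm:ER_Upper_Bound}); it does \emph{not} establish almost-exact recovery all the way down to the CH threshold, and neither does your proposal. Your suggestion to iterate MAP passes over successive edge-splits is plausible but, as you yourself note, requires controlling the spatial distribution of residual errors after each round (a clustered $o(n/\log n)$ set of errors could corrupt entire $\log(n)$-balls and stall the contraction). That control is precisely the missing piece, and the paper does not supply it either. So your proposal is faithful to the paper's intended route and honest about the gap; it is a proof plan rather than a proof, which is all the paper claims for this statement.
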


 We believe two-round techniques developed in \cite{abbe_sandon_ch} applied to the spatial graph case can be fruitful in establishing this conjecture. To establish the existence of a phase-transition however, we analyze the {\ttfamily GBG} algorithm and adapt it to the non-sparse case to yield the following result. 

\begin{theorem}
	For every $\lambda >0$, $d \in \mathbb{N}$ and $0 \leq b < a \leq 1$, there exists $C(a,b,d) > 0$ such that if  $\lambda \nu_d ( 1- \sqrt{ab} - \sqrt{(1-a)(1-b)}) > C(a,b,d)$, Exact-Recovery of $G_n \sim \mathcal{G}_n(\lambda n,a,b,d)$ is solvable by {\ttfamily GBG} algorithm.
	\label{thm:ER_Upper_Bound}
\end{theorem}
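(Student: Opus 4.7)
The plan is to directly transplant the {\ttfamily GBG} algorithm of Section~\ref{sec:algo} to the logarithmic degree regime. The single new observation is that the growing connection radius $r_n := (\log n)^{1/d}$ makes each local pairwise test succeed with probability $1 - n^{-\alpha(\lambda)}$, where $\alpha(\lambda)\to\infty$ as $\lambda\to\infty$. Hence, for $\lambda$ large enough, a union bound over all pairs yields zero classification errors with probability $1 - o(1)$, which is exactly Exact-Recovery.

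Concretely, I tile $B_n$ into cubic cells of side length $\varepsilon r_n$ and, for each pair $(u,v)$ of nodes with $\|X_u - X_v\|_{\mathcal{T}_n} \leq (1-\delta) r_n$, apply the {\ttfamily GBG} pairwise classifier, which counts the common graph neighbors $C_{uv}$ and compares the count against a threshold. Conditional on the PPP and on $X_u, X_v$, the count $C_{uv}$ is a sum of independent Bernoullis whose total mean equals $\mu_{=} := \lambda V_{uv}(a^2+b^2)/2$ if $Z_u = Z_v$ and $\mu_{\neq} := \lambda V_{uv}\, ab$ if $Z_u \neq Z_v$, where $V_{uv}$ is the volume of $B(X_u, r_n) \cap B(X_v, r_n)$. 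Since $a > b$, the two means are distinct, and the Bhattacharyya bound on testing between two Poisson rates gives the optimal threshold test an error of at most $\exp(-\lambda V_{uv}\, I(a,b))$ for an explicit divergence $I(a,b) > 0$. For pairs with $\|X_u - X_v\| \leq (1-\delta) r_n$ one has $V_{uv} \geq c_d \log n$, so the pairwise error is at most $n^{-c_d \lambda I(a,b)}$.

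Next I union-bound over all $O(n^2)$ ordered pairs to conclude that, with probability $1 - o(1)$, every pairwise verdict is simultaneously correct whenever the exponent exceeds $2$; this translates, after expressing $I(a,b)$ in the form dictated by the theorem, into $\lambda \nu_d\bigl(1 - \sqrt{ab} - \sqrt{(1-a)(1-b)}\bigr) > C(a,b,d)$ for a suitable constant $C(a,b,d)$ (the excess over the conjectured tight value of $1$ reflecting the sub-optimality of using only common neighbors instead of the full neighborhood profile). On this good event, the graph of tested pairs is connected, since the underlying random geometric graph at radius $r_n$ on a PPP of intensity $\lambda$ is connected with high probability for $\lambda \nu_d$ sufficiently large, so the pairwise verdicts propagate along any spanning structure into a single globally consistent labeling, up to a global sign flip. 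The {\ttfamily GBG} algorithm of Section~\ref{sec:algo} handles this propagation verbatim, with the percolation coupling that was required in the sparse case becoming trivial here because every local test succeeds simultaneously.

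The main technical obstacle is obtaining the Chernoff bound with the correct dependence on $\lambda V_{uv}$ and then matching its exponent to the specific form $1 - \sqrt{ab} - \sqrt{(1-a)(1-b)}$ appearing in the theorem. Care is also needed to (i)~handle pairs for which $\|X_u - X_v\|$ approaches $r_n$ and $V_{uv}$ is small, which I resolve by restricting the tests to pairs with $\|X_u - X_v\| \leq (1-\delta) r_n$ while ensuring that enough such well-separated pairs remain to cover every node, and (ii)~to turn the conditional-on-PPP Chernoff bound into an unconditional one, via the Campbell-Mecke formula. The constant $C(a,b,d)$ obtained this way is strictly larger than $1$, so the argument only establishes the existence of the phase transition for Exact-Recovery and leaves the tight threshold as Conjecture~\ref{conjecture_er}.
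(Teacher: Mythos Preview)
Your proposal is correct and takes essentially the same approach as the paper: adapt {\ttfamily GBG} with tessellation side proportional to $(\log n)^{1/d}$, observe that each pairwise test now errs with probability $n^{-c\lambda}$ for some $c=c(a,b,d)>0$, and union-bound so that for $\lambda$ large all local tests are simultaneously correct, whence the propagation yields exact recovery. The paper's write-up differs only cosmetically: it union-bounds over the $O(n/\log n)$ grid cells being T-Bad (via Lemma~\ref{lem:T-Good-Prob}) rather than over the $O(n\log n)$ relevant pairs, and it invokes Proposition~\ref{prop:consistent_partition} for the propagation instead of a separate connectivity argument, but the substance is identical.
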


This theorem gives the existence of different phases of the exact-recovery problem depending on $\lambda$. In particular, it states that if the intensity $\lambda$ is sufficiently high, then Exact-Recovery is solvable by our {\ttfamily GBG} algorithm. Using similar ideas as for the sparse regime, we mention in Section \ref{sec:er_ub_proof}, a way of implementing the algorithm without knowledge of the problem parameters.

\section{Algorithm for Performing Community Detection}

\label{sec:algo}

In this section, we outline an algorithm called {\ttfamily GBG} described in Algorithm  \ref{alg:main-routine} that has time complexity of order $n^2$ and storage complexity of order $n$. We make the presentation here assuming that $G_n$ is sparse, although a straightforward adaptation can be made to apply this algorithm in the logarithmic degree regime as well. We will make this adaptation more precise in the sequel in Section \ref{sec:exact_recovery}. The algorithm we present and analyze requires the knowledge of the model parameters $\lambda,f_{in}^{(n)}(\cdot)$ and $f_{out}^{(n)}(\cdot)$, although we show in Section \ref{sec:unknown_conn_func}, that by a simple modification, we can implement the algorithm  even if the model parameters are unknown to the algorithm.

\subsection{Key Idea behind the Algorithm - Dense Local Interactions}

The main and simple idea in our algorithm is that the graph $G_n$ is `locally-dense' even though it is globally sparse. In contrast to sparse Erd\H{o}s-R\'enyi based graphs in which the local neighborhood of a typical vertex `looks like a tree',  our graph will have a lot of triangles due to Euclidean geometry. This simple observation that our graph is locally-dense enables us to propose simple pairwise estimators as described in Algorithm \ref{alg:Pairwise} which exploits the fact that two nodes `nearby' in space have a lot of common neighbors (order $\lambda$). For concreteness, consider the case when $f_{in}(r) = a \mathbf{1}_{r \leq R}$ and $f_{out}(r) = b \mathbf{1}_{r \leq R}$ for some $R > 0$ and $0 \leq b < a \leq 1$. This means that points at Euclidean distance of $R$ or lesser are connected by an edge in $G$ with probability either $a$ or $b$ depending on whether the two points have the same community label or not. Moreover from elementary calculations, the number of common graph neighbors for any two nodes of $G$ at a distance $\alpha R$ away for some $\alpha < 2$  is a Poisson random variable with mean either $\lambda c(\alpha) R^d (a^2+b^2)/2$ or $\lambda c(\alpha) R^d ab$ (for some constant $c(\alpha)$ that comes from geometric arguments) depending on whether the two nodes have the same or different community labels. Thus, using a simple strategy consisting of counting the number of common neighbors and thresholding gives a probability of mis-classifying any `nearby' pair of nodes to be exponentially  small in $\lambda$. We implement this  idea in the sub-routine \ref{alg:Pairwise} below. Now, to produce the global partition one needs care  to aggregate the pairwise estimates into a global partition. Since some pair-wise estimates are bound to be in error, we must identify them and avoid using those erroneous pair-wise estimates (see also Figure \ref{fig:inconsistent}). We achieve this by classifying regions of space $B_n$ as `good' or `bad' and then by considering the pair-wise estimates only in the `good' regions.
 We prove that if $\lambda$ is sufficiently large, then the `good' regions will have sufficiently large volume and hence will succeed in detecting the communities better than at random.
\\

We summarize our main algorithm below before presenting the formal pseudo-code.

\begin{itemize}
	\item \emph{Step 1} Partition the region $B_n$ into small constant size cells and based on `local-geometry' classify each cell as good or bad. This is accomplished in the {\ttfamily Is-A-Good} routine.
	\item \emph{Step 2} Consider connected components of the Good cells and then in each of them apply the following simple classification rule. We enumerate the nodes in each connected component of Good cells in an arbitrary fashion subject to the fact that subsequent nodes are `near-by'. Then we sequentially apply the {\ttfamily Pairwise-Classify} Algorithm given in \ref{alg:Pairwise}. 
	\item \emph{Step 3} Do not classify the nodes in the bad cells and just output an estimate of $+1$ for them.
\end{itemize}

\subsection{Notation and Definitions}

In this section, we  specify the needed notations for describing our algorithm. We will  assume that the connection functions $f_{in}(\cdot)$ and $f_{out}(\cdot)$ satisfy the hypothesis of  Theorem \ref{thm:positive_direction_1}. Thus, there exists $0 \leq \tilde{r} < R <\infty$ such that $f_{in}(r) > f_{out}(r)$ for all $r \in [\tilde{r},R]$. In the rest of this section, we will use the $\tilde{r}$ and $R$ coming from the connection functions. 
\\

To describe the algorithm, we need to set some notation. We  partition the entire infinite domain $\mathbb{R}^d$ into good and bad regions. However this is just for simplicity and in practice, it suffices to do the partition for the region $B_n$.
We first tessellate the space $\mathbb{R}^d$ into cubes of side-length $\frac{R}{4d^{1/d}}$ where $R$ is as above. We identify the tessellation with the index set $\mathbb{Z}^d$, i.e. the cell indexed $z$ is a cube of side-length $\frac{R}{4d^{1/d}}$ centered at the point $\frac{zR}{4d^{1/d}} \in \mathbb{R}^d$. The subset of $\mathbb{R}^d$ that corresponds to cell $z$ is denoted by $Q_z$. Hence the cell indexed $0$ is the cube of side-length $\frac{R}{4d^{1/d}}$ centered at the origin. We now give several definitions on the terminology used for the $\mathbb{Z}^d$ tessellation and not to be confused with the terminology for describing the graph $G_n$. We collect all the different notation and terminology in this sub-section for easier access and reference.

\begin{definition}
	A set $U \subseteq \mathbb{Z}^d$ is said to be $\mathbb{Z}^d$-\textbf{connected} if for every $x,y \in U$, there exists a $k \in \mathbb{N}$ and $x_1, \cdots x_k \in U$ such that for all $i \in [0,k+1]$, $||x_i - x_{i-1}||_{\infty} = 1$, where $x_{0} := x$ and $x_{k+1} := y$. 
\end{definition}

\begin{definition}
	For any $z \in \mathbb{Z}^d$, denote by $\mathbb{Z}^d$-\textbf{neighbors} of $z$ the set of all $z^{'} \in \mathbb{Z}^d$ such that $||z - z^{'}||_{\infty} \leq 1$. 
\end{definition}

\begin{definition}
	For any subset $A \subset \mathbb{Z}^d$ and any $k \in \mathbb{N}$, the \textbf{$k$ thickening of $A$} is denoted by $\mathbf{L}_{k}(A) : = \cup_{z \in A} \cup_{z^{'} \in \mathbb{Z}^d: ||z - z^{'}||_{\infty} \leq k } z^{'}$.
\end{definition}

\begin{definition}
	For any set $B \subseteq \mathbb{Z}^d$, denote by the set $Q_B := \cup_{z \in B}Q_z$.
\end{definition}

%\begin{definition}
%	For any subset $A \subset \mathbb{Z}^d$ and any $k \in \mathbb{N}$, the \textbf{$k$ thickening of $A$} is denoted by $\mathbf{L}_{k}(A) : = \cup_{z \in A} \cup_{z^{'} \in \mathbb{Z}^d: ||z - z^{'}||_{\infty} \leq k } z^{'}$.
%\end{definition}

\begin{definition}
	Let $\mathcal{Z}(\cdot) : \mathbb{R}^d \rightarrow \mathbb{Z}^d$ be the projection function, i.e. $\mathcal{Z}(x) := \inf \{z \in \mathbb{Z}^d : ||\frac{Rz}{4 d^{1/d}} - x||_{\infty} \leq 0.5 \}$. In case, of more than one $z$ achieving the minimum, we take the lexicographically smallest such $z$.
\end{definition}

\begin{definition}
	For any two points $x, y \in \mathbb{R}^d$, denote by $S_R(x,y) := B(x,R) \cap B(y,R) $, i.e.  the intersection of two balls of radius $R$ centered at points $x$ and $y$. 
\end{definition}
\begin{definition}
	 For any two points $x,y \in \mathbb{R}^d$ such that $||x-y||_{2} < R$, define by the two constants $M_{in}(x,y)$ and $M_{out}(x,y)$ as follows.
	\begin{align*}
	M_{in}(x,y) = \int_{z \in S_R(x,y)} \left( f_{in}(||x-z||)f_{in}(||y-z||) + f_{out}(||x-z||)f_{out}(||y-z||) \right) dz
	\end{align*}
	\begin{align*}
	M_{out}(x,y) = \int_{z \in S_R(x,y)} ( f_{in}(||x-z||)f_{out}(||y-z||) + f_{out}(||x-z||)f_{in}(||y-z||) )dz
	\end{align*}
\end{definition}

Observe that the definitions of $M_{in}(x,y)$ and $M_{out}(x,y)$ immediately give that
\begin{align*}
M_{in}(x,y) - M_{out}(x,y) = \int_{z \in S_R(x,y)} \left( f_{in}(||x-z||) - f_{out}(||x-z||) \right) \left( f_{in}(||y-z||) - f_{out}(||y-z||) \right) dz.
\end{align*}

\begin{definition}
	For any two points $x,y \in \phi$, denote by $E_{G}^{(R)}(x,y)$  the number of common graph neighbors of $x$ and $y$ in $G$ which are within a distance $R$ from both $x$ and $y$.
\end{definition}

\subsection{Algorithm Description in Pseudo Code}

We first present two sub-routines in Algorithms \ref{alg:Pairwise} and \ref{alg:Is_Good}  that  classify each cell of $\mathbb{R}^d$ to be either Good or Bad. The algorithm is parametrized by $\epsilon \in \left(0,\frac{1}{2}\right)$ which is arbitrary and fixed.

\begin{algorithm}[H]
	\caption{Pairwise Classifier}
	\label{alg:Pairwise}
	\begin{algorithmic}[1]
		\Procedure{Pairwise-Classify}{$i,j,\phi, G$}
		%		\If {$x,y \notin \phi$}
		%		\Return $0$
		%		\EndIf
		\If {$E_{G}^{(R)}(X_i,X_j) > \frac{\lambda}{2}\left( M_{in}(X_i,X_j) + M_{out}(X_i,X_j) \right)$}
		\Return $1$
		\Else \\
		\Return $-1$
		\EndIf
		\EndProcedure
	\end{algorithmic}
\end{algorithm}

In this algorithm, we classify two nodes as in the same partition if the number of common graph neighbors they have exceeds a threshold. The threshold is the average of the expected number of neighbors if the two nodes in consideration are of the same or opposite communities. Such simple tests suffices for our purpose, although one could imagine a more accurate estimator that also takes into account the number of nodes in $S_{R}(X_i,X_j)$ that do not have any edges to $X_i$ and $X_j$; or the location labels of the common neighbors. 

\begin{algorithm}[H]
	\caption{Is A-Good Testing}
	\label{alg:Is_Good}
	\begin{algorithmic}[1]
		\Procedure{Is-A-Good}{$z,G$}
		\If {$|\phi \cap Q_z| < \lambda (R/4)^d(1/d) (1- \epsilon)$} \Return FALSE
		\EndIf
		\State $\phi^{(z)} := \phi \cap ( \cup_{z^{'} : ||z-z^{'}||_{\infty} \leq 1} Q_{z^{'}})$
		\For{ all $\forall k \geq 1$, and all $X_1, \cdots X_k \in \phi^{(z)}$}
		\If{ $\prod_{i=1}^{k}  \text{PAIRWISE-CLASSIFY}(X_i, X_{i+1},G) = -1$ } \Comment{Where $X_{k+1} := X_1$}
		\State \Return FALSE
		\EndIf
		\EndFor
		\State \Return TRUE
		\EndProcedure
	\end{algorithmic}
\end{algorithm}

\begin{algorithm}
\caption{GBG}
\label{alg:main-routine}
\begin{algorithmic}[1]
	\Procedure{Main-Routine}{$G_n,\phi_n$}
	\State Classify each cell in $B_n$ to be either A-Good or A-Bad using subroutine {\ttfamily Is-A-Good}.
	\State Let $\mathcal{D}_1, \cdots \mathcal{D}_k$ be the A-Good $\mathbb{Z}^d$-connected components in $B_n$. 
	\For{$l=1,l\leq k$}
	\State Let $X_{l_1}, \cdots X_{l_{n_j}} \in \phi_n \cap Q_{\mathcal{D}_j}$ be \textbf{maximal} and arbitrary s.t $||\mathcal{Z}(X_{l_o}) - \mathcal{Z}(X_{l_{o+1}})||_{\infty} \leq 1,  \forall 1 \leq o \leq n_j-1$
	\State Set $\hat{\tau}_{l_1}^{(n)} = +1$
	\For{$w = 2, w \leq n_j$}
	\State Set $\hat{\tau}_{l_w}^{(n)} = \text{Pairwise-Classify}(l_{w-1},l_{w},\phi_n,G_n) \hat{\tau}_{l_{w-1}}^{(n)}$
	\EndFor
	\EndFor
	\For{$c = 1, c \leq N_n$}
	\If{$\hat{\tau}_{c}^{(n)} = 0$}
	\State Set $\hat{\tau}_{c}^{(n)} = +1$
	\EndIf
	\EndFor 
	\State \Return $\{\hat{\tau}_{i}^{(n)}\}_{i=1}^{N_n}$
	\EndProcedure
\end{algorithmic}
\end{algorithm}

To understand the algorithm, we need some definitions which classify cells of $\mathbb{Z}^d$  into Good or Bad depending on the `local graph geometry'.

\begin{definition}
	A cell $Q_z$ is \textbf{A-Good} if
	
	\begin{enumerate}
		\item $\bigg|\phi \cap Q_z \bigg| \geq \max \left( \lambda \left( \frac{R}{4 } \right)^d \frac{1}{d} (1- \epsilon),1 \right)$; and
		\item {\ttfamily Is-A-Good}($z,G$) returns TRUE
	\end{enumerate}
	A cell is called \textbf{A-Bad} if it is not A-Good.
	\label{defn:A-Good}
\end{definition}

The key idea of our simple algorithm lies in the definition of A-Good cells. We classify a cell to be A-Good if  there are no `inconsistencies' in the Pairwise-Estimates. See Figure \ref{fig:inconsistent} for an example of pair-wise inconsistency due to the {\ttfamily Pairwise-Classify} algorithm. In words, a cell is A-Good, if among the nodes of $G$ that either lie in the cell under consideration or in the neighboring cells, there are no inconsistencies in the output returned by the {\ttfamily Pairwise-Classify}  algorithm. Moreover, one can test whether a cell is A-Good or not based on the data $(\phi,G)$ itself as done in Algorithm \ref{alg:Is_Good}. Thus, we use the nomenclature of \emph{Algorithm}-Good as A-Good.  
\\

The main routine in Algorithm \ref{alg:main-routine} proceeds as follows. In Line $3$, we extract out all A-Good connected cells in the spatial region $B_n$. Suppose that there are $k$ A-Good connected components denoted by $\mathcal{D}_1, \cdots, \mathcal{D}_k$. Our algorithm looks at each connected component independently and produces a labeling of the nodes in them. In Line $5$, we enumerate all nodes in any A-Good connected component $\mathcal{D}_l$ as $X_{l_1} \cdots X_{l_{n_l}}$ such that for all $ 1 \leq o < n_l$, we have $|| \mathcal{Z}(X_{l_o}) - \mathcal{Z}(X_{l_{o+1}}) ||_{\infty} \leq 1$. Such an enumeration of any A-Good connected component is possible since by definition, every A-Good cell is non-empty of nodes. Now, we sequentially estimate the community labels in Line $8$ using the {\ttfamily Pairwise-Classify} sub-routine applied on `nearby' pairs of nodes. In Line $13$, we assign an estimate of $+1$, i.e. extract no meaningful clustering for nodes that fall in A-Bad cells. See also Figure \ref{fig:algo_explanation} for an illustration.

\begin{figure}
	\centering
	\includegraphics[scale=0.2]{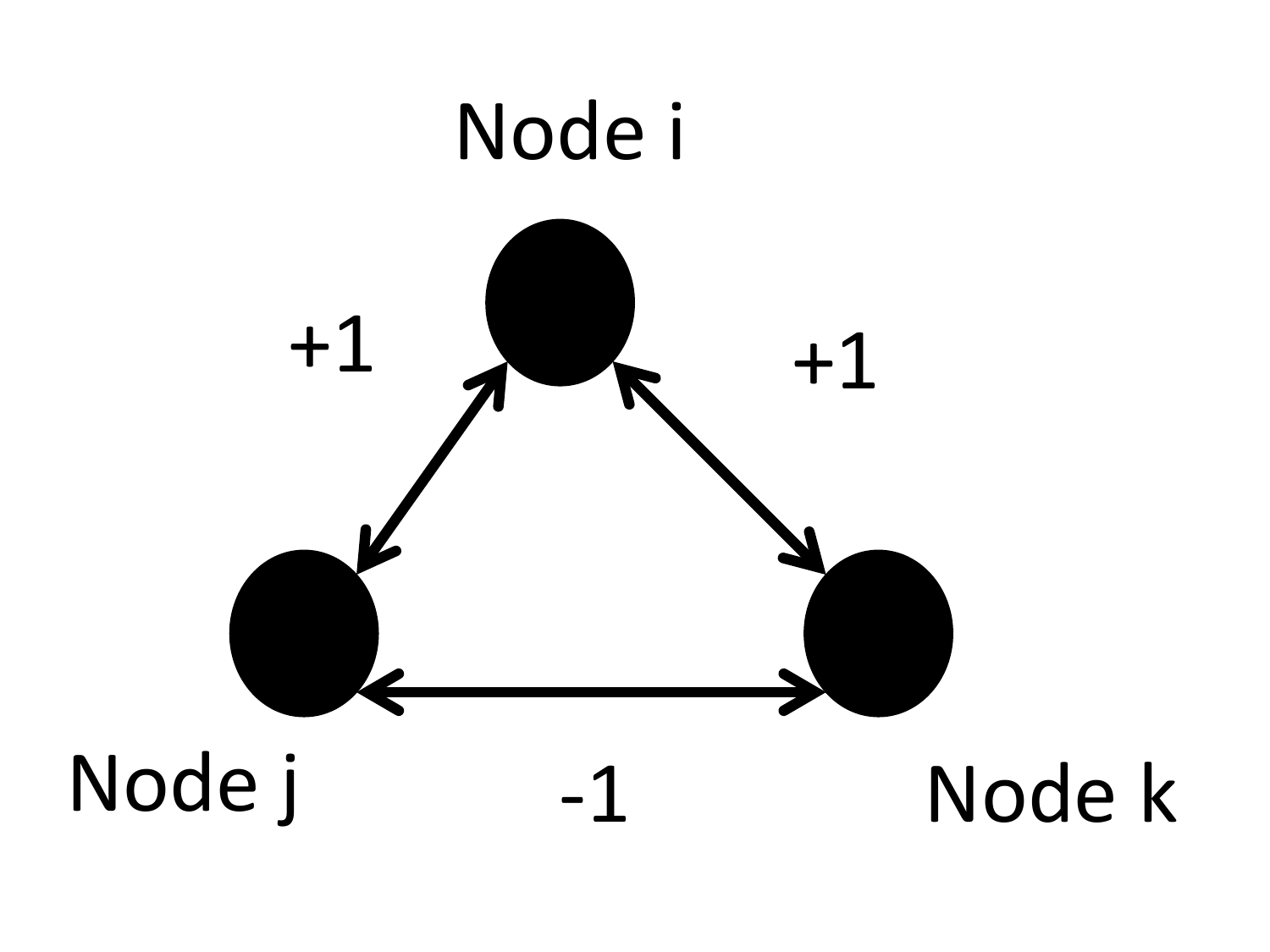}
	\caption{An illustration where Pairwise-Classify leads to inconsistency. The values on the edges represent the output of pairwise classify run on the two end points as inputs. In this example it is clear that for at-least one pair $(i,j),(j,k),(k,i)$, the output of pairwise estimate is different from the ground truth.}
	\label{fig:inconsistent}
	\end{figure}

\subsection{Complexity and Implementation}

We discuss a simple implementation of our algorithm  which takes time of order $n^2$ to run and storage space of order $n$. The multiplicative constants here depend on $\lambda$. We store the locations $\phi_n$ as a vector whose length is order $\lambda n$ and the graph $G_n$ as an adjacency list. An adjacency list representation is appropriate since $G_n$ is sparse and the average degree of any node is a constant (that depends on $\lambda$). Once we sample the locations $\phi_n$, the graph $G_n$ takes time of order $n^2$ to sample. However, if one represented the locations of nodes more cleverly, then the sampling complexity could possibly be reduced from $n^2$. Moreover, since the average degree is a constant, the storage needed is order $n$. Given the data $\phi_n$ and the graph $G_n$, we pre-process this to store another adjacency list where for every vertex, we store the list of all other vertices within a distance of $2R$ from it. This preprocessing takes order  $n^2$ time and order $n$ space. The space complexity is order $n$ since the graph is sparse. Equipped with this, we create a `grid-list' where for each coordinate of $\mathbb{Z}^d$, we store the list of vertices whose location is in the considered grid cell. This takes just order $n$ time to build.
Moreover, since only a constant number of nodes are in any grid cell and the set $B_n$  contains order $n$ cells, the storage space needed for `grid-list' is order $n$. Furthermore, since only a constant number of nodes are in a cell, it takes a constant time to test whether a particular cell is A-Good or A-Bad. Thus, to find $\mathbb{Z}^d$ connected components of Good-cells and produce the clustering takes another order $nd$ time where $d$ is the dimension. This gives our algorithm overall  a time complexity of order $n^2$ and a storage complexity of order $n$. %We present 

\begin{figure}
	\centering
	\includegraphics[scale=0.2]{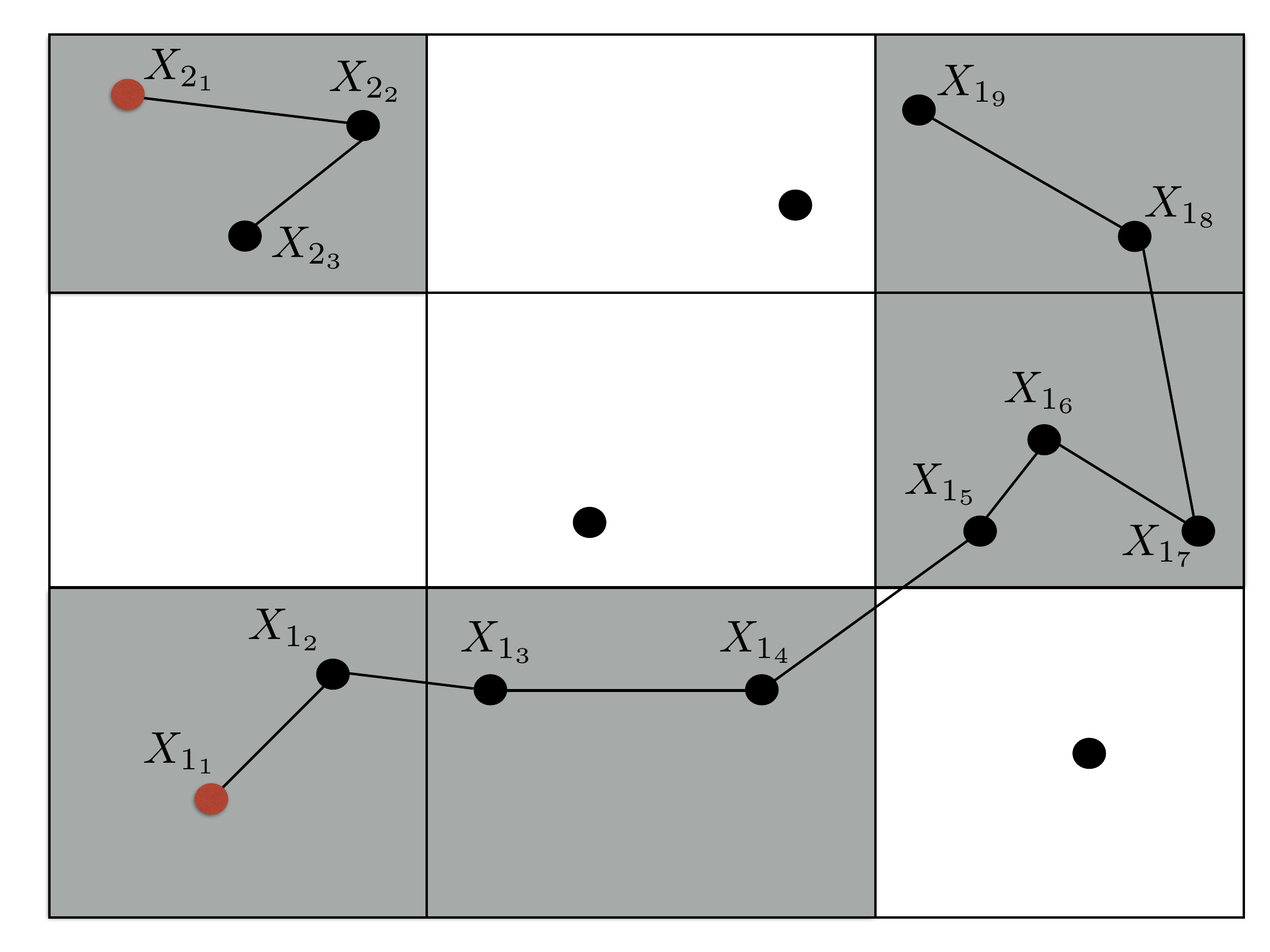}
	\caption{An illustration of algorithm \ref{alg:main-routine}. In this example, \emph{we do not draw the graph $G$}, but only show the locations of the nodes. The shaded cells corresponds to A-Good cells and in this example there are two A-Good connected components. In each component, we outline an arbitrary sequence of points $X_{1_1} \cdots X_{1_9}$ and $X_{2_1}, X_{2_2},X_{2_3}$ that will be used in line $4$ of our main Algorithm \ref{alg:main-routine}. The lines then represent how we recursively set the community label estimates of the nodes as in line $8$ of  Algorithm \ref{alg:main-routine}. The estimates for the nodes in A-Bad cell is always set to $1$.}
	\label{fig:algo_explanation}
\end{figure}

\subsection{Practical Implementation if  Model Parameters are Unknown}
\label{sec:unknown_conn_func}

In this section, we provide a simple alternative that can be used to cluster even when the model parameters $f_{in}^{(n)}, f_{out}^{(n)}(\cdot)$ and $\lambda$ are unknown to the algorithm. Assume for simplicity, that we know an estimate of $R$ such that $\int_{x \in B(0,R)} (f_{in}(||x||) - f_{out}(||x||) )dx > 0$, i.e. the set $\{r \in [0,R]: f_{out}(r) < f_{in}(r) \}$ has non-zero Lebesgue measure. Then, we can change the definition of A-Good as follows.  For any grid cell $z,z^{'} \in \mathbb{Z}^d$ such that $||z-z^{'}||_{\infty} = 1$, consider the subgraph $G_{z,z^{'}}$ of $G$ consisting of nodes whose locations lie either in cell $z$ or $z^{'}$. Consider applying some known partition to the nodes of $G_{z,z^{'}}$, for instance the standard spectral method described in \cite{abbe_overview}. We can denote the cell $z$ to be A-Good, if the number of points of $\phi$ in that cell is no smaller than $(1-\epsilon)$ of the expected value, \emph{and} for every $z^{'}$ in the $1$-thickening of $z$, the partition of the nodes of $G_{z}$, when the spectral method is applied to the induced sub-graph $G_{z,z^{'}}$ is identical, i.e. for every $z^{'},z^{''}$ in the $1$ thickening of $z$, the partition of the nodes of $G_z$ is same whether the spectral method is run on the graph $G_{z,z^{'}}$ or $G_{z,z^{''}}$. Since the spectral method of \cite{abbe_overview} does not need to know the connection functions, one can use this as an alternative definition of A-Good cell in place of Algorithm \ref{alg:Is_Good}. The only model information in this alternative implementation required is an estimate of $R$ to perform the tessellation of space. Thus in line $3$ of Algorithm \ref{alg:main-routine}, we can invoke the test described in this paragraph which does not need knowledge of the connection functions, as opposed to involing Algorithm \ref{alg:Is_Good} which does need knowledge of the parameters.

%From an analysis perspective, we can define an alternate notion of a  T-Good cell. We can denote by a cell $z \in \mathbb{Z}^d$ as T-Good, if it contains at-least $(1-\epsilon)$ of the mean number of points in that cell. Additionally, a grid cell $z$ is T-Good if for every $z^{'} \in \mathbb{Z}^d$ such that $||z-z^{'}||_{\infty} = 1$, the spectral method on the graph $G_{z,z^{'}}$ matched the ground truth. One can then carry an identical analysis to this spectral method for Is-Good, as done for the version presented in Algorithm \ref{alg:Is_Good}. 

\section{Analysis and Proof of the Algorithm in the Sparse Regime}
\label{sec:analysis_algo}
The following theorem is the main theoretical guarantee on the performance of the {\ttfamily GBG} algorithm.
\begin{theorem}
	Let $\epsilon \in \left( 0, \frac{1}{2} \right)$ be arbitrarily set in Algorithm \ref{alg:Is_Good}. Let $\eta \in \left( 0, \frac{1}{2} \right) $ be such that $\left( \frac{1}{2} + \eta \right)(1 - \epsilon) > \frac{1}{2}$. Then there exists a constant $\lambda_0 < \infty$ depending on $f_{in}(\cdot),f_{out}(\cdot), d, \epsilon$ and $ \eta$ such that 
	for all $\lambda > \lambda_0$, Algorithm \ref{alg:main-routine} will solve weak-recovery.
	\label{thm:algo_analysis_global}
\end{theorem}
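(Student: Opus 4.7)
The plan is to decompose the analysis into three largely independent pieces: a single-cell estimate showing A-Good occurs with high probability for large $\lambda$; a coupling with a Bernoulli site percolation that forces the A-Bad cells to be sub-critical so that a giant A-Good cluster emerges with density close to one; and a correctness argument showing that sequential propagation along the giant A-Good cluster recovers the true community labels up to a global sign flip. The quantitative constants $\epsilon$ and $\eta$ in the hypothesis are used at the very end to turn a bound of the form ``at least $(1/2+\eta)$ of the cells are A-Good and each contains at least $(1-\epsilon)$ of its expected density of points'' into a strictly positive overlap.

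First, I would study \texttt{Pairwise-Classify} in isolation. For any two points $x,y \in \mathbb{R}^d$ with $\|x-y\| < R$ and $x,y$ close enough that $\tilde r < \|x - z\|, \|y - z\| < R$ on a set of positive measure in $S_R(x,y)$, we have $M_{in}(x,y) - M_{out}(x,y) \geq c > 0$ uniformly for $x,y$ in neighboring cells. Conditional on $Z_x$ and $Z_y$, the count $E_G^{(R)}(x,y)$ is a sum of a Poisson-distributed number of Bernoulli contributions with mean $\lambda M_{in}(x,y)$ or $\lambda M_{out}(x,y)$ depending on whether $Z_x = Z_y$, so by a standard Chernoff bound for Poisson random variables, the probability that \texttt{Pairwise-Classify} returns the wrong sign decays as $e^{-\Theta(\lambda)}$, uniformly in the pair. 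The Poisson-count bound and these exponential tail estimates together yield $\mathbb{P}[\text{cell } z \text{ is A-Bad}] \leq \rho(\lambda)$ with $\rho(\lambda) \to 0$ as $\lambda \to \infty$: the density clause fails with probability $e^{-\Theta(\lambda)}$ by Poisson concentration, while the inconsistency clause fails only if some cycle in the (bounded-size) $1$-thickening contains a pair erroneously classified, an event whose probability is at most a constant times $e^{-\Theta(\lambda)}$ via a union bound over the (finitely many expected) tuples in the neighborhood.

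Next, I would upgrade this one-cell estimate to a percolation statement. The event $\{z \text{ is A-Good}\}$ is a function of the marked point process $\bar\phi$ restricted to a bounded enlargement $\mathbf{L}_K(\{z\})$ of $z$ for some $K = K(R,d)$ independent of $\lambda$, because both the point counts in $Q_z$ and the common neighbors used inside \texttt{Pairwise-Classify} on any cycle in the $1$-thickening lie within a bounded distance of $z$. Hence $\{\mathbf{1}\{z \text{ is A-Bad}\}\}_{z \in \mathbb{Z}^d}$ is a $2K$-dependent random field. By the Liggett-Schonmann-Stacey theorem, such a field is stochastically dominated by an i.i.d.\ Bernoulli site-percolation whose parameter $p(\lambda)$ satisfies $p(\lambda) \to 0$ as $\lambda \to \infty$. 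Taking $\lambda$ so large that $p(\lambda)$ is below $1 - p_c^{\text{site}}(\mathbb{Z}^d)$, the A-Bad cells form a subcritical percolation: the A-Good cells contain a unique infinite cluster whose density in $\mathbb{Z}^d$ may be taken arbitrarily close to $1$, and in particular strictly greater than $\tfrac{1}{2}+\eta$. A standard boundary estimate then shows that, with probability $1-o_n(1)$, a fraction at least $\tfrac12 + \eta$ of the cells intersecting $B_n$ lies in a single A-Good $\mathbb{Z}^d$-connected component $\mathcal{D}^\star$.

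Finally, I would argue correctness of the propagation. Inside an A-Good connected component, every cycle of \texttt{Pairwise-Classify} outputs multiplies to $+1$; together with the event that every relevant pairwise estimate is actually correct — an event of probability $1-e^{-\Theta(\lambda)}$ per cell by the exponential bound — this implies that the labels produced by the sequential rule in line 8 of Algorithm~\ref{alg:main-routine} agree with $\{Z_i\}$ up to one global sign within the component. Choosing $\lambda$ large enough that this ``all nearby pairwise tests correct'' event holds simultaneously on all of $\mathcal{D}^\star$ with probability $1-o_n(1)$ (again possible by a union bound over order-$n$ cells against exponential-in-$\lambda$ failure), the algorithm labels at least a fraction $(\tfrac12+\eta)(1-\epsilon)$ of all nodes correctly (up to the sign). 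Mis-labellings contributed by small A-Good components and by A-Bad cells are at most $(\tfrac12-\eta)$ of cells times $O(1)$ mean nodes each, and by the choice of $\eta,\epsilon$, these contribute strictly less, leaving a deterministic positive overlap $\gamma$ in the limit. The main obstacle in the argument is the percolation coupling step: because the A-Good events are correlated through shared nodes and edges, one must carefully verify the finite-range dependence and the absence of any ``hidden'' long-range interaction arising from \texttt{Pairwise-Classify}, so that the Liggett-Schonmann-Stacey domination is legitimate.
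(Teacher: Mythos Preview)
Your first two steps are essentially the paper's argument (exponential-in-$\lambda$ pairwise error, finite-range dependence, Liggett--Schonmann--Stacey domination by i.i.d.\ Bernoulli site percolation). The gap is in your third step.

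You write that the event ``all nearby pairwise tests correct'' holds simultaneously on all of $\mathcal{D}^\star$ with probability $1-o_n(1)$ ``by a union bound over order-$n$ cells against exponential-in-$\lambda$ failure.'' But $\lambda$ is a fixed constant, not growing with $n$: the bound you get is $1 - C n\, e^{-c\lambda}$, which tends to $-\infty$, not to $1$. In the sparse regime there \emph{will} be a linear-in-$n$ number of cells inside the giant A-Good component where some pairwise estimate is wrong; A-Good only guarantees \emph{consistency} of the pairwise outputs along cycles, not \emph{correctness}. Along a path in $\mathcal{D}^\star$ that passes through such a cell, the propagated sign can flip relative to ground truth, so you cannot conclude that the algorithm's labeling on $\mathcal{D}^\star$ agrees with $\{Z_i\}$ up to a single global sign.

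The paper's remedy is to run the entire percolation argument not on A-Good cells but on a strictly smaller class of \emph{T-Good} cells, defined by the (unobservable) condition that every pairwise output in the $1$-thickening actually equals the ground truth. The same local analysis gives $\mathbb{P}[z \text{ T-Good}] \to 1$ as $\lambda \to \infty$, the same finite-range dependence holds, and hence by LSS the T-Good cells contain a unique giant component occupying a fraction $\geq \tfrac12+\eta$ of $B_n$. Since T-Good $\subseteq$ A-Good, this giant T-Good component sits inside one A-Good component; the algorithm produces the unique consistent partition on that A-Good component, and on the T-Good sub-component this unique partition coincides with ground truth by definition. No union bound over $n$ cells is needed --- correctness is baked into the definition of the cells whose percolation you prove.
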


%\begin{remark}
%	We provide an upper bound to the constant $\lambda_0$ in the sequel in Proposition \ref{prop:infinite_cluster_average}.
%\end{remark}

To prove the main result, we will need an additional classification of the cells of $B_n$ as either T-Good or T-Bad. The nomenclature stands for \emph{Truth}-Good.

\begin{definition}
	A cell $Q_z$ is \textbf{T-Good} if - 
	\begin{enumerate}
		\item $\bigg|\phi \cap Q_z \bigg| \geq \max \left( \lambda \left( \frac{R}{4 } \right)^d \frac{1}{d} (1- \epsilon),1 \right)$; and
		\item For all $i,j \in \mathbb{N}$ such that $X_i,X_j \in Q_{\mathbf{L}_1(z)} \cap \phi$, \text{Pairwise-Classify}($i,j,\phi,G$)  returns $\mathbf{1}_{Z_i = Z_j} - \mathbf{1}_{Z_i \neq Z_j}$, i.e. the ground truth.   
	\end{enumerate}
	If a cell is not T-Good, we call it \textbf{T-Bad}.
	\label{defn:T-Good}
\end{definition}

A cell is T-Good, if for any pair nodes which either lie in the cell under consideration or the neighboring cells, the output of the pairwise estimation  matches the ground-truth. Of-course since the ground truth is unknown, one cannot test whether a cell is T-Good or not. We introduce the notion of a T-Good cell to aid in the analysis.
\\

\subsection{Proof Roadmap}

The proof of Theorem \ref{thm:algo_analysis_global} can be split into three parts. The first part is composed of  combinatorial  arguments leveraging the definitions of  A-Good and T-Good cells. These combinatorial lemmas (Proposition \ref{prop:comb_global}) will conclude that it suffices to ensure that there exists a `giant' T-Good connected component in the data $(\phi_n,G_n)$. The next is a local analysis wherein we conclude that the probability a cell is T-Good can be made arbitrarily large by choosing the constant $\lambda$ sufficiently high (Corollary \ref{cor:high_val_p}). The final step is to couple the process of T-Good cells to that of dependent site percolation on $\mathbb{Z}^d$ to conclude that if  a single cell is T-Good with sufficiently high probability, then there exists a giant T-Good component comprising of many nodes (Proposition \ref{prop:concluding_comm_det}).

\subsection{Combinatorial Analysis}
\label{sec:combinatorial_analysis_algo}

 The main result in this sub-section we want to establish is the following statement.  If we establish this, then the performance of our algorithm will follow from a study of the properties of the random graph $G$.

\begin{proposition}
	If there exists a connected component of T-Good cells in the data $(\phi_n,G_n)$ which contains a fraction of nodes of $G_n$ strictly larger than a half with probability $1 - o_n(1)$, then the output returned by Algorithm \ref{alg:main-routine} solves the weak-recovery problem as given in Definition \ref{defn:comm_det}.
	\label{prop:comb_global}
\end{proposition}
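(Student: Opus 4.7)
The plan is to show that, on the giant T-Good connected component $\mathcal{T}$ promised by the hypothesis, the labels $\hat\tau$ produced by Algorithm~\ref{alg:main-routine} agree with the ground-truth labels $(Z_i)$ up to a single global sign. The hypothesised lower bound on $|\mathcal{T}|/N_n$ then immediately translates into a strictly positive overlap, verifying Definition~\ref{defn:comm_det}.

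First, every T-Good cell is A-Good: by Definition~\ref{defn:T-Good} the Pairwise-Classify output on any ordered pair of points in the $1$-thickening equals $Z_iZ_j$, so any cycle product telescopes to $+1$ and Is-A-Good returns TRUE. Consequently $\mathcal{T}$ sits inside a unique A-Good connected component $\mathcal{D}_j$ processed by the outer loop of Algorithm~\ref{alg:main-routine}. The key step is a \emph{path-independence} lemma for the labels on $\mathcal{D}_j$: for any two node-paths from $u$ to $v$ in $Q_{\mathcal{D}_j}$ whose consecutive nodes lie in $\mathbb{Z}^d$-adjacent cells, the two products of Pairwise-Classify outputs coincide. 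Since A-Good trivialises every cycle lying in the $1$-thickening of a single cell, I would pick an arbitrary anchor node inside each cell of $\mathcal{D}_j$ (non-empty by part~(1) of Definition~\ref{defn:A-Good}), reduce each single hop $u \to v$ with $\|\mathcal{Z}(X_u)-\mathcal{Z}(X_v)\|_\infty \leq 1$ to the anchor form $u \to \mathrm{anchor}(z_u) \to \mathrm{anchor}(z_v) \to v$ (all four nodes lie jointly in $Q_{\mathbf{L}_1(z_u)}$, so the corresponding triangular identity follows from A-Good for $z_u$), and finally observe that any closed loop of anchor hops in $\mathcal{D}_j$ decomposes into elementary triangles of three mutually adjacent cells, each of which fits inside a single $1$-thickening. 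This shows that $\hat\tau$ on $\mathcal{D}_j$ is determined up to the global sign $\hat\tau_{l_1}=+1$, independently of which maximal enumeration was chosen in Line~5.

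Given path-independence, pick a seed $i_0$ with location in $\mathcal{T}$. For any other index $j$ whose location lies in $\mathcal{T}$, use that $\mathcal{T}$ is cell-connected and that every T-Good cell contains at least one point to build a node-path $i_0 = k_0, k_1, \ldots, k_m = j$ lying \emph{entirely inside} $\mathcal{T}$. On every hop both endpoints belong to the $1$-thickening of a T-Good cell, so Pairwise-Classify$(k_l,k_{l+1}) = Z_{k_l}Z_{k_{l+1}}$, and the telescoping product along this in-$\mathcal{T}$ path equals $Z_{i_0}Z_j$. By path-independence this equals the algorithm's product, hence $\hat\tau_{i_0}\hat\tau_j = Z_{i_0}Z_j$, i.e.\ $\hat\tau_j = s\,Z_j$ with the single global sign $s := \hat\tau_{i_0}Z_{i_0}$ for every index $j$ in the set $T \subseteq \{1,\ldots,N_n\}$ of indices whose locations sit in $\mathcal{T}$. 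The hypothesis furnishes some fixed $\delta > 0$ with $|T|/N_n \geq 1/2 + \delta$ with probability $1-o_n(1)$, and therefore
\begin{equation*}
\mathcal{O}_n \;=\; \frac{\bigl|\sum_{i=1}^{N_n} \hat\tau_i Z_i\bigr|}{N_n} \;\geq\; \frac{|T| - |T^c|}{N_n} \;=\; \frac{2|T| - N_n}{N_n} \;\geq\; 2\delta
\end{equation*}
with probability $1-o_n(1)$, which is weak recovery per Definition~\ref{defn:comm_det}.

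The main obstacle is the path-independence lemma: A-Good is a purely local condition (tested inside one cell's $1$-thickening), yet we need global consistency across a potentially large $\mathcal{D}_j$. The anchor-and-triangulation reduction sketched above is the cleanest route I see, and it rests on the observation that any two $\mathbb{Z}^d$-adjacent cells both sit within the $1$-thickening of either, so any elementary triangle of cell-hops fits inside a single A-Good test region. Once path-independence is in place, everything else is a clean telescoping computation inside $\mathcal{T}$ and a straightforward counting bound for the overlap.
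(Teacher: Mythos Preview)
Your scheme is the same as the paper's: use Proposition~\ref{lem:T-Good-A-Good} to place the giant T-Good component $\mathcal{T}$ inside a single A-Good component $\mathcal{D}_j$, invoke a path-independence property for Pairwise-Classify products on $\mathcal{D}_j$, telescope along an in-$\mathcal{T}$ path to get $\hat\tau_j=sZ_j$ for every $j$ with location in $\mathcal{T}$, and finish with the overlap bound $2\alpha-1$. The paper isolates the path-independence step as a separate statement, Proposition~\ref{prop:consistent_partition}, proves it in an appendix by a cell-by-cell BFS induction, and simply cites it in the proof of Proposition~\ref{prop:comb_global}; you instead try to justify it inline via anchors and triangulation. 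The telescoping computation and the overlap bound are carried out exactly as in the paper.

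Your triangulation argument for path-independence has a genuine gap. The assertion that ``any closed loop of anchor hops in $\mathcal{D}_j$ decomposes into elementary triangles of three mutually adjacent cells'' tacitly assumes that the cell-complex of $\mathcal{D}_j$ is simply connected. But an A-Good component can enclose an island of A-Bad cells: for instance, in $d=2$, take a $2\times 2$ block of empty cells surrounded by a ring of twelve A-Good cells. A loop of anchors encircling such a hole cannot be filled by triangles whose three cells all lie in $\mathcal{D}_j$, because the required interior cells are A-Bad. Since Is-A-Good is a purely local test (each check lives inside a single $\mathbf{L}_1(z)$), nothing forces the Pairwise-Classify product around that loop to equal $+1$; one can write down values on the anchor pairs that pass every Is-A-Good check yet give product $-1$ around the ring, so path-independence genuinely fails on that component. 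You should either supply the missing topological input (e.g.\ restrict to components without such holes, or argue they do not occur on the event in the hypothesis) or, as the paper does, separate out the path-independence claim and prove it by the inductive extension over cells rather than by triangulation.
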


The proof of the above proposition is based on the following two elementary combinatorial propositions.

\begin{proposition}
	If a cell $Q_z$ is \textbf{T-Good}, then it is also \textbf{A-Good}. In particular, every  connected \textbf{T-Good} component is  contained in some connected \textbf{A-Good} component.
	\label{lem:T-Good-A-Good}
\end{proposition}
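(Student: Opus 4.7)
The plan is to unpack the definitions of T-Good and A-Good and to exploit one elementary observation about closed cycles in $\pm 1$-valued labelings. I will proceed in two short steps for the first claim, and then treat the ``in particular'' assertion as a purely topological afterthought.

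First, I would note that the population condition in the definition of T-Good, namely $|\phi \cap Q_z| \geq \max(\lambda (R/4)^d (1/d)(1-\epsilon),1)$, is literally the first thing checked in the definition of A-Good (and it also implies the strictly weaker threshold tested in the first line of \texttt{Is-A-Good}). Hence this clause transfers for free, and the only nontrivial task is to verify that when $Q_z$ is T-Good, the cycle test inside \texttt{Is-A-Good}$(z,G)$ never returns FALSE.

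For this step, I would observe that the set $\phi^{(z)}$ appearing in \texttt{Is-A-Good} is exactly $Q_{\mathbf{L}_1(z)} \cap \phi$, so by the second clause of T-Good, \texttt{Pairwise-Classify}$(X_i, X_j, \phi, G)$ returns the ground truth $\mathbf{1}_{Z_{X_i} = Z_{X_j}} - \mathbf{1}_{Z_{X_i} \neq Z_{X_j}} = Z_{X_i} Z_{X_j}$ (treating the $\pm 1$-valued labels multiplicatively) for every pair of points of $\phi^{(z)}$. Consequently, for any cycle $X_1, \ldots, X_k \in \phi^{(z)}$ with $X_{k+1} := X_1$, the relevant product telescopes:
$$\prod_{i=1}^k \text{Pairwise-Classify}(X_i, X_{i+1}, \phi, G) \;=\; \prod_{i=1}^k Z_{X_i} Z_{X_{i+1}} \;=\; \prod_{i=1}^k Z_{X_i}^{\,2} \;=\; +1,$$
because each $Z_{X_i}$ appears as an endpoint of exactly two consecutive edges of the closed cycle and $Z_{X_i}^{\,2} = 1$. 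Therefore the FALSE branch inside the inner loop of \texttt{Is-A-Good} is never triggered, so \texttt{Is-A-Good} returns TRUE and $Q_z$ is A-Good.

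For the ``in particular'' statement, I would note that $\mathbb{Z}^d$-connectedness of a collection of cells depends only on their positions in $\mathbb{Z}^d$ and not on whether the cells are T-Good or A-Good. Since the first part shows that every T-Good cell is A-Good, any $\mathbb{Z}^d$-connected component of T-Good cells is already a $\mathbb{Z}^d$-connected set of A-Good cells, and hence sits inside the (unique) maximal A-Good $\mathbb{Z}^d$-connected component containing any of its cells. There is no real obstacle in this argument; the only piece of substance is the telescoping cancellation $\prod_{i=1}^k Z_{X_i} Z_{X_{i+1}} = 1$ along a closed cycle, which is precisely the reason why the A-Good definition was engineered as a data-testable proxy for the unobservable T-Good condition.
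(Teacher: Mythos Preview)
Your proof is correct and follows essentially the same approach as the paper: both arguments reduce to showing that the ground-truth pairwise values $\mathbf{1}_{Z_i=Z_j}-\mathbf{1}_{Z_i\neq Z_j}$ multiply to $+1$ along any closed cycle. You do this directly via the identity $\mathbf{1}_{Z_i=Z_j}-\mathbf{1}_{Z_i\neq Z_j}=Z_iZ_j$ and a telescoping product, whereas the paper argues by contradiction (an odd number of $-1$'s would force $Z_0$ to equal both $+1$ and $-1$); your formulation is a bit cleaner but the content is identical.
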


\begin{proof}
	It suffices to prove that for any $ k \in \mathbb{N}$, $\prod_{i=1}^{k} ( \mathbf{1}_{Z_i = Z_{i-1}} - \mathbf{1}_{Z_i \neq Z_{i-1}}) = 1$, where $X_{k} := X_0$ and $Z_{k} := Z_0$, i.e. a cycle. We can see this by contradiction. Assume $\prod_{i=1}^{k} ( \mathbf{1}_{Z_i = Z_{i-1}} - \mathbf{1}_{Z_i \neq Z_{i-1}}) = -1$. This implies that an odd number of $-1's$ exists in the product. This can never be, since this would imply that $Z_0$ must be both simultaneously $+1$ and $-1$. Hence, such a product is always $+1$.
\end{proof}

The following proposition  is the basis of Line $5$ in the {\ttfamily GBG} in Algorithm \ref{alg:main-routine}. For every $z \in \mathbb{Z}^d$, denote by  $\mathcal{A}(z)$  the \emph{maximal} $\mathbb{Z}^d$ connected set containing $z$ such that for all $u \in \mathcal{A}(z)$, cell $u$ is A-Good.

\begin{proposition}
	For every $z \in \mathbb{Z}^d$ such that cell $z$ is A-Good, there exists a \emph{unique} partition of $\phi_{\mathcal{A}(z)} := \phi_{\mathcal{A}(z)}^{(+)} \coprod \phi_{\mathcal{A}(z)}^{(-)}$  such that for all $z,z^{'} \in \mathcal{A}(z)$ with $||z-z^{'}||_{\infty} \leq 1$ and all $X_i \in \phi \cap Q_z$ and $X_j \neq X_i \in \phi \cap Q_{{z}^{'}}$, we have
	\begin{itemize}
		\item If $X_i \in \phi_{z}^{(+)}$ and $X_j \in \phi_{z}^{(-)}$ or , if $X_i \in \phi_{z}^{(-)}$ and $X_j \in \phi_{z}^{(+)}$, then Pairwise-Classify($i,j,G$) will return $-1$.
		\item If $X_i,X_j \in \phi_{z}^{(+)}$ or if $X_i,X_j \in \phi_{z}^{(-)}$, then Pairwise-Classify($i,j,G$)  returns $+1$. 
	\end{itemize}
Moreover, the partition produced in Line $8$ of our Algorithm \ref{alg:main-routine} coincides with this partition.
	\label{prop:consistent_partition}
\end{proposition}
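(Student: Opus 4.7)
My plan is to build the partition by first extracting a local two-coloring from the A-Good structure of each cell of $\mathcal{A}(z)$, and then stitching these local colorings into a global one. Concretely, the \texttt{Is-A-Good} test of Algorithm~\ref{alg:Is_Good} returning TRUE for a cell $u \in \mathcal{A}(z)$ certifies that every cycle of Pairwise-Classify outputs over the nodes of $\phi^{(u)}$ multiplies to $+1$. This balance condition on the signed graph induced on $\phi^{(u)}$ forces the existence of a 2-coloring $\sigma^u \colon \phi^{(u)} \to \{+1,-1\}$, unique up to a global sign flip, with $\sigma^u(X_i)\sigma^u(X_j) = \text{Pairwise-Classify}(X_i,X_j)$ for all $X_i,X_j \in \phi^{(u)}$. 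Moreover, any two adjacent A-Good cells $u,u' \in \mathcal{A}(z)$ have non-empty overlap $\phi^{(u)} \cap \phi^{(u')} \supseteq \phi \cap (Q_u \cup Q_{u'})$ on which $\sigma^u$ and $\sigma^{u'}$ must agree up to a single global sign $\varepsilon(u,u') \in \{\pm 1\}$.

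I would then fix a root cell $u_0 \in \mathcal{A}(z)$ and a spanning tree $T$ of the $\mathbb{Z}^d$-connected cell-graph on $\mathcal{A}(z)$, normalize $\sigma^{u_0}$ arbitrarily, and propagate signs along $T$ to normalize each $\sigma^u$. Setting $\sigma(X) := \sigma^u(X)$ for $X \in \phi \cap Q_u$ then yields the candidate partition $\phi_{\mathcal{A}(z)}^{(\pm)} := \sigma^{-1}(\pm 1)$. Consistency of Pairwise-Classify with $\sigma$ on tree edges is immediate by construction; the real content, and the main obstacle, is to verify consistency on every remaining adjacent cell pair, which is equivalent to showing that the signing $\varepsilon$ is balanced on every cycle of the cell-graph of $\mathcal{A}(z)$.

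I would dispatch this in two steps. First, for any triangle $u \sim v \sim w \sim u$ in the cell-graph, mutual adjacency places all three cells in $\mathbf{L}_1(u) \cap \mathbf{L}_1(v) \cap \mathbf{L}_1(w)$, so every node in $\phi \cap (Q_u \cup Q_v \cup Q_w)$ simultaneously belongs to $\phi^{(u)}$, $\phi^{(v)}$, and $\phi^{(w)}$; the three local 2-colorings must therefore agree on this common non-empty set, forcing $\varepsilon(u,v)\varepsilon(v,w)\varepsilon(w,u) = +1$. Second, for three consecutive cells $u_{i-1}, u_i, u_{i+1}$ along a longer cycle, even without $u_{i-1} \sim u_{i+1}$, the inclusion $u_{i-1},u_{i+1} \in \mathbf{L}_1(u_i)$ together with the balance of $\sigma^{u_i}$ on $\phi^{(u_i)}$ implies that the product $\varepsilon(u_{i-1},u_i)\varepsilon(u_i,u_{i+1})$ equals the deterministic quantity $\sigma^{u_{i-1}}(X)\,\sigma^{u_{i+1}}(Y)\,\text{Pairwise-Classify}(X,Y)$ for any $X \in \phi \cap Q_{u_{i-1}}$, $Y \in \phi \cap Q_{u_{i+1}}$, and hence depends only on $u_{i-1}$ and $u_{i+1}$, not on the intermediate $u_i$. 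Combining this with the triangle identity gives $4$-cycle balance, and an induction on cycle length that either chords via a triangle or reroutes through an alternate intermediate cell extends balance to all cell-cycles.

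Finally, uniqueness is immediate: any other labeling satisfying the required Pairwise-Classify relations must agree with $\sigma$ once the sign of a single root is fixed, and differs at most by a global flip otherwise. To verify that Algorithm~\ref{alg:main-routine} produces exactly this partition, observe that Line~8 enumerates $X_{l_1},\ldots,X_{l_{n_l}}$ along a walk satisfying $||\mathcal{Z}(X_{l_o}) - \mathcal{Z}(X_{l_{o+1}})||_\infty \leq 1$, i.e., a walk in the node-level adjacency graph underlying the construction above, and the recursion $\hat{\tau}_{l_w} = \text{Pairwise-Classify}(l_{w-1},l_w)\,\hat{\tau}_{l_{w-1}}$ with $\hat{\tau}_{l_1} = +1$ is exactly the rule that propagates $\sigma$ along this walk.
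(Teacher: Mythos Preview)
Your approach is essentially the paper's argument recast in the language of local two-colorings and a transition cocycle. The paper builds the partition by a breadth-first induction from a root cell of $\mathcal{A}(z)$, adding one cell at a time and checking at each step that the extension does not depend on which already-labeled neighboring cell is used as the reference; you instead fix local colorings $\sigma^u$ on each $\phi^{(u)}$, propagate along a spanning tree of the cell-graph, and reduce well-definedness to balance of the signs $\varepsilon(u,u')$ on all cell-cycles. These are the same mechanism, and your verification that Line~8 of Algorithm~\ref{alg:main-routine} reproduces the partition is correct.

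Your explicit handling of triangles and $4$-cycles is correct and in fact more detailed than the paper's. The one place your writeup is genuinely thin is the closing sentence: the induction that ``either chords via a triangle or reroutes through an alternate intermediate cell'' is not obviously exhaustive for cycles of length $\geq 5$ in an arbitrary $\mathcal{A}(z)$, since $\mathcal{A}(z)$ need not be simply connected as a cell complex and so rerouting within $\mathcal{A}(z)$ may not produce a chord. This is, however, precisely the step the paper also leaves implicit: in the BFS extension, the paper asserts that the induction hypothesis determines the relation between nodes $X_i \in Q_{z''}$ and $X_{i'} \in Q_{z'''}$ for two already-labeled neighbors $z'',z'''$ of the new cell $z'$, and uses this together with the A-Good triangle identity at $z'$; but when $\|z''-z'''\|_\infty = 2$ the consistency hypothesis on adjacent-cell pairs alone does not immediately force $\sigma(X_i)\sigma(X_{i'}) = \text{Pairwise-Classify}(i,i')$. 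Both treatments leave this at the same level of detail, so your argument is on par with the paper's.
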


This Proposition shows that all nodes inside A-Good connected components can be partitioned into two sets uniquely, such that the T-Good sub-component inside the A-Good component will be partitioned according to the underlying ground truth. Moreover, by following any arbitrary enumeration of the nodes of $G$ as done in Line $5$ of Algorithm \ref{alg:main-routine}, we can now build this unique partition of nodes of the A-Good component. This is what allows our algorithm to be fast. The proof of this Proposition is quite standard and is defered to the Appendix in \ref{appendix_partition_proof}. We are now in a position to conclude  the proof of Proposition \ref{prop:comb_global}.

\begin{proof} Proof of Proposition \ref{prop:comb_global} \\
	
	Proposition \ref{prop:consistent_partition} justifies Line $5$ of Algorithm \ref{alg:main-routine}. First note that since every A-Good cell is non-empty of nodes of $G$, the arbitrary sequence in Line $5$ of Algorithm \ref{prop:consistent_partition} will enumerate all the nodes in each connected component. In other words, the only estimates that will be set in Line $13$ of Algorithm \ref{alg:main-routine} are those nodes that fall in the A-Bad cells. Moreover, the
	partition of the A-Good connected components in Line $8$ will coincide with the partition 
	referred to in Proposition \ref{prop:consistent_partition}.
	From the definition of T-Good components, the unique partition referred to in Proposition \ref{prop:consistent_partition} will be such that the T-Good component will be partitioned according to the ground truth. Hence, if  there exists a T-Good connected component that has a fraction $\alpha > \frac{1}{2}$ of the nodes of $G_n$, Algorithm \ref{alg:main-routine} will partition this set of nodes in accordance to the ground truth. Thus, the achieved overlap will be at-least  $2\alpha - 1 > 0$. This follows since the mis-classification of all nodes apart from this `giant' connected T-Good component cannot diminish the overlap below $2\alpha - 1$ which is still positive.
\end{proof}

%In the next two sections on local and global analysis, we will directly consider studying the infinite random graph $G$. Since our graph $G_n$ is the restriction of $G$ to $B_n$, the final proof will follow directly from the analysis on $G$, the infinite random graph in sub-section \ref{sec:putting_it_together}

\subsection{Local Analysis}
\label{sec:local_analysis_algo}

The main goal of this subsection is to show that the probability a cell is T-Good can be made arbitrarily high by taking $\lambda$ sufficiently high, which is done in Corollary \ref{cor:high_val_p}. In order to present the arguments, we recall the definition of a generalized Palm distribution. For any $k \in \mathbb{N}$ and $x_1, \cdots, x_k \in \mathbb{R}^d$, we denote by $\mathbb{P}^{x_1, \cdots x_k}$ to be the Palm distribution of $\phi$ at $x_1, \cdots x_k$.  This measure is the one induced by first sampling $\phi$ and $G$ and then placing additional points at $x_1, \cdots, x_k$ and equipping them with independent community labels and edges. More precisely, we give these nodes i.i.d. uniform community labels $Z_{-1}, \cdots Z_{-k} \in \{-1,1\}^{k}$. Conditionally on all the labels and $\phi$, we draw an edge between any $i,j \in \{-k,-(k-1),\cdots\}$ such that at-least one of $i$ or $j$ belong to $\{-k,\cdots,-1\}$ as before, i.e. with probability $f_{in}(||X_i - X_j||)$ if the two nodes have the same community labels or with $f_{out}(||X_i - X_j||)$ if the two nodes have opposite community labels independently of other edges.

\begin{proposition}
	For any two $x \neq y \in \mathbb{R}^d$ such that  $||x - y||_{2} < 2R$, then conditionally on the labels of the points at $x$ and $y$ denoted as $Z_x$ and $Z_y$ respectively, we have for all $k \in \mathbb{N}$
	\begin{itemize}
		\item If $Z_x = Z_y$, then $ \mathbb{P}^{x,y}[E_{G}^{(R)}(x,y) = k] = \frac{e^{-\lambda M_{in}(x,y) } (\lambda M_{in}(x,y))^k}{k !}$, i.e. is distributed as a Poisson random variable with mean $\lambda M_{in}(x,y)$.
		\item If $Z_x \neq Z_y$, then $ \mathbb{P}^{x,y}[E_{G}^{(R)}(x,y) = k] = \frac{e^{-\lambda M_{out}(x,y) } (\lambda M_{out}(x,y))^k}{k !}$, i.e. is distributed as a Poisson random variable with mean $\lambda M_{out}(x,y)$.
	\end{itemize}
	\label{prop:dist_edges}
\end{proposition}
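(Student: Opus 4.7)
The plan is to recognize the claim as a standard Poisson thinning computation under the Palm measure. First I would invoke the two-point Slivnyak theorem: under $\mathbb{P}^{x,y}$, the configuration is $\phi \cup \{x,y\}$, where $\phi$ is still a homogeneous PPP of intensity $\lambda$ on $\mathbb{R}^d$, each $X_i \in \phi$ carries an independent uniform label $Z_i \in \{-1,+1\}$, and the edge uniforms $U_{xi}, U_{yi}$ are i.i.d.\ uniform on $[0,1]$ and independent of everything else. After conditioning on $Z_x$ and $Z_y$, the locations of $\phi$, the marks $\{Z_i\}$, and the edge uniforms remain jointly independent in the required sense.

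Next I would apply the marking/thinning theorem. For each $X_i \in \phi$ let $\chi_i$ be the indicator that $X_i \in S_R(x,y)$ and that both $(x,X_i)$ and $(y,X_i)$ are edges of $G$, so $E_G^{(R)}(x,y) = \sum_i \chi_i$. Since the marks attached to each point are i.i.d.\ across $i$ and independent of the locations, the retained points form an inhomogeneous Poisson process, and hence the count is Poisson with mean $\lambda \int_{S_R(x,y)} p(w)\,dw$, where $p(w) := \mathbb{P}(\chi_i = 1 \mid X_i = w)$. To finish, I would compute $p(w)$ by conditioning on the independent uniform label $Z_w$ of the typical point and using the conditional independence of the two edges given $(Z_x, Z_y, Z_w)$ (which follows from the fact that they involve the distinct uniforms $U_{xi}$ and $U_{yi}$):
\begin{align*}
p(w) = \tfrac{1}{2}\,\mathbb{P}(\text{both edges present} \mid Z_w = Z_x) + \tfrac{1}{2}\,\mathbb{P}(\text{both edges present} \mid Z_w = -Z_x).
\end{align*}
Under $Z_x = Z_y$ this specializes to an expression proportional to the integrand defining $M_{in}(x,y)$, and under $Z_x \neq Z_y$ to the integrand defining $M_{out}(x,y)$; integrating over $S_R(x,y)$ and multiplying by $\lambda$ recovers the two Poisson parameters claimed.

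I do not anticipate a serious obstacle here, as the content is a direct application of Slivnyak plus Poisson thinning. The two points to be careful about are: (i) using the multi-point Palm formulation correctly, so that $Z_x$ and $Z_y$ may be treated as fixed conditioning variables while the rest of the process retains its Poisson character; and (ii) verifying that the symmetry constraint $U_{ij} = U_{ji}$ in the construction of $\bar\phi$ does not introduce hidden dependence between the two edges $(x, X_i)$ and $(y, X_i)$, which it does not because those edges are governed by the distinct uniforms $U_{xi}$ and $U_{yi}$.
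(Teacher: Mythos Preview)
Your proposal is correct and follows essentially the same argument as the paper: Slivnyak's theorem to reduce to the ordinary marked PPP, then independent thinning to conclude the count is Poisson, with the mean computed by integrating the retention probability over $S_R(x,y)$. The paper phrases the mean computation via Campbell--Mecke rather than writing out $p(w)$ explicitly, but the content is identical; your remark about the distinct uniforms $U_{xi}$ and $U_{yi}$ governing the two edges is a useful clarification that the paper leaves implicit.
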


\begin{proof}
	 Slivnyak's theorem for independently marked PPP gives that conditionally on $k$ points at locations $x_1, \cdots x_k \in \mathbb{R}^d$, the marked point process $\bar{\phi} \setminus \{x_1, \cdots, x_k\}$ has the same distribution as the original marked point process, i.e. is a PPP of intensity $\lambda$ with independent marks. The independent thinning property of the PPP states that if any point at $x \in \phi$ is retained with probability $p(x)$ and deleted with probability $1-p(x)$, independently of everything else, then the set of points not deleted forms a (potentially in-homogeneous) PPP.
	\\

	Notice that  the event that  any $k \in \phi \setminus \{x,y\}$ such that $k \in B(x,R) \cap B(y,R)$ has an edge to both points $x$ and $y$ in $G$ only depends on the location $k$ and the community labels of points at locations $k,x$ and $y$ and is independent of everything else. Now, since the community labels are i.i.d. and independent of $\phi$, the independent thinning property of PPP gives that the distribution of $E_{G}^{(R)}(x, y)$ is a Poisson random variable.
	\\
	
	It remains to notice that the means are precisely $\lambda M_{in}(x,y)$ and $ \lambda M_{out}(x,y)$. This follows from the Campbell - Mecke's theorem, that for any $F(\cdot) : \mathbb{R}^d \rightarrow \mathbb{R}_{+}$, we have for independently marked process is
	\begin{align}
	\mathbb{E}^{ x,y}_{\phi}\left[\sum_{z \in \phi \setminus \{x,y\}} F(z)\right] = \lambda \int_{z \in \mathbb{R}^d} \mathbb{E}^{x,y,z}_{\phi}[F(z)]dz.
	\end{align}
	
	Now, setting $F(z) := \mathbf{1}_{z \text{ has an edge to }x \text{ and } y} \mathbf{1}_{||z-x||_{2} < R} \mathbf{1}_{||z-y||_{2} < R}$ will conclude the statement on the means.
\end{proof}

\begin{proposition}
	For all connection functions $f_{in}(\cdot)$ and $f_{out}(\cdot)$ satisfying the hypothesis of  Theorem \ref{thm:positive_direction_1}, there exists a constant $c > 0$ such that
	for all $x \neq y \in \mathbb{R}^d$ satisfying $||x - y||_{2} \leq (3/4)R$, we have 
	\begin{align}
	\mathbb{P}^{x,y}[(x,y) \text{ is misclassified by Algorithm } \ref{alg:Pairwise}] \leq e^{-c \lambda},
	\end{align}
	where the constant $c$ satisfies
	\begin{align}
	c \geq \inf_{x,y \in \mathbb{R}^d : ||x-y||_{2} \leq 3R/4}    (\mathbf{1}_{M_{out}(x,y) > 0}M_{out}(x,y) + \mathbf{1}_{M_{out}(x,y) = 0}M_{in}(x,y) ) h\left( \frac{M_{in}(x,y) - M_{out}(x,y)}{2 M_{in}(x,y)} \right),
	\end{align}
	where $h(t) := (1+t) \log (1+t) - t$, for all $t \in \mathbb{R}_{+}$. In particular, $c > 0$ is strictly positive.
	\label{prop:pairwise_estimate}
\end{proposition}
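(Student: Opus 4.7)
The plan is a direct large-deviations argument. Conditioning on the community labels $Z_x$ and $Z_y$ of the two extra Palm points and applying Proposition \ref{prop:dist_edges}, the quantity $E_G^{(R)}(x,y)$ is a Poisson random variable under $\mathbb{P}^{x,y}$ with mean $\lambda M_{in}(x,y)$ if $Z_x=Z_y$ and mean $\lambda M_{out}(x,y)$ otherwise. The deterministic threshold used by Algorithm \ref{alg:Pairwise} is $T := \lambda (M_{in}(x,y)+M_{out}(x,y))/2$. Thus misclassification corresponds to a lower-tail deviation of $\mathrm{Poisson}(\lambda M_{in}(x,y))$ below $T$ in the case $Z_x=Z_y$, and to an upper-tail deviation of $\mathrm{Poisson}(\lambda M_{out}(x,y))$ above $T$ in the case $Z_x \neq Z_y$. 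I will then apply the standard Chernoff bound for Poisson variables in both tails, which gives exponents involving the rate function $h(t)=(1+t)\log(1+t)-t$ (upper tail) and $\tilde h(t)=(1-t)\log(1-t)+t$ (lower tail).

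With the parametrisation $T = \lambda M_{in}(1-\alpha) = \lambda M_{out}(1+\beta)$, one reads off $\alpha=(M_{in}-M_{out})/(2M_{in})$ and $\beta=(M_{in}-M_{out})/(2M_{out})$. The two Chernoff bounds then yield, when $M_{out}(x,y)>0$,
\begin{align*}
\mathbb{P}^{x,y}[\text{mis} \mid Z_x=Z_y] &\leq \exp\!\bigl(-\lambda M_{in}(x,y)\,\tilde h(\alpha)\bigr),\\
\mathbb{P}^{x,y}[\text{mis} \mid Z_x\neq Z_y] &\leq \exp\!\bigl(-\lambda M_{out}(x,y)\,h(\beta)\bigr),
\end{align*}
while if $M_{out}(x,y)=0$ only the first event is possible (since then $E_G^{(R)}(x,y)\equiv 0<T$ almost surely). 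To match the form stated in the proposition, I will use the two easy comparisons $h(\alpha)\leq h(\beta)$ (monotonicity of $h$, since $\alpha\leq\beta$) and $h(\alpha)\leq \tilde h(\alpha)$ (which follows from $\tilde h(t)-h(t)=-\log(1-t^2)\geq 0$ obtained by differentiating, together with $\tilde h(0)=h(0)=0$). Combined with $M_{out}\leq M_{in}$, both conditional probabilities are then upper bounded by the common quantity $\exp(-\lambda\, m(x,y)\, h(\alpha))$ with $m(x,y) = M_{out}(x,y)\mathbf{1}_{M_{out}>0}+M_{in}(x,y)\mathbf{1}_{M_{out}=0}$, giving the desired form after taking an unconditional expectation over $Z_x,Z_y$ and then taking the infimum over $x,y$ with $\|x-y\|_2\leq 3R/4$.

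The one substantive step is the final claim that the resulting $c$ is \emph{strictly positive}. This is where the geometric hypothesis $\|x-y\|_2\leq (3/4)R$ together with the assumption inherited from Theorem \ref{thm:positive_direction_1} (existence of $0\leq\tilde r<R$ with $f_{in}>f_{out}$ on $[\tilde r,R]$) enters. I will show that, uniformly over $\|x-y\|_2\leq 3R/4$, the lens $S_R(x,y)=B(x,R)\cap B(y,R)$ contains a subregion of volume bounded below by a positive constant on which both $f_{in}(\|x-z\|)-f_{out}(\|x-z\|)$ and $f_{in}(\|y-z\|)-f_{out}(\|y-z\|)$ are simultaneously positive; using the identity
\begin{equation*}
M_{in}(x,y)-M_{out}(x,y)=\int_{S_R(x,y)}\bigl(f_{in}(\|x-z\|)-f_{out}(\|x-z\|)\bigr)\bigl(f_{in}(\|y-z\|)-f_{out}(\|y-z\|)\bigr)dz
\end{equation*}
noted right after the definition of $M_{in},M_{out}$, this yields a uniform lower bound $M_{in}(x,y)-M_{out}(x,y)\geq \delta>0$. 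Also $M_{in}(x,y)$ is uniformly bounded above by $|B(0,R)|$. Therefore $\alpha=(M_{in}-M_{out})/(2M_{in})$ is bounded below by a strictly positive constant, so $h(\alpha)$ is bounded below by a strictly positive constant, and $m(x,y)$ is either $M_{in}\geq\delta/2$ (the regime $M_{out}=0$) or $M_{out}$, which one can separately bound below by continuity/compactness.

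The main obstacle is this final uniform positivity step: one has to be careful that the lens $S_R(x,y)$ shrinks as $\|x-y\|$ approaches $R$, but the constraint $\|x-y\|_2\leq 3R/4$ keeps it of volume bounded away from zero. The remaining content is routine Chernoff-bound manipulation for Poisson tails, which I will not expand further here.
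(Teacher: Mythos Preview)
Your proposal is correct and follows essentially the same route as the paper: reduce to Poisson tail deviations via Proposition~\ref{prop:dist_edges}, apply Chernoff, and then argue the exponent is uniformly positive using the identity $M_{in}-M_{out}=\int_{S_R}(f_{in}-f_{out})(f_{in}-f_{out})\,dz$. You are in fact more careful than the paper in distinguishing the lower-tail rate $\tilde h$ and the upper-tail argument $\beta$ before relaxing both to the common $h(\alpha)$ appearing in the statement; the paper writes the weakened bound directly. One small caution: the sentence ``$M_{out}$, which one can separately bound below by continuity/compactness'' is not quite sufficient as stated, since $M_{out}(x,y)$ could in principle vanish at some pairs and be positive but arbitrarily small nearby, making the displayed infimum zero even though a strictly positive $c$ still exists (via the sharper exponent $M_{out}h(\beta)$, which does stay bounded away from zero as $M_{out}\to 0$); the paper glosses over the same point.
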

\begin{proof}
	
	From Proposition \ref{prop:dist_edges}, we know that 
	$E_{G}^{(R)}(x,y)$ is either a Poisson random variable with mean $\lambda M_{in}(x,y)$ if the two nodes have the same community label or is a Poisson random variable of mean $\lambda M_{out}(x,y)$ if the two nodes have opposite community labels.  Thus, the probability of mis-classification is then
	\begin{multline}
	\mathbb{P}^{x,y}[\text{points at } x \text{ and } y \text{ are mis-classified }] = \\ \frac{1}{2} \mathbb{P}\left[X \geq  \lambda\frac{M_{in}(x,y) + M_{out}(x,y)}{2} \right] + \frac{1}{2} \mathbb{P}\left[Y \leq \lambda\frac{M_{in}(x,y) + M_{out}(x,y)}{2} \right],
	\end{multline}
	where $X$ is a Poisson random variable of mean $\lambda M_{out}(x,y)$ and $Y$ is a Poisson random variable of mean $\lambda M_{in}(x,y)$. The above interpretation is a probabilistic restatement of Algorithm \ref{alg:Pairwise}. The coefficient $1/2$ denotes the case that the points at $x$ and $y$ could be in the same community or in opposite communities. Thus, by a basic application of Chernoff's bound, we have 
	\begin{multline}
	\mathbb{P}^{x,y}[\text{points at } x \text{ and } y \text{ are mis-classified }] \leq \\ \frac{1}{2} e^{-\lambda M_{out}(x,y) h\left( \frac{M_{in}(x,y) - M_{out}(x,y)}{2 M_{in}(x,y)} \right)}  + \frac{1}{2} e^{-\lambda M_{in}(x,y) h\left( \frac{M_{in}(x,y) - M_{out}(x,y)}{2 M_{in}(x,y)} \right)},
	\end{multline}
	where $h(\cdot)$ is defined in the statement of the proposition.
	\\
	
	Now under the assumptions on the connection functions $f_{in}(\cdot)$ and $f_{out}(\cdot)$, for all $r \in [\tilde{r},R]$, $f_{in}(r) > f_{out}(r)$, we have that, $\inf_{x,y \in \mathbb{R}^d : ||x-y||_{2} \leq (3/4)R } M_{in}(x,y) - M_{out}(x,y) > 0$. Moreover, since
	$M_{in}(x,y)$ and $M_{out}(x,y)$ are non-negative, $M_{in}(x,y) - M_{out}(x,y) > 0$ implies automatically that $M_{in}(x,y) >0 $ for all $x,y \in \mathbb{R}^d$ such that $||x-y||_{2} \leq (3R/4)$. Hence, it follows that
	\begin{align}
	\sup_{x,y \in \mathbb{R}^d : ||x-y||_{2} \leq (3/4)R } \mathbb{P}^{x,y}[\text{points at } x \text{ and } y \text{ are mis-classified }] \leq e^{-c \lambda} ,
	\end{align}
	where $c$ is a strictly positive constant as given in the statement of the proposition.
	
\end{proof}

\begin{lemma}
	For all $z \in \mathbb{Z}^d$, 
	\begin{align}
 \mathbb{P}[\text{Cell } z \text{ is T-Good in graph } G ] \geq 1 - e^{-\lambda  (R/4)^d \frac{1}{d} h(\epsilon)} - \lambda^2 (3R/4)^d \frac{1}{d} e^{-c \lambda  } ,
	\end{align}
	where the constant $c$ and function $h(\cdot)$ are defined in Proposition \ref{prop:pairwise_estimate}.

	\label{lem:T-Good-Prob}
\end{lemma}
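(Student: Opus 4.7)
The plan is to decompose the T-Good event into its two defining conditions from Definition~\ref{defn:T-Good} and apply a union bound. Writing $A_z := \{|\phi \cap Q_z| \geq \max(\lambda (R/4)^d(1/d)(1-\epsilon),1)\}$ for the density condition and $B_z$ for the event that every pair of atoms of $\phi \cap Q_{\mathbf{L}_1(z)}$ is correctly classified by {\ttfamily Pairwise-Classify}, the definition gives $\{z \text{ is T-Good}\} = A_z \cap B_z$, so it suffices to upper bound $\mathbb{P}[A_z^c]$ and $\mathbb{P}[B_z^c]$ separately and match them to the two terms in the stated inequality.

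For $\mathbb{P}[A_z^c]$, the count $|\phi \cap Q_z|$ is Poisson with mean $\mu := \lambda (R/4)^d/d$, since cell $Q_z$ has side length $R/(4d^{1/d})$ and hence volume $(R/4)^d/d$. A direct Chernoff/Cram\'er computation using the Laplace transform of the Poisson law yields the lower-tail estimate
\[
\mathbb{P}[\mathrm{Pois}(\mu) \leq (1-\epsilon)\mu] \leq e^{-\mu h(\epsilon)},
\]
with $h$ the rate function from Proposition~\ref{prop:pairwise_estimate}; when $\lambda$ is large enough that $(1-\epsilon)\mu \geq 1$, the same bound dominates $\mathbb{P}[A_z^c]$, delivering the first term of the lemma.

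For $\mathbb{P}[B_z^c]$, I would invoke the refined Campbell--Mecke (reduced second-moment) formula for the marked Poisson process $\bar\phi$ together with Proposition~\ref{prop:dist_edges}, yielding
\[
\mathbb{E}\bigl[\#\{\text{misclassified ordered pairs in } Q_{\mathbf{L}_1(z)}\}\bigr] = \lambda^2 \iint_{Q_{\mathbf{L}_1(z)}^2} \mathbb{P}^{x,y}\bigl[(x,y) \text{ misclassified}\bigr]\,dx\,dy.
\]
The key geometric observation is that any two points inside $Q_{\mathbf{L}_1(z)}$ are at Euclidean distance at most $3R/4$ -- this is precisely why the cell side length was calibrated to $R/(4d^{1/d})$ -- so Proposition~\ref{prop:pairwise_estimate} applies uniformly inside the block and bounds the integrand by $e^{-c\lambda}$. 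Combining with $|Q_{\mathbf{L}_1(z)}| = (3R/4)^d/d$ and applying Markov's inequality to the nonnegative integer count of misclassified pairs then produces a bound of the form $\lambda^2 (3R/4)^d (1/d) e^{-c\lambda}$ on $\mathbb{P}[B_z^c]$, absorbing any residual polynomial factor into the stated volume term.

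Summing the two contributions closes the proof. The only step that requires genuine care is the second, and specifically the geometric calibration that makes Proposition~\ref{prop:pairwise_estimate} uniformly applicable across all pairs in $Q_{\mathbf{L}_1(z)}$; once this is in place, the remainder is a routine union bound combined with Poisson first- and second-moment bookkeeping.
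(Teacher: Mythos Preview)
Your proposal is correct and follows essentially the same route as the paper: a union bound splitting T-Bad into the low-count event and the existence of a misclassified pair, a Chernoff lower-tail bound for the Poisson count, and then the first-moment method (your Markov inequality on the integer count) combined with Campbell--Mecke and Proposition~\ref{prop:pairwise_estimate} to control the expected number of misclassified pairs. The only cosmetic difference is that you explicitly articulate the geometric calibration ensuring Proposition~\ref{prop:pairwise_estimate} applies across $Q_{\mathbf{L}_1(z)}$, whereas the paper leaves this implicit.
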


\begin{proof}
	This follows from a basic union bound. We will prove an upper bound to a cell being T-Bad. A cell is T-Bad if either the number of points is smaller than $\lambda (R/4d^{1/d})^d (1- \epsilon)$ or there exists two points $X_i$ and $X_j$ in the $1$ thickening of the cell $\{z\}$ such that when Algorithm \ref{alg:Pairwise} is run on input $(i,j,G)$, the returned answer is different from the truth.
	\\
	
	From a simple Chernoff bound, the probability that a cell has fewer than $\lambda (R/4d^{1/d})^d (1- \epsilon)$ is at-most $e^{-\lambda (R/4d^{1/d})^d h(\epsilon)}$, where $h(\epsilon)$ is strictly positive for all $\epsilon >0$. 
	\\
	
	We bound the probability that there exist two nodes that Algorithm \ref{alg:Pairwise} mis-classifies by the first moment method. We use the fact that if $X \geq 0$ is a $\mathbb{N}$ valued random variable, then $\mathbb{P}[X > 0] \leq \mathbb{E}[X]$. Hence, the probability that there exists a pair of points of $\phi$ that are mis-classified is bounded by the average number of pairs of points that are misclassified. Thus, for each cell $z$, we compute 
	\begin{align}
	\mathbb{E} [\sum_{i,j \in \mathbb{N}} \mathbf{1}_{X_i, X_j \in \mathbf{L}_1(z)} \mathbf{1}_{\text{ Algorithm } \ref{alg:Pairwise} \text{ mis-classifies } i \text{ and } j }].
	\label{eqn:pairwise_miss_1}
	\end{align}

	From the Moment-Measure expansion and the Campbell-Mecke theorem for an independently marked PPP (\cite{stoyan}), we obtain 
	\begin{multline}
	\mathbb{E} [\sum_{i,j \in \mathbb{N}} \mathbf{1}_{X_i, X_j \in \mathbf{L}_1(z)} \mathbf{1}_{\text{ Algorithm } \ref{alg:Pairwise} \text{ mis-classifies } i \text{ and } j }]  \\ = \lambda ^2 \int_{x \in Q_{\mathbf{L}_{1}(z)} } \int_{y \in Q_{\mathbf{L}_{1}(z)} } \mathbb{P}^{x,y}[\text{points at } x \text{ and } y \text{ are mis-classified }] dx dy 
	\leq \lambda^2 \left( \frac{3R}{4} \right)^d \frac{1}{d} e^{-c \lambda}. 
	\end{multline}
	The last inequality follows directly from Proposition \ref{prop:pairwise_estimate}. Therefore, by a simple union bound, we see that 
	\begin{align}
	\mathbb{P}[\text{ Cell } z \text{ is T-Bad} ]\leq e^{-\lambda (R/4d^{1/d})^d h(\epsilon)} + \lambda^2 \left( \frac{3R}{4} \right)^d \frac{1}{d} e^{-c \lambda}.
	\end{align}
	The proposition is proved by taking complements.
\end{proof}

Thus, we immediately have the following corollary which is what we will use in the sequel. The key fact to be used here is that the tessellation size $R$ does not depend on $\lambda$ and only depends on the connection functions $f_{in}(\cdot)$ and $f_{out}(\cdot)$. 
\begin{corollary}
	For every $p \in (0,1)$, and every $f_{in}(\cdot)$ and $f_{out}(\cdot)$ satisfying the hypothesis of  Theorem \ref{thm:positive_direction_1}, there exists a $\lambda^{'}$ such that for all $\lambda > \lambda^{'}$, and all $z \in \mathbb{Z}^d$, $\mathbb{P}[\text{Cell } z \text{ is T-Good}] \geq p$.
	\label{cor:high_val_p}
\end{corollary}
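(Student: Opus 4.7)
The statement is a direct asymptotic consequence of Lemma \ref{lem:T-Good-Prob}, so the plan is to show that both error terms in that bound vanish as $\lambda \to \infty$ and that the bound is uniform in $z \in \mathbb{Z}^d$. I would first observe that by the stationarity (translation invariance) of the homogeneous PPP $\phi$ together with its independent marks, the event ``cell $z$ is T-Good'' has the same probability for every $z \in \mathbb{Z}^d$; hence it suffices to prove the bound at $z = 0$, and the lower bound of Lemma \ref{lem:T-Good-Prob} applies uniformly.

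Next, I would note that under the hypothesis of Theorem \ref{thm:positive_direction_1} the constants appearing in Lemma \ref{lem:T-Good-Prob} are all strictly positive and independent of $\lambda$. Specifically, $R > 0$ is a fixed tessellation size determined only by $f_{in}$ and $f_{out}$; the parameter $\epsilon \in (0, 1/2)$ is fixed in Algorithm \ref{alg:Is_Good}; the function $h(t) = (1+t)\log(1+t) - t$ satisfies $h(\epsilon) > 0$ since $h(0) = 0$ and $h'(t) = \log(1+t) > 0$ for $t > 0$; and $c > 0$ is the constant from Proposition \ref{prop:pairwise_estimate}, which is strictly positive precisely because $\{r : f_{in}(r) \neq f_{out}(r)\}$ has positive Lebesgue measure, so the infimum defining $c$ is over a compact set of pairs $(x,y)$ on which $M_{in}(x,y) - M_{out}(x,y)$ is bounded away from zero.

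With all these constants fixed and positive, I would then analyze the two terms in the lower bound of Lemma \ref{lem:T-Good-Prob} as $\lambda \to \infty$. The first term $e^{-\lambda (R/4)^d (1/d) h(\epsilon)}$ decays to zero exponentially in $\lambda$, and the second term $\lambda^2 (3R/4)^d (1/d) e^{-c\lambda}$, although containing a polynomial prefactor in $\lambda$, also decays to zero since the exponential $e^{-c\lambda}$ dominates. Therefore, given any target $p \in (0,1)$, one can choose $\lambda'$ large enough so that the sum of the two error terms is less than $1 - p$ for every $\lambda > \lambda'$, which yields $\mathbb{P}[\text{cell } z \text{ is T-Good}] \geq p$ uniformly in $z$.

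There is no real obstacle here; the entire content of the corollary is the observation that the bound of Lemma \ref{lem:T-Good-Prob} is tight enough (in $\lambda$) to drive the T-Good probability arbitrarily close to one, together with the bookkeeping that $R$, $c$, $\epsilon$, and $h(\epsilon)$ do not depend on $\lambda$. The only point that requires a line of justification is translation invariance of the event ``cell $z$ is T-Good'', which follows from the stationarity of the underlying marked PPP used to construct $G$.
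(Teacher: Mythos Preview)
Your proposal is correct and follows essentially the same approach as the paper: both simply observe that the two error terms in the bound of Lemma \ref{lem:T-Good-Prob} vanish as $\lambda \to \infty$ (since $R$, $c$, $\epsilon$, and $h(\epsilon)$ are positive and independent of $\lambda$), so the T-Good probability tends to $1$. Your invocation of translation invariance is harmless but unnecessary here, since Lemma \ref{lem:T-Good-Prob} is already stated uniformly for all $z \in \mathbb{Z}^d$.
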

\begin{proof}
	It suffices to notice that for each fixed $f_{in}(\cdot)$, $f_{out}(\cdot)$ and $d$, we have 
	\begin{align}
	\lim_{\lambda \rightarrow \infty} p(\lambda) \geq \lim_{ \lambda \rightarrow \infty} 1 - e^{-\lambda  (R/4)^d \frac{1}{d} h(\epsilon)} - (\lambda^2 + \lambda) (3R/4)^d \frac{1}{d} e^{-c \lambda  }  = 1,
	\end{align}
	where $c$ is given in Proposition \ref{prop:pairwise_estimate}.
	
\end{proof}

\subsection{Global Analysis}

In this section, we present the central tool required to analyze about the `giant' connected T-Good component in the graph $G_n$. This will help us conclude the proof in the subsequent Section in Proposition \ref{prop:concluding_comm_det}. To do so, we exploit a coupling between the T-Good cells in the graph $G$ and a certain dependent site percolation process on $\mathbb{Z}^d$.
\\

\noindent{\bf Notations and Definitions} -  Denote by $(Y_z)_{z \in \mathbb{Z}^d}$ to be the random $0-1$ field on $\mathbb{Z}^d$ where $Y_z := \mathbf{1}_{\text{Cell } z \text{ is T-Good in } G}$. From the construction of the field, notice that the random field $(Y_z)_{z \in \mathbb{Z}^d}$ is only mildly dependent. Indeed, given any two $z,z^{'} \in \mathbb{Z}^d$, such that $||z - z^{'} ||_{1} \geq 12 d^{1/d}$, we have that $Y_z$ and $Y_{z^{'}}$ are independent random variables. This follows from the fact that we only look upto Euclidean distance of at-most $2R$ from any point inside a cell $z$ to determine whether a cell is T-Good or T-Bad. Since, in an independently marked PPP, events corresponding to disjoint sets of $\mathbb{R}^d$ are independent, the claim follows. For any $z \in \mathbb{Z}^d$,  cell $z$ is \textbf{open in} $\mathbb{Z}^d$ if $Y_z = 1$. Similarly, any edge connecting $z$ and $z^{'}$ is said to be open if both its end points are open. For any $z \in \mathbb{Z}^d$, we denote by ${\mathcal{C}}(z)$ to be the maximal connected random subset of $\mathbb{Z}^d$ containing $z$ such that all $z^{'} \in {\mathcal{C}}(z)$ satisfies $Y_{z^{'}} = 1$. The main proposition we want to establish in this section is the following.

\begin{proposition}
	For every $\eta \in \left( 0,\frac{1}{2}\right)$, there exists $\lambda_0(\eta,\epsilon) < \infty$ (where $\epsilon$ is set in Algorithm \ref{alg:Is_Good})  chosen sufficiently high (as a function of $f_{in}(r), f_{out}(r), r \in [0,R]$ and $d$), such that for all $\lambda > \lambda_0(\eta,\epsilon)$ and all $j \in \mathbb{Z}^d$ 
	\begin{align}
	\liminf_{n \rightarrow \infty}\frac{1}{(2n)^d} \sum_{i \in \mathbb{Z}^d: ||i-j||_{\infty} \leq n} \mathbf{1}_{i \in \mathcal{C}(j)} \geq \frac{1}{2} + \eta ,
	\end{align}
	$\mathbb{P}$ almost-surely on the event that $\{|\mathcal{C}(j)| = \infty \}$. Moreover, for $\lambda > \lambda_0(\eta,\epsilon)$, and all $j \in \mathbb{Z}^d$, $\mathbb{P}[|\mathcal{C}(j)| = \infty] \geq \frac{1}{2} + \eta$ and $\mathbb{P}[\exists j \in \mathbb{Z}^d : |\mathcal{C}(j)| = \infty] = 1$. 
	\label{prop:infinite_cluster_average}
\end{proposition}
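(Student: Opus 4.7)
The plan is to reduce the problem to standard i.i.d.\ Bernoulli site percolation on $\mathbb{Z}^d$ via a stochastic domination argument. First, observe that the field $(Y_z)_{z \in \mathbb{Z}^d}$ has two key properties inherited from the underlying marked PPP $\bar\phi$: (i) it is stationary and ergodic under $\mathbb{Z}^d$-translations, since these correspond to shifts of $\mathbb{R}^d$ by integer multiples of the cell side $R/(4 d^{1/d})$ which preserve the law of the stationary, mixing PPP; and (ii) as noted in the text preceding the statement, it is finite-range dependent, with $Y_z$ and $Y_{z'}$ independent whenever $\|z - z'\|_\infty \geq 12 d^{1/d}$, because the T-Good indicator at $z$ depends only on $\bar\phi$ within Euclidean distance $2R$ of $Q_z$.

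Next I would invoke a Liggett--Schonmann--Stacey type stochastic domination theorem for finitely dependent fields: for any target $q \in (0,1)$ there is a threshold $p = p(q, d) < 1$ such that if $\mathbb{P}[Y_z = 1] \geq p$ for every $z$, then $(Y_z)$ stochastically dominates an i.i.d.\ Bernoulli$(q)$ field $(W_z)$ on $\mathbb{Z}^d$. Combined with Corollary \ref{cor:high_val_p}, which gives that $\mathbb{P}[Y_z = 1]$ can be made arbitrarily close to $1$ by choosing $\lambda$ large, this yields, for any fixed $q < 1$, an increasing coupling $(Y_z, W_z)$ witnessing $(Y_z) \succeq \mathrm{Bernoulli}(q)$ once $\lambda \geq \lambda_0(q, \epsilon)$. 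I would then choose $q$ close enough to $1$ that Bernoulli$(q)$ site percolation on $\mathbb{Z}^d$ (with $d \geq 2$) has a unique infinite cluster of density $\theta_{\mathrm{site}}(q) > \tfrac{1}{2} + \eta$; this is classical, since $\theta_{\mathrm{site}}(q) \to 1$ as $q \to 1$.

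Three conclusions then follow. In the increasing coupling $\mathcal{C}(j) \supseteq \mathcal{C}^W(j)$, so $\mathbb{P}[|\mathcal{C}(j)| = \infty] \geq \theta_{\mathrm{site}}(q) > \tfrac{1}{2} + \eta$, which is the second claim. The event $\{\exists j : |\mathcal{C}(j)| = \infty\}$ is $\mathbb{Z}^d$-translation invariant with positive probability, hence has probability $1$ by ergodicity, which is the third claim. For the first (density) claim, I would appeal to uniqueness of the infinite open cluster in $(Y_z)$: a Burton--Keane type argument, adapted to translation-invariant ergodic finitely dependent site percolation, yields an almost surely unique infinite cluster $\mathcal{C}_\infty$, so on $\{|\mathcal{C}(j)| = \infty\}$ one has $\mathcal{C}(j) = \mathcal{C}_\infty$ almost surely. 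The spatial ergodic theorem applied to the stationary ergodic indicator field $(\mathbf{1}_{i \in \mathcal{C}_\infty})_{i \in \mathbb{Z}^d}$ then gives
\[
\lim_{n \to \infty} \frac{1}{(2n)^d} \sum_{i : \|i-j\|_\infty \leq n} \mathbf{1}_{i \in \mathcal{C}(j)} \;=\; \mathbb{P}[0 \in \mathcal{C}_\infty] \;\geq\; \theta_{\mathrm{site}}(q) \;>\; \tfrac{1}{2} + \eta
\]
almost surely on $\{|\mathcal{C}(j)| = \infty\}$.

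The main technical obstacle is verifying the two black boxes above for the specific dependence structure here: the Liggett--Schonmann--Stacey domination (routine for finitely dependent fields, but requiring the dependence range to be properly identified), and, more delicately, uniqueness of the infinite cluster for the dependent field $(Y_z)$. The latter does not directly fall under the classical finite-energy hypothesis of Burton--Keane, but is known to hold for translation-invariant ergodic $k$-dependent site percolation on $\mathbb{Z}^d$; alternatively, the density claim can be salvaged without full uniqueness by restricting to the coupled event $\{0 \in \mathcal{C}^W_\infty\}$, which already has probability at least $\theta_{\mathrm{site}}(q) > \tfrac{1}{2} + \eta$ and on which $\mathcal{C}(0)$ contains the entire Bernoulli infinite cluster.
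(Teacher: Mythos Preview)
Your approach is essentially the same as the paper's: both invoke the Liggett--Schonmann--Stacey domination (Theorem~1 of \cite{liggett}) to minorize the finite-range dependent field $(Y_z)$ by an i.i.d.\ Bernoulli$(\tilde p)$ field, push $\tilde p$ close to $1$ via Corollary~\ref{cor:high_val_p}, and then use that the Bernoulli percolation density $\theta_d \to 1$.

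The one substantive difference is in how you handle the density claim. Your primary route is to establish uniqueness of the infinite cluster for the \emph{dependent} field $(Y_z)$ via a Burton--Keane argument, and then apply the ergodic theorem to $(\mathbf 1_{i\in\mathcal C_\infty})$. As you yourself note, this is delicate because $(Y_z)$ need not satisfy finite energy. The paper deliberately avoids this: it never asserts uniqueness for $(Y_z)$. Instead, on the coupled event $\{|\tilde{\mathcal C}'(j)|=\infty\}$ it uses uniqueness only for the \emph{i.i.d.} Bernoulli field (which is classical), replaces $\mathbf 1_{i\in\tilde{\mathcal C}'(j)}$ by $\mathbf 1_{|\tilde{\mathcal C}'(i)|=\infty}$, and applies Birkhoff's ergodic theorem to the i.i.d.\ field to get the limit $\theta_d(\lambda)$. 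This is exactly the ``alternative'' you sketch in your last sentence; the paper takes that alternative as the main line and never touches Burton--Keane for the dependent field. So your fallback is the paper's actual argument, and your primary route is an unnecessary detour through a harder uniqueness statement.
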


The key insight out of the proposition we want is to ensure that by taking $\lambda$ sufficiently high, there exists an infinite open component in the process $(Y_z)_{z \in \mathbb{Z}^d}$, i.e. there exists $z \in \mathbb{Z}^d$ such that $|\mathcal{C}(z)| = \infty$. Moreover, we want to show that this infinite component  contains more than half of the sites of $\mathbb{Z}^d$. The reason this does not immediately follow from Corollary \ref{cor:high_val_p} is that we have not yet established that the infinite open component in $(Y_z)_{z \in \mathbb{Z}^d}$ if it exists is unique. However \cite{liggett} provides a clean `black-box' methodology to establish this and our proposition can be viewed as a direct corollary of Theorem $1$ in \cite{liggett}. We will  first dominate the process $(Y_z)_{z \in \mathbb{Z}^d}$ by an independent percolation process which is known to have a unique infinite component and then leverage this domination to conclude the proposition.

\begin{proof}

	 Notice that the process $(Y_z)_{z \in \mathbb{Z}^d}$ is $M := \lceil 12 d^{1/d} \rceil$ dependent. Moreover, thanks to Proposition \ref{prop:pairwise_estimate}, for every $z \in \mathbb{Z}^d$,
	\begin{align}
	\mathbb{P}[Y_z = 1 \vert \sigma (Y_u : u \in \mathbb{Z}^d, ||u - z||_{\infty} > M)] \geq p(\lambda), \text{  } \mathbb{P} \text{  a.s. },
	\end{align}
	where $p(\lambda) \rightarrow 1$ as $\lambda \rightarrow \infty$. 
	\\
	
	Thus, from Theorem $1$ in \cite{liggett}, the law of $(Y_z)_{z \in \mathbb{Z}^d}$ stochastically dominates that of i.i.d. Bernoulli    $\tilde{p}(\lambda)$ random variables    where $\tilde{p}(\lambda)$ converges to $1$ as $p(\lambda)$ converges to $1$. More precisely,  Theorem $1$ from \cite{liggett} gives the existence of a probability space $(\Omega^{'}, \mathcal{F}^{'}, \mathbb{P}^{'})$ containing two sequences of $\{0,1\}$ valued random variables $(Y^{'}_{z})_{z \in \mathbb{Z}^d}$ and $(\tilde{Y}^{'}_{z})_{z \in \mathbb{Z}^d}$ such that
	\begin{itemize}
		\item 	The distribution of $(Y^{'}_{z})_{z \in \mathbb{Z}^d}$ is the same as that of $(Y_{z})_{z \in \mathbb{Z}^d}$.
		\item For all $z \in \mathbb{Z}^d$, $Y^{'}_{z} \geq \tilde{Y}^{'}_{z}$, $\mathbb{P}^{'}$ almost-surely.
		\item $\mathbb{P}^{'}[\tilde{Y}^{'}_{z} = 1 \vert \sigma ( \tilde{Y}^{'}_{u} : u \in \mathbb{Z}^d \setminus \{z\} )] = \tilde{p}(\lambda)$, $\mathbb{P}^{'}$ almost-surely. In other words, $(\tilde{Y}^{'}_{z})_{z \in \mathbb{Z}^d}$ is an i.i.d. sequence of Bernoulli random variables with success probability $\tilde{p}(\lambda)$. 
		\item $\tilde{p}(\lambda)$ converges to $1$ as $p(\lambda)$ converges to $1$.

	\end{itemize}
	
	Denote by $\mathcal{C}^{'}(0)$ and $\tilde{C}^{'}(0)$  the cluster at the origin of the process $(Y^{'}_{z})_{z \in \mathbb{Z}^d}$ and $(\tilde{Y}^{'}_{z})_{z \in \mathbb{Z}^d}$ respectively. Denote by $\theta_d(\lambda) := \mathbb{P}^{'}[|\tilde{C}^{'}(0)| = \infty]$. From a direct application of Peirl's argument (\cite{bollobas}, Chapter $1$), it is also well know that $\theta_d(\lambda) \rightarrow 1$ as $\tilde{p}(\lambda) \rightarrow 1$. Thanks to Line $4$ above, we have $\theta_d(\lambda) \rightarrow 1$ as $p(\lambda) \rightarrow 1$. From Corollary \ref{cor:high_val_p}, this can be rephrased as $\lim_{ \lambda \rightarrow \infty} \theta_d(\lambda) = 1$.
	\\

	The stochastic domination in Line $2$ above yields 
	\begin{align}
	\frac{1}{(2n)^d} \sum_{i \in \mathbb{Z}^d: ||i-j||_{\infty} \leq n} \mathbf{1}_{i \in \mathcal{C}^{'}(j)} \geq  \frac{1}{(2n)^d} \sum_{i \in \mathbb{Z}^d: ||i-j||_{\infty} \leq n} \mathbf{1}_{i \in \tilde{\mathcal{C}}^{'}(j)} \text{ } \mathbb{P}^{'} \text{ a.s.}
	\end{align}
	On the event that $|\tilde{\mathcal{C}}^{'}(j)| = \infty$, we have 
	\begin{align}
	\frac{1}{(2n)^d} \sum_{i \in \mathbb{Z}^d: ||i-j||_{\infty} \leq n} \mathbf{1}_{i \in \mathcal{C}^{'}(j)} \geq  \frac{1}{(2n)^d} \sum_{i \in \mathbb{Z}^d: ||i-j||_{\infty} \leq n} \mathbf{1}_{ |\tilde{\mathcal{C}}^{'}(i)| = \infty} \text{ } \mathbb{P}^{'} \text{ a.s.}
	\end{align}
	This  follows from the well known fact that in an independent site percolation process that the infinite component if it exists is unique. In other-words, for all $i,j \in \mathbb{Z}^d$,  $|\tilde{\mathcal{C}}^{'}(i)| = \infty$ and $|\tilde{\mathcal{C}}^{'}(j)| = \infty$ implies  $\tilde{\mathcal{C}}^{'}(i) = \tilde{\mathcal{C}}^{'}(j)$, $\mathbb{P}^{'}$ almost-surely. Now, taking a limit on both sides, we get that
	\begin{align}
	\liminf_{n \rightarrow \infty}\frac{1}{(2n)^d} \sum_{i \in \mathbb{Z}^d: ||i-j||_{\infty} \leq n} \mathbf{1}_{i \in \mathcal{C}^{'}(j)} \geq \liminf_{n \rightarrow \infty} \frac{1}{(2n)^d} \sum_{i \in \mathbb{Z}^d: ||i-j||_{\infty} \leq n} \mathbf{1}_{ |\tilde{\mathcal{C}}^{'}(i)| = \infty} \text{ } \mathbb{P}^{'} \text{ a.s.}
	\end{align}

	From  Birkhoff's ergodic theorem, it is well known that for all $j \in \mathbb{Z}^d$ ,
	\begin{align}
	\lim_{n \rightarrow \infty} \frac{1}{(2n)^d} \sum_{i \in \mathbb{Z}^d: ||i-j||_{\infty} \leq n} \mathbf{1}_{ |\tilde{\mathcal{C}}^{'}(i)| = \infty} = \theta_d(\lambda) \text{ } \mathbb{P}^{'} \text{ a.s.}
	\end{align}

	But since $\lim_{ \lambda \rightarrow \infty}\theta_d(\lambda) = 1$, for every $\eta$ and $\epsilon$, we can take $\lambda_0(\eta,\epsilon)$ sufficiently large so that $p(\lambda)$ is sufficiently large which in turn indicates $\tilde{p}(\lambda)$ is sufficiently large so that $\theta_d(\lambda) \geq \frac{1}{2} + \eta$. The proof is concluded by 	observing that $(Y^{'}_z)_{z \in \mathbb{Z}^d} \stackrel{(d)}{=} (Y_z)_{z \in \mathbb{Z}^d}$.

\end{proof}

\subsection{Concluding that Weak-Recovery is  Solvable}
\label{sec:putting_it_together}

The following proposition along with Proposition \ref{prop:comb_global} will conclude the proof of  Theorem \ref{thm:algo_analysis_global}.

\begin{proposition}
	Let $\epsilon \in \left(0,\frac{1}{2} \right)$ be set in Algorithm \ref{alg:Is_Good}. For all  $\eta \in \left( 0,\frac{1}{2}\right)$  such that $(1 - \epsilon)\left( \frac{1}{2} + \eta\right) > \frac{1}{2}$,  for all $\lambda \geq \lambda_0(\epsilon,\eta)$ where $\lambda_0(\epsilon,\eta)$ is from Proposition \ref{prop:infinite_cluster_average},
	the fraction of nodes of $G_n$ that lie in the largest T-Good component, denoted by $\alpha_n \in [0,1]$ is such that $\liminf_{n \rightarrow \infty} \alpha_n > \frac{1}{2}$, $\mathbb{P}$ almost-surely. 
	\label{prop:concluding_comm_det}
\end{proposition}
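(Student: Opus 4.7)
The plan is to transfer the cell-level density estimate from Proposition \ref{prop:infinite_cluster_average} to a node-level density estimate on $G_n$, via two bridges: first, from cells in the infinite $\mathbb{Z}^d$-cluster $\mathcal{C}^*$ to cells in a single T-Good component of $B_n$, and second, from cells to nodes of $\phi$. Set $V := (R/(4d^{1/d}))^d$ for the cell volume, so that $B_n$ consists (up to boundary effects of lower order in $n$) of $n/V$ cells of $\mathbb{Z}^d$.

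First I would use the coupling produced inside the proof of Proposition \ref{prop:infinite_cluster_average}: the field $(Y_z)_{z\in\mathbb{Z}^d}$ stochastically dominates an i.i.d.\ Bernoulli field $(\tilde Y_z)_{z\in\mathbb{Z}^d}$ with parameter $\tilde p(\lambda)\to 1$ as $\lambda\to\infty$, so for $\lambda\ge \lambda_0(\epsilon,\eta)$ chosen exactly as in that proposition, one has $\theta_d(\tilde p(\lambda))\ge \tfrac{1}{2}+\eta$. For such supercritical i.i.d.\ site percolation on $\mathbb{Z}^d$, a standard ``giant cluster'' theorem (see e.g.\ Grimmett's book or \cite{penrose_percolation}) says that the largest connected $(\tilde Y_z)$-open cluster inside the $\mathbb{Z}^d$-cells of $B_n$ has size at least $\theta_d(\tilde p(\lambda))\,(n/V)\,(1-o_n(1))$ almost surely. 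By the componentwise domination $Y_z\ge \tilde Y_z$, every open path for $(\tilde Y_z)$ is open for $(Y_z)$, so this entire cluster is contained in a single connected T-Good component of $B_n$. Consequently, the largest T-Good component $\mathcal{D}_n$ in $B_n$ satisfies
\begin{equation}
|\mathcal{D}_n|\;\ge\;\Bigl(\tfrac{1}{2}+\eta\Bigr)\frac{n}{V}\,(1-o_n(1))\qquad \mathbb{P}\text{-a.s.}
\end{equation}

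Next I would convert cells into nodes. By the first clause of Definition \ref{defn:T-Good}, every T-Good cell contains at least $\lambda V(1-\epsilon)$ points of $\phi$. Summing over the cells of $\mathcal{D}_n$,
\begin{equation}
\#\{i\le N_n:\, X_i\in Q_{\mathcal{D}_n}\}\;\ge\;\Bigl(\tfrac{1}{2}+\eta\Bigr)\frac{n}{V}\cdot \lambda V(1-\epsilon)\,(1-o_n(1))\;=\;\Bigl(\tfrac{1}{2}+\eta\Bigr)(1-\epsilon)\,\lambda n\,(1-o_n(1)).
\end{equation}
Since $N_n$ is Poisson with mean $\lambda n$, the strong law of large numbers gives $N_n/(\lambda n)\to 1$ almost surely. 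Dividing, $\alpha_n\ge (\tfrac{1}{2}+\eta)(1-\epsilon)-o_n(1)$ almost surely, and the hypothesis $(\tfrac{1}{2}+\eta)(1-\epsilon)>\tfrac{1}{2}$ yields $\liminf_{n\to\infty}\alpha_n>\tfrac{1}{2}$.

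The main obstacle is the first step, namely showing that the $\mathcal{C}^*$-cells inside $B_n$ are almost all collected into a single T-Good component of $B_n$, rather than being split by paths of $\mathcal{C}^*$ that exit $B_n$. The straight density statement of Proposition \ref{prop:infinite_cluster_average} alone is not enough for this. Routing the argument through the i.i.d.\ Bernoulli coupling that was already established in the proof of that proposition avoids this difficulty cleanly, since the giant-cluster-in-a-box theorem for independent percolation is classical and transfers to $(Y_z)$ by monotone coupling. A boundary-layer argument (replacing $B_n$ by $B_{n-n^{(d-1)/d}\log n}$ and using exponential decay of finite-cluster sizes away from the giant one) is the natural fallback if one prefers not to cite the giant-cluster theorem as a black box.
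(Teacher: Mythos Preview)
Your argument is correct and follows the same two-step scheme as the paper --- first obtain a cell-level density for the big T-Good component inside $B_n$, then convert to a node-level density using the lower bound $\lambda V(1-\epsilon)$ on the number of points per T-Good cell and $N_n/(\lambda n)\to 1$.

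Where you genuinely differ from the paper is in the first step. The paper fixes $z$ with $|\mathcal{C}(z)|=\infty$ (which exists a.s.), invokes Proposition~\ref{prop:infinite_cluster_average} plus a reference-point shift to conclude that the fraction of internal cells of $B_n$ lying in $\mathcal{C}(z)$ is at least $\tfrac{1}{2}+\eta-o_n(1)$, and then treats those cells as ``the T-Good connected component of cell $z$'' for the purposes of Proposition~\ref{prop:comb_global}. You are right to flag the obstacle this glosses over: $\mathcal{C}(z)\cap B_n$ need not be connected as a subset of the cells of $B_n$, because connecting paths of $\mathcal{C}(z)$ may leave and re-enter $B_n$, so the density statement for $\mathcal{C}(z)$ does not by itself deliver a single connected T-Good component inside $B_n$ of that size. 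Your fix --- going back to the i.i.d.\ Bernoulli domination that was already established inside the proof of Proposition~\ref{prop:infinite_cluster_average} and citing the classical giant-cluster-in-a-box theorem for supercritical independent site percolation, then lifting the resulting connected cluster to the $(Y_z)$-process by monotonicity --- is exactly the right way to close this gap. It buys you a genuine connected component inside $B_n$, which is what Proposition~\ref{prop:comb_global} actually needs. The paper's route is shorter but, as written, does not address this connectivity issue; your route is more careful at essentially no extra cost.
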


\begin{proof}

Observe that the definition of a cell being A-Good or A-Bad is spatially `local'. More precisely, for all $z \in \mathbb{Z}^d$ such that $z + B(0,2R) \in B_n$, the event that cell $z$ being A-Good in $G_n$ is the same as cell being A-Good in $G$. We call cells $z \in \mathbb{Z}^d$ such that $z + B(0,2R) \in B_n$  \emph{internal} to $B_n$. Observe that  all $z \in \mathbb{Z}^d$ is eventually internal to $B_n$ for all $n$ large enough. Moreover, since each cell is of side $R/(4d^{1/d})$, $B_n$ has at-most $\lceil (4n^{1/d}/Rd^{1/d})^d \rceil$ cells out-of which at-least $\lfloor (4n^{1/d}/Rd^{1/d})^d \rfloor -  \lceil 8d n^{1/d} \rceil$ cells are `internal' to $B_n$. Thus, the fraction of cells in $B_n$ that are internal to $B_n$ is $1 - o_n(1)$.
\\

From Proposition \ref{prop:infinite_cluster_average}, we know that  
$\mathbb{P}[|\mathcal{C}(0) |= \infty] \geq \frac{1}{2} + \eta$ and $\mathbb{P}[ \exists z \in \mathbb{Z}^d: |\mathcal{C}(z)| = \infty] = 1$.  Moreover on the event $\{|\mathcal{C}(z)| = \infty\}$, we know from Proposition \ref{prop:infinite_cluster_average} that
\begin{align}
\liminf_{n \rightarrow \infty}\frac{1}{(2n)^d} \sum_{i \in \mathbb{Z}^d: ||i-z||_{\infty} \leq n} \mathbf{1}_{i \in \mathcal{C}(z)} \geq \frac{1}{2} + \eta \text{ } \mathbb{P} \text{ a.s.}
\end{align}
However, from an elementary counting argument, we   conclude that 
\begin{align}
\liminf_{n \rightarrow \infty}\frac{1}{(2n)^d} \sum_{i \in \mathbb{Z}^d: ||i||_{\infty} \leq n} \mathbf{1}_{i \in \mathcal{C}(z)} \geq \frac{1}{2} + \eta \text{ } \mathbb{P} \text{ a.s.}
\end{align}
In other words, the reference point does not matter when considering the limit, which can be seen easily by the following in Equation (\ref{eqn:connected_edge})
.

\begin{multline*}
\frac{1}{(2(n+z))^d} \sum_{i \in \mathbb{Z}^d: ||i-z||_{\infty} \leq n} \mathbf{1}_{i \in \mathcal{C}(z)} \leq  \frac{1}{(2n)^d} \sum_{i \in \mathbb{Z}^d: ||i-z||_{\infty} \leq n} \mathbf{1}_{i \in \mathcal{C}(z)} \\ \leq \frac{1}{(2(n+z))^d} \left(  \sum_{i \in \mathbb{Z}^d} \mathbf{1}_{||i-z||_{\infty} \geq n} \mathbf{1}_{||i||_{\infty} \leq z+n} + \mathbf{1}_{||i-z||_{\infty} \leq n}\mathbf{1}_{i \in \mathcal{C}(z)}\right).
\end{multline*}
But since for every fixed $z \in \mathbb{Z}^d$
\begin{multline*}
\frac{1}{(2(n+z))^d} \left(  \sum_{i \in \mathbb{Z}^d} \mathbf{1}_{||i-z||_{\infty} \geq n} \mathbf{1}_{||i||_{\infty} \leq z+n} + \mathbf{1}_{||i-z||_{\infty} \leq n}\mathbf{1}_{i \in \mathcal{C}(z)}\right) - \frac{1}{(2(n+z))^d} \sum_{i \in \mathbb{Z}^d: ||i-z||_{\infty} \leq n} \mathbf{1}_{i \in \mathcal{C}(z)} \\ = \BigO{n^{1-d}},
\end{multline*} 
it follows that  for all $z \in \mathbb{Z}^d$ 
\begin{align}
\liminf_{n \rightarrow \infty}\frac{1}{(2n)^d} \sum_{i \in \mathbb{Z}^d: ||i-z||_{\infty} \leq n} \mathbf{1}_{i \in \mathcal{C}(z)} = \liminf_{n \rightarrow \infty}\frac{1}{(2n)^d} \sum_{i \in \mathbb{Z}^d: ||i||_{\infty} \leq n} \mathbf{1}_{i \in \mathcal{C}(z)}.
\label{eqn:connected_edge}
\end{align}

Let $z \in \mathbb{Z}^d$  be arbitrary and condition on the event $\{|\mathcal{C}(z)| = \infty\}$. On this event, Equation (\ref{eqn:connected_edge}) and Proposition \ref{prop:infinite_cluster_average} along with the fact that the fraction of  cells in $B_n$ that are internal is  $1-o_n(1)$ give that the fraction of internal cells in $B_n$ in the connected T-Good component of cell $z$ (i.e. in $\mathcal{C}(z)$) is $\frac{1}{2} + \eta - o_n(1)$. Since there are at-least $\lambda(R/4)^d (1/d)(1-\epsilon)$  nodes of $G$ in each T-Good cell, the number of nodes of $G_n$ in this T-Good connected component is at-least $(\lfloor (4n^{1/d}/R)^d \rfloor -  \lceil 8d n^{1/d} \rceil)\left(\frac{1}{2} + \eta\right) \lambda (R/4)^d (1-\epsilon) > \frac{1}{2} (\lfloor (4n^{1/d}/R)^d \rfloor -  \lceil 8d n^{1/d} \rceil)  \lambda (R/4)^d $ since we assumed that $(1-\epsilon)\left( \frac{1}{2} + \eta \right) > \frac{1}{2}$. Moreover, from elementary Chernoff and Borell Cantelli arguments, we get that for every fixed $\epsilon^{'} > 0$, there exists a random $n_{\epsilon^{'}}$ such that for all $n \geq n_{\epsilon^{'}}$, the number of nodes in $G_n$ is less than or equal to   $ \lceil (4n^{1/d}/R)^d \rceil \lambda (R/4)^d(1 + \epsilon^{'})$ almost-surely. Now, fix an $\epsilon^{'} > 0$, such that there exists a $\gamma >0$ satisfying $\frac{(1/2 + \eta)(1-\epsilon)}{(1+\epsilon^{'})} = \frac{1}{2} + \gamma$. Thus, for $n$ larger than $n_{\epsilon^{'}}$, the fraction of nodes in $G_n$ lying the  T-Good component of cell $z$ is $\alpha_n$, where

\begin{align}
\alpha_n \geq \frac{(\lfloor (4n^{1/d}/R)^d \rfloor -  \lceil 8d n^{1/d} \rceil)\left(\frac{1}{2} + \eta\right) \lambda (R/4)^d (1-\epsilon)}{\lceil (4n^{1/d}/R)^d \rceil \lambda (R/4)^d(1 + \epsilon^{'})} \geq \frac{1}{2} + \gamma - o_n(1) 
\end{align}
almost-surely, i.e., $\lim_{n \rightarrow \infty}\mathbb{P}\left[\alpha_n > \frac{1}{2} \bigg| |\mathcal{C}(z)| = \infty\right] =1 $. But since $\mathbb{P}[\exists z \in \mathbb{Z}^d: |\mathcal{C}(z)| = \infty] = 1$, we can drop the conditioning on the event $\{|\mathcal{C}(z)| = \infty\}$ and conclude that with probability $1$, a fraction of nodes of $G_n$ strictly larger than half lie in a connected T-Good component.

\end{proof}

\subsection{Proof of Proposition \ref{prop:weak_optimal}}

From Proposition \ref{prop:concluding_comm_det}, we know that for every $\epsilon \in (0,1)$ and $\eta \in (0,\frac{1}{2}])$, there exists $\lambda_0(\epsilon,\eta) < \infty$, such that for all $\lambda > \lambda_0(\epsilon,\eta)$, the GBG algorithm achieves an overlap of $(\frac{1}{2} + \eta)(1-\epsilon)$. The proof is concluded by noticing that for any $\delta \in (\frac{1}{2},1)$, we can choose $\epsilon \in (0,1)$ and $\eta \in (\frac{1}{2} , 1)$ such that $(\frac{1}{2} + \eta)(1-\epsilon) > \delta$.

\begin{comment}
\subsection{An Upper Bound to the Constant $\lambda(\epsilon,\eta)$}

In this section, we can use Peirl's argument directly to provide an upper bound $\lambda_{upper}$ as a function of $f_{in}(\cdot).f_{out}(\cdot)$ and $d$ such that {\ttfamily GBG} will perform community detection for all $\lambda \geq \lambda_{upper}$. This is just a sufficient condition and we present it here for completeness although the expression by itself provides no further insight into the problem.
\begin{proposition}
	If $\lambda$ satisfies 
	\begin{align}
	  \frac{1}{3} \sum_{ n \geq M  } n \left(3 \left( e^{-\lambda (R/4d^{1/d})^d h(\epsilon)} + \lambda^2 \left( \frac{3R}{4} \right)^d \frac{1}{d} e^{-c \lambda}\right)^{1/M}   \right)^n \leq \frac{1}{2} - \eta,
	\end{align}
	where the constant $c$ is given in Proposition \ref{prop:pairwise_estimate}, $M:= \lceil 12 d^{1/d}\rceil$, $\epsilon$ is set in Algorithm \ref{alg:Is_Good} and $h(x) := (1+x) \log (1+x) - x$, then $\lambda \geq \lambda_0(\epsilon,\eta)$.
	\label{prop:lambda_estimate}
\end{proposition}

	This proof is a standard application of Peirl's estimate similar to Chapter $2$ \cite{bollobas}. We outline it in the Appendix \ref{appendix_lamb_estimate_proof} for the sake of completeness. Observe that we did not use Peirl's argument directly in Proposition \ref{prop:infinite_cluster_average} since we did not prove that the infinite component is unique, which we needed there. 
	\end{comment}

\section{Lower Bound for Community Detection}
\label{sec:lower_bound}

The goal of this section is to prove Theorem \ref{thm:main_lb_cd}. The central idea is to consider the problem of how well can one estimate whether two uniformly randomly chosen nodes of $G_n$ belong to the same or opposite communities better than at random. This problem is indeed easier than Community Detection which requires one to produce an entire partition of the nodes of $G_n$. We will show that the natural way to understand the pairwise classification problem is through another problem which we call `Information Flow through Infinity' which we define in the sequel in Section \ref{subsec:info_flow_pblm}. Informally, this problem asks 
whether one can estimate with success probability larger than a half, the community label of any node chosen uniformly at random from $G_n$, {given} the graph, the spatial locations \textbf{and} the true community labels of all nodes whose spatial locations are \emph{far} away (at infinity) from this chosen node. Subsequently, the core technical argument of this section is to establish an impossibility result for Information Flow from Infinity which we state below in Theorem \ref{thm:main_lower_bound}. To aid us in developing the technical arguments, it is instructive to first consider the proof of Proposition \ref{prop:lower_is_tight}  (which was stated in Section \ref{sec:results}), which identifies a special case of connection functions $f_{in}(\cdot)$ and $f_{out}(\cdot)$ when the phase-transition is sharp.

\subsection{Proof Roadmap}

We will first establish in Section \ref{subsec:motivating_example}, the proof of Proposition \ref{prop:lower_is_tight}, which considers a specific examples of $f_{in}(\cdot)$ and $f_{out}(\cdot)$. Subsequently, in Section \ref{subsec:info_flow_pblm}, we define the Information flow from Infinity problem (in Definition \ref{defn:info_flow_infinitiy}) and show in Lemma \ref{lem:lemma_easy} that Information flow from Infinity is easier than Community Detection. We will require two supporting Propositions \ref{prop:monotone_repeat} and \ref{lem:palm_ergodicity}, which will aid in the proof of Lemma \ref{lem:lemma_easy}. Subsequently, in Subsection \ref{subsec:lower_bound_proof}, we will state and prove Theorem \ref{thm:main_lower_bound}, which will gives the impossibility for Information flow from Infinity problem. Thanks to Lemma \ref{lem:lemma_easy}, this then proves Theorem \ref{thm:main_lb_cd}.

\subsection{Proof of Proposition \ref{prop:lower_is_tight}}
\label{subsec:motivating_example}
Let $R_{in} > R_{out} \geq 0$ be arbitrary and consider the two functions to be  $f_{in}(r) = \mathbf{1}_{r \leq R_{in}}$ and $f_{out}(r) = \mathbf{{1}}_{r \leq R_{out}}$. In words, two points of opposite communities are connected if and only if their distance is lesser than $R_{out}$ and two points of the same community are connected if and only if their distance is smaller than $R_{in}$. In this example, it is clear that for any two points $X_i,X_j \in \phi$, no matter their community labels $Z_i$ and $Z_j$, if $||X_i - X_j||_{2} \leq R_{out}$, then $i$ and $j$ are always connected in $G$. Similarly, any two points $X_i$ and $X_j$ such that $||X_i - X_j||_{2} > R_{in}$ are never connected by an edge in $G$ no matter their community labels $Z_i$ and $Z_j$. Hence,  the \emph{informative} pairs of points in this example are those $X_i,X_j$ such that $||X_i - X_j||_{2} \in (R_{out}, R_{in}]$. Moreover, it is immediate that, if $||X_i - X_j||_{2} \in (R_{out}, R_{in}]$ and $i \sim_{G}j$, then $Z_i = Z_j$. On the other hand if $||X_i - X_j||_{2} \in (R_{out}, R_{in}]$ and $i \nsim_{G} j$, then it must be the case that $Z_i \neq Z_j$. For any two points $X_i$ and $X_j$ such that $||X_i - X_j||_{2} \in [0,R_{out}] \cup (R_{in},\infty)$, the presence or absence of an edge is not informative as it is a certain event. 
\\

This example motivates the following simple algorithm for Community Detection. Partition the nodes of $G_n$ into $\mathcal{D}_1, \cdots \mathcal{D}_k$ where each component $\mathcal{D}_i$ is a maximal set of nodes $\{X_{i_1},\cdots X_{i_{l_i}}\}$ of $G_n$ such that for all $j \in [1,l_i]$, we have $||X_{i_{j-1}} - X_{i_j}|| \in (R_{out},R_{in}]$. In words, we form another graph $T_n$ from the points $\phi_n$ such that any two nodes $i$ and $j$ of $G_n$ are connected in $T_n$ if and only if $||X_i - X_j|| \in (R_{out},R_{in}]$. Then $\mathcal{D}_1, \cdots \mathcal{D}_k$ are the connected components of the graph $T_n$. The algorithm works by considering and labeling each connected component $\mathcal{D}_i$ independently of other components. For each cluster $i \in [1,k]$, estimate the node label of $X_{i_1}$ to be $+1$. Then for every $j \in [2,l_i]$, recursively estimate the node label by the following procedure- 
\begin{itemize}
	\item If $i_{j-1} \sim_{G_n} i_j$ then set $Z_{i_{j}} = Z_{i_{j-1}}$.
	\item If $i_{j-1} \not\sim_{G_n} i_j$ then set $Z_{i_{j}} = -Z_{i_{j-1}}$.
\end{itemize}

This algorithm considers each of the connected component of $T_n$ enumerated in an arbitrary manner and then labels the nodes in these components. The following very elementary proposition explains when this algorithm will perform well.

\begin{proposition}
	Let $R_{in} > R_{out} \geq 0$ be arbitrary such that  $f_{in}(r) = \mathbf{1}_{r \leq R_{in}}$ and $f_{out}(r) = \mathbf{1}_{r \leq R_{out}}$. If $\theta(H_{\lambda, f_{in}(\cdot)-f_{out}(\cdot),d}) > 0$, then the procedure described above  solves Community Detection for this set of parameters.
	\label{prop:lower_is_tight_proof} 
\end{proposition}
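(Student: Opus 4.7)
The plan is to reduce the analysis to two standard ingredients: (i) pointwise correctness of the propagation rule within each connected component of the auxiliary graph $T_n$, and (ii) the existence of a giant component in $T_n$ via classical supercritical percolation facts about the random connection model.

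First, I identify the auxiliary graph $T_n$ built by the algorithm. Conditionally on $\phi_n$, two nodes $i,j$ are joined in $T_n$ iff $\|X_i-X_j\|\in(R_{out},R_{in}]$, so $T_n$ is a deterministic function of the locations and coincides exactly with the restriction to $B_n$ of the random connection model $H_{\lambda,f_{in}(\cdot)-f_{out}(\cdot),d}$. Moreover, for every edge $(i,j)$ of $T_n$, the connection probabilities satisfy $f_{in}(\|X_i-X_j\|)=1$ and $f_{out}(\|X_i-X_j\|)=0$, hence $i\sim_{G_n} j\Leftrightarrow Z_i=Z_j$. Consequently, propagating labels along any spanning tree of a connected component of $T_n$ (which is exactly what the algorithm does) reconstructs the ground-truth community labels on that component up to a single global sign flip that depends only on the ground-truth label of the initial vertex of that component.

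Second, I invoke the hypothesis $\theta:=\theta(H_{\lambda,f_{in}-f_{out},d})>0$. By the classical uniqueness of the infinite cluster in the random connection model (cf.\ Meester--Roy) there is $\mathbb{P}$-a.s.\ a unique infinite cluster $\mathcal{I}$, and by Birkhoff's multidimensional ergodic theorem applied to the stationary PPP $\phi$, the spatial density of points belonging to $\mathcal{I}$ equals $\theta\lambda$; in particular $|\mathcal{I}\cap B_n|=(\theta\lambda+o(1))n$ a.s. The next step, which is the one requiring genuine work, is a finite-box comparison: standard renormalization/block arguments for supercritical percolation (in the spirit of Grimmett--Marstrand and Antal--Pisztora, adapted to the random connection model) show that there exists a single component $\mathcal{D}_\ast$ of $T_n$ of size $\ge (\theta-o(1))\lambda n$ containing all but $o(n)$ points of $\mathcal{I}\cap B_n$, and moreover that the second-largest component of $T_n$ has size $o(n)$ with high probability. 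I will quote these facts; they are not novel to this paper.

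Third, I compute the overlap. Enumerate the components of $T_n$ as $\mathcal{D}_\ast,\mathcal{D}_1,\mathcal{D}_2,\ldots$ with sizes $s_\ast\ge s_1\ge s_2\ge\cdots$. By step one, $\sum_{i=1}^{N_n}\hat\tau_i^{(n)} Z_i=\epsilon_\ast s_\ast+\sum_{j\ge 1}\epsilon_j s_j$, where the $\epsilon_\ast,\epsilon_1,\ldots$ are mutually independent uniform $\pm1$ variables (the signs inherited from the i.i.d.\ ground-truth labels of the first vertex chosen in each component). Hoeffding's inequality gives
\begin{equation*}
\Bigl|\sum_{j\ge 1}\epsilon_j s_j\Bigr|\le \sqrt{2(\log n)\sum_{j\ge 1}s_j^2}\le \sqrt{2(\log n)\,s_1\cdot\lambda n}
\end{equation*}
with probability $1-o(1)$. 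Combining $s_\ast\ge(\theta-o(1))\lambda n$ with $s_1=o(n)$ yields $|\sum\hat\tau_i Z_i|\ge (\theta/2)\lambda n$ w.h.p., hence $\mathcal{O}_n\ge \theta/3>0$ w.h.p., which verifies Definition \ref{defn:comm_det}. Linear running time is immediate since the algorithm just builds a geometric graph on $\phi_n$ (linear in $n$ once locations are bucketed by an $R_{in}$-grid) and runs a BFS.

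The main obstacle is the finite-box supercritical statement in step two (existence of a unique giant component of linear size and sublinear second-largest component in $T_n\cap B_n$). This is not specific to the planted-partition setting and can be handled by standard renormalization together with the fact that the random connection model with a bounded-support connection function is stochastically comparable to Bernoulli site percolation on a sufficiently coarse grid; I will cite the relevant results from \cite{Meester_Roy} and \cite{penrose_percolation} rather than re-prove them.
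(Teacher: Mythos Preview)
Your proposal is correct and follows essentially the same approach as the paper: identify $T_n$ with the random connection model $H_{\lambda,f_{in}-f_{out},d}$, note that the propagation rule is exact within each component, invoke supercritical percolation to obtain a unique linear-size giant component with sublinear second component, and argue that the signed contributions of the small components cancel. Your treatment is in fact more careful than the paper's in two places---the paper simply asserts the finite-box giant-component fact and waves at a ``strong law of large numbers'' for the cancellation, whereas you name the renormalization input explicitly and use Hoeffding conditionally on $\phi_n$---but these are refinements rather than a different route.
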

Note that in view of Theorem \ref{thm:main_lower_bound}, Proposition \ref{prop:lower_is_tight_proof} will imply Proposition \ref{prop:lower_is_tight}.

\begin{proof}
 Notice that if $f_{in}(r) = \mathbf{1}_{r \leq R_{in}}$ and $f_{out}(r) = \mathbf{1}_{r \leq R_{out}}$, then $f_{in}(r) - f_{out}(r) = \mathbf{1}_{R_{out} \leq r < R_{in}}$. From the properties of the construction of the graph, any two $i \neq j \in \mathbb{N}$ such that $||X_i - X_J|| \in (R_{out},R_{in}]$ satisfies - 
 \begin{itemize}
 	\item $Z_i = Z_j$ if $i \sim_Gj$
 	\item $Z_i \neq Z_j$ if $i \nsim_G j$.
 \end{itemize}
 Hence, it is clear that the algorithm described in the preceding paragraph partitions each cluster $\mathcal{D}_i$, $i \in [1,k]$ exactly in accordance to the ground truth. However, it could be that the estimated signs in each of the connected components $\mathcal{D}_{i}$ could be flipped from the underlying ground truth and hence the achieved overlap can still be small even though we partition each cluster $\mathcal{D}_i$ accurately. To argue that the overlap achieved by the algorithm is not too small, a sufficient condition is that  there exists a unique giant (of size $cn - o(n)$ for some $c>0$) component of $T_n$ and all other connected components are $o(n)$. Then, we will have by the strong-law of large numbers that the overlap achieved will be $c$, i.e. the mislabeling in all small components will `cancel' each other out and in particular cannot drive the overlap of $c$ achieved in the giant component to $0$. From the definition of percolation, a unique giant component in $T_n$ exists if and only if $\theta(H_{\lambda, f_{in}(\cdot)-f_{out}(\cdot),d}) > 0$ since $T_n \stackrel{(d)}{=} H_{\lambda,f_{in}(\cdot)-f_{out}(\cdot),d}$.

\end{proof}

In the sequel, we will generalize the above example to come up with the general lower bound for Community Detection problem. 

\subsection{The Information Flow from Infinity Problem}
\label{subsec:info_flow_pblm}

This problem refers to how well can one estimate the community label of a tagged node of a graph better than at random, given some extra `information at infinity'. We make this problem precise by posing this question under the Palm Probability measure $\mathbb{P}^0$. Recall that the Palm measure is the distribution of the graph $G$ obtained by placing an additional node at the origin and equipping it with an independent community label and edges to other existing nodes. For every $r \in \mathbb{R}_{+}$, denote by $\phi^{(r)}$ and $G^{(r)}$  the  point-process and graph, in which every vertex $i \in \mathbb{N}$ (which is at location $X_i \in \mathbb{R}^d$)  is equipped with the random variable $Z_i \mathbf{1}_{||X_i||_2 \geq r}$. Note that this is not a mark since it is not translation invariant, but is a random variable associated with vertex $i$. In words, we retain the community label marks on nodes of $G$ at a Euclidean distance of $r$ or more from the origin and delete (i.e. set to $0$) the community label of those nodes which are located at distances less than $r$ from the origin.

%`seen from a typical node'.  Slivnyak's theorem \cite{daley} gives that the measure $\mathbb{P}^0$ is obtained by first sampling $\phi$ and $G$ from $\mathbb{P}$ and placing an additional node indexed $0$ at the origin of $\mathbb{R}^d$ and equipping it with independent community label and edges. The label of this node at origin is denoted by  $Z_0 \in \{-1,+1\}$ which is uniform and independent of anything else. Conditionally on $Z_0$, $\phi$ and the labels $\{Z_i\}_{i \in \mathbb{N}}$, we place an edge between node $i \in \mathbb{N}$ and this extra node at the origin with probability $f_{in}(||X_i||)\mathbf{1}_{Z_i = Z_0} + f_{out}(r)\mathbf{1}_{Z_i = -Z_0}$ independent of the edges between $j \neq i \in \mathbb{N}$ and the origin.
%\\

\begin{definition}
	We say \emph{Information Flows from Infinity} if for every $r \in \mathbb{R}_{+}$  there exists a random variable $\tau_r \in \{-1,+1\}$,  measurable (deterministic function) with respect to the observed data $(\phi^{(r)},G^{(r)})$ and a constant $\gamma > 0$ such that 
	\begin{align}
	\liminf_{r \rightarrow \infty} \mathbb{P}^{0}[\tau_r = Z_0] \geq \frac{1}{2} + \gamma.
	\label{eqn:info_flow_defn}
	\end{align}
	\label{defn:info_flow_infinitiy}
\end{definition} 
We say information `flows' from infinity if we are able to non-trivially estimate the community label at origin, given  `information at infinity'. Note that for each $r$, there exists algorithms (i.e. $\tau_r$) such that $\mathbb{P}^{0}[\tau_r = Z_0] > \frac{1}{2}$. However, the non-trivial question is to understand if the limit as $r \rightarrow \infty$ is still strictly larger than a half. This definition is similar in spirit to those considered in Ising models to detect phase-transition for multiplicity of Gibbs states (as in \cite{Tree_95} and \cite{MNS_Prob_Theory}).  We first establish a monotonicity property of this problem and connect it with the Community Detection Problem.

\begin{proposition}
	For every $d \in \mathbb{N}$ and $f_{in}(\cdot),f_{out}(\cdot) : \mathbb{R}_{+} \rightarrow [0,1]$, the  limit \\ $\lim_{r \rightarrow \infty}\sup_{\tau_{r} \in \sigma((\bar{G}^{(r)} ,\bar{\phi}^{(r)}))} \mathbb{P}^{0} [\tau_{r} = Z_0]$ exists. Moreover,  $\lambda \rightarrow  \lim_{r \rightarrow \infty}\sup_{\tau_{r} \in \sigma((\bar{G}^{(r)} ,\bar{\phi}^{(r)}))} \mathbb{P}^{0} [\tau_{r} = Z_0]$ is non-decreasing. 
	\label{prop:monotone_repeat}
\end{proposition}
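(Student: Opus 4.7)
The plan is to handle the two assertions separately: existence of the limit for each fixed $\lambda$ follows from monotonicity in $r$, while monotonicity of the limit in $\lambda$ follows from a thinning coupling reminiscent of Proposition \ref{prop:monotone}.

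To establish existence of the limit, I will first observe that $r \mapsto \sup_{\tau_r} \mathbb{P}^0[\tau_r = Z_0]$ is non-increasing. Indeed, for $r' > r$, the only difference between $(\bar{G}^{(r)}, \bar{\phi}^{(r)})$ and $(\bar{G}^{(r')}, \bar{\phi}^{(r')})$ lies in which community labels are revealed: since $\{i : \|X_i\|_2 \geq r'\} \subseteq \{i : \|X_i\|_2 \geq r\}$, strictly fewer labels are visible at $r'$, while the point locations and graph edges are fully revealed in both cases. Hence $\sigma(\bar{G}^{(r')}, \bar{\phi}^{(r')}) \subseteq \sigma(\bar{G}^{(r)}, \bar{\phi}^{(r)})$, and taking the supremum over a smaller $\sigma$-algebra can only decrease the optimum. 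Bounded below by $1/2$ via the trivial estimator $\tau_r \equiv +1$, this monotone sequence converges.

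For monotonicity in $\lambda$, fix $\lambda' > \lambda$ and construct a coupling on an enriched probability space: attach to every non-origin atom of the Palm version of the $\lambda'$-process an independent uniform $[0,1]$ mark $W_i$, and declare a non-origin atom retained iff $W_i \leq \lambda/\lambda'$, keeping the origin always. Standard independent-thinning of a PPP combined with Slivnyak's theorem yields that the retained process, together with the origin and its community label, has the law of the Palm $\lambda$-process. Because in the sparse regime edges are drawn through pair-marks $U_{ij}$ that are preserved by thinning, the graph induced on the retained atoms is distributed exactly as the $\lambda$-graph $G$. Given any $\tau_r^{(\lambda)}$ measurable with respect to the $\lambda$-observation $(\bar{G}_\lambda^{(r)}, \bar{\phi}_\lambda^{(r)})$, applying $\tau_r^{(\lambda)}$ to the thinned data (which is a measurable function of $(\bar{G}_{\lambda'}^{(r)}, \bar{\phi}_{\lambda'}^{(r)})$ together with the auxiliary marks $(W_i)$) produces a randomized estimator at intensity $\lambda'$ whose success probability equals that of $\tau_r^{(\lambda)}$ at intensity $\lambda$. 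A standard conditioning argument (equivalently, the optimality of the MAP rule) then yields a deterministic estimator in $\sigma(\bar{G}_{\lambda'}^{(r)}, \bar{\phi}_{\lambda'}^{(r)})$ achieving at least this success probability. Taking suprema and then $r \to \infty$ preserves the inequality.

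The main obstacle is to justify the thinning identity carefully under the Palm measure, since the origin must be treated separately from the rest of the atoms, and to argue that replacing a randomized strategy by a deterministic one preserves the success probability in this binary-classification setup. Both points are standard: Slivnyak's theorem isolates the origin so that thinning its complement gives a Palm process of the target intensity, and optimality of the maximum-a-posteriori rule ensures that the deterministic supremum equals the randomized one.
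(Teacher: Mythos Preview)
Your proof is correct and follows essentially the same approach as the paper: monotonicity of the $\sigma$-algebras in $r$ for existence of the limit, and the independent-thinning coupling (keeping the origin fixed, via Slivnyak) for monotonicity in $\lambda$. Your treatment is in fact slightly more careful than the paper's in spelling out the passage from the randomized estimator (using the auxiliary $W_i$'s) to a deterministic one via MAP optimality, a detail the paper leaves implicit.
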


Note the supremum is over all possible estimators of the community label at origin.
\begin{proof}
	
	Denote by $\tilde{\xi}(\lambda, r) := \sup_{\tau_{r} \in \sigma((\bar{G}^{(r)} ,\bar{\phi}^{(r)}))} \mathbb{P}^{0}_{\phi} [\tau_{r} = Z_0]$. 
	Notice that, for each fixed $\lambda$ and $r^{'} \geq r$, we have $\sigma((\bar{G}^{(r^{'})} ,\bar{\phi}^{(r^{'})})) \subseteq \sigma((\bar{G}^{(r)} ,\bar{\phi}^{(r)}))$. This follows from the fact that sample path-wise,$(\bar{G}^{(r^{'})} ,\bar{\phi}^{(r^{'})})$ is a measurable function of $(\bar{G}^{(r)} ,\bar{\phi}^{(r)})$ which is obtained by zeroing all revealed labels in the set $B_{r}^{\complement}   \cap B_{r^{'}}$.  Hence, the limit in proposition \ref{prop:monotone_repeat} exists.
	\\
	
	It remains to prove that $\xi(\lambda) := \lim_{r \rightarrow \infty} \tilde{\xi}(\lambda,r)$ is non-decreasing in $\lambda$. It suffices to prove that $\tilde{\xi}(\lambda,r)$ is non-decreasing in $\lambda$ for every $r$. We show this by using  a standard coupling argument used to prove monotonicity of percolation probabilities (for example in Chapter $2$, \cite{Meester_Roy}). The basis of the coupling argument is the independent thinning property and Slivnyak's theorem of the PPP and the associated random connection model. These two theorems gives the following two facts. Let  $(\phi,G)$ be a Poisson Point Process of intensity $\lambda$ and $G$ is the block model graph for some  connection functions $f_{in}(\cdot)$ and $f_{out}(\cdot)$ under measure $\mathbb{P}$. Then if each node of $G$ along with its incident edges are removed independently with probability $p$, the resulting point process $\phi^{'}$ is an instance of a PPP with intensity $\lambda p$ and the resulting graph $G^{'}$ is the associated block model graph with the same connection functions $f_{in}(\cdot)$ and $f_{out}(\cdot)$. Slivnyak's theorem for $(\phi,G)$ gives that if we place an extra node at origin and equip it with independent community label and edges, the resulting point-process and graph is equal in distribution to $(\phi,G)$ under the Palm measure $\mathbb{P}^{0}$. 
	\\

	Thus given a problem instance at intensity $\lambda$ under measure $\mathbb{P}^0$, we can independently remove nodes of $G$ other than the one at origin with probability $p$. The resulting graph and the Information Flow from Infinity problem will be that at intensity $\lambda p$.  Thus, the best performance at intensity $\lambda$ cannot be smaller than that at intensity $\lambda p$. Since $p$ was arbitrary, we have that   the best performance at intensity $\lambda$ cannot be smaller than that at any intensity $\lambda^{'} \leq \lambda$. In other words, for all $r \geq 0$, $\tilde{\xi}(\lambda^{'}, r) \leq \tilde{\xi}(\lambda,r)$.	
\end{proof}

We will need the following classical result on the ergodic property of marks of a stationary point process.
\begin{proposition} (\cite{daley})
	Let $\phi := \{X_1,X_2,\cdots\}$ be a homogeneous PPP with its atoms enumerated in an arbitrary measurable way. Let each atom $i \in \mathbb{N}$ be assigned a translation invariant mark random variable $J_i \in \Xi$ taking values in an arbitrary Borel measurable space $(\Xi, \mathfrak{\Xi})$. Let $B_n := \left[ -\frac{n^{1/d}}{2} , \frac{n^{1/d}}{2}\right]^d$ be the box of volume $n$ and let $X_{j^{(n)}} \in \phi$ be chosen uniformly at random among the atoms of $\phi$ that lie in $B_n$, if any. Then for all $A \in \mathfrak{\Xi}$, the  limit $\lim_{n \rightarrow \infty} \mathbb{P}[J_{j^{(n)}} \in A]$  exists and satisfies $\lim_{n \rightarrow \infty} \mathbb{P}[J_{j^{(n)}} \in A] = \mathbb{P}^0[J_{0} \in A]$, where $J_0$ is the mark of the atom of $\phi$ at origin under $\mathbb{P}^0$.
	\label{lem:palm_ergodicity}
\end{proposition}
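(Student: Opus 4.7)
The plan is to express the law of the sampled mark $J_{j^{(n)}}$ as the expectation of an empirical mark average, and then invoke ergodicity of the marked PPP together with the Campbell--Mecke (Palm) formula to identify the limit.

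First I would rewrite, adopting the convention that $J_{j^{(n)}} \notin A$ when $N_n = |\phi \cap B_n| = 0$ (harmless since $\mathbb{P}[N_n = 0] = e^{-\lambda n} \to 0$),
\begin{align*}
\mathbb{P}[J_{j^{(n)}} \in A] \;=\; \mathbb{E}\!\left[ \frac{1}{N_n} \sum_{i : X_i \in B_n} \mathbf{1}_{J_i \in A} \,\mathbf{1}_{N_n \geq 1} \right].
\end{align*}
So the statement reduces to identifying the limit of the empirical mark frequency inside the expectation.

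Next I would invoke the pointwise ergodic theorem for marked point processes. The homogeneous PPP on $\mathbb{R}^d$ equipped with translation-invariantly distributed marks is ergodic with respect to the $\mathbb{R}^d$ translation group. Applied to the thinned marked counting measure $\sum_i \mathbf{1}_{J_i \in A}\, \delta_{X_i}$, the multivariate Birkhoff ergodic theorem (see \cite{daley}) gives
\begin{align*}
\frac{1}{|B_n|} \sum_{i : X_i \in B_n} \mathbf{1}_{J_i \in A} \;\xrightarrow[n \to \infty]{}\; \lambda \, \mathbb{P}^0[J_0 \in A] \qquad \mathbb{P}\text{-a.s.},
\end{align*}
where the right-hand side is just the Campbell intensity of the restricted process, computed via the Palm formula. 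Likewise, applied to the unmarked PPP, it gives $N_n / |B_n| \to \lambda$ a.s. Dividing the two limits yields
\begin{align*}
\frac{1}{N_n} \sum_{i : X_i \in B_n} \mathbf{1}_{J_i \in A} \;\xrightarrow[n \to \infty]{}\; \mathbb{P}^0[J_0 \in A] \qquad \mathbb{P}\text{-a.s.}
\end{align*}

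Finally, since the empirical frequency is uniformly bounded in $[0,1]$, the bounded convergence theorem transfers the almost-sure limit into the expectation and produces $\lim_{n\to\infty} \mathbb{P}[J_{j^{(n)}} \in A] = \mathbb{P}^0[J_0 \in A]$, as desired. The only subtlety is the correct statement of Birkhoff's theorem in the spatial marked setting: one needs joint stationarity of $(\phi, (J_i)_i)$ under spatial translations, which is exactly the translation invariance of the marks assumed in the hypothesis. Given that, the identification of the ergodic limit with the Palm expectation of a typical mark is standard Palm theory, so no substantial obstacle remains beyond citing the correct version of the ergodic theorem from \cite{daley}.
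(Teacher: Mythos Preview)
Your proof is correct and follows the standard argument: express the sampled-mark probability as the expectation of an empirical average, apply the spatial ergodic theorem to the marked PPP to get almost-sure convergence of that average to the Palm probability, and then pass to the limit in expectation via bounded convergence. Note, however, that the paper does not actually prove this proposition; it is stated with a citation to \cite{daley} as a classical fact about ergodic properties of stationary marked point processes, so there is no ``paper's own proof'' to compare against --- your argument is essentially the derivation one would find in that reference.
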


The following proposition establishes that Community Detection is harder than Information Flow from Infinity.

\begin{lemma}
	If there exists a Community Detection algorithm (polynomial or exponential time) that achieved an overlap of $\gamma > 0$, then $\lim_{r \rightarrow \infty}\sup_{\tau_{r} \in \sigma((\bar{G}^{(r)} ,\bar{\phi}^{(r)}))} \mathbb{P}^{0} [\tau_{r} = Z_0] \geq \frac{1 + \gamma}{2}$.
	\label{lem:lemma_easy}
\end{lemma}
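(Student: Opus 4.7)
Given a Community Detection algorithm $A$ that achieves overlap $\gamma>0$, I will use $A$ together with the labels revealed at infinity to build an estimator $\tau_r$ for $Z_0$ whose success probability approaches $(1+\gamma)/2$. For each fixed $r>0$, I proceed as follows. Pick $n=n(r)$ large (with $n(r)\to\infty$ as $r\to\infty$). Since the full graph $G$ and locations $\phi$ are available in the data $(\phi^{(r)},G^{(r)})$, I can run $A$ on the truncated data $(G_n,\phi_n)$ to produce $\{\hat\tau_i^{(n)}\}_{i=1}^{N_n}$. Using the revealed labels $\{Z_i : X_i\in B_n,\; \|X_i\|\ge r\}$, I compute
\begin{equation*}
s_r \;=\; \mathrm{sign}\!\Bigl(\,\sum_{X_i\in B_n,\;\|X_i\|\ge r} \hat\tau_i^{(n)} Z_i\Bigr),
\end{equation*}
and output $\tau_r = s_r\,\hat\tau_0^{(n)}$, where $\hat\tau_0^{(n)}$ is the label assigned by $A$ to the origin (which under $\mathbb{P}^0$ is a vertex of $G_n$ by Slivnyak).

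The analysis has two pieces. First, a sign-recovery lemma. Let $s^{\star}=\mathrm{sign}\bigl(\sum_{X_i\in B_n}\hat\tau_i^{(n)} Z_i\bigr)$ be the optimal global alignment. On the event $\{\mathcal O_n\ge\gamma\}$ one has $s^\star \sum_{X_i\in B_n}\hat\tau_i^{(n)}Z_i\ge \gamma N_n$, while the sum we actually use differs from this by at most $|\phi\cap B_r|$, a $\mathrm{Poisson}(\lambda\,\mathrm{vol}(B_r))$ quantity. Since $N_n/n\to\lambda$ a.s.\ and $|\phi\cap B_r|$ is $O_{\mathbb P}(r^d)$, we obtain $\mathbb{P}^0[s_r=s^\star\mid \mathcal O_n\ge\gamma]\to 1$ as $n\to\infty$ (with $r$ held fixed). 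Second, a Palm-inversion argument. Under the stationary law $\mathbb{P}$ the assumption on $A$ gives $\mathbb{P}[\mathcal O_n\ge\gamma]\to 1$, so the expected number of vertices $i\in[N_n]$ with $s^\star\hat\tau_i^{(n)}=Z_i$ is at least $\frac{1+\gamma}{2}\,\mathbb{E}[N_n](1-o(1)) = \frac{1+\gamma}{2}\,\lambda n(1-o(1))$. Applying Slivnyak/Campbell to the marked PPP whose mark at vertex $i$ encodes the $\{\pm 1\}$-valued random variable $\mathbf{1}_{s^\star\hat\tau_i^{(n)}=Z_i}$ (together with everything else needed to define it),
\begin{equation*}
\mathbb{P}^{0}\!\bigl[s^{\star}\hat\tau_0^{(n)}=Z_0\bigr] \;=\; \frac{1}{\lambda n}\,\mathbb{E}\!\Bigl[\sum_{i=1}^{N_n}\mathbf{1}_{s^\star\hat\tau_i^{(n)}=Z_i}\Bigr] \;\ge\; \tfrac{1+\gamma}{2}-o(1).
\end{equation*}
This is precisely Proposition~\ref{lem:palm_ergodicity} applied to the randomly tagged vertex $j^{(n)}$: its mark distribution converges to that of the origin under $\mathbb{P}^0$. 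Combining with the sign-recovery step, $\mathbb{P}^0[\tau_r=Z_0]\ge \tfrac{1+\gamma}{2}-o_n(1)$ for each fixed $r$. Taking $n=n(r)\to\infty$ and invoking the monotonicity/existence of the limit from Proposition~\ref{prop:monotone_repeat} yields the stated bound.

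The main obstacle is a subtle measurability/translation-invariance point in the Palm-inversion step: the optimal alignment $s^\star$ is a global functional of $G_n$, not an individual mark, so strictly speaking the quantity $\mathbf{1}_{s^\star \hat\tau_i^{(n)}=Z_i}$ is not of the translation-invariant form to which Proposition~\ref{lem:palm_ergodicity} directly applies. I plan to circumvent this by exploiting the $Z\mapsto -Z$ symmetry of the model: conditioning on $\{s^\star=+1\}$ (an event of probability $1/2$ by symmetry and independent of the $Z\mapsto -Z$-equivariant functionals of interest), the event $\{s^\star\hat\tau_i^{(n)}=Z_i\}$ reduces to $\{\hat\tau_i^{(n)}=Z_i\}$, which is genuinely local (modulo the dependence on $N_n$-indexed enumeration, handled by applying $A$ to the shifted configuration). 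Averaging both signs and using linearity then recovers the same inequality without the symmetry breaking, and step 3 goes through. Everything else — the overlap lower bound, the concentration of the boundary sign-vote, and the existence of the limit in $r$ — reduces to well-known properties of PPPs and Slivnyak's theorem.
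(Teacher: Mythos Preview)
Your approach differs from the paper's: you build $\tau_r$ directly from the community-detection output together with a sign vote over the revealed labels, whereas the paper argues the contrapositive through a pairwise question --- for two uniformly random vertices $i,j$ of $G_n$, the optimal probability of deciding whether $Z_i=Z_j$ is bounded above by the Information-Flow quantity (reveal all labels at distance $\ge r$ from $X_i$, which in particular reveals $Z_j$, and invoke Proposition~\ref{lem:palm_ergodicity} for the randomly tagged vertex $i$). The paper's route never has to single out one deterministic vertex of the box, which is precisely where your argument runs into trouble.

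The gap in your Palm step is real but misdiagnosed. Campbell only gives
\[
\mathbb{E}\Bigl[\sum_{i=1}^{N_n}\mathbf{1}_{s^\star\hat\tau_i^{(n)}=Z_i}\Bigr]
=\lambda\int_{B_n}\mathbb{P}^{x}\bigl[s^{\star}\hat\tau_x^{(n)}=Z_x\bigr]\,dx,
\]
so the overlap hypothesis yields $\frac{1}{n}\int_{B_n}\mathbb{P}^{x}[\,\cdot\,]\,dx\ge\frac{1+\gamma}{2}-o(1)$, a spatial average; identifying this with $\mathbb{P}^{0}[\,\cdot\,]$ would require the integrand to be constant in $x$, which it is not, because $A$ is run on the \emph{fixed} box $B_n$ and the tagged point's position inside that box is not generic. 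The obstruction is the box, not $s^\star$: your $Z\mapsto -Z$ symmetry does remove $s^\star$, but $\hat\tau_i^{(n)}$ remains tied to $B_n$ and is still not a translation-invariant mark; and ``applying $A$ to the shifted configuration'' does make the mark invariant, but then each vertex runs $A$ on its own box, so the single-box overlap guarantee no longer controls the empirical mean of those marks. The clean repair is to randomize the box inside the estimator: draw $c$ uniformly in $B_n$, run $A$ on $c+B_n$ (which contains the origin), form $s_r$ from the revealed labels in that box, and output $s_r\hat\tau_0$. Averaging over $c$ and translating by $-c$ turns $\mathbb{P}^{0}[\tau_r=Z_0]$ into exactly the spatial average above, while your sign-recovery step is unaffected since only $O(r^d)$ labels are hidden for every $c$. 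Modulo a routine Slivnyak/contiguity argument to absorb the one extra atom at $0$ in $A$'s input, this salvages the direct proof with the full constant $(1+\gamma)/2$.
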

\begin{proof}
	We will assume that we cannot solve Information Flow from Infinity problem and then conclude that Community Detection  is not solvable. More precisely, we will assume that \\ $\lim_{r \rightarrow \infty}\sup_{\tau_{r} \in \sigma((\bar{G}^{(r)} ,\bar{\phi}^{(r)}))} \mathbb{P}^{0} [\tau_{r} = Z_0] \leq \frac{1 }{2}$ and then argue that no Community Detection algorithm can achieve a positive overlap. A Community Detection algorithm achieves an overlap $\gamma > 0$ if when  run on the data $(G_n,\phi_n)$, it produces an output $\{\tau_{i}^{(n)}\}_{i=1}^{N_n}$ satisfying 
	\begin{align}
	\frac{|\sum_{i = 1}^{N_n} \tau_{i}^{(n)} Z_i |}{N_n} \geq \gamma,
	\end{align}
	with probability $1-o_n(1)$. Now, an \emph{easier} question corresponds to asking if any two uniformly randomly chosen nodes (with replacement) of $G_n$ belong to the same or opposite community. This question is easier than Community Detection since one way to answer this pairwise question is to first produce a partition of all nodes of $G_n$ and then answer the question for the two randomly chosen nodes. Note that an overlap of $\gamma$ can be achieved if and only if a fraction $(1+\gamma)/2$ of the nodes have been correctly classified. Hence the chance that any two uniformly chosen nodes are classified correctly is at-least $(1+\gamma)/2$. Since we can achieve an overlap of $\gamma$ with probability $1 - o_n(1)$, the chance that two uniformly randomly chosen nodes of $G_n$ to be correctly classified is at-least $(1+\gamma)/2 - o_n(1)$. Hence, if we show that the best estimator for answering whether any two randomly chosen nodes from $G_n$ belong to the same or opposite community has a success probability of at-most $\frac{1}{2} + o_n(1)$, then no algorithm exists for solving Community Detection. In the rest of the proof, we will show that if the Information Flow from Infinity cannot be solved, then for every $\epsilon > 0$, the best estimator to estimate whether any two randomly chosen nodes of $G_n$ belong to the same or opposite communities will succeed with probability at-most $\frac{1}{2} + \epsilon + o_n(1)$. This will conclude the proof that no algorithm exists for solving Community Detection in view of the preceding discussion and hence the proof of Lemma \ref{lem:lemma_easy}.
	\\
	
	Let $\epsilon > 0$ be arbitrary. Under the assumption that Information Flow from Infinity cannot be solved, there exists a $r > 0$ such that $\sup_{\tau_r \in \sigma(({G}^{(r)} ,\phi^{(r)}))} \mathbb{P}^0[\tau_r = Z_0] \leq \frac{1}{2} + \frac{\epsilon}{2}$. In words, choose a $r$ such that the Information Flow from Infinity cannot succeed with probability larger than $\frac{1}{2} + \frac{\epsilon}{2}$. Now, let $n$ be large enough such that for two uniformly randomly chosen nodes of $G_n$ denoted by $i$ and $j$ to be $||X_i - X_j|| > r$ with probability at-least $1 - \frac{\epsilon}{2}$. Now, assume that we are on the event that $||X_i - X_j|| > r$. Conditionally on this event, the probability that any pairwise estimator correctly tells whether the two nodes $i$ and $j$ are in the same or opposite community will succeed with probability at-most $\frac{1}{2} + \frac{\epsilon}{2}$. This follows since conditionally on $X_i$ and $X_j$, we can make the pairwise problem  \emph{easier} by revealing all community labels of nodes at a distance of larger than $r$ from $X_i$ and asking whether we can now guess the community label at $X_i$. This will enable us to answer the pairwise question of whether $X_i$ and $X_j$ lie in the same community or not since we will know the true label of $X_j$ when the labels of nodes at distances $r$ or more from $X_i$ are revealed. This now is a problem of finding a mark $\tau_i$ of the atom $i$ of $\phi$ which denotes the best community label estimate of $X_i$ given $\phi,G$ and all community labels of nodes at a distance of $r$ or more from $X_i$. Since $X_i$ was an uniformly randomly chosen point from $\phi \cap B_n$, the chance that $\tau_i = Z_i$ is equal to the Palm probability that the best community label estimate of the node at origin is correct given $\phi,G$ and the true community labels of all nodes at a distance $r$ or more from the origin. This follows from a direct application of Proposition \ref{lem:palm_ergodicity}. Thus the probability $\tau_i = Z_i$ is bounded from above by $\frac{1}{2} + \frac{\epsilon}{2} + o_n(1)$. On the complementary event that $||X_i - X_j|| < r$, we use the trivial bound that the pairwise estimation is always successful. Hence by the law of total probability, the success probability of the pairwise estimator cannot be larger than $\frac{1}{2} + \epsilon + o_n(1)$. In other words, for every $\epsilon > 0$, there exists a $n_{\epsilon} < \infty$, such that for all $n \geq n_{\epsilon}$, the probability that we correctly identify the community membership of two uniformly randomly chosen nodes of $G_n$ is at-most $\frac{1}{2} + \epsilon$.

\end{proof}

The following is the main technical result on the Information Flow from Infinity problem.

\subsection{Main Result on Information Flow from Infinity}
\label{subsec:lower_bound_proof}
\begin{theorem}
	For every $\lambda,f_{in}(\cdot),f_{out}(\cdot)$ and $d$, the following limit exists and satisfies
	\begin{align}
	\lim_{ r \rightarrow \infty} \sup_{\tau_{r} \in \sigma(G^{(r)} ,\phi^{(r)})} \mathbb{P}^{0}[\tau_r = Z_0] \leq \frac{1}{2}\left( 1 + \theta(H_{\lambda,f_{in}(\cdot) - f_{out}(\cdot), d}) \right).
	\label{eqn:lower_bound}
	\end{align}
	\label{thm:main_lower_bound}
\end{theorem}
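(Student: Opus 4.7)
The core idea is to couple $G$ with an auxiliary random graph $H$, distributed as $H_{\lambda, f_{in}-f_{out}, d}$ under the Palm measure $\mathbb{P}^0$, which carries \emph{exactly} the label-dependent information contained in $G$. I will then show via a symmetry argument that the Bayes-optimal estimator given this coupling recovers $Z_0$ exactly on the event that the connected component $\mathcal{C}_H(0)$ of the origin in $H$ meets the revealed region $\{i : \|X_i\| \geq r\}$, and is otherwise uniform on $\{-1,+1\}$ conditional on the observed data. The bound $(1+\theta)/2$ will then drop out by letting $r \to \infty$.

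Step 1 (coupling). I use the canonical construction from Section \ref{subsec:graph_model}: edge $(i,j)$ lies in $G$ iff $U_{ij} \leq f_{in}(\|X_i-X_j\|)\mathbf{1}_{Z_i=Z_j} + f_{out}(\|X_i-X_j\|)\mathbf{1}_{Z_i \neq Z_j}$. Define the \emph{informative subgraph} $H$ on the same vertex set by $(i,j) \in H$ iff $f_{out}(\|X_i-X_j\|) < U_{ij} \leq f_{in}(\|X_i-X_j\|)$. Then $H$ is a deterministic function of $(\phi, \{U_{ij}\})$, hence independent of the community labels, and by Slivnyak combined with independent thinning its law under $\mathbb{P}^0$ is exactly that of $H_{\lambda, f_{in}-f_{out}, d}$ with the origin added. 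The key dichotomy is: for $(i,j) \in H$, one has $\mathbf{1}_{(i,j)\in G} = \mathbf{1}_{Z_i = Z_j}$; for $(i,j) \notin H$, the indicator $\mathbf{1}_{(i,j)\in G}$ is a function of $(\phi, \{U_{ij}\})$ alone and carries no label information.

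Step 2 (Bayes optimum with enlarged data). By a data-processing argument (giving the estimator additional information can only improve it), the supremum in the theorem is bounded above by the Bayes-optimal success probability for estimators measurable with respect to the enlarged $\sigma$-algebra $\mathcal{F}_r := \sigma(\phi, \{U_{ij}\}, G, \{Z_i : \|X_i\| \geq r\})$. Define $p_r := \mathbb{P}^0[\mathcal{C}_H(0) \cap \{i : \|X_i\| \geq r\} \neq \emptyset]$. On this event, pick any revealed vertex $v \in \mathcal{C}_H(0)$ and a path in $H$ from $v$ to $0$; walking along this path and flipping a running sign whenever an edge is absent from $G$ recovers $Z_0$ deterministically from $\mathcal{F}_r$. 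On the complementary event $\{\mathcal{C}_H(0) \subset B(0,r)\}$, consider the random involution $T$ that flips $Z_i$ for every $i \in \mathcal{C}_H(0)$. Then $T$ leaves invariant $\phi$, $\{U_{ij}\}$, the revealed labels (since $\mathcal{C}_H(0) \subset B(0,r)$), and $G$ itself: non-informative edges are functions of $(\phi,\{U_{ij}\})$; informative edges inside $\mathcal{C}_H(0)$ have both endpoints flipped so $\mathbf{1}_{Z_i=Z_j}$ is unchanged; and informative edges crossing out of $\mathcal{C}_H(0)$ cannot exist by definition of connected component. Since the labels are i.i.d.\ uniform, $T$ preserves the joint law, so conditionally on $\mathcal{F}_r$ on this event, $Z_0$ and $-Z_0$ are exchangeable. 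Hence the Bayes success probability equals $p_r \cdot 1 + (1-p_r) \cdot \tfrac{1}{2} = \tfrac{1+p_r}{2}$.

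Step 3 (limit and main obstacle). The events $\{\mathcal{C}_H(0) \cap \{i : \|X_i\| \geq r\} \neq \emptyset\}$ are nested decreasing in $r$, and by local finiteness of $\phi$ under $\mathbb{P}^0$ their intersection is $\{|\mathcal{C}_H(0)| = \infty\}$, whose probability is $\theta(H_{\lambda, f_{in}-f_{out}, d})$. Therefore $p_r \downarrow \theta$, which together with Step 2 yields the claimed bound $\tfrac{1}{2}(1+\theta)$. The main obstacle is the symmetry step: one must verify that the random involution $T$, whose domain $\mathcal{C}_H(0)$ is itself measurable in $(\phi, \{U_{ij}\})$, is jointly measurable, preserves the enlarged joint law on the good event $\{\mathcal{C}_H(0) \subset B(0,r)\}$, and then argue that this invariance, combined with $T$-invariance of $\mathcal{F}_r$ restricted to that event, forces the conditional law of $Z_0$ given $\mathcal{F}_r$ to be uniform. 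The remaining ingredients, namely the coupling, the path-recovery on the good event, and the monotone-limit identification with $\theta$, are essentially bookkeeping once the coupling and symmetry are in place.
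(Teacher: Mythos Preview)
Your proposal is correct and follows essentially the same approach as the paper: the auxiliary graph $H$ is exactly the paper's information graph $I$, your dichotomy in Step~1 is Lemma~\ref{lem:I_and_G}, and your involution/symmetry argument in Step~2 is the content of Lemmas~\ref{lem:two_possible} and~\ref{lem:cond_indep_2} (the paper phrases the conditional-uniformity via a direct Bayes' rule computation rather than an explicit label-flipping involution, but the underlying idea is identical). The enlargement of the $\sigma$-algebra and the monotone-limit identification with $\theta$ in your Step~3 likewise mirror the paper's final argument in Section~\ref{sec:I_Graph}.
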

Recall that $\theta(H_{\lambda,f_{in}(\cdot) - f_{out}(\cdot), d})$ is the percolation probability of the classical random connection model where any two nodes of $\phi$ located at $x,y \in \mathbb{R}^d$ are connected by an edge with probability $f_{in}(||x-y||) - f_{out}(||x-y||)$. The supremeum is over all valid estimators of the community label at origin and hence if $\theta(H_{\lambda,f_{in}(\cdot) - f_{out}(\cdot), d}) = 0$, then there is no estimator that will solve the Information Flow from Infinity problem.  In view of  Lemma \ref{lem:lemma_easy}, we also get that if $\theta(H_{\lambda,f_{in}(\cdot) - f_{out}(\cdot), d})=0$, then there is no algorithm (polynomial or exponential time) to solve Community Detection. Thus, if we prove Theorem \ref{thm:main_lower_bound}, then we will conclude the proof of Theorem \ref{thm:main_lb_cd}.
\\

Before presenting the proof of Theorem \ref{thm:main_lower_bound}, we illustrate a few example setting where the bound in Equation \ref{eqn:lower_bound} is tight and loose respectively. In view of Lemma \ref{lem:lemma_easy} and Proposition \ref{prop:lower_is_tight}, the following corollary where Equation (\ref{eqn:lower_bound}) is tight holds.
\begin{corollary}
		For all $\lambda >0, R_1 \geq R_2$, if $f_{in}(r) = \mathbf{1}_{r \leq R_1}$ and $f_{out}(r) = \mathbf{1}_{r \leq R_2}$ 
		\begin{align*}
		\lim_{ r \rightarrow \infty} \sup_{\tau_{r} \in \sigma(G^{(r)} ,\phi^{(r)})} \mathbb{P}^{0}[\tau_r = Z_0] = \frac{1}{2}\left( 1 + \theta(H_{\lambda,f_{in}(\cdot) - f_{out}(\cdot), d}) \right).
		\end{align*}
\end{corollary}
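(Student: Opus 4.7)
The upper bound in the corollary is immediate from Theorem \ref{thm:main_lower_bound}, so the task reduces to exhibiting a sequence of estimators $\tau_r$ attaining the matching lower bound $\tfrac{1}{2}\bigl(1+\theta(H_{\lambda, f_{in}-f_{out},d})\bigr)$. The key structural feature of this parameter regime is that $f_{in}(r) - f_{out}(r) = \mathbf{1}_{R_2 < r \le R_1}$ is itself $\{0,1\}$-valued, so the random connection model $H_{\lambda, f_{in}-f_{out}, d}$ coincides, on a given realisation of $\phi$, with the \emph{deterministic} geometric graph $T$ on $\phi$ defined by $i \sim_T j \iff \|X_i - X_j\| \in (R_2, R_1]$. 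Moreover, for any pair with $i \sim_T j$ one has $f_{in}(\|X_i - X_j\|) = 1$ and $f_{out}(\|X_i - X_j\|) = 0$, so $(i,j)$ is an edge of $G$ if and only if $Z_i = Z_j$. Equivalently, the signed quantity $\epsilon_{ij} := 2\,\mathbf{1}_{i \sim_G j} - 1$ equals $Z_i Z_j$ for every edge of $T$, and $\epsilon_{ij}$ is read off directly from $G^{(r)}$.

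Next I would construct the estimator. Let $\mathcal{C}_T(0)$ be the component of the origin in $T$ under the Palm measure $\mathbb{P}^0$ and let $A_r := \{\mathcal{C}_T(0) \cap \{x : \|x\|_2 \ge r\} \ne \emptyset\}$. Both $T$ and $\mathcal{C}_T(0)$ are deterministic functions of $\phi$, which is fully revealed by $\phi^{(r)}$. On $A_r$, pick (by a fixed measurable rule, e.g.\ lexicographic ordering on locations) a witness $w \in \mathcal{C}_T(0)$ with $\|X_w\|_2 \ge r$ together with a path $0 = v_0, v_1, \ldots, v_k = w$ in $T$, and set
\[
\tau_r := Z_w \prod_{\ell=1}^{k} \epsilon_{v_{\ell-1} v_\ell};
\]
on $A_r^c$ set $\tau_r := +1$. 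Since $\|X_w\|_2 \ge r$ the label $Z_w$ is revealed by $\phi^{(r)}$ and the edge signs $\epsilon_{v_{\ell-1} v_\ell}$ are read off $G^{(r)}$, so $\tau_r$ is measurable with respect to $\sigma\bigl(\phi^{(r)}, G^{(r)}\bigr)$.

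Finally, on $A_r$ the product telescopes as $\prod_{\ell} \epsilon_{v_{\ell-1} v_\ell} = \prod_{\ell} Z_{v_{\ell-1}} Z_{v_\ell} = Z_0 Z_w$ (using $Z_{v_\ell}^2 = 1$ for interior vertices), so $\tau_r = Z_0$ deterministically on $A_r$. The event $A_r$ is $\sigma(\phi)$-measurable, hence independent of $Z_0$, and on $A_r^c$ one has $\mathbb{P}^0[\tau_r = Z_0 \mid A_r^c] = \mathbb{P}^0[Z_0 = +1] = 1/2$. Combining,
\[
\mathbb{P}^0[\tau_r = Z_0] \;=\; \mathbb{P}^0[A_r] \;+\; \tfrac{1}{2}\bigl(1 - \mathbb{P}^0[A_r]\bigr) \;=\; \tfrac{1}{2}\bigl(1 + \mathbb{P}^0[A_r]\bigr).
\]
The events $A_r$ are monotone decreasing in $r$ with $\bigcap_{r > 0} A_r = \{|\mathcal{C}_T(0)| = \infty\}$, so continuity of probability from above yields $\mathbb{P}^0[A_r] \downarrow \theta(H_{\lambda, f_{in}-f_{out},d})$, and sending $r \to \infty$ produces the matching lower bound. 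The only point requiring mild care is the measurable selection of $(w,\text{path})$, which is routine; no substantive obstacle is anticipated, the reason being that in this regime the informative pairs carry \emph{perfect} same/different-community information and the Information Flow problem reduces exactly to percolation on $T$.
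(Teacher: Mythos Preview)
Your proof is correct and takes a genuinely different route from the paper. The paper derives the corollary indirectly: it first shows (Proposition~\ref{prop:lower_is_tight_proof}) that for these connection functions the Community Detection algorithm on $G_n$ achieves overlap equal to the percolation density $\theta(H_{\lambda,f_{in}-f_{out},d})$, and then invokes Lemma~\ref{lem:lemma_easy} (Community Detection is harder than Information Flow from Infinity) to transfer this to the lower bound $\tfrac{1}{2}(1+\theta)$ on the Palm-probability limit. Your argument instead constructs an explicit estimator $\tau_r$ directly for the Information Flow from Infinity problem, bypassing both the finite-$n$ Community Detection analysis and the ergodic reduction in Lemma~\ref{lem:lemma_easy}. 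In effect you are using the paper's own information-graph machinery (your graph $T$ is exactly the paper's $I$ from Section~\ref{sec:I_Graph}, and your observation $\epsilon_{ij}=Z_iZ_j$ is Lemma~\ref{lem:I_and_G}) in the \emph{forward} direction to build an estimator, rather than only in the reverse direction to prove the upper bound. Your approach is more self-contained for the corollary itself; the paper's route has the side benefit of establishing the Community Detection result (Proposition~\ref{prop:lower_is_tight}) along the way, which is of independent interest. One very minor point: your claim $\bigcap_r A_r = \{|\mathcal{C}_T(0)|=\infty\}$ uses that an infinite component of $T$ is unbounded, which holds here because the connection function has bounded support $R_1$ and a PPP is locally finite; this is worth one sentence.
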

In other words, we see that the inequality in Theorem \ref{thm:main_lower_bound} is achieved in certain examples. However, Theorem \ref{thm:main_lower_bound} is not an accurate characterization of the Information Flow from Infinity problem as evidenced in the following example. 

\begin{proposition}
	For all $d \geq 2$, if $f_{in}(r) = \min \left( 1, \frac{1}{\sqrt{r}} + \frac{1}{r^{d-1/4}}\right)$ and $f_{out}(r) = \min \left(1,\frac{1}{\sqrt{r}} \right)$, the inequality in Equation (\ref{eqn:lower_bound}) is strict for all values of $\lambda > 0$.
	\label{prop:lower_is_loose}
\end{proposition}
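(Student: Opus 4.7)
The right-hand side of \eqref{eqn:lower_bound} equals $1$ in this example because $\int(f_{in}(\|x\|) - f_{out}(\|x\|))dx = \infty$ for $d \ge 2$, so the Palm expected degree of the origin in $H_{\lambda, f_{in}-f_{out}, d}$ is infinite and $\theta = 1$; the bound ``$\le 1$'' is thus vacuous, and I will prove directly that $\lim_{r\to\infty} \sup_{\tau_r} \mathbb{P}^0[\tau_r = Z_0] < 1$. The strategy is to upper bound this limit by the Bayes risk of a strictly easier hypothesis test, and to control that risk via the Bhattacharyya coefficient. First, because $(\phi^{(r)}, G^{(r)})$ is a deterministic coarsening of the richer observation $(\phi, G, \{Z_i\}_{i \ge 1})$, the data-processing inequality gives $\sup_{\tau_r} \mathbb{P}^0[\tau_r = Z_0] \le \sup_{\tau} \mathbb{P}^0[\tau = Z_0]$, where the right-hand supremum is over estimators allowed to use every community label other than $Z_0$. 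Since this latter bound does not depend on $r$, it suffices to show it is strictly less than $1$.

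By Bayes optimality the easier risk equals $\tfrac{1}{2}(1 + d_{TV}(P_+, P_-))$, with $P_\pm$ the joint law of $(\phi, G, \{Z_i\}_{i \ge 1})$ conditional on $Z_0 = \pm 1$. Conditioning further on $(\phi, \{Z_i\}_{i \ge 1})$: the marginal laws of $\phi$ and of the labels agree under $P_\pm$, and all edges not incident to the origin have a conditional law that does not involve $Z_0$. Consequently the conditional TV reduces to the TV between two product Bernoulli measures whose $i$-th coordinate parameters are $\{f_{in}(\|X_i\|), f_{out}(\|X_i\|)\}$, in swapped order depending on $Z_i$. Since for product Bernoullis the Bhattacharyya coefficient factorizes and $d_{TV} < 1 \Leftrightarrow BC > 0$, it suffices to prove $\sum_{i \ge 1}(1 - BC_i) < \infty$ almost surely, where
\[
BC_i := \sqrt{f_{in}(\|X_i\|)\,f_{out}(\|X_i\|)} + \sqrt{(1 - f_{in}(\|X_i\|))(1 - f_{out}(\|X_i\|))}.
\]
By Campbell's formula, it then suffices to verify $\int_{\mathbb{R}^d}[1 - BC(x)]dx < \infty$, where $BC(x)$ is the same expression with $\|X_i\|$ replaced by $\|x\|$. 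I would split $\mathbb{R}^d$ into three zones: (i) the ball on which $f_{in} = f_{out} = 1$, where the integrand vanishes; (ii) the bounded shell where $f_{in}(\|x\|) = 1 > f_{out}(\|x\|) = \|x\|^{-1/2}$, where $1 - BC = 1 - \|x\|^{-1/4}$ is bounded and the shell has finite volume; and (iii) the tail where both functions lie strictly in $(0,1)$, on which a Taylor expansion around $f_{in} = f_{out}$ yields $1 - BC(x) = (f_{in}-f_{out})^2 /\bigl(8\,f_{out}(1 - f_{out})\bigr) + O\bigl((f_{in}-f_{out})^4/f_{out}^3\bigr)$. Substituting $f_{in} - f_{out} = \|x\|^{-(d-1/4)}$ and $f_{out} = \|x\|^{-1/2}$ gives $1 - BC(x) = \Theta(\|x\|^{-(2d-1)})$, and integrating against $r^{d-1}dr$ produces $\int^\infty r^{-d}dr$, which converges precisely because $d \ge 2$.

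Almost-sure finiteness of $\sum_i (1 - BC_i)$ implies $\prod_i BC_i > 0$ almost surely, so the conditional TV given $(\phi, \{Z_i\}_{i \ge 1})$ is a.s.\ strictly below $1$; a $[0,1]$-valued random variable that is a.s.\ strictly less than $1$ has expectation strictly less than $1$, hence $d_{TV}(P_+, P_-) < 1$ and $\sup_\tau \mathbb{P}^0[\tau = Z_0] < 1$. Combining this with the data-processing step and the existence of the limit from Proposition~\ref{prop:monotone_repeat} proves strict inequality in \eqref{eqn:lower_bound}. The main obstacle is a clean treatment of the intermediate zone (ii) where the Taylor expansion breaks down because $f_{in}$ is pinned at $1$; fortunately that zone has bounded volume with a bounded integrand, so any crude uniform estimate works. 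The quantitative feature that makes the example tick is the matched decay $(f_{in}-f_{out})^2/f_{out} \sim \|x\|^{-(2d-1)}$: the difference graph $H_{\lambda,f_{in}-f_{out},d}$ percolates trivially and the bound of Theorem~\ref{thm:main_lower_bound} is useless, yet each edge incident to the origin carries only $O(\|x\|^{-(2d-1)})$ units of information, summing to a finite total and preventing the posterior of $Z_0$ from becoming deterministic.
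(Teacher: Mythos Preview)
Your argument is correct and reaches the same conclusion as the paper, but the route is genuinely different. The paper does not condition on the point locations; instead it observes that, once all labels $\{Z_i\}_{i\ge 1}$ are revealed, the data relevant to $Z_0$ splits into four \emph{independent Poisson point processes} (same-community neighbors, opposite-community neighbors, and the two non-neighbor processes), and then invokes a Kakutani-type dichotomy for Poisson measures (Lemma~\ref{lem:acs_singular}, taken from~\cite{acs_singular}) stating that two PPP laws are mutually absolutely continuous iff $\int(1-\sqrt{d\mu_1/d\mu_2})^2\,d\mu_2<\infty$. The paper then checks this Hellinger-type integral directly for the pairs $(\mu_{in},\mu_{out})$ and $(\tilde\mu_{in},\tilde\mu_{out})$. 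Your approach instead conditions on $(\phi,\{Z_i\})$, reduces to a product of Bernoulli edge indicators, and controls the Bhattacharyya coefficient of that product by showing $\int(1-BC(x))\,dx<\infty$ via Campbell's formula and a Taylor expansion. The two criteria are of course equivalent in spirit---both are Hellinger/Bhattacharyya finiteness conditions and the integrands coincide up to constants on the tail---but your proof is more self-contained: it avoids the external PPP dichotomy theorem and makes the ``finite total information'' heuristic completely explicit through $1-BC(x)=\Theta(\|x\|^{-(2d-1)})$. The paper's version is shorter once the cited lemma is in hand, and its four-PPP decomposition arguably makes clearer why both the edge and non-edge information must be controlled separately; your single integral handles both simultaneously because the Bhattacharyya coefficient of a Bernoulli pair already encodes both outcomes.
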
 

The example in  Proposition \ref{prop:lower_is_loose} corresponds to the case when the degree of each node is almost-surely infinite. Thus, $\theta(H_{\lambda,f_{in}(\cdot) - f_{out}(\cdot), d}) = 1$ in this case. However, using results from \cite{acs_singular}, one can argue that perfect recovery is impossible in this example, i.e. $\lim_{ r \rightarrow \infty} \sup_{\tau_{r} \in \sigma(G^{(r)} ,\phi^{(r)})} \mathbb{P}^{0}[\tau_r = Z_0] < 1$. The key tool, is to see that if perfect recovery were to be possible, then it would be the case that either of the following two pairs of point-process will be mutually singular. 
\begin{enumerate}
	\item The point process
	formed by the location of those nodes of  $G$ that have an edge to the origin and have a community label $Z_0$ and the point process formed by the location of those nodes of $G$ having an edge to the origin and having a community label of $-Z_0$ are mutually singular.

	\item Or, the point process corresponding to the locations of those nodes of $G$ that have a community label $Z_0$ and do not have an edge to the origin and the point process corresponding to the locations of those nodes of $G$ that have a community label $-Z_0$ and do not have an edge to the origin are mutually singular. 
\end{enumerate}

 We will argue that in our example, neither is possible by alluding to a theorem from \cite{acs_singular}, and hence perfect recovery is not possible. We present the complete proof in the Appendix \ref{appendix_proof}.

\subsection{The Information Graph and Proof of Theorem \ref{thm:main_lower_bound} }
\label{sec:I_Graph}
In this section, we  generalize the example of the previous section and give a proof of Theorem \ref{thm:main_lower_bound}. To do so, we define a general information graph and  conclude that if this constructed information graph does not percolate, then one cannot solve the Information Flow from Infinity problem. 
\\

\noindent {\bf Notation and Definition} - We denote by $I$  the information graph whose vertex set is $\phi$. The random graph $I$ is constructed just based on the positions of the points and the random elements $\{\{U_{ij}\}_{j > i} \}_{i \in \mathbb{N}}$. Recall that the graph $G$ was built by connecting any two points $i < j \in \mathbb{N}$ if $U_{ij} \leq \mathbf{1}_{Z_i = Z_j} f_{in}(||X_i - X_j||) + \mathbf{1}_{Z_i \neq Z_j} f_{out}(||X_i - X_j||)$. Using the same random elements, we connect any $i < j \in \mathbb{N}$ by an edge in graph $I$ if $U_{ij} \in [f_{out}(||X_i - X_j||), f_{in}(||X_i - X_j||)]$. We denote by $i \sim_{I} j$ the event that points $i$ and $j$ are connected by an edge in $I$. Hence the graphs $I$ and $G$ are coupled and built on the same probability space using the same set of random elements. For each $i \in \mathbb{N}$, we denote by $V_{I}(i) \subseteq \mathbb{N}$  the random subset of the nodes contained in the connected component of node $i$ in graph $I$. Note that the information graph $I \stackrel{(d)}{=} H_{\lambda,f_{in}(\cdot) - f_{out}(\cdot),d}$, i.e. the $I$ graph we constructed is equal in distribution to the graph of the Poisson Random Connection model with vertex set forming a PPP of intensity $\lambda$ and connecting any two vertices at distance $r$ away with probability $f_{in}(r) - f_{out}(r)$ independently of everything else. This equality in distribution follows from the fact that $\{\{U_{kl}\}_{l > k}\}_{k \in \mathbb{N}}$ is an i.i.d. uniform $[0,1]$ sequence. The following structural lemma justifies the term information graph.

\begin{lemma}
	From the way we have coupled the construction of $G$ and $I$, we have 
	\begin{itemize}
		\item If $i \sim_{I} j$ and $i \sim_{G} j$, then $Z_i = Z_j$.
		\item If $i \sim_{I} j$ and $i \nsim_{G} j$, then $Z_i \neq Z_j$.
		%\item If $i \nsim_{I} j$, then $Z_i$ and $Z_j$ can be arbitrary. 
	\end{itemize}
	\label{lem:I_and_G}
\end{lemma}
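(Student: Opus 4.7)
The plan is a direct case analysis on the community labels, using the definitions of the two edge rules and the independence and continuity of the auxiliary uniform variables $U_{ij}$. Throughout, write $r := \|X_i - X_j\|$; by hypothesis on the connection functions we always have $f_{out}(r) \leq f_{in}(r)$. The event $\{i \sim_I j\}$ is, by definition, $\{U_{ij} \in [f_{out}(r), f_{in}(r)]\}$, while $\{i \sim_G j\}$ equals $\{U_{ij} \leq f_{in}(r)\}$ on $\{Z_i = Z_j\}$ and $\{U_{ij} \leq f_{out}(r)\}$ on $\{Z_i \neq Z_j\}$.

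For the first bullet, I would assume $i \sim_I j$ and $i \sim_G j$ and derive $Z_i = Z_j$ by contradiction: if instead $Z_i \neq Z_j$, the $G$-edge rule would force $U_{ij} \leq f_{out}(r)$, while the $I$-edge rule gives $U_{ij} \geq f_{out}(r)$; the two together pin $U_{ij} = f_{out}(r)$, an event of probability zero since $U_{ij}$ is uniform on $[0,1]$ and independent of $(X_i,X_j,Z_i,Z_j)$. So almost surely $Z_i = Z_j$, as claimed.

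For the second bullet, I would again argue by contradiction: if $i \sim_I j$, $i \nsim_G j$, and $Z_i = Z_j$, then $i \sim_I j$ yields $U_{ij} \leq f_{in}(r)$, but the $G$-edge rule on $\{Z_i = Z_j\}$ is exactly $\{U_{ij} \leq f_{in}(r)\}$, which contradicts $i \nsim_G j$. Hence $Z_i \neq Z_j$, completing the proof.

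The only subtlety is the measure-zero boundary incident $U_{ij} \in \{f_{out}(r), f_{in}(r)\}$; since $U_{ij}$ has a continuous distribution and the mark sequences $\{U_{ij}\}$ are independent of the location and community label marks, these boundary ties occur with probability zero, so the lemma holds $\mathbb{P}$-almost surely and $\mathbb{P}^0$-almost surely (equivalently, after adjusting one endpoint of the interval defining $I$ to be open, the conclusion holds pathwise). No further probabilistic machinery or percolation input is needed for the lemma itself; its significance lies entirely in how it will be used downstream, namely to transfer percolation statements about $I$ into statements about the usefulness of $G$ for recovering community labels.
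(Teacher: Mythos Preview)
Your proof is correct and follows essentially the same route as the paper's: both simply unpack the defining inequalities for $i\sim_G j$ and $i\sim_I j$ together with $f_{in}(r)\geq f_{out}(r)$ and observe the desired implications. You are in fact slightly more careful than the paper, which states the lemma pathwise despite using the closed interval $[f_{out}(r),f_{in}(r)]$; your observation that the boundary case $U_{ij}=f_{out}(r)$ has probability zero (by independence of $U_{ij}$ from the locations) is exactly the right way to reconcile this, and your remark that opening one endpoint would make the statement hold sample-path-wise is correct.
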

\begin{proof}
	This follows from the following construction of $G$ and $I$ as follows. 
	\begin{itemize}
		\item $i \sim_{G} j$ if and only if $U_{ij} \leq f_{in}(||X_i - X_j||)\mathbf{1}_{Z_i = Z_j} + f_{out}(||X_i - X_j||)\mathbf{1}_{Z_i \neq Z_j}$. 
		\item $i \sim_{I} j$ if and only if $U_{ij} \in [f_{out}(||X_i - X_j||), f_{in}(||X_i - X_j||)]$.
		\item $\forall r \geq 0$, $f_{in}(r) \geq f_{out}(r)$. 
	\end{itemize}
	The lemma follows since the $\{U_{ij}\}_{0 \leq i <j}$ are the same with which we build both the random graphs $G$ and $I$. 
\end{proof}

We can iterate the above lemma from edges to connected components of $I$ which forms a crucial structural lemma. 

\begin{lemma}
	For all $j \in \mathbb{N}$, conditional on $G,\phi,I$, there are exactly two possible sequences $(Z_k)_{k \in V_I(j)}$ which are complements of each other that are consistent in the sense of Lemma \ref{lem:I_and_G} with the observed data $G,\phi$ and $I$.
	\label{lem:two_possible}
\end{lemma}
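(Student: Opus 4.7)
The plan is to reduce the claim to an elementary label-propagation argument on the induced subgraph of $I$ on $V_I(j)$. By definition of $V_I(j)$ as the connected component of $j$ in $I$, no $I$-edge crosses between $V_I(j)$ and $\mathbb{N}\setminus V_I(j)$, and since Lemma \ref{lem:I_and_G} constrains only pairs joined by an $I$-edge, the constraints on $(Z_k)_{k \in V_I(j)}$ imposed by the observed data $(G,\phi,I)$ come entirely from the induced subgraph of $I$ on $V_I(j)$, decorated on each edge by the indicator of whether the edge is also present in $G$. Each $I$-edge then becomes either an equality constraint on the labels of its endpoints (when the edge is present in $G$) or an inequality constraint (when the edge is absent in $G$).

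The lower bound of two follows immediately: the true labels $(Z_k)_{k \in V_I(j)}$ satisfy all these constraints by Lemma \ref{lem:I_and_G}, and their coordinate-wise negation $(-Z_k)_{k \in V_I(j)}$ does too, since equality and inequality of a pair are both preserved under simultaneously flipping both signs. The two assignments are distinct because $j \in V_I(j)$ is nonempty. For the upper bound, I would fix any consistent labeling and denote its value at $j$ by $\sigma_j \in \{-1,+1\}$. For an arbitrary $k \in V_I(j)$, pick a path $j = i_0 \sim_I i_1 \sim_I \cdots \sim_I i_m = k$, which exists by connectedness of $V_I(j)$ in $I$. Each edge $(i_l, i_{l+1})$ forces the label at $i_{l+1}$ to equal the label at $i_l$ if $i_l \sim_G i_{l+1}$, and to equal its negation otherwise, so by induction the label at $k$ is forced to be $\sigma_j \prod_{l=0}^{m-1}(\mathbf{1}_{i_l \sim_G i_{l+1}} - \mathbf{1}_{i_l \nsim_G i_{l+1}})$. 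Hence once $\sigma_j$ is chosen, the labels on all of $V_I(j)$ are determined, so there are at most two consistent labelings.

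The one subtlety to handle carefully is the well-definedness of the propagation: a priori, two distinct paths from $j$ to $k$ in the induced $I$-subgraph could force incompatible values for $Z_k$, which would rule out one of the two choices of $\sigma_j$. This potential obstruction is automatically vacated by the existence of the true labels as a globally consistent assignment, which forces every cycle of the induced $I$-subgraph on $V_I(j)$ to contain an even number of $G$-absent edges. Therefore both signs of $\sigma_j$ extend to a consistent labeling, and these two labelings are complements of each other, giving exactly two consistent sequences as claimed.
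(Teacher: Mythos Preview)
Your proof is correct and follows essentially the same label-propagation strategy as the paper's own argument. The paper constructs a breadth-first spanning tree $T_I(j)$ rooted at $j$ and propagates labels along tree edges by induction on graph distance, whereas you propagate along an arbitrary $I$-path and then resolve the potential path-dependence by invoking the true labeling to force every cycle to have even $G$-absent parity. The spanning-tree device sidesteps the cycle-consistency issue you handle explicitly, but both routes amount to the same observation: fixing $Z_j$ determines all of $(Z_k)_{k\in V_I(j)}$ via the edge constraints of Lemma~\ref{lem:I_and_G}, and the existence of the ground-truth labeling guarantees that both choices of $Z_j$ extend globally.
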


The proof of this follows from Lemma \ref{lem:I_and_G} and an induction argument. The proof can be found in Appendix \ref{appendix_proof_2possible}. We now present the main probabilistic observation in the sequel in Lemma \ref{lem:cond_indep_2} which essentially states that the community labels on disconnected components are independent. To do so, recall the definition that for any $i \in \mathbb{N}$, $V_I(i)$ denotes the set of nodes in the same connected component as node $i$ in graph $I$.

%We exploit the usefulness of the above structural lemma in proving the following statement which shows that information cannot `flow' from infinity if there is no percolation of the $I$ graph. 

\begin{lemma}
	%	Under the assumption that $\lambda < \lambda_{perc}(d, f_{in}(\cdot) - f_{out}(\cdot))$,
	For all $\lambda > 0$, on the event $\{ |V_I(0)| < \infty\}$, 
	\begin{align*}
	\mathbb{P}^{0}\left[Z_0 = +1 \bigg| G, \{ \{U_{kl}\}_{l > k}\}_{k \in \mathbb{N}\cup \{0\}} , \phi, \{Z_i \}_{ i \in V_I^{\complement}{(0)} }\right] = \frac{1}{2}  \text{   a.s. }
	\end{align*}
	\label{lem:cond_indep_2}

\end{lemma}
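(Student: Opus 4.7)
The plan is to exploit the symmetry of the community labels through a flipping argument. First, I would observe that the information graph $I$ is a deterministic function of $\phi$ and $\{U_{kl}\}_{k,l}$, so conditioning on $(G, \phi, \{U_{kl}\}, \{Z_i\}_{i \in V_I(0)^c})$ in particular determines $I$ and the cluster $V_I(0)$. By Lemma \ref{lem:two_possible}, exactly two assignments of $\{Z_k\}_{k \in V_I(0)}$ are consistent with this data, and they are complementary. Hence it suffices to show these two assignments are conditionally equally likely.

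Next, I would introduce the involution $\Phi$ on the underlying probability space that flips $Z_k$ for every $k \in V_I(0)$ (which contains the origin by convention) while leaving $\phi$, all $U_{kl}$, and all $Z_i$ with $i \notin V_I(0)$ untouched. Since $\{Z_i\}$ is i.i.d. uniform on $\{-1,+1\}$ and independent of $(\phi, \{U_{kl}\})$, and since $V_I(0)$ is measurable with respect to $(\phi, \{U_{kl}\})$ (so the subset being flipped is itself a function of variables $\Phi$ leaves fixed), $\Phi$ preserves the joint law under $\mathbb{P}^0$.

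The crucial verification is that $\Phi$ leaves $G$ unchanged, so that the conditioning event is preserved. I would split into three cases for any edge $\{i,j\}$. If both endpoints lie in $V_I(0)$, then flipping both $Z_i$ and $Z_j$ preserves $\mathbf{1}_{Z_i = Z_j}$, hence preserves the edge status. If both endpoints lie outside $V_I(0)$, nothing changes. The interesting case is $i \in V_I(0)$, $j \notin V_I(0)$: by definition of the cluster, $i \not\sim_I j$, which by construction of $I$ means $U_{ij} \notin [f_{out}(\|X_i - X_j\|), f_{in}(\|X_i - X_j\|)]$. In the sub-case $U_{ij} < f_{out}(\|X_i-X_j\|)$ the edge is always present in $G$ irrespective of labels, and in the sub-case $U_{ij} > f_{in}(\|X_i-X_j\|)$ the edge is always absent; in either sub-case the $G$-status is independent of $Z_i, Z_j$, so flipping $Z_i$ does not affect it. Thus $\Phi$ is a measure-preserving involution exchanging the two complementary consistent assignments while respecting the conditioning, which yields conditional probability $1/2$ for each, and in particular $\mathbb{P}^0[Z_0 = +1 \mid \cdots] = 1/2$.

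The main technical subtlety, which I expect to be the one point requiring care rather than a conceptual difficulty, is the measurability of the flip map and the legitimacy of conditioning on the random $\sigma$-algebra involving the random set $V_I(0)^c$. The hypothesis $|V_I(0)| < \infty$ makes $V_I(0)$ a finite random subset of $\mathbb{N}$ measurable with respect to $(\phi, \{U_{kl}\})$, which allows the argument to be formalized by disintegrating over the value of $V_I(0)$ and applying the symmetry on each atom. This caveat is precisely why the statement is restricted to the event $\{|V_I(0)| < \infty\}$, and it interfaces cleanly with the subsequent percolation-probability estimate in Theorem \ref{thm:main_lower_bound}.
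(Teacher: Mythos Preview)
Your proposal is correct and reaches the same conclusion as the paper, but the route differs in a way worth noting. The paper carries out an explicit Bayes' rule computation: it invokes Lemma~\ref{lem:two_possible} to reduce to the two complementary labelings $\mathbf{s},\mathbf{s}^{\complement}$ of $V_I(0)$, then computes
\[
\mathbb{P}^{0}\bigl[(Z_k)_{k\in V_I(0)}=\mathbf{s}\,\big|\,\phi,\{U_{kl}\},(Z_k)_{k\in V_I(0)^c},G=g\bigr]
\]
by writing out numerator and denominator, using that the prior on any finite label block determined by $(\phi,\{U_{kl}\})$ is uniform (so $(1/2)^{|V_I(0)|}$) and that $G$ is deterministic once all labels and $U$'s are fixed. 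Your argument replaces this calculation by a measure-preserving involution $\Phi$ that flips the labels on $V_I(0)$ and leaves everything in the conditioning fixed; the edge-by-edge case analysis (both in, both out, one in and one out) is exactly the structural content that the paper's Bayes step uses implicitly via Lemma~\ref{lem:two_possible}. Your verification of the boundary case $i\in V_I(0)$, $j\notin V_I(0)$ via $U_{ij}\notin[f_{out},f_{in}]$ is the key point and is correct. The involution approach is cleaner and avoids tracking normalizing constants; the paper's Bayes approach is more explicit about where finiteness of $V_I(0)$ enters (to ensure $(1/2)^{|V_I(0)|}>0$). Both rest on the same two facts: $V_I(0)$ is $(\phi,\{U_{kl}\})$-measurable, and the labels are i.i.d.\ uniform independent of that data.
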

\begin{proof}

	From Lemma \ref{lem:two_possible}, we know that conditionally on $\phi,G,I$, there are exactly two possible sequences $\{Z_k\}_{k \in V_I(0)}$ that are consistent with the observed data in the sense of Lemma \ref{lem:I_and_G}.  Denote  these two sequences by $\mathbf{s}$ and $\mathbf{s}^{\complement}$. It suffices to show that conditionally on $\phi,\{\{U_{kl}\}_{l > k}\}_{k \in \mathbb{N}\cup \{0\}},G$ and $\{Z_k\}_{k \in V_{I}^{c}(0)}$, the two sequences $\mathbf{s}$ and $\mathbf{s}^{\complement}$ are equally likely. We will denote by $g$  the realization of the random graph $G$. To conclude the lemma, we use  Bayes' conditional rule as follows.

	%\begin{multline}
	%\mathbb{P}[(Z_k)_{k \in V_I(j)} = \mathbf{s} \vert \phi, I, (Z_k)_{k \in V_{I}^{c}(j)}, G ] = \\ \frac{\sum_{g} \mathbb{P}[G=g \vert \phi, I, (Z_k)_{k \in V_{I}^{c}(j)}, (Z_k)_{k \in V_I(j)} = \mathbf{s}  ] }{ \sum_{g}\mathbb{P}[G=g \vert \phi, I, (Z_k)_{k \in V_{I}^{c}(j)}  ] } \mathbb{P}[(Z_k)_{k \in V_I(j)} = \mathbf{s}  \vert \phi, I ,(Z_k)_{k \in V_{I}^{c}(j)} ]
	%\label{eqn:Bayes_rule}
	%\end{multline}

	\begin{align}
	&\mathbb{P}^{0}_{\phi}[(Z_k)_{k \in V_I(0)}  =  \mathbf{s} \vert \phi, \{\{U_{kl}\}_{l > k}\}_{k \in \mathbb{N}\cup \{0\}}, (Z_k)_{k \in V_{I}^{c}(0)}, G = g]  \nonumber \\ & = \frac{ \mathbb{P}^{0}_{\phi}[G=g \vert \phi, \{\{U_{kl}\}_{l > k}\}_{k \in \mathbb{N}\cup \{0\}}, (Z_k)_{k \in V_{I}^{c}(0)}, (Z_k)_{k \in V_I(0)} = \mathbf{s}  ] }{  \mathbb{P}^{0}_{\phi}[G=g \vert \phi, \{\{U_{kl}\}_{l > k}\}_{k \in \mathbb{N}\cup \{0\}}, (Z_k)_{k \in V_{I}^{\complement}(0)}  ] }   \nonumber \\&\mathbb{P}^{0}_{\phi}[(Z_k)_{k \in V_I(0)} = \mathbf{s}  \vert \phi, \{\{U_{kl}\}_{l > k}\}_{k \in \mathbb{N}\cup \{0\}} ,(Z_k)_{k \in V_{I}^{c}(0)} ], \nonumber \\
	& \stackrel{(a)}{=} \frac{1 }{  \mathbb{P}^{0}_{\phi}[G=g \vert \phi, \{\{U_{kl}\}_{l > k}\}_{k \in \mathbb{N}\cup \{0\}}, (Z_k)_{k \in V_{I}^{\complement}(0)}  ] } \left( \frac{1}{2} \right)^{|V_I(0)|}, \text{   a.s. on the event } \{ |V_I(0)| < \infty\}\\
		& \stackrel{(b)}{=} \frac{1 }{ \sum_{\mathbf{l}}  \mathbb{P}^{0}_{\phi}[G=g, (Z_k)_{k \in V_I(0)} = \mathbf{l}\vert \phi, \{\{U_{kl}\}_{l > k}\}_{k \in \mathbb{N}\cup \{0\}}, (Z_k)_{k \in V_{I}^{\complement}(0)}  ] } \left( \frac{1}{2} \right)^{|V_I(0)|}, \text{   a.s. on } \{ |V_I(0)| < \infty\}\\
	& \stackrel{(c)}{=} \frac{1}{2} \text{   a.s. on the event } \{ |V_I(0)| < \infty\}.
	\label{eqn:Bayes_rule}
	\end{align}
	
	The first equality follows from rewriting the events using  Baye's conditional rule. In the rest of the proof, we  justify steps $(a)$ to $(c)$. We  prove the equalities and also justify that one can apply  conditional Baye's  rule without worrying about the $0$ by $0$ situation almost-surely.
	\\

	%	We can apply Bayes' rule since we know that almost surely the set $V_I(j)$ is finite from the condition that $\lambda < \lambda_{perc}(d, f_{in}(\cdot) - f_{out}(\cdot))$. 
	%	\\

	Note that conditionally on $\phi, \{\{U_{kl}\}_{l > k}\}_{k \in \mathbb{N}\cup \{0\}} , \{Z_k\}_{k \in \mathbb{N}}$, the graph $G$ is fixed and deterministic. Thus, the numerator in step $(a)$ is $1$ almost-surely.
	This follows from Lemma \ref{lem:two_possible} which states that $g$ is consistent with the data $(\phi,\{\{U_{kl}\}_{l > k}\}_{k \in \mathbb{N}\cup \{0\}}, (Z_k)_{k \in V_{I}^{\complement}(0)} )$ if $(Z_k)_{k \in V_I(0)} = \mathbf{s}$ or $\mathbf{s}^{\complement}$. Furthermore, the process $(Z_k)_{k \in \mathbb{N}}$ is an i.i.d. sequence independent of everything else. Hence, given any random finite subset $A \in \mathbb{N} $ independent of $(Z_k)_{k \in \mathbb{N}}$, the labels $(Z_k)_{k \in A}$ are uniform over $\{-1,1\}^{|A|}$. Now, since $|V_I(0)| < \infty$, and $V_I(0)$ is a function of $(\phi,\{\{U_{kl}\}_{l > k}\}_{k \in \mathbb{N}\cup \{0\}})$ which is independent of $(Z_k)_{k \in \mathbb{N}}$, it follows that,  on the event $\{ |V_I(0)| < \infty\}$,
	\begin{align}
	\mathbb{P}^{0}_{\phi}[(Z_k)_{k \in V_I(0)} = \mathbf{s}  \vert \phi, \{\{U_{kl}\}_{l > k}\}_{k \in \mathbb{N}\cup \{0\}} ,(Z_k)_{k \in V_{I}^{\complement}(0)} ] = \left( \frac{1}{2} \right)^{|V_I(0)|} \text{   a.s. }
	\label{eqn:lem_bayes_pre}
	\end{align}
	Moreover, the above expression is non-zero almost surely since $|V_I(0)| < \infty$. This justifies step $(a)$. To conclude the proof, it suffices to show that  on the event  $\{ |V_I(0)| < \infty\}$,
	\begin{align}
	\sum_{\mathbf{l}} \mathbb{P}^{0}_{\phi}[G=g,  (Z_k)_{k \in V_I(0)} = \mathbf{l}  \vert \phi, \{\{U_{kl}\}_{l > k}\}_{k \in \mathbb{N}\cup \{0\}}, (Z_k)_{k \in V_{I}^{\complement}(0)}  ] = 2 \left( \frac{1}{2} \right)^{|V_I(0)|} \text{   a.s } .
	\label{eqn:lem_bayes_rule}
	\end{align}
	This will conclude the proof  by noticing that the above expression is non-zero almost-surely. 
	\\

	Observe that the summation in Equation (\ref{eqn:lem_bayes_rule}) is over the various community labels $(Z_k)_{k \in V_I(0)}$. Thus, the summation is over the $2^{|V_I(0)|}$ different choices for $(Z_k)_{k \in V_{I}(0)}$. However, given $\phi$ and $\{\{U_{kl}\}_{l > k}\}_{k \in \mathbb{N}\cup \{0\}}$, one can construct the $I$ graph. Then, Lemma \ref{lem:two_possible}  states that the total number of possible choices for the labels $(Z_k)_{k \in V_{I}(0)}$ is now only two, which  we denoted by $\mathbf{s}$  and $\mathbf{s}^{\complement}$ in this proof. However, again from Lemma \ref{lem:two_possible}, conditionally on those two sequences, the graph constructed from the data  $(\phi, \{\{U_{kl}\}_{l > k}\}_{k \in \mathbb{N}\cup \{0\}}, (Z_k)_{k \in V_{I}^{c}(0)}, (Z_k)_{k \in V_I(0)} = \mathbf{s})$ and from the data  $( \phi, \{\{U_{kl}\}_{l > k}\}_{k \in \mathbb{N}\cup \{0\}}, (Z_k)_{k \in V_{I}^{c}(0)}, (Z_k)_{k \in V_I(0)} = \mathbf{s}^{\complement})$ is $g$, the observed graph. Hence, the proof of the claim follows from Equation (\ref{eqn:lem_bayes_pre}).

\end{proof}

The following is an immediate corollary of the definition of conditional expectation.
\begin{corollary}
	For all events $A \in \sigma(G, \phi, \{\{U_{kl}\}_{l > k}\}_{k \in \mathbb{N}\cup \{0\}}, (Z_k)_{k \in V_{I}^{\complement}(0)} )$,  we have $\mathbb{E}^{0}[\mathbf{1}_{E}\mathbf{1}_A \mathbf{1}_{Z_0 = +1}] = \mathbb{E}^{0}[\mathbf{1}_{E}\mathbf{1}_A \mathbf{1}_{Z_0 = -1}] = \frac{1}{2} \mathbb{E}^{0}[\mathbf{1}_{E}\mathbf{1}_A ]$, where $E$ is the event that $V_I(0)$ is finite.
	\label{cor:bayes_indep}
\end{corollary}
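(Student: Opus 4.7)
The plan is to read this off Lemma \ref{lem:cond_indep_2} by the tower property of conditional expectation, after first observing that the event $E = \{|V_I(0)| < \infty\}$ sits inside the conditioning sigma-algebra. Write $\mathcal{G} := \sigma(G, \phi, \{\{U_{kl}\}_{l > k}\}_{k \in \mathbb{N}\cup \{0\}}, (Z_k)_{k \in V_I^{\complement}(0)} )$. Since $V_I(0)$ is itself a measurable function of $\phi$ and the edge marks $\{U_{kl}\}_{l>k}$ alone (recall $I$ was defined purely from these objects, without reference to community labels), the cardinality $|V_I(0)|$ is $\mathcal{G}$-measurable, and hence so is the indicator $\mathbf{1}_E$. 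Consequently, for any $A \in \mathcal{G}$, the product $\mathbf{1}_E \mathbf{1}_A$ is $\mathcal{G}$-measurable as well.

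Next I would invoke Lemma \ref{lem:cond_indep_2}, which states that on $E$, $\mathbb{P}^0[Z_0 = +1 \mid \mathcal{G}] = 1/2$ almost-surely. Applying the tower property,
\begin{align*}
\mathbb{E}^0[\mathbf{1}_E \mathbf{1}_A \mathbf{1}_{Z_0 = +1}]
&= \mathbb{E}^0\bigl[\mathbf{1}_E \mathbf{1}_A \, \mathbb{P}^0[Z_0 = +1 \mid \mathcal{G}]\bigr] \\
&= \mathbb{E}^0\bigl[\mathbf{1}_E \mathbf{1}_A \cdot \tfrac{1}{2}\bigr]
= \tfrac{1}{2}\,\mathbb{E}^0[\mathbf{1}_E \mathbf{1}_A],
\end{align*}
where the middle equality uses that on the complement of $E$ we multiply by $\mathbf{1}_E = 0$, so replacing the conditional probability by $1/2$ everywhere is harmless. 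The identical computation with $Z_0 = -1$ (using $\mathbb{P}^0[Z_0 = -1 \mid \mathcal{G}] = 1 - 1/2 = 1/2$ on $E$) yields the second equality in the corollary.

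There is essentially no obstacle here: the only subtlety to flag is the $\mathcal{G}$-measurability of $E$, which is what allows one to pull $\mathbf{1}_E$ across the conditional expectation. This fact in turn relies on the coupling used in Section \ref{sec:I_Graph}, namely that the information graph $I$ is a deterministic function of $(\phi, \{U_{kl}\}_{l>k})$ and does not require knowledge of $(Z_k)_{k \in \mathbb{N}}$; without this separation the argument would fail.
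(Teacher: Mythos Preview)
Your proof is correct and matches the paper's approach: the paper simply states that this is ``an immediate corollary of the definition of conditional expectation'' without further detail, and what you have written is precisely the standard tower-property unpacking of that remark. Your explicit observation that $E$ is $\mathcal{G}$-measurable (because $I$ is built from $\phi$ and the $U_{kl}$ alone) is the one point worth making, and you handle it correctly.
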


\subsection{Proof of Theorem \ref{thm:main_lower_bound}}

We are now ready to conclude the proof of  Theorem \ref{thm:main_lower_bound}. Notice that since $\tau_{r} \in \{-1,+1\}$, we can represent it as $\tau_{r} = \mathbf{1}_A - \mathbf{1}_{A^{\complement}}$, for some $A \in \sigma(({G}^{(r)} ,\phi^{(r)}))$. Hence, we have
\begin{align}
\sup_{\tau_{r} \in \sigma(({G}^{(r)} ,\phi^{(r)}))} \mathbb{P}^{0}_{\phi} [\tau_{n}^{(\delta)} = Z_0] = \sup_{A \in \sigma(({G}^{(r)} ,\phi^{(r)}))} \mathbb{E}^{0}_{\phi} [\mathbf{1}_{A}\mathbf{1}_{Z_0 = +1} + \mathbf{1}_{A^{\complement}} \mathbf{1}_{Z_0 = -1}].
\end{align}
For every $m \in \mathbb{N}$, denote by  $E_m$  the event that $C_I(0) \subseteq B_m$, i.e. the event that the connected component of the point at the origin in $I$ is contained in the set $B_m$.  The sets $E_m$ are non-decreasing. Moreover, from the definition of  percolation, $\mathbb{P}^{0}[\cup_{m \in \mathbb{N}}E_m] = \lim_{m \rightarrow \infty} \mathbb{P}^{0}[E_m] = 1- \theta(H_{\lambda,f_{in}(\cdot)-f_{out}(\cdot),d})$. Let $r \in \mathbb{R}$ be arbitrary, and condition on the event $E_r$. We have,

\begin{align*}
\sup_{A \in \sigma(({G}^{(r)} ,\phi^{(r)}))} \mathbb{E}^{0}& [\mathbf{1}_{A}\mathbf{1}_{Z_0 = +1} + \mathbf{1}_{A^{\complement}} \mathbf{1}_{Z_0 = -1}] = \\ \sup_{A \in \sigma((({G}^{(r)} ,\phi^{(r)}))} \mathbb{E}^{0}& [ \mathbf{1}_{E_r}\left(\mathbf{1}_{A}\mathbf{1}_{Z_0 = +1} + \mathbf{1}_{A^{\complement}} \mathbf{1}_{Z_0  =-1}\right)] + \mathbb{E}^{0} [ \mathbf{1}_{E^{\complement}_{r}}\left(\mathbf{1}_{A}\mathbf{1}_{Z_0 = +1} + \mathbf{1}_{A^{\complement}} \mathbf{1}_{Z_0 = -1}\right)] \\&
\leq \sup_{A \in \sigma((({G}^{(r)} ,\phi^{(r)})))} \mathbb{E}^{0} [ \mathbf{1}_{E_r}\left(\mathbf{1}_{A}\mathbf{1}_{Z_0 = +1} + \mathbf{1}_{A^{\complement}} \mathbf{1}_{Z_0  =-1}\right)] + \mathbb{P}^{0}[E^{\complement}_{r}] \\
&\stackrel{(a)}{\leq} \sup_{A \in \sigma(({G}^{(r)} ,\phi^{(r)}, \{\{U_{kl}\}_{l > k}\}_{k \in \mathbb{N}\cup \{0\}}  )}\mathbb{E}^{0} [ \mathbf{1}_{E_r}\left(\mathbf{1}_{A}\mathbf{1}_{Z_0 = +1} + \mathbf{1}_{A^{\complement}} \mathbf{1}_{Z_0  =-1}\right)] + \mathbb{P}^{0}[E^{\complement}_{r}]  \\
&\stackrel{(b)}{\leq} \sup_{A \in \sigma(G, \phi, \{\{U_{kl}\}_{l > k}\}_{k \in \mathbb{N}\cup \{0\}}, (Z_k)_{k \in V_{I}^{\complement}(0)} )}\mathbb{E}^{0} [ \mathbf{1}_{E_r}\left(\mathbf{1}_{A}\mathbf{1}_{Z_0 = +1} + \mathbf{1}_{A^{\complement}} \mathbf{1}_{Z_0  =-1}\right)] + \mathbb{P}^{0}[E^{\complement}_{r}]  \\
& \stackrel{(c)}{=} \sup_{A \in \sigma(G, \phi, \{\{U_{kl}\}_{l > k}\}_{k \in \mathbb{N}\cup \{0\}}, (Z_k)_{k \in V_{I}^{\complement}(0)} )}\frac{1}{2} \mathbb{E}^{0}[\mathbf{1}_A \mathbf{1}_{E_r}] + \frac{1}{2} \mathbb{E}^{0}[\mathbf{1}_{A^{\complement}} \mathbf{1}_{E_r}] + \mathbb{P}^{0}[E^{\complement}_{r}] \\
&= \frac{1}{2} \mathbb{P}^{0}[E_r] + \mathbb{P}^{0}[E^{\complement}_{r}].
\end{align*}
Step $(a)$ follows from enlarging the sigma algebra over which we are searching for a solution. Step $(b)$ follows from the fact that on the event $E_r$, $V_I(0) \subseteq B(0,r)$. Thus, revealing more labels will only preserve the inequality. Step $(c)$ follows from Corollary \ref{cor:bayes_indep}. Now, since the sets $E_r$ are non-decreasing , we get the theorem by taking a limit as $r$ goes to infinity on both sides, i.e.
\begin{align}
\lim_{r \rightarrow \infty}\sup_{A \in \sigma(({G}^{(r)} ,\phi^{(r)}))} \mathbb{E}^{0} [\mathbf{1}_{A}\mathbf{1}_{Z_0 = +1} + \mathbf{1}_{A^{\complement}} \mathbf{1}_{Z_0 = -1}] &\leq \lim_{r \rightarrow \infty} \frac{1}{2} \mathbb{P}^{0}[E_r] + \mathbb{P}^{0}[E^{\complement}_{r}] \nonumber \\
&=\frac{1}{2}\mathbb{P}^{0}[E] + \mathbb{P}^{0}[E^{\complement}] \nonumber \\
&= \frac{1}{2} (1 + \theta(H_{\lambda,f_{in}(\cdot) - f_{out}(\cdot),d})).
\end{align}
The limit on the LHS exists from Proposition \ref{prop:monotone} and the limit on the RHS exists since $E_r$ are non-decreasing events.

\section{ Identifiability of the Partition and Proof of Theorem \ref{thm:identifiability}}
\label{sec:hypo}

The key technical tool is the ergodicity of the PPP which is summarized in the following lemma. We need to set some notation that are needed to state the lemma. Denote by $\mathbb{M}_{\Xi}(\mathbb{R}^d)$  the set of all 
`marked' point processes on $\mathbb{R}^d$ where each point is assigned a `mark' from the measure space $\Xi$, with its associated sigma-algebra. The set $\mathbb{M}_{\Xi}(\mathbb{R}^d)$ is a Polish space, has a natural topology and hence an associated sigma-algebra (see \cite{daley}). Denote by $\theta : \mathbb{R}^d \times \mathbb{M}_{\Xi}(\mathbb{R}^d) \rightarrow \mathbb{M}_{\Xi}(\mathbb{R}^d)$ the `shift' operator  which is a measurable function where $\theta(x,\psi)$
retains the same marks but translates all points of $\psi$ by a vector $x$.

\begin{lemma}
	Let $C_n \subset \mathbb{R}^d$ be a sequence of $L_p$, $p \in [1,\infty]$ balls centered at the origin with radius going to infinity as $n \rightarrow \infty$. Let $f: \mathbb{M}_{\Xi}(\mathbb{R}^{d}) \rightarrow \mathbb{R}_{+}$ be a measurable function  such that $\mathbb{E}^{0}_{\phi}[f] < \infty$. Then, the following limit exists :
	\begin{align}
	\lim_{n \rightarrow \infty}\frac{\sum_{i \in \mathbb{N}} \mathbf{1}_{X_i \in C_n} f \circ \theta(X_i,G)}{ \sum_{i \in \mathbb{N}} \mathbf{1}_{X_i \in C_n}}  = \mathbb{E}^{0}[f] \text {   } \mathbb{P} \text{    a.s. }.
	\end{align}
	\label{lem:ergodic_thm}
\end{lemma}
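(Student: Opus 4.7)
The plan is to recognize the statement as a pointwise ergodic theorem for the stationary marked Poisson point process underlying $G$, and then obtain the claim by dividing two such asymptotics so that the intensity factor cancels out.

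First, I would observe that the enriched marked configuration $\bar{\phi}$ (the PPP $\phi$ together with the i.i.d. community-label marks $\{Z_i\}$ and the i.i.d. edge-uniforms $\{U_{ij}\}$ that jointly determine $G$) is a stationary element of $\mathbb{M}_{\Xi}(\mathbb{R}^d)$ whose distribution is invariant under the translation action $\{\theta(x, \cdot) : x \in \mathbb{R}^d\}$. Because the underlying PPP is mixing under spatial translations (numbers of points in disjoint, far-apart Borel sets are independent, hence asymptotically independent for any pair of bounded sets translated apart), and because enriching a mixing point process with i.i.d. marks preserves mixing, the law of the marked graph $G$ is ergodic (indeed mixing) under the $\mathbb{R}^d$-action.

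Next, I would invoke the multidimensional pointwise ergodic theorem for stationary marked point processes (e.g.\ Theorem 13.4.III in Daley and Vere-Jones, \cite{daley}). Since $C_n$ is a sequence of $L_p$ balls centered at the origin with radius tending to infinity, $\{C_n\}$ is a Van Hove (equivalently, convex averaging) sequence in $\mathbb{R}^d$, and the integrability assumption $\mathbb{E}^0[f] < \infty$ is exactly what the theorem requires. Consequently,
\begin{align*}
\frac{1}{|C_n|} \sum_{i \in \mathbb{N}} \mathbf{1}_{X_i \in C_n}\, f \circ \theta(X_i, G) \;\xrightarrow[n \to \infty]{}\; \lambda\, \mathbb{E}^0[f] \qquad \mathbb{P}\text{-a.s.}
\end{align*}
Applying the same theorem to the constant function $f \equiv 1$ (or, equivalently, the law of large numbers for Poisson counts) gives the companion limit
\begin{align*}
\frac{1}{|C_n|} \sum_{i \in \mathbb{N}} \mathbf{1}_{X_i \in C_n} \;\xrightarrow[n \to \infty]{}\; \lambda \qquad \mathbb{P}\text{-a.s.}
\end{align*}

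On the $\mathbb{P}$-almost-sure event where both limits hold, dividing numerator by denominator and using the continuity of the map $(a,b) \mapsto a/b$ at $b = \lambda > 0$ eliminates the intensity and yields $\mathbb{E}^0[f]$, as claimed. The main thing to verify carefully, should one wish a self-contained derivation, is the ergodicity (or mixing) of the marked point process $\bar{\phi}$ under spatial translations, which is classical, and the fact that $L_p$ balls form a Van Hove sequence; once these two ingredients are in place, the argument is the two-step ratio computation above.
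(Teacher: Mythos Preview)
Your proposal is correct and follows the standard route to this classical fact. Note, however, that the paper does not actually prove this lemma: it is stated as a known ergodic-theoretic tool (``the ergodicity of the PPP which is summarized in the following lemma'') and then immediately used in the proof of Theorem~\ref{thm:identifiability}, with the reference to \cite{daley} serving as the justification. So there is no paper-proof to compare against; your sketch supplies precisely the argument the paper leaves implicit, namely ergodicity (mixing) of the independently marked PPP under translations, application of the spatial ergodic theorem over a Van Hove sequence to both the $f$-weighted sum and the raw count, and division to cancel $\lambda$.
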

This is a standard fact about ergodic point processes (for ex. \cite{daley}).
We now prove Theorem \ref{thm:identifiability}.

\begin{proof}

First if $g(\cdot) \neq \frac{f_{in}(\cdot)+f_{out}(\cdot)}{2}$ Lebesgue almost everywhere, then $G$ and $H_{\lambda,g(\cdot),d}$ are mutually singular and this can be seen through the following elementary argument. Fix some $L < \infty $ such that $\int_{x \in \mathbb{R}^d: ||x|| \leq L} g(||x||)dx \neq \int_{x \in \mathbb{R}^d: ||x|| \leq L} ((f_{in}(x) + f_{out}(||x||))/2) dx$ are both finite. Such a $L$ exists since $g(\cdot) \neq \frac{f_{in}(\cdot)+f_{out}(\cdot)}{2}$. Now, we apply the ergodic theorem, where every node $i \in \mathbb{N}$ of $\phi$ is equipped with a mark $\Xi_i \in \mathbb{N}$, which denotes the number of graph neighbors of node $i$  at a distance of at-most $L$ from $X_i$, i.e. $\Xi_i = | \{j \in \mathbb{N} \setminus\{i\} i \sim_{G}j, ||X_i - X_j|| \leq L \}|$. Thus, the Ergodic theorem implies that  the measure induced by $G$ will be concentrated on the set  
\begin{align*}
\left\{ g \in \mathbb{M}_{\mathcal{G}}(\mathbb{R}^d) : \lim_{n \rightarrow \infty}\frac{\sum_{i,j \in \mathbb{N}} \mathbf{1}_{||X_i|| \leq n, ||X_i - X_j|| \leq L }  \mathbf{1}_{i \sim_g j } }{\sum_{i,j \in \mathbb{N}} \mathbf{1}_{||X_i|| \leq n, ||X_i - X_j|| \leq L } } = \int_{x \in \mathbb{R}^d: ||x|| \leq L} (f_{in}(x) + f_{out}(||x||))/2 dx  \right\},
\end{align*}

 while the measure induced by  $H_{\lambda,g(\cdot),d}$ will be concentrated on the set
 \begin{align*}
 \left\{ g \in \mathbb{M}_{\mathcal{G}}(\mathbb{R}^d) : \lim_{n \rightarrow \infty}\frac{\sum_{i,j \in \mathbb{N}} \mathbf{1}_{||X_i|| \leq n, ||X_i - X_j|| \leq L }  \mathbf{1}_{i \sim_g j } }{\sum_{i,j \in \mathbb{N}} \mathbf{1}_{||X_i|| \leq n, ||X_i - X_j|| \leq L } } = \int_{x \in \mathbb{R}^d: ||x|| \leq L} g(||x||) dx  \right\}.
 \end{align*}

Thus, the only case to consider is the one where $g(\cdot) = (f_{in}(\cdot) + f_{out}(\cdot)/2$ Lebesgue almost-everywhere. From linearity of expectation, the average degree of any node $i \in \mathbb{N}$ in both  graphs $G$ and $H_{(\lambda,\frac{f_{in}(\cdot)+f_{out}(\cdot)}{2},d)}$ is the same and equal to $(\lambda/2)\int_{x \in \mathbb{R}^d} (f_{in}(||x||) + f_{out}(||x||))dx$ and thus empirical average of the degree does not help. However, we see that the triangle profiles differ in the two models which we leverage to prove the Theorem. 
\\

 For ease of notation, we denote by $H := H_{\lambda, (f_{in}(\cdot) + f_{out}(\cdot))/2,d}$. Define
	
	\begin{align*}
	\Delta = \mathbb{E}^{0} \left[ \sum_{x\neq y \neq 0} h(x,y) \mathbf{1}( (0,x) \in E, (0,y) \in E, (x,y) \in E ) \right].
	\end{align*}
	Denote by $\Delta_G$ and $\Delta_H$  the value of the above expression if the underlying graphs were $G$ and $H$ respectively.
	From the moment measure expansion of PPPs \cite{stoyan}, we get that

	\begin{multline*}
	\Delta_{G} = \int_{x \in \mathbb{R}^d} \int_{y \in \mathbb{R}^d} h(x,y)  (  f_{in}(||x-y||) \left( \frac{f_{in}(||x||) f_{in}(||y||) + f_{out}(||x||) f_{out}(||y||)   }{4}      \right ) +   \\ f_{out}(||x-y||)\left( \frac{f_{in}(||x||) f_{out}(||y||) + f_{out}(||x||) f_{in}(||y||)   }{4} \right) ) \lambda^2 dx dy,
	\end{multline*}
	and
		\begin{multline*}
	\Delta_{H} = \int_{x \in \mathbb{R}^d} \int_{y \in \mathbb{R}^d} h(x,y) \left( \frac{f_{in}(||x||) + f_{out}(||x||)}{2}  \right) \left( \frac{f_{in}(||y||) + f_{out}(||y||)}{2}  \right) \\ \left( \frac{f_{in}(||x-y||) + f_{out}(||x-y||)}{2} \right)  \lambda^2 dx dy.
	\end{multline*}
	
	Now observe that 
	\begin{multline*}
	\Delta_{G} - \Delta_{H} = \int_{x \in \mathbb{R}^d} \int_{y \in \mathbb{R}^d} h(x,y) \frac{1}{2} \left( f_{in}(||x||) - f_{out}(||x||)    \right) \left(  f_{in}(||y||) - f_{out}(||y||) \right) \\ \left( f_{in}(||x-y||) - f_{out}(||x-y||) \right) \lambda^2 dx dy.
	\end{multline*}
	From the fact that  $f_{in}(r) \geq f_{out}(r)$ and $f_{in}(r)$ is not equal to $f_{out}(r)$ Lebesgue almost everywhere, there exists a positive bounded function $h(x,y)$ such that $0 \leq \Delta_H < \Delta_G < \infty$. Choose one such test function $h(\cdot,\cdot)$,  for ex.  $h(x,y) = \mathbf{1}_{||x|| \leq R}\mathbf{1}_{||y|| \leq R} \mathbf{1}_{f_{in}(||x||) > f_{out}(||x||)}\mathbf{1}_{f_{in}(||y||) > f_{out}(||y||)} \mathbf{1}_{f_{in}(||x-y||) > f_{out}(||x-y||)}$ and consider the following estimator:

%	Thus, if we choose $h(x,y)$ such that $\Delta_{G}$ and $\Delta_{H}$ are both finite, then under the hypothesis that $f_{in}(r) \geq f_{out}(r)$ and $f_{in}(r)$ is not equal to $f_{out}(r)$ Lebesgue almost everywhere, then $\Delta_{G} - \Delta_{H} > 0$ for some appropriately chosen $h(x,y)$ that is non-zero on the domain where $f_{in}(x-y) > f_{out}(x-y)$. Now to conclude the proof, we will develop an estimator and invoke the powerful Ergodic Theorem we reproduce in Lemma \ref{lem:ergodic_thm}.
%	\\
	\begin{algorithm}[H]
	\caption{Detect-Partitions}
	
	 Given the data, i.e.  locations of nodes and the graph, pick a $L$ large enough and compute 
	\begin{align*}
	\Delta^{(L)} := \frac{ \sum_{i \in \mathbb{N}} \mathbf{1}(|X_i| \leq L) \tilde{h}(X_i)    }{  \sum_{i \in \mathbb{N}} \mathbf{1}(|X_i| \leq L)  },
	\end{align*}
	where $\tilde{h}(X_i) = \sum_{j,k \in \mathbb{N}, j \neq k \neq i}  h( X_i - X_j, X_i - X_k) \mathbf{1}( i \sim_{G} j, i \sim_{G} k, j \sim_{G} k)$. 
	
	\end{algorithm}
From  ergodicity, we know that $\lim_{L \rightarrow \infty} \Delta^{(L)} = \Delta_{G}$, $\mathbb{P}$ almost-surely if the data is the block model graph or $\lim_{L \rightarrow \infty} \Delta^{(L)} = \Delta_{H}$ $\mathbb{P}$ almost surely if the graph is drawn according to the null model. We can apply the ergodic theorem since a spatial random graph is a marked point process (as described in Section \ref{subsec:graph_model}). Thus, the measures induced by $(\phi,G)$ and $(\phi, H_{\lambda, \frac{f_{in}(\cdot) + f_{out}(\cdot)}{2} , d})$ are mutually singular. Moreover, the above algorithm when tested on the finite data $(G_n,\phi_n)$ runs in  time proportional to $\lambda n$ with the multiplicative constants depending on the connection functions $f_{in}(\cdot)$ and $f_{out}(\cdot)$ with success probability $1 - o_n(1)$ with the $o_n(1)$ term depending on the function $h(\cdot,\cdot)$ chosen.

\end{proof}

We investigated the singularity of measures in order to understand the question of distinguishability of the planted partition model. This is a hypothesis testing problem of whether  the data (graph and spatial locations) is drawn from the distribution of $G$ or from the distribution of $H_{\lambda,g(\cdot),d}$ with a probability of success exceeding a half given a uniform prior over the models. This problem is in some sense easier than Community Detection, since this asks for asserting whether a partition exists or not, which intuitively should be simpler than finding the partition. Indeed, we show this {in our model} by proving that the distinguishability problem is trivially solvable while community detection undergoes a phase transition and is solvable only under certain regimes. In the general sparse SBM however, the equivalence between distinguishability and community detection is only conjectured and not yet proven (\cite{decelle},\cite{neeman_distinction}).  

\section{The Exact-Recovery Problem}
\label{sec:exact_recovery}

In this section, we provide a lower bound for the exact-recovery problem as stated in Definition \ref{defn:comm_det_er}. Recall that for the exact-recovery case, we equipped the set $B_n$ with the torridal metric rather than the usual Euclidean metric. This is done mainly to simplify the presentation of the results. Nevertheless, one could establish identical results to the case when the set $B_n$ equipped with the Euclidean metric albeit with significantly more heavier notation to handle the `edge effects'. We note that for any $x := (x_1,\cdots,x_d), y:= (y_1,\cdots, y_d) \in B_n$, the torroidal distance is given by $||x-y||_{\mathcal{T}_n} = || ( \min(|x_1-y_1|, n^{1/d} - |x_1-y_1 |),\cdots, \min(|x_d-y_d|, {n}^{1/d} - |x_d-y_d |)  )||$, where $||.||$ is the Euclidean norm on $\mathbb{R}^d$. The key result we establish about this model is a lower bound or a necessary condition to perform Exact-Recovery. We then observe that the {\ttfamily GBG} algorithm presented earlier achieves Exact-Recovery, if the intensity $\lambda$ is sufficiently high, thereby establishing a phase-transition. We establish the lower bound by exploiting recent advances in the understanding of error exponents for the probability of error in distinguishing between two Poisson random vectors developed in \cite{abbe_sandon_ch}. Furthermore, we also conjecture this necessary condition to also be sufficient. The exact proof of this conjecture is left open in this paper. Another important note is that since we view the set $B_n$ as a torus in this section, the notion of Palm probability needs more care in stating. In the most general sense, we must employ the moment measure expansions of a Poisson Point Process on compact topological groups to discuss the Palm measure on the torus. We provide a justification of the moment measure equations we use in this section in the Appendix \ref{appendix_palm}.

\subsection{Lower Bound for Exact-Recovery}

In this section, we prove Theorem \ref{thm:er_main_formula}. To do so, we first give a general result in Proposition \ref{prop:er_nec_cond} using the Genie aided argument introduced in \cite{abbe_exact}, and then subsequently prove an explicit formula for the lower bound stated in Theorem \ref{thm:er_main_formula} using the large deviation results of \cite{abbe_sandon_ch} for  hypothesis testing between Poisson random vectors.
To state the result, we define a notion of \emph{Flip-Bad} for nodes. Roughly speaking, we say a node $i \in \{1,\cdots,N_n\}$ is Flip-Bad in $G_n$, if on flipping the community label of node $i$ from $Z_i$ to $-Z_i$, the likelihood of the observed data $(G_n,\phi_n)$ increases. More formally, given data $(\phi_n,G_n,(Z_i)_{i = 1}^{N_n})$, the likelihood is defined as 
\begin{multline*}
\mathcal{L}(\phi_n,G_n,(Z_i)_{i = 1}^{N_n}) := \frac{e^{-\lambda n} (\lambda n)^{N_n}}{N_n !} \left(\frac{1}{2n}\right)^{N_n} \prod_{1 \leq i < j \leq N_n: Z_i = Z_j, i\sim j} f_{in}^{(n)}(||X_i - X_j) \\\prod_{1 \leq i < j \leq N_n: Z_i \neq Z_j i \sim j} f_{out}^{(n)}(||X_i - X_j||)  \prod_{1 \leq i < j \leq N_n, Z_i = Z_j, i\nsim j} (1 - f_{in}^{(n)}(||X_i - X_j||) ) \\\prod_{1 \leq i < j \leq N_n, Z_i \neq Z_j, i \nsim j} (1 - f_{out}^{(n)}(||X_i - X_j||)),
\end{multline*}
with the empty product being equal to $1$. Thus, it is easy to see that for every $n$, $0 < \mathcal{L}((\phi_n,G_n,(Z_i)_{i = 1}^{N_n})) \\ \leq 1$ almost-surely, since for every $n$, $N_n$ is finite almost surely. We say a node $j$ is \emph{Flip-Bad in $G_n$} if 
\begin{align*}
\mathcal{L}((\phi_n,G_n,(Z_i)_{i = 1}^{N_n})) \leq \mathcal{L}((\phi_n,G_n,(\tilde{Z}_i^{(j)})_{i = 1}^{N_n})),
\end{align*}
where $\tilde{Z}_i^{(j)} = Z_i$ if $i \neq j$ and $\tilde{Z}_j^{(j)} = -Z_j$. Thus, we use the term Flip-Bad, where a node is `bad' if on flipping its community label, the observed data becomes more likely. We use this definition to reason about the maximum-likelihood estimator.  More formally, for each node $i \in [1,N_n]$, the Maximum-Likelihood (ML) estimate of the community label is denoted as $\hat{Z}_i$  and satisfies

\begin{align}
(\hat{Z}_i)_{i \in \{1,\cdots,N_n\}} &= \arg\max_{z \in \{-1,+1\}^{N_n}} \mathbb{P}[(Z_i)_{i \in \{1,\cdots,N_n\} }= z \vert \phi_n,G_n] \nonumber \\
&= \arg\max_{z \in \{-1,+1\}^{N_n}}\mathcal{L}((\phi_n,G_n,z) 
\label{eqn:ML_estimator}
\end{align}
In words, it is the optimal estimate for the community labels given the observed data $\phi_n$ and $G_n$, where optimality refers to the fact that it minimizes the probability that the vector is $(\hat{Z}_i)_{i \in [1,N_n]}$ is different from the ground truth $(Z_i)_{i \in [1,N_n]}$ among all possible estimators. More formally, the ML estimator $(\hat{Z}_i)_{i \in [1,N_n]} $ in Equation (\ref{eqn:ML_estimator}) satisfies
\begin{align*}
\mathbb{P}[(\hat{Z}_i)_{i \in \{1,\cdots,N_n\}} \neq (Z_i)_{i \in \{1,\cdots,N_n\}}] = \inf_{(\tilde{Z}_i)_{i \in \{1,\cdots,N_n\}} = \mathcal{A}(\phi_n,G_n)}\mathbb{P}[(\tilde{Z}_i)_{i \in \{1,\cdots,N_n\}} \neq (Z_i)_{i \in \{1,\cdots,N_n\}}],
\end{align*}
where the infimum is over all measurable functions $\mathcal{A}$ of the data $\phi_n$ and $G_n$. This asserts that the ML estimator for the community labels is the optimal estimator that minimizes probability of error. The following proposition gives a structural condition on the model parameters of $G_n$ to identify a sufficient condition when the ML estimator will fail.

%\begin{proposition}
%	If $\limsup_{n \rightarrow \infty} \log(n)^{-1} \int_{x \in B_n} f_{in}^{(n)}(||x||) dx < \infty$, then $\limsup_{n \rightarrow \infty} \frac{ \int_{y \in B_n} \mathbb{E}^{0,y}[ \mathbf{1}_{0 \text{ is Flip-Bad in } G_n \cup \{0,y\}} \mathbf{1}_{y \text{ is Flip-Bad in }G_n \in \cup\{0,y\}}]  m_{n,d}(dy)}{    n \mathbb{E}^{0}[\mathbf{1}_{0 \text{ is flip-bad in }G_n \cup \{0\} }]^2} \leq 1$.
%\end{proposition}
%\begin{proof}
%	The proof follows from the following chain of equations. For each $y \in B_n$,
%	\begin{align}
% \mathbb{E}^{0,y}[ \mathbf{1}_{0 \text{ is Flip-Bad in } G_n \cup \{0,y\}} \mathbf{1}_{y \text{ is Flip-Bad in }G_n \in \cup\{0,y\}}]  m_{n,d}(dy) = \mathbb{E}^{0,y}[\mathbf{1}_{0 \text{ is Flip-Bad in }G_n \cup \{0,y\} } ]  \mathbb{E}^{0,y}[\mathbf{1}_{y \text{ is Flip-Bad in }G_n \cup \{0,y\} } \vert \mathbf{1}_{0 \text{ is Flip-Bad in }G_n \cup \{0,y\} }   ] 
%	\end{align}
%\end{proof}

\begin{proposition}
	If the model parameters of $G_n$ satisfies
\begin{align}
 &	\limsup_{n \rightarrow \infty} \frac{ \int_{y \in B_n} \mathbb{E}^{0,y}[ \mathbf{1}_{0 \text{ is Flip-Bad in } G_n \cup \{0,y\}} \mathbf{1}_{y \text{ is Flip-Bad in }G_n \in \cup\{0,y\}}]  m_{n,d}(dy)}{    n \mathbb{E}^{0}[\mathbf{1}_{0 \text{ is flip-bad in }G_n \cup \{0\} }]^2} \leq 1,
 \label{eqn:er_corr_decay_def} \\
& \lim_{n \rightarrow \infty}n \mathbb{E}^0[\mathbf{1}_{0 \text{ is flip-bad in } G_n \cup \{0\}}] = \infty \label{eqn:er_nec_cond_inf}
\end{align}
	where $m_{n,d}$ is the Haar measure on the torus $B_n$,
	then Exact-Recovery is not solvable.
	\label{prop:er_nec_cond}
%	Let the Connection functions $f_{in}^{(n)}(\cdot)$ and $f_{out}^{(n)}(\cdot)$ satisfy Equation (\ref{eqn:er_corr_decay_def}). If \\ $\lim_{n \rightarrow \infty}n \mathbb{E}^0[\mathbf{1}_{0 \text{ is flip-bad in } G_n \cup \{0\}}] = \infty$, then 
\end{proposition}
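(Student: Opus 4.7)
The plan is to use the second moment method on the random variable counting Flip-Bad nodes, and thereby show that the optimal (ML/MAP) estimator fails Exact-Recovery with probability tending to one; since every estimator's failure probability is at least that of the optimal one, this rules out Exact-Recovery by any algorithm.

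First I would make precise the reduction from ML failure to the existence of a Flip-Bad node. By the symmetry of the model under a global sign flip, the posterior satisfies $\mathbb{P}[(Z_i) = z \mid \phi_n, G_n] = \mathbb{P}[(Z_i) = -z \mid \phi_n, G_n]$, so the Bayes-optimal estimator for Exact-Recovery picks the equivalence class $\{z,-z\}$ that maximizes $\mathcal{L}(\phi_n, G_n, z)$. If any node $j \in [1,N_n]$ is Flip-Bad, then the single-bit flip $\tilde{z}^{(j)}$ of the ground truth achieves likelihood at least as large as that of the ground truth, and (for $N_n \geq 2$) $\tilde{z}^{(j)}$ lies in a different symmetry class than the ground truth. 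Hence on the event $\{W_n \geq 1\}$, the true class is not the unique posterior maximizer and the optimal estimator cannot return the true class with probability tending to $1$ (ties occur on a set of measure zero since the model has absolutely continuous structure except for the discrete community labels, and the tie-breaking subtlety can be handled by passing to strict inequality in the definition of Flip-Bad at negligible cost).

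The core of the argument is to show $\mathbb{P}[W_n \geq 1] \to 1$, where $W_n := \sum_{i=1}^{N_n} \mathbf{1}_{i\text{ is Flip-Bad in }G_n}$. By the Campbell--Mecke formula for the marked PPP on the torus and by the translation invariance of the distribution of $(G_n,\phi_n)$ under the toroidal metric,
\begin{align*}
\mathbb{E}[W_n] = \lambda \int_{B_n} \mathbb{P}^{x}[x\text{ is Flip-Bad in }G_n \cup \{x\}]\, m_{n,d}(dx) = \lambda n\, \mathbb{E}^{0}[\mathbf{1}_{0\text{ is Flip-Bad in }G_n \cup \{0\}}],
\end{align*}
so hypothesis (\ref{eqn:er_nec_cond_inf}) yields $\mathbb{E}[W_n] \to \infty$. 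Applying the second-order moment measure expansion for the PPP and again exploiting toroidal stationarity gives
\begin{align*}
\mathbb{E}[W_n(W_n - 1)] = \lambda^2 n \int_{B_n} \mathbb{E}^{0,y}\bigl[\mathbf{1}_{0\text{ Flip-Bad in }G_n\cup\{0,y\}}\,\mathbf{1}_{y\text{ Flip-Bad in }G_n\cup\{0,y\}}\bigr]\, m_{n,d}(dy).
\end{align*}
Dividing by $\mathbb{E}[W_n]^2 = \lambda^2 n^2 \mathbb{E}^{0}[\mathbf{1}_{0\text{ Flip-Bad in }G_n \cup \{0\}}]^2$, hypothesis (\ref{eqn:er_corr_decay_def}) gives $\limsup_n \mathbb{E}[W_n(W_n-1)]/\mathbb{E}[W_n]^2 \leq 1$, which together with $\mathbb{E}[W_n] \to \infty$ yields $\mathbb{E}[W_n^2]/\mathbb{E}[W_n]^2 \to 1$. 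The Paley--Zygmund inequality $\mathbb{P}[W_n \geq 1] \geq \mathbb{E}[W_n]^2/\mathbb{E}[W_n^2]$ then gives $\mathbb{P}[W_n \geq 1] \to 1$, completing the proof.

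The main obstacle I anticipate is not conceptual but technical: justifying the Campbell--Mecke and Slivnyak identities in the Palm framework on the torus (rather than on $\mathbb{R}^d$), and handling cleanly the boundary case of likelihood ties and the global sign-flip equivalence in the reduction to ML failure. The appendix on Palm measures on the torus referenced in the excerpt should supply what is needed for the former, while the latter is resolved by verifying that the set of $(\phi_n, G_n)$ realizations producing likelihood ties between distinct non-negated configurations has probability zero, which follows from the independent continuous thinning marks $\{U_{ij}\}$ introduced in the construction of $G$.
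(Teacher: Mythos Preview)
Your proposal is correct and matches the paper's proof essentially line for line: both define the count of Flip-Bad nodes, compute its first moment via Campbell--Mecke and its second (factorial) moment via the two-point Palm expansion on the torus, and then conclude $\mathbb{P}[W_n\geq 1]\to 1$ by a second-moment inequality (you invoke Paley--Zygmund, the paper uses the equivalent Chebyshev form $\mathbb{P}[Y_n=0]\leq \mathbb{E}[Y_n^2]/\mathbb{E}[Y_n]^2-1$). Your extra care about the ML reduction, the global sign-flip symmetry, and ties is more than the paper spells out; one caveat is that your ``ties have measure zero'' claim is not literally true for the indicator-type connection functions of Definition~\ref{defn:gsbm} (the likelihood ratio depends only on integer counts), but this is harmless since a tie already means the ML estimator does not uniquely return the true class.
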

 The condition in Equation (\ref{eqn:er_corr_decay_def}) states that the event of two `far-away' nodes being Flip-Bad are asymptotically uncorrelated. Such statements are true for instance if the functions $f_{in}^{(n)}(\cdot)$ and $f_{out}^{(n)}(\cdot)$ have support that is $o(n)$. For example, if the connection functions satisfy $f_{in}^{(n)}(r) = a_n \mathbf{1}_{r \leq R_n}$ and $f_{out}^{(n)}(r)
 = b_n \mathbf{1}_{r \leq R_n}$, for some $0 \leq b_n \leq a_n \leq 1$ and $R_n = o(n^{1/d})$, then they satisfy the condition. See for example also Lemma \ref{lem:er_con_func_good}. On the other hand, Equation (\ref{eqn:er_corr_decay_def}) is satisfied even if the model is reduced to the classical SBM without geometry, i.e., any two nodes in $G_n$ are  connected only based on their community labels and not the location labels if the average degree is proportional to the logarithm of the population size.
 % We believe Equation (\ref{eqn:er_corr_decay_def}) to hold for all connection functions if the average degree is not a constant, but scales with $n$. 
\begin{proof}
	If there exists a node in $G_n$ that is Flip-Bad, then the Maximum Likelihood estimator for the community labels will not match the ground truth. We will show that if Equations (\ref{eqn:er_corr_decay_def}) and (\ref{eqn:er_nec_cond_inf}) hold, then $ \mathbb{P}[\exists i \in \{1,\cdots,N_n\}: i \text{ is Flip-Bad in }G_n] = 1 - o_n(1)$, which will conclude the proof of the result. 
	\\
	
	To do this, define by $Y_n := \sum_{i \in \{1,\cdots,N_n\}} \mathbf{1}_{i \text{ is Flip-Bad in }G_n}$. From the classical method of moments, it suffices to establish that $\limsup_{n \rightarrow \infty} \frac{\mathbb{E}[Y_n^2] }{\mathbb{E}[Y_n]^2} \leq 1$ and $\mathbb{E}[Y_n] \rightarrow \infty$ as $n \rightarrow \infty$. Indeed, if this were the case, then by Chebychev's inequality, we would have 
	\begin{align*}
	\mathbb{P}[Y_n = 0] \leq \frac{\mathbb{E}[Y_n^2] }{\mathbb{E}[Y_n]^2} - 1,
	\end{align*}
	which will converge to $0$. It remains to compute the first and second moment of $Y_n$. 
	\\
	
	From Campbell's theorem (see Appendix \ref{appendix_palm} for Palm probability on the torus), we have 
	\begin{align}
	\mathbb{E}[Y_n] &= \lambda \int_{y \in B_n} \mathbb{E}^{y}[\mathbf{1}_{y \text{ is Flip-Bad in }G_n \cup \{y\}}] m_{n,d}(dy) \nonumber\\
	&=  \lambda n \mathbb{E}^0[\mathbf{1}_{0 \text{ is Flip-bad in }G_n\cup \{0\} }],
	\label{eqn:er_first_moment}
	\end{align}
	where the second inequality follows from the symmetry in the torus.
	Thus from the hypothesis of the theorem in Equation (\ref{eqn:er_nec_cond_inf}), $\mathbb{E}[Y_n]$ converges to $\infty$. To compute the second moment, we use the factorial moment expansion of the Poisson Process as follows.
	\begin{align}
	\mathbb{E}[Y_n^2] &= \mathbb{E}[(\sum_{i \in [1,N_n]} \mathbf{1}_{i \text{ is Flip-Bad in }G_n})^2]\nonumber\\
	&= \mathbb{E}[Y_n] + \mathbb{E}[\sum_{i \neq j} \mathbf{1}_{i \text{ is Flip-Bad in } G_n} \mathbf{1}_{j \text{ is Flip-Bad in }G_n}]\nonumber\\
	& \stackrel{(a)}{=} \mathbb{E}[Y_n] + \lambda^2 \int_{x \in B_n} \int_{y \in B_n} \mathbb{E}^{x,y}[ \mathbf{1}_{x \text{ is Flip-Bad in } G_n \cup \{x,y\}} \mathbf{1}_{y \text{ is Flip-Bad in }G_n \in \cup\{x,y\}}] m_{n,d}(dx) m_{n,d}(dy)\nonumber \\
	& \stackrel{(b)}{=} \mathbb{E}[Y_n] + \lambda^2 n \int_{y \in B_n}\mathbb{E}^{0,y}[ \mathbf{1}_{0 \text{ is Flip-Bad in } G_n \cup \{0,y\}} \mathbf{1}_{y \text{ is Flip-Bad in }G_n \in \cup\{0,y\}}] m_{n,d}(dy),
	\label{eqn:er_prop_second_moment}
	\end{align} 
	where equality $(a)$ follows from the $2$nd order Moment Measure expansion of a Poisson process and $(b)$ follows from the symmetry in the torus. Thus, from Equations (\ref{eqn:er_first_moment}) and (\ref{eqn:er_prop_second_moment}) , we have,
	\begin{align}
	\frac{\mathbb{E}[Y_n^2]}{\mathbb{E}[Y_n]^2} = \frac{1}{\mathbb{E}[Y_n]} + \frac{\lambda^2\int_{y \in B_n}\mathbb{E}^{0,y}[ \mathbf{1}_{0 \text{ is Flip-Bad in } G_n \cup \{0,y\}} \mathbf{1}_{y \text{ is Flip-Bad in }G_n \in \cup\{0,y\}}] m_{n,d}(dy)}{ (\lambda n \mathbb{E}^0[\mathbf{1}_{0 \text{ is Flip-bad in }G_n\cup \{0\} }])^2}.
	\label{eqn:er_nec_cond_prop}
	\end{align}
	
	Thanks to the assumption on the connection functions in Equation (\ref{eqn:er_corr_decay_def}) and the first moment in Equation (\ref{eqn:er_nec_cond_prop}), we have that $\limsup_{n \rightarrow \infty} \frac{\mathbb{E}[Y_n^2]}{\mathbb{E}[Y_n]} \leq 1$.
\end{proof}

In the rest of the section, we consider the model given in Definition \ref{defn:gsbm} where the connection functions $f_{in}^{(n)}(\cdot)$ and $f_{out}^{(n)}(\cdot)$ take the form $f_{in}(r) = a \mathbf{1}_{r \leq \log(n)^{1/d}}$ and $f_{out}^{(n)}(r) = b \mathbf{1}_{r \leq \log(n)^{1/d}}$ for some $0 \leq b < a \leq 1$, to illustrate how one can use the previous proposition to obtain a closed form expression for the phase-transition threshold. We first prove Theorem \ref{thm:er_main_formula} to provide an exact necessary condition in terms of the model parameters $\lambda,a$ and $b$ for achieving exact-recovery. The proof of Theorem \ref{thm:er_main_formula} follows from the next two lemmas. Recall that, we denote by $\nu_d$ for all $d \in \mathbb{N}$ as the volume of the unit Euclidean ball in $d$ dimensions. 

\begin{lemma}
	For all $\lambda >0$, $d \in \mathbb{N}$ and $0 \leq b < a \leq 1$, if $G_n \sim \mathcal{G}(\lambda n,a,b,d)$, then \\ $\mathbb{E}^0[\mathbf{1}_{0 \text{ is Flip-Bad in }G_n \cup \{0\}}] =  e^{-\lambda \nu_d  \log(n)(1 - \sqrt{ab} - \sqrt{(1-a)(1-b)} -o(1)   )}$.
		\label{lem:er_ch_divergence}
\end{lemma}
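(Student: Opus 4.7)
The plan is to compute $\mathbb{P}^0[0 \text{ is Flip-Bad in } G_n \cup \{0\}]$ sharply on the logarithmic scale, by reducing the event to a two-point hypothesis test between two independent Poisson vectors and invoking the Chernoff--Hellinger (CH) divergence machinery from \cite{abbe_sandon_ch}. First, since $f_{in}^{(n)}$ and $f_{out}^{(n)}$ are supported on $[0, \log(n)^{1/d}]$, the likelihood ratio between the flipped and unflipped configurations depends only on the edges incident to $0$, hence only on the nodes lying in the ball $B(0,\log(n)^{1/d})$ of volume $\nu_d \log n$. By independent marking and Slivnyak's theorem on the torus, the counts
\begin{align*}
n_1 &= \#\{i : X_i \in B(0,\log(n)^{1/d}),\ Z_i = Z_0,\ i \sim_{G_n} 0\}, \\
n_2 &= \#\{i : X_i \in B(0,\log(n)^{1/d}),\ Z_i = Z_0,\ i \not\sim_{G_n} 0\}, \\
n_3 &= \#\{i : X_i \in B(0,\log(n)^{1/d}),\ Z_i \neq Z_0,\ i \sim_{G_n} 0\}, \\
n_4 &= \#\{i : X_i \in B(0,\log(n)^{1/d}),\ Z_i \neq Z_0,\ i \not\sim_{G_n} 0\}
\end{align*}
are independent Poissons with means $\lambda \nu_d \log(n) \cdot a/2,\ \lambda \nu_d \log(n) \cdot (1-a)/2,\ \lambda \nu_d \log(n) \cdot b/2,\ \lambda \nu_d \log(n) \cdot (1-b)/2$ respectively.

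A direct expansion of the likelihood ratio shows that $0$ is Flip-Bad iff
\[
a^{n_1}(1-a)^{n_2}b^{n_3}(1-b)^{n_4} \leq b^{n_1}(1-b)^{n_2}a^{n_3}(1-a)^{n_4},
\]
equivalently
\[
T := (n_1 - n_3)\log(a/b) + (n_4 - n_2)\log\!\big((1-b)/(1-a)\big) \leq 0.
\]
Since $a > b$ makes both logarithms strictly positive and $\mathbb{E}[T] = \lambda \nu_d \log(n) \cdot \tfrac{a-b}{2} \log\!\big( \tfrac{a(1-b)}{b(1-a)} \big) > 0$, the event $\{T \leq 0\}$ is a large-deviation event. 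It is precisely the Bayes error event for testing between the Poisson vector with mean $p = \lambda \nu_d \log n \cdot (a/2,(1-a)/2,b/2,(1-b)/2)$ and the one with mean $q = \lambda \nu_d \log n \cdot (b/2,(1-b)/2,a/2,(1-a)/2)$, which is the coordinate-swap image of $p$.

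For the upper bound on $\mathbb{P}^0[T \leq 0]$, I use a Chernoff bound: for any $t \in [0,1]$,
\[
\mathbb{P}^0[T \leq 0] \leq \mathbb{E}^0[e^{-tT}] = \exp\!\big( \lambda \nu_d \log(n)\, \Psi(t) \big),
\]
where, by the Poisson MGF applied coordinate-wise,
\[
\Psi(t) = \tfrac{1}{2}\Big( a^{1-t}b^t + a^t b^{1-t} + (1-a)^t(1-b)^{1-t} + (1-a)^{1-t}(1-b)^t - 2 \Big).
\]
The coordinate-swap symmetry relating $p$ and $q$ gives $\Psi(t) = \Psi(1-t)$, and a short convexity check identifies $t^\ast = 1/2$ as the unique minimizer. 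Plugging in $t = 1/2$ yields $\Psi(1/2) = \sqrt{ab} + \sqrt{(1-a)(1-b)} - 1$, producing the bound
\[
\mathbb{P}^0[0 \text{ is Flip-Bad}] \leq \exp\!\Big( -\lambda \nu_d \log(n)\,\big[ 1 - \sqrt{ab} - \sqrt{(1-a)(1-b)} \big] \Big).
\]

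For the matching lower bound, I invoke the sharp large-deviation theorem of \cite{abbe_sandon_ch} for hypothesis testing between Poisson random vectors, which asserts that the Bayes risk has logarithmic asymptotics $-D_+(p,q)(1 + o(1))$ with $D_+(p,q) = \max_{t \in [0,1]} \lambda \nu_d \log n \cdot (-\Psi(t))$. By the symmetry and convexity observed above, the maximum is attained at $t^\ast = 1/2$, giving $D_+(p,q) = \lambda \nu_d \log n \cdot [1 - \sqrt{ab} - \sqrt{(1-a)(1-b)}]$ and matching the Chernoff upper bound to leading order. The main subtlety, and the step I expect to check most carefully, is verifying that the scaling regime here (coordinate means of order $\log n$ with bounded ratios $a/b$ and $(1-b)/(1-a)$ strictly bounded away from $1$ and $\infty$) fits the hypotheses of the Abbe--Sandon theorem, which it does as long as $0 \leq b < a \leq 1$. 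Combining upper and lower bounds then gives the claim $\mathbb{E}^0[\mathbf{1}_{0 \text{ is Flip-Bad in } G_n \cup \{0\}}] = e^{-\lambda \nu_d \log(n)(1 - \sqrt{ab} - \sqrt{(1-a)(1-b)} - o(1))}$.
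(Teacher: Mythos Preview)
Your proof is correct and follows essentially the same approach as the paper: both reduce the Flip-Bad event to a hypothesis test between two Poisson vectors arising from the four (community, edge/non-edge) counts inside the ball $B(0,\log(n)^{1/d})$, observe that the coordinate-swap symmetry forces the optimal Chernoff parameter $t^\ast = 1/2$, and invoke the CH-divergence result of \cite{abbe_sandon_ch} (their Lemma~11) to obtain the exponent $1 - \sqrt{ab} - \sqrt{(1-a)(1-b)}$. The only cosmetic difference is that you work out the Chernoff upper bound explicitly and cite \cite{abbe_sandon_ch} only for the matching lower bound, whereas the paper cites that lemma directly for both directions at once.
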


\begin{lemma}
	For all $\lambda >0$, $d \in \mathbb{N}$ and $0 \leq b < a \leq 1$ such that $\lambda \nu_d  \log(n)(1 - \sqrt{ab} - \sqrt{(1-a)(1-b)} < 1$, the graph $G_n \sim \mathcal{G}(\lambda n,a,b,d)$ satisfies Equation (\ref{eqn:er_corr_decay_def}).
	\label{lem:er_con_func_good}
\end{lemma}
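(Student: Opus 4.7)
The approach will exploit the bounded support of the connection functions in Definition \ref{defn:gsbm}: both $f_{in}^{(n)}$ and $f_{out}^{(n)}$ vanish outside toroidal distance $(\log n)^{1/d}$, which is $o(n^{1/d})$. Consequently, whether a point $x$ is Flip-Bad in $G_n \cup \{x\}$ will be a deterministic function of the restriction of the marked PPP to the ball $B(x,(\log n)^{1/d})$ together with the $\{U_{xj}\}$ marks indexing $x$'s potential edges. My plan is to decompose the integral in the numerator of (\ref{eqn:er_corr_decay_def}) into a \emph{far} region $F_n := \{y \in B_n : \|y\|_{\mathcal{T}_n} > 2(\log n)^{1/d}\}$ and a \emph{near} region $N_n := B_n \setminus F_n$, whose toroidal volume is at most $2^d \nu_d \log n$, and to handle these two contributions separately.

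For $y \in F_n$, the balls $B(0,(\log n)^{1/d})$ and $B(y,(\log n)^{1/d})$ are disjoint on the torus. Under $\mathbb{P}^{0,y}$, the restrictions of the marked PPP to these two regions are independent by Slivnyak's theorem combined with the independence property of the PPP on disjoint sets; moreover the extra atom at $y$ lies outside the ball about $0$ and vice versa, so removing it does not alter either Flip-Bad event. Thus the two indicators will factorize and each marginal will coincide with $\mathbb{E}^0[\mathbf{1}_{0 \text{ is Flip-Bad in } G_n \cup \{0\}}]$ by toroidal shift-invariance. Integrating over $F_n$ will therefore yield at most $n \cdot \mathbb{E}^0[\mathbf{1}_{0 \text{ is Flip-Bad}}]^2$, precisely the denominator in (\ref{eqn:er_corr_decay_def}).

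For $y \in N_n$, I will upper bound the integrand by $\mathbb{E}^{0,y}[\mathbf{1}_{0 \text{ is Flip-Bad in } G_n \cup \{0,y\}}]$ and compare it with $\mathbb{E}^0[\mathbf{1}_{0 \text{ is Flip-Bad in } G_n \cup \{0\}}]$. Writing the log-likelihood change from flipping $Z_0$ as a sum over within-range points, the extra atom at $y$ will contribute at most one additional term bounded in absolute value by the constant $M := \max(|\log(a/b)|,|\log((1-a)/(1-b))|)$, giving $\mathbb{E}^{0,y}[\mathbf{1}_{0 \text{ is Flip-Bad}}] \leq \mathbb{P}^0[\Lambda \geq -M]$, where $\Lambda$ denotes the likelihood change in $G_n \cup \{0\}$. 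A Chernoff argument based on the same exponential tilt that underpins Lemma \ref{lem:er_ch_divergence} will then show $\mathbb{P}^0[\Lambda \geq -M] \leq C(a,b) \cdot \mathbb{P}^0[\Lambda \geq 0]$, since shifting the threshold by a constant only introduces a multiplicative factor $e^{s^\ast M}$ at the optimal tilt $s^\ast$. The near-region integral will then be $O(\log n \cdot \mathbb{E}^0[\mathbf{1}_{0 \text{ is Flip-Bad}}])$, and its ratio to the denominator is $O(\log n / (n \mathbb{E}^0[\mathbf{1}_{0 \text{ is Flip-Bad}}]))$. Since Lemma \ref{lem:er_ch_divergence} combined with the hypothesis $\lambda \nu_d(1 - \sqrt{ab} - \sqrt{(1-a)(1-b)}) < 1$ yields $n \cdot \mathbb{E}^0[\mathbf{1}_{0 \text{ is Flip-Bad}}] = n^{1 - \lambda \nu_d (1 - \sqrt{ab} - \sqrt{(1-a)(1-b)}) + o(1)} \to \infty$, this ratio vanishes, and summing both contributions will give $\limsup \leq 1$. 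The main technical subtlety is this near-region comparison between two shifted tail probabilities; the far-region factorization and the volume accounting are otherwise routine.
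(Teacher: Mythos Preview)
Your approach is correct and matches the paper's: split the integral at toroidal distance $2(\log n)^{1/d}$, use PPP independence on disjoint balls to factorize the far contribution into at most $n\,\mathbb{E}^0[\mathbf{1}_{0\text{ Flip-Bad}}]^2$, and bound the near contribution by $O(\log n)\cdot \mathbb{E}^0[\mathbf{1}_{0\text{ Flip-Bad}}]$, which vanishes against the denominator since $n\,\mathbb{E}^0[\mathbf{1}_{0\text{ Flip-Bad}}]\to\infty$ under the hypothesis via Lemma \ref{lem:er_ch_divergence}. Your Chernoff-tilt comparison $\mathbb{P}^0[\Lambda\ge -M]\le C(a,b)\,\mathbb{P}^0[\Lambda\ge 0]$ for the near region is in fact more explicit than the paper's own treatment, which writes the analogous bound in Equation (\ref{eqn:er_conn_func_good4}) without justifying why the extra atom at $y$ inflates the Flip-Bad probability by only a bounded factor; two minor caveats are that $M$ should be taken as the largest \emph{positive} contribution occurring with positive probability (so that it remains finite at the boundary case $b=0$), and that strictly the constant $C(a,b)$ may carry an $n^{o(1)}$ slack coming from the lower-bound half of Lemma \ref{lem:er_ch_divergence}, which is harmless for the conclusion.
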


\begin{proof}  \emph{of Theorem \ref{thm:er_main_formula}}.\\
From Lemma \ref{lem:er_con_func_good}, we know that the connection functions satisfy the model assumption in Equation  (\ref{eqn:er_corr_decay_def}). From Lemma \ref{lem:er_ch_divergence}, we know that if $\lambda \nu_d (1 - \sqrt{ab} - \sqrt{(1-a)(1-b)}) < 1$, then  $\mathbb{E}^0[\mathbf{1}_{0 \text{ is flip-bad in } G_n \cup \{0\}}] = n^{-1+\delta - o(1)}$ for some $\delta >0$. Thus, $n \mathbb{E}^0[\mathbf{1}_{0 \text{ is flip-bad in } G_n \cup \{0\}}] = n^{\delta - o(1)}$ which converges to $\infty$ as $n$ goes to $\infty$. The proof is now complete thanks to Proposition \ref{prop:er_nec_cond}.
\end{proof}

We now provide the proofs of the associated lemmas.

\begin{proof} \emph{of Lemma \ref{lem:er_ch_divergence}}.\\
	This lemma is a corollary of Lemma $11$ proven in \cite{abbe_sandon_ch}, where the  error exponent for hypothesis testing between Poisson random vectors was established. 
To assess whether the node at $0$ is Flip-Bad, we need to decide given the locations and true community labels of the neighbors and non-neighbors of node at $0$, whether it belongs to community $+1$ or $-1$. However, since the connection functions have support of $\log(n)^{1/d}$ and events in disjoint regions of space are independent, it suffices to consider the neighbors and non-neighbors within the ball of radius $\log(n)^{1/d}$ around $0$. (Note from the model that there are no neighbors of the node at $0$ at a distance larger than $\log(n)^{1/d}$). The number of neighbors of nodes in the same commuity as $0$ is a Poisson random variable of mean $\lambda/2 \nu_d a\log(n)$ and the number of neighbors in the opposite community is another independent Poisson random variable of mean  $\lambda/2 \nu_d b\log(n)$. The independence follows from elementary independent thinning property of the PPP. Similarly, the number of non-neighbors in the same and opposite community as $0$ and within a distance of $\log(n)^{1/d}$ of $0$ are independent Poisson random variables of mean $\lambda/2 \nu_d (1-a)\log(n)$ and $\lambda/2 \nu_d (1-b)\log(n)$ respectively. This argument again follows from the independent thinning property of an independently marked Poisson Process. 
	\\
	
	Thus the probability of a node at $0$ being flip-bad is equal to the error made by an optimal hypothesis tester between two random vectors $(\lambda/2) \nu_d \log(n) (a,b,1-a,1-b)$ and $(\lambda/2) \nu_d \log(n) (b,a,1-b,1-a)$, given an uniform prior over the two models. Furthermore, the components of the observed random vector are independent. Thus, we are in a setting to  apply the CH-divergence theorem of \cite{abbe_sandon_ch} to characterize the error probability of the optimal hypothesis testing error. More precisely, applying Lemma $11$ from \cite{abbe_sandon_ch} the error probability in identifying between the two Poisson random vectors with uniform prior  $(\lambda/2) \nu_d \log(n) \mu $ and $(\lambda/2) \nu_d \log(n) \nu $ where the vectors $\mu:= (a,b,1-a,1-b)$ and $\nu :=  (b,a,1-b,1-a)$  is equal to $\mathbb{E}^0[\mathbf{1}_{0 \text{ is flip-bad in } G_n \cup \{0\}}]$ is given by
	\begin{align}
	\mathbb{E}^0[\mathbf{1}_{0 \text{ is flip-bad in } G_n \cup \{0\}}]:= n^{- (\lambda/2) \nu_d D_{+}(\mu,\nu) + o(1)},
	\label{eqn:er_error_prob_CH}
	\end{align}
	where the CH-Divergence $D_{+}(\mu,\nu)$ (\cite{abbe_sandon_ch}) is given by
	\begin{align}
	D_{+}(\mu,\nu) := \max_{t \in [0,1]} \sum_{x \in \mathcal{X}} (t \mu(x) + (1-t) \nu(x) - \mu(x)^t \nu(x)^{1-t}).
	\label{eqn:er_CH_Divergence}
	\end{align}
	Here $\mathcal{X} := [1,2,3,4]$, and $\mu(x)$ ($\nu(x)$) for $x \in \mathcal{X}$ refers to the $x$th component of the vector $\mu$ ($\nu$). Evaluating Equation (\ref{eqn:er_CH_Divergence}) yields that $D_{+}(\mu,\nu) = 2(1 - \sqrt{ab} - \sqrt{(1-a)(1-b)})$ with the maximum being achieved at $t=1/2$ due to symmetry in the vectors $\mu$ and $\nu$. Substituting Equation (\ref{eqn:er_CH_Divergence}) into (\ref{eqn:er_error_prob_CH}) yields the result.
\end{proof}

\begin{proof} \emph{of Lemma \ref{lem:er_con_func_good}}. \\

This lemma follows from some straightforward calculations exploiting the spatial independence across the Poisson process. To verify Equation (\ref{eqn:er_corr_decay_def}), consider the following chain of equations.
	\begin{align}
	&\int_{y \in B_n} \mathbb{E}^{0,y}[ \mathbf{1}_{0 \text{ is Flip-Bad in } G_n \cup \{0,y\}} \mathbf{1}_{y \text{ is Flip-Bad in }G_n \in \cup\{0,y\}}] m_{n,d}(dy) = \nonumber\\ & \int_{y \in B(0,2\log(n)^{1/d})} \mathbb{E}^{0,y}[ \mathbf{1}_{0 \text{ is Flip-Bad in } G_n \cup \{0,y\}} \mathbf{1}_{y \text{ is Flip-Bad in }G_n \in \cup\{0,y\}}] m_{n,d}(dy)  \nonumber\\& +\int_{y \in B_n \cap B(0,2\log(n)^{1/d})^{\complement}}  \mathbb{E}^{0,y}[ \mathbf{1}_{0 \text{ is Flip-Bad in } G_n \cup \{0,y\}} \mathbf{1}_{y \text{ is Flip-Bad in }G_n \in \cup\{0,y\}}] m_{n,d}(dy) \nonumber\\
	& \leq \int_{y \in B(0,2\log(n)^{1/d})} \mathbb{E}^{0,y}[ \mathbf{1}_{0 \text{ is Flip-Bad in } G_n \cup \{0,y\}} ]  m_{n,d}(dy)  \nonumber \\& + \int_{y \in B_n \cap B(0,2\log(n)^{1/d})^{\complement}}  \mathbb{E}^{0,y}[ \mathbf{1}_{0 \text{ is Flip-Bad in } G_n \cup \{0,y\}} \mathbf{1}_{y \text{ is Flip-Bad in }G_n \in \cup\{0,y\}}] m_{n,d}(dy)
	\label{eqn:er_conn_func_good_1}
	\end{align} 
The key observation to make is that for $x,y$ such that $||x-y|| > 2 \log(n)^{1/d}$, we have
\begin{align}
\mathbb{E}^{x,y}[ \mathbf{1}_{x \text{ is Flip-Bad in } G_n \cup \{x,y\}} \mathbf{1}_{y \text{ is Flip-Bad in }G_n \in \cup\{x,y\}}]  = \mathbb{E}^{x}[ \mathbf{1}_{x \text{ is Flip-Bad in } G_n \cup \{x\}}] \mathbb{E}^y[ \mathbf{1}_{y \text{ is Flip-Bad in }G_n  \cup\{y\}}] 
\label{eqn:er_conn_func_good2}
\end{align}

This follows since in an independently marked Poisson Process, events on disjoint sets are independent. Further more, from the symmetry in the torus, for all $x \in B_n$, we also have
\begin{align}
\mathbb{E}^{x}[ \mathbf{1}_{x \text{ is Flip-Bad in } G_n \cup \{x\}}] = \mathbb{E}^{0}[ \mathbf{1}_{0 \text{ is Flip-Bad in } G_n \cup \{0\}}].
\label{eqn:er_conn_func_good3}
\end{align}
Thus, we get from Equations (\ref{eqn:er_conn_func_good_1}),(\ref{eqn:er_conn_func_good2}) and (\ref{eqn:er_conn_func_good3}) that 
\begin{align}
\int_{y \in B_n}& \mathbb{E}^{0,y}[ \mathbf{1}_{0 \text{ is Flip-Bad in } G_n \cup \{0,y\}} \mathbf{1}_{y \text{ is Flip-Bad in }G_n \in \cup\{0,y\}}] m_{n,d}(dy)  \leq \nonumber\\&(n - 2^d \nu_d \log(n) ) \mathbb{E}^{0}[ \mathbf{1}_{0\text{ is Flip-Bad in } G_n \cup \{0\}}]^2 + 2^d \nu_d \log(n) \mathbb{E}^{0}[ \mathbf{1}_{0 \text{ is Flip-Bad in } G_n \cup \{0\}}].
\label{eqn:er_conn_func_good4}
\end{align}
Thus, since $\lambda \nu_d  \log(n)(1 - \sqrt{ab} - \sqrt{(1-a)(1-b)} > 1$, we have from Lemma \ref{lem:er_ch_divergence} that,\\ $n \mathbb{E}^{0}[ \mathbf{1}_{0 \text{ is Flip-Bad in } G_n \cup \{0\}}] = n^{\gamma}$ for some $\gamma > 0$.  Hence Equation (\ref{eqn:er_conn_func_good4}) implies Equation (\ref{eqn:er_corr_decay_def}) if $\lambda \nu_d  \log(n)(1 - \sqrt{ab} - \sqrt{(1-a)(1-b)} > 1$.

\end{proof}

\subsection{Upper Bound for Exact-Recovery - Proof of Theorem \ref{thm:ER_Upper_Bound}}
\label{sec:er_ub_proof}
In the present paper, we are only able to establish the presence of the phase transition by proving Theorem \ref{thm:ER_Upper_Bound}. We believe a `two-round' information theoretic argument can be employed to prove this result. A possible strategy is to first show that for any $0 \leq b < a \leq 1$, a `large' (i.e. all but $o(n)$) nodes will be correctly classified by the ML estimator with high probability. Further, if the parameters satisfied $\lambda \nu_d (1 - \sqrt{ab} - \sqrt{(1-a)(1-b)}) > 1$, then all nodes will be correctly classified with high probability. One can possibly make this efficient by means of `sample splitting' arguments of \cite{abbe_sandon_ch}. One can sub-sample the edges so that the graph is almost sparse so that a large fraction of the nodes can be correctly labelled by the ML estimator. Then, we can `clean-up', i.e. estimate a corrected community label estimate using the edges not used in the first round. This conjecture is also reminiscent of the `local to global' phenomena that occurs in many random graph models (\cite{abbe_exact},\cite{MNS_er},\cite{bollobas},\cite{penrose}), where an obvious local necessary condition also turns out to be sufficient. However, as a corollary to the GBG algorithm introduced above, we Theorem \ref{thm:ER_Upper_Bound} which establsihes that Exact-Recovery can be solved if the intensity $\lambda$ is sufficiently high.

\begin{proof} \emph{of Theorem \ref{thm:ER_Upper_Bound}} \\
	Notice that with $R = \log(n)^{1/d}/2d$, there are at-most $\lceil 4^d dn/\log(n) \rceil$ grid-cells in $B_n$. If we show that the chance that a grid cell is T-BAD is $n^{-1-\delta}$ for some $\delta > 0$, then by an union bound argument, we can assert that with probability at-least $1-n^{-\delta}$, all grid-cells will be T-GOOD. Furthermore, if all grid cells are T-GOOD, then by Proposition \ref{prop:consistent_partition}, all nodes in $G_n$ will be correctly partitioned by the algorithm.
	\\

	Set  $R = \log(n)^{1/d}/2d$ and $\epsilon \in (0,1)$ arbitrary as parameters of the {\ttfamily GBG} algorithm. From Proposition \ref{prop:pairwise_estimate}, we know that for any $z,z^{'} \in \mathbb{Z}^d$ such that $||z-z^{'}||_{\infty} =1$, the probability that the pairwise classifier makes an error is at-most  $e^{-c(d,a,b) \lambda \log(n) }$, where $c(d,a,b) > 0$ is a positive constant. From Lemma \ref{lem:T-Good-Prob}, it is clear, that the probability that a cell is T-BAD is at-most $n^{-c(d,a,b) \lambda } \log(n)^d C + n^{-c^{'}(\epsilon) \lambda n}$, where $c$ and $c^{'}$ are two strictly positive constants. Thus, by choosing $\lambda$ sufficiently high, we can ensure that the probability of a grid-cell being T-BAD is at-most $n^{-1-\delta}$ for some $\delta >0$.
\end{proof}

Here we also want to remark that using ideas from Section \ref{sec:unknown_conn_func}, the {\ttfamily GBG} algorithm can be implemented without knowledge of the parameters. In particular, one can use standard spectral or SDP algorithms to decide whether a cell is T-GOOD or not, as explained in Section \ref{sec:unknown_conn_func}. This can be done so since the sub-graph of $G_n$ restricted to nodes within the $1$ thickening of any grid cell $z$ is dsitributed as the SBM with random $\log(n)$ number of nodes and having a constant connection probability of either $a$ or $b$ among nodes of the same or opposite communities. Thus, one can employ standard spectral or SDP algorithms `off the shelf' to implement the algorithm without knowledge of $a$,$b$ or $\lambda$.

\section{Related Work}
\label{sec:related_work}

Community Detection on sparse graphs has mostly been studied on the SBM random graph model. The study of SBM has a long history in the different literatures of statistical physics (see \cite{decelle} and references therein), mathematics (for ex. \cite{inhomo_rg},\cite{MNS_Prob_Theory}) and computer science (for ex. \cite{planted_partition}, \cite{coja_oghalan},\cite{MNS_er}).  The reader should refer to the  survey  \cite{abbe_overview}  for further background and references on the SBM. The survey  \cite{moore_survey} gives a complete treatment of the SBM from a statistical physics view point. There has been renewed interest in the sparse regime of the SBM following the paper  \cite{decelle}, which made a number of striking conjectures on phase-transitions.  Subsequently, some of them have been established with the most notable and relevant achievements to ours being that of \cite{MNS_Prob_Theory},\cite{mns_possible}, \cite{massoulie_possibile} and \cite{bordenave}. These papers  prove that both Community Detection and the distinguishability problem for the two community sparse SBM undergo a phase-transition at the same point which they characterize explicitly. These results for the SBM motivates the investigation of the phase-transitions for Community Detection and distinguishability in our model.
However, the tools needed for our model are very different from those used to study the SBM. The key ideas for all of our  results come from different problems, mostly those studied in the theory of percolation \cite{bollobas} and stochastic geometry \cite{penrose_percolation}. Our algorithm is motivated by certain ideas that appeared in the Interacting Particle Systems literature (for ex. \cite{penrose_percolation} \cite{liggett},\cite{penrose_Particles},\cite{durrett_lecture}). The papers there developed re-normalization and percolation based ideas to study different particle systems arising in statistical mechanics, and our analysis bears certain similarity to that line of work. Our lower bound comes from identifying an \emph{easier} problem than Community Detection called Information Flow from Infinity, which is a new problem. The key idea to show this reduction comes from an ergodic argument applied on the spatial random graphs. To study the  impossibility of Information Flow from Infinity, we employ certain  `random-cluster method' and coupling arguments. Such methods are quite popular and have  proven to be extremely fruitful in other contexts for example to study mixing time of Ising Models (\cite{info_perc_1},\cite{infor_perc_2},\cite{info_perc_survey}). Similar coupling ideas have also appeared in other estimation contexts, notably the reconstruction on trees problem \cite{second_eigen}, where a lower bound was established using this method. 
\\

In the non-sparse regime, the work of \cite{abbe_exact} and \cite{abbe_sandon_ch} is most related to our methods for the non-sparse regime. Both papers studies the problem of Exact-Recovery in much greater detail in the generalized SBM, proving both lower bounds and efficient algorithms for exact recovery. In this paper, we use the lower bound framework developed in \cite{abbe_sandon_ch}, specifically, the large deviations characterization of hypothesis testing between Poisson random vectors, to provide a lower bound for our spatial random graph model.
\\

From a modeling perspective, the works of \cite{xu_gsbm} and \cite{gbm} which considers the Latent Space Models as a part of its model is the closest to our model. The paper of \cite{xu_gsbm} gives a spectral algorithm that is proven to work in the logarithmic degree regime. In particular, their methods do not work for the sparse regime which is one of the central topics discussed in the present paper. The algorithm provided in \cite{gbm} bears certain similarity to ours where nearby nodes' community membership are tested based on the common neighbors. However, it is only guaranteed to work in the logarithmic degree regime. Furthermore, no lower bounds are presented in \cite{gbm}, while the lower bound in \cite{xu_gsbm} is based on the idea that the graph is locally tree-like, which is not the case in our model.  Another related problem of group synchronization was introduced in \cite{abbe_group}, which among other things, also considered the question of how well can one identify the communities of two `far-away' nodes. This paper establishes certain phase-transitions for weak recovery that look similar to our lower bound, using different ideas from percolation and random walks on grids. Nonetheless, our algorithm is completely different from theirs and in-fact a straight forward adaptation of our algorithm to their setting can give an alternative proof of Theorem $3$ in \cite{abbe_group}. We also remark that certain results on the weak-recovery in our model appeared in a conference paper by the second and third authors \cite{soda}. However, the conference version does not carry certain proofs and does not discuss the Exact-Recovery problem in the logarithmic degree regime, which is the new technical contribution of the present paper compared to \cite{soda}.

\section{ Conclusions and Open Problems}

 In this paper, we introduced the problem of community detection in a spatial random graph where there are two equal sized communities. We studied this problem in both the sparse and non-sparse regime. Our main technical contributions in the sparse graph case are in identifying the problem of Information Flow from Infinity and connecting that with the Community Detection problem and giving a simple lower bound criterion. For developing the algorithm, we noticed that a spatial graph is sparse due to the fact that all interactions are dense, but localized which is starkly different from the reason why an Erd\H{o}s-R\'enyi graph is sparse. We leveraged this difference to propose an algorithm for community detection by borrowing further ideas from dependent site percolation processes. In the Exact-Recovery setting, we give a lower bound that we conjecture to be tight. However, this is just a first step and there are a plenty of open questions just concerning the model we introduced. 
 \\
 
 \emph{1) Is Weak-Recovery and Information Flow from Infinity equivalent ?} - In this paper, we proved that weak-recovery was harder than Information Flow from Infinity. However, our algorithm and its analysis showed that it can solve Community Detection whenever it can solve Information Flow from Infinity. Thus a natural question is whether these two problems undergo a phase-transition at the same point ? Moreover is there a relation between the optimal overlap achievable in Community Detection and the optimal success probability of estimating the community label of the origin in the Information Flow from Infinity problem ?
 \\
 
 \emph{2) Is the optimal overlap in weak-recovery monotone non-decreasing in $\lambda$ ?} -
 We saw in the proof of Proposition \ref{prop:monotone_repeat} that the optimal success probability of correctly labeling the origin in the Information Flow from Infinity problem is monotone in $\lambda$. However, for Community Detection, we only established that solvability is monotone and not the optimal overlap achievable.
 \\
 
 \emph{3) Can one resolve Conjecture \ref{conjecture_er} to identify the critical phase-transition point for Exact-Recovery} - This conjecture is reminiscent of the local to global phenomenon consistently observed in various random graph models (\cite{abbe_overview}, \cite{penrose_book},\cite{MNS_er} ). In these settings, an obvious local condition, i.e., there being no flip-bad node, also turns out to be sufficient for exact-recovery. Establishing such a result in our model will help us obtain a better understanding of our random graph model and also may aid in improved algorithms for practical situations.
 \\
 
 \emph{3) More than $2$ Communities} -
 In this paper, we focused exclusively on the case of two communities in the network, and an immediate question is that of $3$ or more communities. In the symmetric case where the connection function is $f_{in}^{(n)}(\cdot)$ within communities and $f_{out}^{(n)}(\cdot)$ across communities, a simple adaptation of our algorithm can give a sufficient condition in both the sparse and logarithmic degree regime, although will be sub-optimal. Our lower bound technique in the sparse regime can also be applied in the setting of many communities (see Theorem $2$ in \cite{abbe_group} for example ) thereby establishing the \emph{existence} of a non-trivial phase transition for any number of communities in the symmetric setting. But the open question is to identify examples similar to Proposition \ref{prop:lower_is_tight} where the phase transition for weak-recovery can be tight. A quest for such examples can possibly lead to better understanding of even the $2$ community case considered in this paper. Moreover, unified algorithmic techniques capable of handling non-symmetric case also is of interest since our algorithm does not generalize  in a straight forward way to the non-symmetric setting. As far as the logarithmic degree regime, our lower bound framework can easily extend using the same large-deviations result for Poisson hypothesis testing developed in \cite{abbe_sandon_ch}. However, an algorithm achieving this threshold in the logarithmic regime with multiple communities is not yet known.
 \\

 \emph{4) Characterization of the Phase-Transition for Weak-Recovery} -
 An obvious but harder question is whether one can characterize if not compute the exact phase-transition for either Community Detection or Information Flow from Infinity. We  show that  our lower bound is  capturing the phase-transition only in very specific cases and may not be tight in general due to corner cases similar to Proposition \ref{prop:lower_is_loose}. We also have no reason to believe that our algorithm is optimal in any sense. Thus, a structural characterization of the phase-transition is still far from being understood. 
 \\
 
 \emph{5) Computational Phase-Transition} - 
 Another aspect concerns the possible gaps between information versus computation thresholds. Is there a regime where Community Detection is solvable, but no polynomial (in $n$) time and space algorithms that operate on $(G_n,\phi_n)$ are known to exist ? 
 \\

 \emph{6) Estimating the Model Parameters} - How does one efficiently estimate  the connection functions $f_{in}(\cdot)$ and $f_{out}(\cdot)$ from the data of just the graph $G$ and the spatial locations $\phi$.

%\section{Conclusions}
%
%In this paper, we introduced and studied a community detection problem on sparse spatial random graphs. 

%\section{Simulations and Numerical Experiments}

\section*{Acknowledgements}

EA was  supported by the grant of NSF CAREER Award CCF-1552131. FB and AS were supported by a grant of the Simons Foundations ($\#19
7982$ to
The University of Texas at Austin). The  authors thank  Gustavo de Veciana, Sanjay Shakkottai, Joe Neeman and Alex Dimakis for providing comments on an earlier  version of this work.

\bibliographystyle{plain}
\bibliography{../../../../../Notes_Email/SBM}

\begin{thebibliography}{10}

\bibitem{abbe_overview}
Emmanuel Abbe.
\newblock Community detection and stochastic block models: recent developments.
\newblock {\em arXiv preprint arXiv:1703.10146}, 2017.

\bibitem{abbe_exact}
Emmanuel Abbe, Afonso~S Bandeira, and Georgina Hall.
\newblock Exact recovery in the stochastic block model.
\newblock {\em arXiv preprint arXiv:1405.3267}, 2014.

\bibitem{abbe_group}
Emmanuel Abbe, Laurent Massoulie, Andrea Montanari, Allan Sly, and Nikhil
  Srivastava.
\newblock Group synchronization on grids.
\newblock {\em arXiv preprint arXiv:1706.08561}, 2017.

\bibitem{abbe_sandon_ch}
Emmanuel Abbe and Colin Sandon.
\newblock Community detection in general stochastic block models: Fundamental
  limits and efficient algorithms for recovery.
\newblock In {\em Foundations of Computer Science (FOCS), 2015 IEEE 56th Annual
  Symposium on}, pages 670--688. IEEE, 2015.

\bibitem{neeman_distinction}
Jess Banks, Cristopher Moore, Joe Neeman, and Praneeth Netrapalli.
\newblock Information-theoretic thresholds for community detection in sparse
  networks.
\newblock In {\em Conference on Learning Theory}, pages 383--416, 2016.

\bibitem{Tree_95}
P.~M. Bleher, J.~Ruiz, and V.~A. Zagrebnov.
\newblock On the purity of the limiting gibbs state for the ising model on the
  bethe lattice.
\newblock {\em Journal of Statistical Physics}, 79(1):473--482, 1995.

\bibitem{inhomo_rg}
B{\'e}la Bollob{\'a}s, Svante Janson, and Oliver Riordan.
\newblock The phase transition in inhomogeneous random graphs.
\newblock {\em Random Structures \& Algorithms}, 31(1):3--122, 2007.

\bibitem{bollobas}
B{\'e}la Bollob{\'a}s and Oliver Riordan.
\newblock {\em Percolation}.
\newblock Cambridge University Press, 2006.

\bibitem{bordenave}
Charles Bordenave, Marc Lelarge, and Laurent Massouli{\'e}.
\newblock Non-backtracking spectrum of random graphs: community detection and
  non-regular ramanujan graphs.
\newblock In {\em Foundations of Computer Science (FOCS), 2015 IEEE 56th Annual
  Symposium on}, pages 1347--1357. IEEE, 2015.

\bibitem{ppi_detect}
Jingchun Chen and Bo~Yuan.
\newblock Detecting functional modules in the yeast protein--protein
  interaction network.
\newblock {\em Bioinformatics}, 22(18):2283--2290, 2006.

\bibitem{stoyan}
Sung~Nok Chiu, Dietrich Stoyan, Wilfrid~S Kendall, and Joseph Mecke.
\newblock {\em Stochastic geometry and its applications}.
\newblock John Wiley \& Sons, 2013.

\bibitem{coja_oghalan}
Amin Coja-Oghlan.
\newblock Graph partitioning via adaptive spectral techniques.
\newblock {\em Combinatorics, Probability and Computing}, 19(2):227--284, 2010.

\bibitem{daley}
Daryl~J Daley and David Vere-Jones.
\newblock {\em An introduction to the theory of point processes: volume II:
  general theory and structure}.
\newblock Springer Science \& Business Media, 2007.

\bibitem{decelle}
Aurelien Decelle, Florent Krzakala, Cristopher Moore, and Lenka Zdeborov{\'a}.
\newblock Asymptotic analysis of the stochastic block model for modular
  networks and its algorithmic applications.
\newblock {\em Physical Review E}, 84(6):066106, 2011.

\bibitem{dunbar_base}
Robin~IM Dunbar.
\newblock Neocortex size as a constraint on group size in primates.
\newblock {\em Journal of human evolution}, 22(6):469--493, 1992.

\bibitem{durrett_lecture}
Richard Durrett.
\newblock {\em Lecture notes on particle systems and percolation}.
\newblock Brooks/Cole Pub Co, 1988.

\bibitem{comm_det_nature}
Santo Fortunato.
\newblock Community detection in graphs.
\newblock {\em Physics reports}, 486(3):75--174, 2010.

\bibitem{gbm}
Sainyam Galhotra, Arya Mazumdar, Soumyabrata Pal, and Barna Saha.
\newblock The geometric block model.
\newblock {\em arXiv preprint arXiv:1709.05510}, 2017.

\bibitem{space_1}
Mark~S Handcock, Adrian~E Raftery, and Jeremy~M Tantrum.
\newblock Model-based clustering for social networks.
\newblock {\em Journal of the Royal Statistical Society: Series A (Statistics
  in Society)}, 170(2):301--354, 2007.

\bibitem{space_2}
Peter~D Hoff, Adrian~E Raftery, and Mark~S Handcock.
\newblock Latent space approaches to social network analysis.
\newblock {\em Journal of the american Statistical association},
  97(460):1090--1098, 2002.

\bibitem{holland}
Paul~W Holland, Kathryn~Blackmond Laskey, and Samuel Leinhardt.
\newblock Stochastic blockmodels: First steps.
\newblock {\em Social networks}, 5(2):109--137, 1983.

\bibitem{web_app}
Ravi Kumar, Prabhakar Raghavan, Sridhar Rajagopalan, and Andrew Tomkins.
\newblock Trawling the web for emerging cyber-communities.
\newblock {\em Computer networks}, 31(11):1481--1493, 1999.

\bibitem{last_palm}
G{\"u}nter Last.
\newblock {\em Modern Random Measures: Palm Theory and Related Models}.
\newblock Univ. Karlsruhe, Fak. f{\"u}r Mathematik, 2008.

\bibitem{liggett}
Thomas~M Liggett, Roberto~H Schonmann, and Alan~M Stacey.
\newblock Domination by product measures.
\newblock {\em The Annals of Probability}, 25(1):71--95, 1997.

\bibitem{reco_1}
Greg Linden, Brent Smith, and Jeremy York.
\newblock Amazon. com recommendations: Item-to-item collaborative filtering.
\newblock {\em IEEE Internet computing}, 7(1):76--80, 2003.

\bibitem{info_perc_survey}
Eyal Lubetzky and Allan Sly.
\newblock An exposition to information percolation for the ising model.
\newblock {\em arXiv preprint arXiv:1501.00128}, 2014.

\bibitem{infor_perc_2}
Eyal Lubetzky and Allan Sly.
\newblock Universality of cutoff for the ising model.
\newblock {\em arXiv preprint arXiv:1407.1761}, 2014.

\bibitem{info_perc_1}
Eyal Lubetzky and Allan Sly.
\newblock Information percolation and cutoff for the stochastic ising model.
\newblock {\em Journal of the American Mathematical Society}, 29(3):729--774,
  2016.

\bibitem{massoulie_possibile}
Laurent Massouli{\'e}.
\newblock Community detection thresholds and the weak ramanujan property.
\newblock In {\em Proceedings of the 46th Annual ACM Symposium on Theory of
  Computing}, pages 694--703. ACM, 2014.

\bibitem{planted_partition}
Frank McSherry.
\newblock Spectral partitioning of random graphs.
\newblock In {\em Foundations of Computer Science, 2001. Proceedings. 42nd IEEE
  Symposium on}, pages 529--537. IEEE, 2001.

\bibitem{penrose}
Ronald Meester, Mathew~D Penrose, and Anish Sarkar.
\newblock The random connection model in high dimensions.
\newblock {\em Statistics \& probability letters}, 35(2):145--153, 1997.

\bibitem{Meester_Roy}
Ronald Meester and Rahul Roy.
\newblock {\em Continuum percolation}, volume 119.
\newblock Cambridge University Press, 1996.

\bibitem{moore_survey}
Cristopher Moore.
\newblock The computer science and physics of community detection: landscapes,
  phase transitions, and hardness.
\newblock {\em arXiv preprint arXiv:1702.00467}, 2017.

\bibitem{second_eigen}
Elchanan Mossel.
\newblock Reconstruction on trees: beating the second eigenvalue.
\newblock {\em Annals of Applied Probability}, pages 285--300, 2001.

\bibitem{mns_possible}
Elchanan Mossel, Joe Neeman, and Allan Sly.
\newblock A proof of the block model threshold conjecture.
\newblock {\em arXiv preprint arXiv:1311.4115}, 2013.

\bibitem{MNS_er}
Elchanan Mossel, Joe Neeman, and Allan Sly.
\newblock Consistency thresholds for the planted bisection model.
\newblock In {\em Proceedings of the forty-seventh annual ACM symposium on
  Theory of computing}, pages 69--75. ACM, 2015.

\bibitem{MNS_Prob_Theory}
Elchanan Mossel, Joe Neeman, and Allan Sly.
\newblock Reconstruction and estimation in the planted partition model.
\newblock {\em Probability Theory and Related Fields}, 162(3):431--461, 2015.

\bibitem{social_science_app}
Mark~EJ Newman, Duncan~J Watts, and Steven~H Strogatz.
\newblock Random graph models of social networks.
\newblock {\em Proceedings of the National Academy of Sciences}, 99(suppl
  1):2566--2572, 2002.

\bibitem{penrose_book}
Mathew Penrose.
\newblock {\em Random geometric graphs}.
\newblock Number~5. Oxford University Press, 2003.

\bibitem{penrose_percolation}
Mathew~D Penrose.
\newblock On a continuum percolation model.
\newblock {\em Advances in applied probability}, 23(03):536--556, 1991.

\bibitem{penrose_Particles}
Mathew~D Penrose.
\newblock Existence and spatial limit theorems for lattice and continuum
  particle systems.
\newblock {\em Prob. Surveys}, 5:1--36, 2008.

\bibitem{reco_2}
Shaghayegh Sahebi and William~W Cohen.
\newblock Community-based recommendations: a solution to the cold start
  problem.
\newblock In {\em Workshop on recommender systems and the social web, RSWEB},
  2011.

\bibitem{soda}
Abishek Sankararaman and Fran{\c{c}}ois Baccelli.
\newblock Community detection on euclidean random graphs.
\newblock In {\em Proceedings of the Twenty-Ninth Annual ACM-SIAM Symposium on
  Discrete Algorithms}, pages 2181--2200. SIAM, 2018.

\bibitem{bio_application}
Andrey~A Shabalin, Victor~J Weigman, Charles~M Perou, and Andrew~B Nobel.
\newblock Finding large average submatrices in high dimensional data.
\newblock {\em The Annals of Applied Statistics}, pages 985--1012, 2009.

\bibitem{image_segment}
Jianbo Shi and Jitendra Malik.
\newblock Normalized cuts and image segmentation.
\newblock {\em IEEE Transactions on pattern analysis and machine intelligence},
  22(8):888--905, 2000.

\bibitem{acs_singular}
AN~Shiryaev.
\newblock Absolute continuity and singularity of probability measures in
  functional spaces.
\newblock In {\em Proceedings of the International Congress of Mathematicians,
  Helsinki}, pages 209--225, 1978.

\bibitem{milgram}
Jeffrey Travers and Stanley Milgram.
\newblock The small world problem.
\newblock {\em Phychology Today}, 1(1):61--67, 1967.

\bibitem{xu_gsbm}
Jiaming Xu, Laurent Massouli{\'e}, and Marc Lelarge.
\newblock Edge label inference in generalized stochastic block models: from
  spectral theory to impossibility results.
\newblock In {\em COLT}, pages 903--920, 2014.

\end{thebibliography}

\begin{appendix}
	\section{Proof of Proposition \ref{prop:monotone}}
	\label{appendix-monotone}
	\begin{proof}
		Assume, for a given value of $\lambda$, there exists a community detection algorithm that achieves an overlap of $\gamma > 0$. Now, given any $\lambda^{'} > \lambda$, we will argue that we can achieve positive overlap. The proof of this follows from the basic thinning properties of the PPP. Given an instance of the problem with intensity $\lambda^{'}$, we will remove every node along with its incident edges independently with probability $1- \frac{\lambda}{\lambda^{'}}$. We assign a community label estimate of $+1$ to all the removed nodes. For the nodes that remain (which is then an instance of the problem of community detection with intensity $\lambda$), we achieve an overlap of $\gamma$ with  probability at-least $1-o_n(1)$, from the hypothesis that we can achieve positive overlap at intensity $\lambda$. Thus, from the independence of the  thinning procedure and the community labels and strong law of large numbers, the overlap achieved by this process on an instance of intensity $\lambda^{'}$ will be at-least $\frac{\lambda \gamma}{\lambda^{'}}$ with probability at-least $1-o_n(1)$. Thus, the problem of community detection solvability is monotone in $\lambda$ in the sparse case.
	\end{proof}
\section{Proof of Proposition \ref{prop:lower_is_loose}}
\label{appendix_proof}
The goal of this section is to establish Proposition \ref{prop:lower_is_loose} in which the equality in Theorem \ref{thm:main_lower_bound} cannot be achieved for any $\lambda > 0$. The functions $f_{in}(\cdot)$ and $f_{out}(\cdot)$ of Proposition \ref{prop:lower_is_loose} satisfy $\int_{r \geq 0}(f_{in}(r) - f_{out}(r))rdr = \infty$. Thus, the bound from Theorem \ref{thm:main_lower_bound} predicts that $\lim_{r \rightarrow \infty} \sup_{\tau_{r} \in \sigma({G}^{(r)} ,{\phi}^{(r)})} \mathbb{P}^{0}_{\phi}[\tau_{r} = Z_0] \leq 1$. However, we shall show that for every $\lambda > 0$, in our particular example, the probability of correctly estimating $Z_0$ is strictly smaller than $1$. In-fact, we will show something slightly stronger. We will show that given the labels of \emph{every node} other than the node at  origin, the probability of correctly estimating $Z_0$ is strictly less than $1$. The key tool to conclude about this example is the following classical result from \cite{acs_singular} which states that the measures induced by two Poisson Point Processes on $\mathbb{R}^d$ are either absolutely continuous with respect to each other or are mutually singular. More precisely, the result from \cite{acs_singular} after being adapted to our setting, states the following.
\begin{lemma}
	Let $\mu_1$ and $\mu_2$ be two measures on $\mathbb{R}^d$ such that $\mu_1 \sim \mu_2$ and $\mu_1(\mathbb{R}^d) = \mu_2(\mathbb{R}^d) = \infty$. Let $P_{\mu_1}$ and $P_{\mu_2}$ be the probability measures on the space of locally finite counting measures on $\mathbb{R}^d$ induced by two Poisson Point Processes having intensity measures $\mu_1$ and $\mu_2$ respectively. Then the following dichotomy holds.
	\begin{itemize}
		\item If $\int_{x \in \mathbb{R}^d} \left( 1- \sqrt{\frac{d \mu_1}{d \mu_2}} \right)^2 d \mu_2 < \infty$, then $P_{\mu_1} \sim P_{\mu_2}$
		\item If $\int_{x \in \mathbb{R}^d} \left( 1- \sqrt{\frac{d \mu_1}{d \mu_2}} \right)^2 d \mu_2 = \infty$, then $P_{\mu_1} \perp P_{\mu_2}$
	\end{itemize} 
	\label{lem:acs_singular}
\end{lemma}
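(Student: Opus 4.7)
The plan is to reduce to an infinite product structure on bounded sets, compute the Hellinger affinity of the two Poisson laws on each bounded piece in closed form, and then invoke Kakutani's dichotomy theorem for infinite product measures. First, I would fix any increasing sequence of bounded Borel sets $B_n \uparrow \mathbb{R}^d$ and set $D_1 := B_1$, $D_k := B_k \setminus B_{k-1}$, so that $\{D_k\}$ is a measurable partition of $\mathbb{R}^d$ with $\mu_i(D_k) < \infty$ for both $i = 1,2$. A key structural property of the Poisson point process is that the restrictions $\phi \cap D_k$ are mutually independent as $k$ varies; consequently, $P_{\mu_i}$ can be identified with the product measure $\bigotimes_k P_{\mu_i}|_{D_k}$ on the natural product space of counting measures on each $D_k$.

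Second, on each bounded $D_k$, since $\mu_1 \sim \mu_2$ and both have finite total mass on $D_k$, the Janossy density representation of the Poisson process gives the explicit Radon-Nikodym derivative
\begin{align*}
\frac{dP_{\mu_1}|_{D_k}}{dP_{\mu_2}|_{D_k}}(\{x_1,\ldots,x_n\}) = e^{\mu_2(D_k) - \mu_1(D_k)} \prod_{j=1}^n \frac{d\mu_1}{d\mu_2}(x_j).
\end{align*}
A direct computation of the Hellinger affinity $H_k := \int \sqrt{dP_{\mu_1}|_{D_k}\,dP_{\mu_2}|_{D_k}}$, using Campbell's product formula for the Poisson process (namely, $\mathbb{E}_{P_\mu}\bigl[\prod_{x \in \phi} f(x)\bigr] = \exp\bigl(\int (f-1)\,d\mu\bigr)$ for $0 \leq f \leq 1$) with the choice $f = \sqrt{d\mu_1/d\mu_2}$, then yields the closed form
\begin{align*}
H_k = \exp\!\left(-\tfrac{1}{2}\int_{D_k} \left(1 - \sqrt{d\mu_1/d\mu_2}\right)^2 d\mu_2\right).
\end{align*}

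Third, I would invoke Kakutani's dichotomy theorem for infinite products of pairwise mutually absolutely continuous probability measures. Since each pair $(P_{\mu_1}|_{D_k}, P_{\mu_2}|_{D_k})$ is mutually absolutely continuous by the formula above, Kakutani's theorem says that the product measures $P_{\mu_1}$ and $P_{\mu_2}$ are either mutually equivalent or mutually singular, and the alternative is resolved precisely by whether $\prod_k H_k > 0$ or $\prod_k H_k = 0$. Additivity of the integral across the partition gives $\prod_k H_k = \exp\bigl(-\tfrac{1}{2}\int_{\mathbb{R}^d}(1 - \sqrt{d\mu_1/d\mu_2})^2\, d\mu_2\bigr)$, which is strictly positive if and only if the integral is finite, matching the stated dichotomy. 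The main obstacles in executing this plan rigorously are (i) justifying the measurable isomorphism between the space of locally finite counting measures on $\mathbb{R}^d$ and the product of configuration spaces on the $D_k$'s, which is a standard but non-trivial result in point process theory; and (ii) handling the potential unboundedness of $d\mu_1/d\mu_2$: although this does not preclude the Hellinger affinity from being well-defined (the square root keeps the relevant integrals tame), it requires that Campbell's formula be applied with some care since $\sqrt{d\mu_1/d\mu_2}$ may exceed one, necessitating either an explicit Janossy-series expansion on each $D_k$ or an $L^1$-approximation argument.
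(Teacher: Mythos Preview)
Your proposal is correct and is in fact the standard proof of this classical dichotomy. The paper itself does not prove this lemma: it is quoted there as a known result from the reference \cite{acs_singular} and used as a black box in the proof of Proposition~\ref{prop:lower_is_loose}. So there is no ``paper's own proof'' to compare against.

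That said, your argument is the expected one: decompose $\mathbb{R}^d$ into bounded pieces, use complete independence of the Poisson process to write each $P_{\mu_i}$ as a product, compute the Hellinger affinity on each piece via the explicit Radon--Nikodym derivative, and invoke Kakutani. The two caveats you raise are genuine but routine. The identification of the configuration space with the product of per-cell configuration spaces is a standard measurable isomorphism following from complete independence. As for the product-functional identity $\mathbb{E}_{P_\mu}\bigl[\prod_{x\in\phi} f(x)\bigr]=\exp\bigl(\int (f-1)\,d\mu\bigr)$, on a region of finite total mass this holds for any nonnegative measurable $f$ (both sides may be $+\infty$, but here $\int_{D_k}\sqrt{d\mu_1/d\mu_2}\,d\mu_2 \le \tfrac12(\mu_1(D_k)+\mu_2(D_k))<\infty$ by the arithmetic--geometric mean inequality, so everything is finite); one simply expands the Janossy series termwise, and the restriction $f\le 1$ is not needed. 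There is no gap.
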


The following lemma will conclude that the lower bound is strictly sub-optimal in this example.

\begin{lemma}
	For every $\lambda >0$ and $d \geq 2$, if $f_{in}(r) = \min \left( 1, \frac{1}{r} + \frac{1}{r^{d-1/4}}\right)$ and $f_{out}(r) = \min \left(1,\frac{1}{r} \right)$,
	$\sup_{\tau \in \sigma(G,\phi,\{Z_i\}_{i \geq 1})} \mathbb{P}^{0}[\tau = Z_0] < 1$.	
\end{lemma}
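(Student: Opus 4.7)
My plan is to show that, even after revealing the labels $\{Z_i\}_{i \ge 1}$ of every other node, the conditional law of the observable data under $Z_0 = +1$ is mutually absolutely continuous with respect to its law under $Z_0 = -1$; no deterministic function of the observable can then equal $Z_0$ almost surely. The key tool is Lemma \ref{lem:acs_singular}, applied via a decomposition of the marked PPP into four independent Poisson sub-processes. Since the edges of $G$ not incident to node $0$ are independent of $Z_0$ given $(\phi, \{Z_i\})$, all information about $Z_0$ is carried by the indicators $Y_i := \mathbf{1}_{0 \sim_G i}$. Define
\begin{equation*}
\psi_{s, \epsilon} := \{X_i : Z_i = s, \, Y_i = \epsilon\}, \qquad (s, \epsilon) \in \{-1, +1\} \times \{0, 1\}.
\end{equation*}
By the independent thinning property of the PPP together with Slivnyak's theorem, conditionally on $Z_0 = +1$ these four sets are mutually independent PPPs on $\mathbb{R}^d$ with intensity densities $(\lambda/2) f_{in}(\|x\|)$, $(\lambda/2)(1 - f_{in}(\|x\|))$, $(\lambda/2) f_{out}(\|x\|)$ and $(\lambda/2)(1 - f_{out}(\|x\|))$ respectively; conditionally on $Z_0 = -1$ the roles of $f_{in}$ and $f_{out}$ are swapped within each community. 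The observable tuple $(G, \phi, \{Z_i\}_{i \ge 1})$ is a measurable function of $(\psi_{s, \epsilon})$ together with edges independent of $Z_0$, so it suffices to establish mutual absolute continuity of each of the four PPP pairs.

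I then apply Lemma \ref{lem:acs_singular} pair by pair. All four intensity measures have infinite total mass for $d \ge 2$, and the Hellinger-type integrals in Lemma \ref{lem:acs_singular} reduce to scalar multiples of
\begin{equation*}
I_1 := \int_{\mathbb{R}^d} \bigl(\sqrt{f_{in}(\|x\|)} - \sqrt{f_{out}(\|x\|)}\bigr)^2 dx, \qquad I_0 := \int_{\mathbb{R}^d} \bigl(\sqrt{1-f_{in}(\|x\|)} - \sqrt{1-f_{out}(\|x\|)}\bigr)^2 dx.
\end{equation*}
Near the origin both connection functions equal $1$, so the integrand vanishes on a neighborhood of $0$; for $r$ large, writing $a = r^{-1/2}$ and $b = r^{-(d-1/4)}$ so that $f_{in}(r) = a + b$ and $f_{out}(r) = a$, a Taylor expansion yields $(\sqrt{a+b} - \sqrt{a})^2 = b^2/(4a) + O(b^3 a^{-2}) \asymp r^{-(2d-1)}$ and $(\sqrt{1-a-b} - \sqrt{1-a})^2 = b^2/(4(1-a)) + O(b^3) \asymp r^{-(2d-1/2)}$. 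Multiplying by the radial Jacobian $r^{d-1}$ and integrating in $r$ yields tail contributions of order $r^{-d}$ and $r^{-d-1/2}$, both integrable at infinity for $d \ge 2$. Hence $I_0, I_1 < \infty$, Lemma \ref{lem:acs_singular} delivers mutual absolute continuity for each of the four PPP pairs, and by independence across pairs the joint law of the observable data is mutually absolutely continuous under $\{Z_0 = +1\}$ and $\{Z_0 = -1\}$.

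To conclude, suppose for contradiction some estimator $\tau \in \sigma(G, \phi, \{Z_i\}_{i \ge 1})$ valued in $\{-1, +1\}$ satisfies $\mathbb{P}^0[\tau = Z_0] = 1$. Then the event $\{\tau = +1\}$ has conditional probability $1$ given $Z_0 = +1$ and conditional probability $0$ given $Z_0 = -1$, contradicting the mutual absolute continuity just established. Hence $\sup_\tau \mathbb{P}^0[\tau = Z_0] < 1$. I expect the main technical obstacle to be making the Taylor bounds fully uniform, in particular handling the intermediate range of $r$ where $f_{in}$ is clipped to $1$ while $f_{out}$ is already strictly less than $1$, so as to argue that both $I_0$ and $I_1$ receive only a bounded contribution from that region.
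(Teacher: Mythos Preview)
Your approach is essentially the same as the paper's: decompose the marked process into four independent Poisson sub-processes according to $(Z_i,\mathbf{1}_{0\sim_G i})$, observe that flipping $Z_0$ merely swaps $f_{in}\leftrightarrow f_{out}$ in each intensity, and invoke Lemma~\ref{lem:acs_singular} on the resulting Hellinger integrals $I_0,I_1$ to conclude mutual absolute continuity and hence imperfect recovery. Your write-up is in fact a bit more careful than the paper's, which just asserts the two integrals in Equation~(\ref{eqn:example_conditions}) are finite without the Taylor analysis you supply. One small slip: the statement you were given has $f_{out}(r)=\min(1,1/r)$, so for large $r$ one should take $a=r^{-1}$, not $a=r^{-1/2}$ (the paper itself is internally inconsistent here, using $1/\sqrt{r}$ in Proposition~\ref{prop:lower_is_loose} but $1/r$ in the appendix lemma); recomputing with $a=r^{-1}$ gives tail orders $r^{-d+1/2}$ and $r^{-d-1/2}$ after the Jacobian, still integrable for $d\ge 2$, so your argument goes through unchanged. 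The intermediate-$r$ region you flag, where $f_{in}$ is clipped to $1$ but $f_{out}<1$, is a bounded interval on which both integrands are bounded, so its contribution is trivially finite.
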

\begin{proof}
	From the independent thinning property of the Poisson Point Process, the origin partitions the process $\phi \setminus \{0\}$ into $4$ independent Poisson Processes, 
	\begin{enumerate}
		\item The point process $\phi_{+,e}$ which are the locations of those nodes of $G$ that have an edge to the origin and have for community label $Z_0$. From properties of $G$, the intensity measure of $\phi_{+,e}$ which we denote by $\mu_{in}(\cdot)$, has a density with respect to Lebesgue measure given by $f_{in}(||\cdot||)$.
		\item The point process $\phi_{-,e}$ which are the locations of those nodes of $G$ that have an edge to the origin and have for community label $-Z_0$.  The intensity measure of this point process which we denote as $\mu_{out}(\cdot)$ admits $f_{out}(||\cdot||)$ as its density with respect to Lebesgue measure.
		
		\item The point process $\phi_{+,n}$ which are the locations of those nodes of $G$ that do not have an edge to the origin and have for community label $Z_0$.  The intensity measure of this point process which we denote as $\tilde{\mu}_{in}(\cdot)$ admits $1-f_{in}(||\cdot||)$ as its density with respect to Lebesgue measure.
		
		\item The point process $\phi_{-,n}$ which are the locations of those nodes of $G$ that do not have an edge to the origin and have for community label $-Z_0$.  The intensity measure of this point process which we denote as $\tilde{\mu}_{out}(\cdot)$ admits $1-f_{out}(||\cdot||)$ as its density with respect to Lebesgue measure.

	\end{enumerate}
	
	Since, the process of graph neighbors of the origin and graph non-neighbors of the origin are independent, it suffices to conclude that both the optimal estimators for $Z_0$ based on the data of just the neighbors and  based on the data of just non-neighbors have a strictly positive chance of being wrong.
	In other words, it suffices to conclude that the measures induced on the set of locally finite counting measures on $\mathbb{R}^d$ by the process $\phi_{+,e}$ and $\phi_{-,e}$ are not mutually singular and the measures induced by $\phi_{+,n}$ and $\phi_{-,n}$ are not mutually singular either. 
	To do so, we will directly use Lemma \ref{lem:acs_singular} for our example.
	\\
	
	Notice that the chosen example satisfies the following.

	\begin{equation}
	\begin{aligned}
	\int_{x \in \mathbb{R}^2} \left( 1 - \sqrt{\frac{f_{out}(||x||)}{f_{in}(||x||)}}\right)^2dx < \infty \\
	\int_{x \in \mathbb{R}^2} \left( 1 - \sqrt{\frac{1-f_{in}(||x||)}{1-f_{out}(||x||)}}\right)^2dx < \infty
	\end{aligned}
	\label{eqn:example_conditions}
	\end{equation}
	
	%\begin{enumerate}
	%	\item $\int_{x \in \mathbb{R}^2} \left( 1 - \sqrt{\frac{f_{out}(||x||)}{f_{in}(||x||)}}\right)^2dx < \infty$.
	%	\item $\int_{x \in \mathbb{R}^2} \left( 1 - \sqrt{\frac{1-f_{in}(||x||)}{1-f_{out}(||x||)}}\right)^2dx < \infty$
	%\end{enumerate}
	
	Furthermore, notice that $\frac{d \mu_{out}}{d \mu_{in}}(\cdot) = \frac{f_{out}(\cdot)}{f_{in}(\cdot)}$ and $\frac{d \tilde{\mu}_{in}}{d \tilde{\mu}_{out}}(\cdot) = \frac{1-f_{in}(\cdot)}{1-f_{out}(\cdot)}$. Hence, from Equations (\ref{eqn:example_conditions}) and Lemma \ref{lem:acs_singular}, we see that for every $\lambda >0$, we have $P_{\mu_{in}} \sim P_{\mu_{out}}$ and $P_{\tilde{\mu}_{in}} \sim P_{\tilde{\mu}_{out}}$. Thus, $Z_0$ cannot be estimated perfectly without errors from the data and thus $\sup_{\tau \in \sigma(G,\phi,\{Z_i\}_{i \geq 1})} \mathbb{P}^{0}[\tau = Z_0] < 1$.
	
\end{proof}

\section{Proof of Proposition \ref{prop:consistent_partition}}
\label{appendix_partition_proof}

\begin{proof}
	We prove the proposition by induction. We will show that, given an unique and consistent partition of all nodes in cells $\mathcal{A}(z)$ upto $l_{\infty}$ distance of $n$ from $z$ and a certain number of cells at distance $n+1$, we can uniquely extend the consistent partition to one more cell in $\mathcal{A}(z)$ at distance $n+1$ from $z$. In other words, we construct the unique partition in a `breadth first manner' from $z$, by representing the distance on $\mathbb{Z}^d$ using the $l_{\infty}$ distance. As a corollary, this proof technique  establishes that the arbitrary sequence in Line $5$ of Algorithm \ref{alg:main-routine} will contain all the nodes of the component and will return the unique consistent partition.
	\\
	
	For the base-case, since cell $z$ is A-Good, from Line $6$ of Algorithm \ref{alg:Is_Good}, we can uniquely partition all points in cell $z$ and its $1$ step neighbors. The existence of a consistent partition can be argued as follows. Pick an arbitrary $X_i \in \phi \cap Q_z$ which we know by definition to be non-empty and label it $+1$. Now, for all points $X_j \in \cup_{z^{'} : ||z-z^{'}||_{\infty}} (\phi \cap Q_{z^{'}})$, label $X_j$ with the value returned by  Algorithm \ref{alg:Pairwise} run on the input $(i,j,G)$. Thus, we have produced a partition of all the points in the one-step neighbor of $z$ in $\mathcal{A}(z)$  We will first argue that the partition we produced above is consistent, i.e. satisfies the conditions of the present proposition. Indeed, assume to the contrary, that this partition violates the statement of the current proposition. Assume there exists two points $X_k$ and $X_l$ such that they are in opposite sets of the partition with the partitioning procedure stated above whereas Algorithm \ref{alg:Pairwise} run on $(k,l,G)$, returns a $+1$. This  implies that the product of the outputs of Algorithm \ref{alg:Pairwise} run on input $(i,k,G)$ with $(k,l,G)$ and $(i,l,G)$ is $-1$, thereby violating the fact that cell $z$ is A-Good. We can similarly, argue the absence of two points $X_k$ and $X_l$ such that they are in the same partition, but Algorithm \ref{alg:Pairwise} returns a $-1$. Now, to argue uniqueness, assume that on the contrary,  there exists another consistent partition. This implies that there must exist two points $X_i$ and $X_j$ which are in the same set in one partition and in different sets in the other partition. Thus, clearly one of the partitions must violate the two requirements of this proposition, since Algorithm \ref{alg:Pairwise} run on $(i,j,G)$ will produce just one output, thereby invalidating the consistency of at-least one the two partitions. 
	\\
	
	Now, we show the induction step. Assume there is an unique consistent partition of all nodes in $\phi \cap Q_{\mathcal{A}(z)}$ that are in cells of $l_{\infty}$ distance of at-most $n$ from $z$ and some cells at a distance of $n+1$ from $z$. Let $z^{'} \in \mathcal{A}(z)$ be such that $||z-z^{'}||_{\infty} = n+1$ and assume that $z^{'}$ has not yet been partitioned. Pick any $z^{''} \in \mathcal{A}(z)$ such that $||z - z^{''}||_{\infty} \leq n+1$ , $||z^{'} - z^{''}||_{\infty} = 1$, such that cell $z^{''}$ has already been partitioned. Note from the fact that $\mathcal{A}(z)$ is a connected subset of $\mathbb{Z}^d$, one can always find such a $z^{''}$. Pick $X_i \in \phi \cap Q_{z^{''}}$ arbitrarily. This can be done since we know that $\phi \cap Q_{z^{''}}$ is non-empty by definition of it being in $\mathcal{A}(z)$. To extend the partition, for every point $X_k \in \phi \cap Q_{z^{'}}$, run Algorithm \ref{alg:Pairwise} on the input $(i,k,G)$. Place $X_k$ in the same partition as $X_i$ if the algorithm returned $+1$; else place $X_k$ in the opposite partition of $X_i$. We will now conclude by showing that this extension still respects consistency and is unique. To argue uniqueness and consistency, it suffices to show that for all $X_k \in \phi \cap Q_{z^{'}}$ arbitrary no matter what $X_i \in Q_{z^{''}}$ we pick, the class to which $X_k$  belongs is the same. Let $z^{'''}$ be arbitrary and such that $||z^{'}-z^{'''}||_{\infty} = 1$ cell and $z^{'''}$ is already partitioned. Let $i^{'} \in \phi \cap Q_{z^{'''}}$ be arbitrary. Now, the points $X_{i^{'}},X_k,X_i$ are all within a $1$ thickening of cell $z^{'}$ which we know is A-Good. Thus, from Line $6$ of Algorithm \ref{alg:Is_Good}, the product of Pairwise-Classify on inputs $(i^{'},k),(k,i),(i,i^{'})$ is $1$. By the induction hypothesis, there is a unique relation between $i$ and $i^{'}$ (they are either in the same or different classes of the unique partition). Therefore, no matter the reference point, the class of a point $X_k \in \phi \cap Q_{z^{'}}$ is unique and consistent.
	
\end{proof}

\section{Proof of Lemma \ref{lem:two_possible}}
\label{appendix_proof_2possible}

\begin{proof}
	Denote by $T_I(j)$  be the breadth first spanning tree of $I$ constructed with $j$ as the root. Thus in the tree $T_I(j)$, and for all $k \in V_I(j)$, there is exactly one path from $j$ to $k$ in the tree $T_I(j)$. 
	\\
	
	The existence of two labelings is not so difficult since we know there exists one underlying true labeling which generated the data $G,I$. But since, the model is symmetric, the complement of the true labels will also be consistent in the sense of Lemma \ref{lem:I_and_G}. Thus, there are at least two labeling consistent with the observed data $G,I$. These two labels of $V_I(j)$ can be constructed explicitly which we do in the next paragraph. We  then show, that there are no other that can be consistent in the sense of Lemma \ref{lem:I_and_G}, which will conclude the proof.
	\\

	To construct the two possible labelings, first assume that $Z_j = +1$. Now conditionally on this and $G$, each neighbor of $j$ in $T_I(j)$ will have exactly one possible community label estimate that is consistent in the sense of Lemma \ref{lem:I_and_G}. Now, by induction, we can construct the labels of $V_I(j)$. Assume, that conditionally on $Z_j = +1$ and $G$, we have a unique set of labels for all vertices in $T_I(j)$ at graph distance of less than or equal to $k$. Let $u \in T_I(j)$ be an arbitrary vertex such that it is at graph distance $k+1$ from $j$ in $T_I(j)$. Since $T_I(j)$ is a tree, there is a unique vertex $v$ in $V_I(j)$ such that $v \sim_{T_I(j)} u$ and $v$ is at a distance of $k$ from $j$. Thus, conditionally on $Z_j = +1$, $Z_{v}$ is a fixed community label due to the induction hypothesis. Since $Z_{v}$ is fixed, then there is a unique label for $Z_u$ that will be consistent in the sense of Lemma \ref{lem:I_and_G}. Since $u$ was arbitrary, we can uniquely assign a community label to all vertices at graph distance of $k+1$ from $j$ in $T_I(j)$. Hence, by induction, conditionally on $Z_j = +1$, there is a unique community estimate for all vertices in $V_I(j)$. Similarly, if we assumed $Z_j = -1$, we will find another unique labeling for the vertices in $V_i(j)$ which will be the complement of the unique labeling obtained by assuming $Z_j = +1$. This, gives us that there exist at-least two labelings of $V_i(j)$ that are complements of each other and consistent with the observed data $G$ and $I$ in the sense of Lemma \ref{lem:I_and_G}. 
	\\

	To see that there can be no other possibilities, we argue by contradiction. Assume there are two labelings and a vertex $k$ such that in one of the labelings $Z_j = +1, Z_k = +1$ and in the other $Z_j = +1, Z_k = -1$. It is  clear that at-most one of the above labelings will be consistent in the tree $T_I(j)$ in the sense of Lemma \ref{lem:I_and_G}. This establishes that the  two sequences we constructed in the previous paragraph which are complements of each other are the only two possible sequences that are consistent in the sense of Lemma \ref{lem:I_and_G}. 
	
\end{proof}

\section{Definition of PPP}
\label{appendix_PPP}

A homogeneous PPP of intensity $\lambda$ on $\mathbb{R}^d$ is a random process $\phi := \{X_1,X_2, \cdots\}$ with each $X_i \in \mathbb{R}^d$ such that the following two holds
\begin{itemize}
	\item For every bounded Borel set $B$, the cardinality of the set $\phi(B):= |\{i \in \mathbb{N}: X_i \in B\}|$ is a Poisson random variable with mean $\lambda |B|$ where $|B|$ is the volume (Lebesgue measure) of the set $B$.
	\item For any $k \in \mathbb{N}$ and any \emph{disjoint} bounded Borel measurable sets $B_1, \cdots , B_k$, the random variables $\phi(B_1), \cdots \phi(B_k)$ are mutually independent. 
\end{itemize} 

\section{Palm Measure on the Torus}
\label{appendix_palm}
In this section, we recap the basic properties of the Poisson Point Process on the torus. As the torus is a locally compact unimodular topological group, the definition of Palm measures and the moment measure equations of PPP on a torus follows directly from the theory described in \cite{last_palm}. We reproduce the key results needed for our paper here. Fix a $n \in \mathbb{N}$ and consider the $d$ dimensional torus on the set $B_n := \left[ -\frac{n^{1/d}}{2}, \frac{n^{1/d}}{2}\right]^d$ equipped with the Haar measure denoted as $m_{n,d}(\cdot)$ which is invariant under translations on the torus. Let $(\Omega, \mathcal{F},\mathbb{P})$ be a probability space on which we have a stationary independently marked  PPP on the torus $B_n$ of intensity $\lambda > 0$ with marks in an arbitrary Polish space $\mathcal{K}$. Denote by the atoms of this point process by $\phi_n := \{X_1,\cdots,X_{N_n}\}$ enumerated in an arbitrary manner and the corresponding marks as $\{K_1,\cdots,K_{N_n}\}$. From the definition of the PPP,  $N_n$ is a Poisson random variable of mean $\lambda n$ independent of everything else and conditional on $N_n$, $(X_i)_{i=1}^{N_n}$ are  i.i.d. random variables that are uniformly distributed in the set $B_n$.  Conditionally on $N_n$ and $(X_i)_{i=1}^{N_n}$, the sequence $(K_i)_{i = 1}^{N_n}$ are i.i.d. In this framework, for any $k \in \mathbb{N}$ and $x_1,\cdots, x_k \in B_n$, the Palm measure $\mathbb{P}^{x_1,\cdots,x_k}$ corresponds to adding fictitious atoms at locations $\{x_1,\cdots,x_k\}$ and equipping them with independent marks having the same law as $K_1$. In other words, thanks to Slivnyak's theorem (\cite{last_palm}), the atoms of the PPP under the Palm measure $\mathbb{P}^{x_1,\cdots,x_k}$ is $\phi_n \cup \{x_1,\cdots x_k\}$ where $\phi_n$ is the law of the point process under $\mathbb{P}$, with the marks of these additional points having the same distribution as that of $K_1$ and independent of everything else. For this framework,  for any function $f(\cdot) : B_n \rightarrow \mathbb{R}_{+}$  such that $\int_{x \in B_n} f(x) m_{n,d}(dx) < \infty$ we have the following version of Campbell's theorem (\cite{last_palm}) -
\begin{equation}
\mathbb{E}[\sum_{x \in \phi_n} f(x)] = \lambda \int_{x \in B_n} \mathbb{E}^x[ f(x) ]m_{d,n}(dx).
\label{eqn:appendix_palm_campbell}
\end{equation}
More generally, we have the following $k$-th order moment measure expansions of the Poisson process. Let $f(\cdot): B_n^k \rightarrow \mathbb{R}_{+}$ be such that $\int_{x_1, \cdots,x_k} f(x_1,\cdots, x_k) \prod_{i=1}^{k}m_{n,d}(dx_i) < \infty$. Then, we have
\begin{equation}
\mathbb{E}[\sum_{\substack{x_1,\cdots, x_k \in \phi_n \\ \neq}} f(x_1,\cdots,x_k)] = \lambda^k \int_{x_1,\cdots, x_k \in B_n} \mathbb{E}^{x_1,\cdots,x_k}[f(x_1,\cdots,x_k) ]\prod_{i=1}^{k} m_{n,d}(dx_i).
\label{eqn:appendix_palm_moment}
\end{equation}
We use Equations (\ref{eqn:appendix_palm_campbell}) and (\ref{eqn:appendix_palm_moment}) in Section \ref{sec:exact_recovery} to compute the first and second moments of the number of Flip-Bad nodes in $G_n$.
\end{appendix}

\end{document}